\newtheorem{theorem}{Theorem}[section]
\newtheorem{lemma}[theorem]{Lemma}
\newtheorem{proposition}[theorem]{Proposition}
\newtheorem{corollary}[theorem]{Corollary} 
\theoremstyle{definition}  
\newtheorem{definition}[theorem]{Definition}
\newtheorem{example}[theorem]{Example}
\newtheorem{remark}[theorem]{Remark}
\newcommand{\id}{\text{id}} 
\newcommand{\Ker}{\text{Ker\,}}
\newcommand{\Fun}{\text{Fun}}
\newcommand{\EqBr}{\text{EqBr}}
\newcommand{\uEqBr}{\underline{\EqBr}}
\newcommand{\End}{\text{End}} 
\renewcommand{\Vec}{\text{Vec}}
\newcommand{\Pic}{\text{BrPic}}
\newcommand{\uPic}{\underline{\text{BrPic}}}
\newcommand{\uuPic}{\underline{\underline{\text{BrPic}}}}
\newcommand{\pic}{\text{Pic}}
\newcommand{\upic}{\underline{\text{Pic}}}
\newcommand{\Hom}{\text{Hom}} 
\newcommand{\FPdim}{\text{FPdim}}
\newcommand{\Aut}{\text{Aut}}
\newcommand{\Out}{\text{Out}}
\newcommand{\Rep}{\text{Rep}}
\newcommand{\rev}{\text{rev}}
\newcommand{\BN}{\mathbb{N}}
\newcommand{\op}{\text{op}}
\newcommand{\B}{\mathcal{B}}
\newcommand{\C}{\mathcal{C}}
\newcommand{\D}{\mathcal{D}}
\newcommand{\E}{\mathcal{E}}
\newcommand{\Z}{\mathcal{Z}}
\newcommand{\M}{\mathcal{M}}
\newcommand{\A}{\mathcal{A}}
\newcommand{\N}{\mathcal{N}}
\newcommand{\K}{\mathcal{K}}
\renewcommand{\O}{\mathcal{O}}
\newcommand{\be}{\mathbf{1}}
\renewcommand{\be}{\mathbf{1}}
\newcommand{\bt}{\boxtimes}
\newcommand{\ot}{\otimes}
\newcommand{\q}{\mathbb{Q}^{\times}_{>0}/(\mathbb{Q}^{\times}_{>0})^2}
\newcommand{\BZ}{\mathbb{Z}}
\newcommand{\Ga}{\Gamma}
\newcommand{\tensor}{\otimes}
\newcommand{\del}{\partial}
\newcommand{\Aaut}{{\rm Aut}_{\tensor}(\D)}
\newcommand{\Oout}{{\rm Out}_{\tensor}(\D)}
\newcommand{\mapsonto}{\twoheadrightarrow}
\newcommand{\rtensor}{\boxtimes}
\newtheorem{fact}[theorem]{Fact}
\newcommand{\italic}{\textit}
\newcommand{\converges}{\Rightarrow}
\begin{document}
\title{Fusion categories and homotopy theory}

\author{Pavel Etingof}
\address{P.E.: Department of Mathematics, Massachusetts Institute of Technology,
Cambridge, MA 02139, USA}
\email{etingof@math.mit.edu}

\author{Dmitri Nikshych}
\address{D.N.: Department of Mathematics and Statistics,
University of New Hampshire,  Durham, NH 03824, USA}
\email{nikshych@math.unh.edu}

\author{Victor Ostrik}
\address{V.O.: Department of Mathematics,
University of Oregon, Eugene, OR 97403, USA}
\email{vostrik@math.uoregon.edu}

\date{\today}
\begin{abstract}
We apply the yoga of classical homotopy theory to classification problems 
of $G$-extensions of fusion and braided fusion categories, 
where $G$ is a finite group. Namely, we reduce such problems to classification 
(up to homotopy) of maps from $BG$ to classifiying spaces of certain higher groupoids. 
In particular, to every fusion category $\C$ we attach the 3-groupoid $\uuPic(C)$ 
of invertible $\C$-bimodule categories, called the Brauer-Picard groupoid of $\C$, 
such that equivalence classes of $G$-extensions of $\C$ are in bijection with homotopy
classes of maps from $BG$ to the classifying space of
$\uuPic(\C)$. This gives rise to an explicit
description of both the obstructions to existence of extensions 
and the data parametrizing them; we work these out both topologically and algebraically.  

One of the central results of the paper is that the 2-truncation of $\uuPic(\C)$  
is canonically the 2-groupoid of braided autoequivalences of the Drinfeld center $\Z(\C)$ of $\C$. 
In particular, this implies that the Brauer-Picard group ${\rm BrPic}(\C)$ (i.e., the group of equivalence classes of 
invertible $\C$-bimodule categories) is naturally isomorphic to the group of braided autoequivalences of $\Z(\C)$.
Thus, if $\C=\Vec_A$, where $A$ is a finite abelian group, then ${\rm BrPic}(\C)$ is the orthogonal group $O(A\oplus A^*)$. 
This allows one to obtain a rather explicit classification 
of extensions in this case; in particular, in the case $G=\Bbb Z_2$, we rederive 
(without computations) the classical result of Tambara and Yamagami. Moreover, we explicitly describe
the category of all $(\Vec_{A_1},\Vec_{A_2})$-bimodule categories (not necessarily invertible ones) by showing that 
it is equivalent to the hyperbolic part of the category of Lagrangian correspondences. 
\end{abstract} 
\maketitle  

\setcounter{tocdepth}{1}

\newpage
\tableofcontents
\newpage


\section{Introduction}

Fusion categories (introduced in \cite{ENO1}) form a class of relatively simple
tensor categories. It would be very interesting to give a classification of fusion
categories but this seems to be out of reach at the moment. A more feasible task
is to come up with some new examples and constructions of such categories. 
In this paper we are making a step in this direction. Namely, for a finite group $G$ 
there is a natural notion of {\em $G-$graded} fusion category, see \S \ref{graded TC} 
below\footnote{We note that one can find in the literature a different (but related) notion
of {\em graded monoidal category}, see \cite{FW,CGO}.}.  
The {\em trivial component} of a $G-$graded fusion category is itself a smaller fusion category
and we say that a $G-$graded fusion category is {\em $G-$extension} of its trivial component.
The goal of this paper is to apply classical homotopy theory to classify 
$G-$extensions of a given fusion category.

To do so, we introduce the {\it Brauer-Picard groupoid of fusion
categories} $\uuPic$. 
By definition, this is a 3-groupoid, whose objects are
fusion categories, 1-morphisms from $\C$ to $\D$ 
are invertible $(\C,\D)$-bimodule categories, 2-morphisms are equivalences
of such bimodule categories, and 3-morphisms are isomorphisms of such
equivalences. This \linebreak 3-groupoid can be truncated in the usual way 
to a 2-groupoid $\uPic$ and further to a 1-groupoid 
(i.e., an ordinary groupoid) $\Pic$; 
the group of automorphisms of $\C$ in this groupoid 
is the {\it Brauer-Picard group} $\Pic(\C)$ of $\C$, which 
is the group of equivalence classes of invertible $\C$-bimodule
categories. 

We also define the 2-groupoid $\uEqBr$, whose objects are 
braided fusion categories, 1-morphisms are braided equivalences,
and 2-morphisms are isomorphisms of such equivalences. 
It can be truncated in the usual way to an ordinary groupoid 
$\EqBr$; the group of automorphisms of a braided fusion category
$\B$ in this groupoid is the group $\EqBr(\B)$ of isomorphism classes of 
braided autoequivalences
of $\B$. 

Let $\C$ and $\D$ be fusion categories. 
Any invertible $(\C,\D)$-bimodule 
category $\M$ naturally gives rise to a Morita equivalence
between $\C$ and $\D$. Hence, by the result of M\"uger 
\cite{Mu} it defines a braided 
equivalence of the Drinfeld centers 
$\Z(\M): \Z(\C)\to \Z(\D)$. 
This implies that the operation $\Z$
of taking the Drinfeld center is a 
2-functor $\uPic\to \uEqBr$.   

Our first main result, which is a strengthening of  \cite[Theorem 3.1]{ENO3}, is  

\begin{theorem}\label{breq}
The 2-functor $\Z$ is a fully faithful embedding 
$\uPic\to \uEqBr$.
In particular, for every fusion category 
$\C$ we have a natural group isomorphism 
$\Pic(\C)\cong \EqBr(\Z(\C))$. 
\end{theorem}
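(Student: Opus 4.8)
The plan is to prove the stronger statement that $\Z$ induces an equivalence of groupoids $\uPic(\C,\D)\xrightarrow{\sim}\uEqBr(\Z(\C),\Z(\D))$ for every pair of fusion categories $\C,\D$, together with the fact that $\Z$ is injective on equivalence classes of objects. The latter is precisely the assertion that $\C$ and $\D$ are Morita equivalent if and only if $\Z(\C)$ and $\Z(\D)$ are braided equivalent; the "only if" is M\"uger's theorem \cite{Mu}, and the "if" will follow from the quasi-inverse construction discussed below. Granting this, one chooses (when it exists) an invertible $(\C,\D)$-bimodule category $\M_0$; then $-\boxtimes_\D\M_0^{-1}$ and $-\circ\Z(\M_0)^{-1}$ are equivalences $\uPic(\C,\D)\to\uPic(\C,\C)$ and $\uEqBr(\Z(\C),\Z(\D))\to\uEqBr(\Z(\C),\Z(\C))$ which, because $\Z$ is a 2-functor, intertwine $\Z$ with $\Z$; and if no such $\M_0$ exists both hom-categories are empty by the previous sentence. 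So it suffices to show that the monoidal functor of 2-groups $\Z\colon\uPic(\C,\C)\to\uEqBr(\Z(\C),\Z(\C))$ is an equivalence.

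A monoidal functor between 2-groups (monoidal groupoids with invertible objects) is an equivalence if and only if it induces isomorphisms on the groups $\pi_0$ and on the automorphism group of the unit object. On $\pi_0$ the map in question is the homomorphism $\Pic(\C)\to\EqBr(\Z(\C))$, which is an isomorphism by (a mild strengthening of) \cite[Theorem 3.1]{ENO3}, and also follows from the quasi-inverse construction below. The automorphisms of the unit $\C$ in $\uPic(\C,\C)$ are the isomorphism classes of $\C$-bimodule autoequivalences of $\C$, which by the canonical tensor equivalence $\End_{\C|\C}(\C)\simeq\Z(\C)$ form the group $\text{Inv}(\Z(\C))$ of invertible objects of $\Z(\C)$; the automorphisms of $\id_{\Z(\C)}$ in $\uEqBr(\Z(\C),\Z(\C))$ are the monoidal natural automorphisms of the identity functor — compatibility with the braiding is automatic, as any such automorphism is scalar on each homogeneous component — and these form the character group $\Hom(U(\Z(\C)),\CC^\times)$ of the universal grading group $U(\Z(\C))$.

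It then remains to identify the map $\Z$ induces between $\text{Inv}(\Z(\C))$ and $\Hom(U(\Z(\C)),\CC^\times)$ and to check it is an isomorphism. Unwinding the definition of $\Z$ on bimodule equivalences, the autoequivalence $L\ot-$ of $\C$ attached to an invertible $L\in\Z(\C)$ (with bimodule structure coming from the half-braiding of $L$) is sent to the monoidal automorphism of $\id_{\Z(\C)}$ which acts on the $g$-homogeneous component by the scalar of the double braiding $c_{L,X}\circ c_{X,L}$ on $X\ot L$, for $X$ simple over $g$. Since $\Z(\C)$ is nondegenerate — indeed modular, being the center of a fusion category over $\CC$ — this pairing is an isomorphism: it is injective because an invertible object with trivial double braiding against everything is transparent, hence trivial by nondegeneracy, and surjective by the standard description of the universal grading of a modular category (or by comparison of orders). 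This finishes the argument modulo the quasi-inverse construction.

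The technical heart of the proof, and the step I expect to be the main obstacle, is exactly that quasi-inverse construction, which underlies both the object-injectivity and the $\pi_0$-isomorphism: from a braided equivalence one must manufacture an invertible bimodule category and verify that $\Z$ sends it back to (an isomorph of) that equivalence, with all the assignments fitting together into mutually quasi-inverse functors $\uPic(\C,\D)\rightleftarrows\uEqBr(\Z(\C),\Z(\D))$ compatibly with $2$-morphisms. Concretely, a braided equivalence $\Z(\C)\to\Z(\D)$ transports the canonical Lagrangian algebra of $\Z(\C)\boxtimes\overline{\Z(\C)}$ (the one realizing the regular $\C$-bimodule category) to a Lagrangian algebra in $\Z(\C)\boxtimes\overline{\Z(\D)}=\Z(\C\boxtimes\D^{\rev})$, whose associated $\C\boxtimes\D^{\rev}$-module category is the sought invertible $(\C,\D)$-bimodule category; the bookkeeping that these assignments are genuinely inverse, at the level of both $1$- and $2$-morphisms, is where the real work lies. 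A secondary subtlety is the identification of $\Aut(\id_{\Z(\C)})$ in $\uEqBr$ with $\Hom(U(\Z(\C)),\CC^\times)$ and the matching of $\Z$ with the double-braiding pairing.
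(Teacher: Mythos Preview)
Your proposal is correct, and the overall strategy---reducing to the automorphism 2-group of a single object and then checking the induced maps on $\pi_0$ and on $\Aut(\be)$---is sound. The paper takes a more direct route: it constructs an explicit quasi-inverse $\Psi:\uEqBr(\Z(\C))\to\uPic(\C)$ by sending a braided autoequivalence $\alpha$ to the category of modules (in $\C$) over an indecomposable summand of the commutative algebra $\alpha^{-1}(I(\be))\in\Z(\C)$, where $I$ is right adjoint to the forgetful functor $\Z(\C)\to\C$; it then verifies $\Phi\Psi\cong\mathrm{Id}$ and $\Psi\Phi\cong\mathrm{Id}$ on objects and disposes of morphisms in one line by invoking the $\mathrm{Inv}(\Z(\C))$-torsor structure on hom-sets. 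Your Lagrangian-algebra sketch is a close cousin of this construction (the canonical Lagrangian algebra is essentially $I(\be)$ viewed in $\Z(\C)\boxtimes\overline{\Z(\C)}$), so the ``technical heart'' you flag is the same in both approaches. The genuine difference is your treatment of morphisms: rather than the paper's torsor shortcut, you identify $\Aut_\otimes(\mathrm{Id}_{\Z(\C)})$ with $\Hom(U(\Z(\C)),{\bold k}^\times)$ and argue that $\Z$ realizes the double-braiding pairing $\mathrm{Inv}(\Z(\C))\to\Hom(U(\Z(\C)),{\bold k}^\times)$, which is an isomorphism by nondegeneracy of $\Z(\C)$. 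This is more work than the paper's one-line argument but is conceptually illuminating and makes explicit a point the paper leaves implicit; conversely, the paper's construction of $\Psi$ is more concrete and self-contained than your Lagrangian-algebra outline.
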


This result allows one to calculate
the group $\Pic(\C)$ in the case 
$\C={\rm Vec}_A$, the category of vector
spaces graded by a group $A$. 
In particular, we immediately get the following
corollary of Theorem \ref{breq}: 

\begin{corollary}\label{ortho}
If $A$ is an abelian group and $\C=\Vec_A$ then
$\Pic(\C)=O(A\oplus A^*)$, the split orthogonal group of $A\oplus
A^*$ (i.e. the group of automorphisms of $A\oplus A^*$ preserving 
the hyperbolic quadratic form $q(a,f)=f(a))$. 
\end{corollary}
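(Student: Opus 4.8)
The plan is to combine Theorem~\ref{breq} with an explicit computation of the group of braided autoequivalences of the Drinfeld center $\Z(\Vec_A)$. By Theorem~\ref{breq} we have a natural isomorphism $\Pic(\Vec_A)\cong\EqBr(\Z(\Vec_A))$, so it suffices to identify the right-hand side with the split orthogonal group $O(A\oplus A^*)$, i.e.\ with the group of automorphisms of $A\oplus A^*$ preserving $q(a,f)=f(a)$. The first step is to recall the standard description of $\Z(\Vec_A)$ as a braided fusion category: since $A$ is abelian, $\Z(\Vec_A)\cong\Rep(D(A))$ is pointed, its invertible objects are labelled by the group $A\oplus A^*$ (a grading element together with a character), the associativity constraint is trivial, and the braiding of the object labelled by $(a,f)$ with the object labelled by $(b,g)$ is the scalar $g(a)\in\CC^\times$. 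In other words $\Z(\Vec_A)$ is the pointed braided fusion category $\C(A\oplus A^*,q)$ attached to the finite abelian group $A\oplus A^*$ and the nondegenerate quadratic form $q(a,f)=f(a)$ (regarded as a root of unity). Thus the corollary is reduced to the assertion that for any pointed braided fusion category $\C(G,q)$ one has a natural isomorphism $\EqBr(\C(G,q))\cong\Aut(G,q)$, the group of automorphisms of the finite abelian group $G$ preserving $q$.

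To prove $\EqBr(\C(G,q))\cong\Aut(G,q)$ I would proceed in three substeps. \emph{(Well-definedness.)} Any braided autoequivalence $F$ of $\C(G,q)$ permutes the isomorphism classes of invertible objects, and being monoidal this permutation is a group automorphism $\phi_F$ of $G$; being braided, $F$ preserves the twists (balancings), and the twist of the object labelled by $g$ is exactly $q(g)$, so $\phi_F\in\Aut(G,q)$, and $F\mapsto\phi_F$ is a homomorphism. \emph{(Injectivity.)} If $\phi_F=\id$ one may assume $F$ is the identity on objects, so its monoidal structure is a scalar-valued $2$-cochain $J$ on $G$, which the pentagon axiom forces to be a $2$-cocycle and compatibility with the (nontrivial) braiding forces to be \emph{symmetric}; but every symmetric $\CC^\times$-valued $2$-cocycle on a finite abelian group is a coboundary, since symmetric $2$-cocycles modulo coboundaries classify abelian extensions of $G$ by $\CC^\times$ and $\mathrm{Ext}^1_{\BZ}(G,\CC^\times)=0$ as $\CC^\times$ is divisible, hence a trivializing $1$-cochain gives a monoidal natural isomorphism $F\cong\id$. \emph{(Surjectivity.)} Given $\phi\in\Aut(G,q)$, pull back the braided structure of $\C(G,q)$ along $\phi$; by the Eilenberg--Mac Lane classification, braided structures on $\Vec_G$ up to braided equivalence are classified by their quadratic form, and the pulled-back structure has quadratic form $q\circ\phi=q$, so it is braided-equivalent to $\C(G,q)$ via an equivalence lying over $\phi$, which is the desired preimage. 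Applying this with $G=A\oplus A^*$ and $q$ the hyperbolic form then yields $\Pic(\Vec_A)\cong O(A\oplus A^*)$.

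I expect the main obstacle to be the surjectivity step and the cohomological bookkeeping that supports it: one must set up the Eilenberg--Mac Lane description of pointed braided fusion categories carefully enough to conclude that $q$ is a \emph{complete} invariant of the braided-equivalence class and that the equivalence can be chosen to lie over a prescribed automorphism of $G$; on the injectivity side, the identification of monoidal autoequivalences trivial on objects with symmetric $2$-cocycles and the vanishing $H^2_{\mathrm{sym}}(G,\CC^\times)=0$ require some care as well. By contrast, the identification of $\Z(\Vec_A)$ with $\C(A\oplus A^*,q)$ and the passage from $\EqBr(\Z(\Vec_A))$ back to $\Pic(\Vec_A)$ are formal once Theorem~\ref{breq} is granted.
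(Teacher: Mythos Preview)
Your proposal is correct and follows exactly the route the paper intends: the paper states Corollary~\ref{ortho} as an immediate consequence of Theorem~\ref{breq}, leaving implicit precisely the identification $\EqBr(\Z(\Vec_A))\cong O(A\oplus A^*)$ that you spell out via the Eilenberg--MacLane description of the pointed braided category $\Z(\Vec_A)\cong\C(A\oplus A^*,q)$. It may be worth noting that the paper also records a second, independent proof in Section~\ref{Sect10} (see the remark after Corollary~\ref{equi}), obtained by explicitly matching invertible $\Vec_A$-bimodule categories with Lagrangian graphs in $A\oplus A^*\oplus A\oplus A^*$ and invoking Proposition~\ref{groupoi}; that argument bypasses Theorem~\ref{breq} entirely.
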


To apply the above to classifying extensions, we recall that
to the 3-groupoid $\uuPic$ one can attach its classifying
space $B\uuPic$, defined up to homotopy equivalence. This space
falls into connected components, labeled by Morita equivalence
classes of fusion categories. Each connected component
$B\uuPic(\C)$ corresponding to a fusion category $\C$  
is a 3-type, i.e. it has three nontrivial homotopy groups: its fundamental 
group $\pi_1$ is $\Pic(\C)$, 
$\pi_2$ is the group of isomorphism classes of
invertible objects of $\Z(\C)$, 
and $\pi_3={\bold k}^\times$ (the multiplicative group of the
ground field). 

It then follows from general abstract
nonsense that extensions of $\C$ by a group $G$ are parametrized 
by maps of classifying spaces $BG\to B\uuPic(\C)$. Thus, to classify
extensions, one needs to classify the homotopy classes of such
maps, which we proceed to do using the classical obstruction
theory. This leads us to our second main result,
which is the following explicit description 
of extensions of fusion categories, which is similar to the classical description  
of group extensions \cite{EM} (and is made more explicit in the body of the
paper).

\begin{theorem}\label{extcl}
Extensions of a fusion category $\C$ by a finite group $G$ are parametrized by triples 
$(c,M,\alpha)$, where $c: G\to \Pic(\C)$ is 
a group homomorphism, $M$ belongs to a certain torsor $T_c^2$ over 
$H^2(G,\pi_2)$ (where $G$ acts on $\pi_2$ via $c$), 
and $\alpha$ belongs to a certain torsor $T_{c,M}^3$ over $H^3(G,{\bold k}^\times)$. 
Here the data $c,M$ must satisfy the conditions 
that certain obstructions $O_3(c)\in H^3(G,\pi_2)$ and 
$O_4(c,M)\in H^4(G,{\bold k}^\times)$ vanish.   
\end{theorem}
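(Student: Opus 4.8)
The plan is to reinterpret everything homotopy-theoretically and then turn the crank of classical obstruction theory. First I would recall the basic fact from the theory of higher groupoids: for a pointed connected $3$-type $X = B\uuPic(\C)$ with homotopy groups $\pi_1 = \Pic(\C)$, $\pi_2$ the group of invertible objects of $\Z(\C)$ (with the $\pi_1$-action coming from the natural action of bimodule autoequivalences on the center), and $\pi_3 = {\bold k}^\times$, the set of homotopy classes of maps $[BG, X]$ is what classifies $G$-extensions of $\C$ (this was asserted just above the statement, following from the universal property of the classifying space of $\uuPic$). So the entire content of the theorem is to make the set $[BG, X]$ explicit via a Postnikov-tower/obstruction-theory analysis, organized by the three homotopy groups.

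The key steps, in order: (i) Since $BG$ is (up to homotopy) a $K(G,1)$ and $\pi_1(X) = \Pic(\C)$, a map $BG \to X$ first of all induces on $\pi_1$ a group homomorphism $c\colon G \to \Pic(\C)$; fixing $c$, I restrict attention to the fiber of $[BG,X] \to \Hom(G,\Pic(\C))$ over $c$. (ii) Using the Postnikov decomposition of $X$, let $X_{\le 1} = K(\Pic(\C),1)$, $X_{\le 2}$ the next stage (with $k$-invariant in $H^3(\Pic(\C),\pi_2)$), and $X = X_{\le 3}$ (with $k$-invariant in $H^4(X_{\le 2},{\bold k}^\times)$). A choice of $c$ is a map $BG \to X_{\le 1}$; the obstruction to lifting it to $X_{\le 2}$ is the pullback of the first $k$-invariant, an element $O_3(c) \in H^3(G,\pi_2)$ (with $G$ acting through $c$). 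When $O_3(c) = 0$, the set of lifts is a torsor $T_c^2$ over $H^2(G,\pi_2)$. (iii) Given such a lift (i.e.\ the pair $(c,M)$), the obstruction to lifting further to $X = X_{\le 3}$ is the pullback of the second $k$-invariant, which lands in $H^4(G,{\bold k}^\times)$ (the $G$-action on ${\bold k}^\times$ is trivial) — call it $O_4(c,M)$; when it vanishes, the set of lifts is a torsor $T_{c,M}^3$ over $H^3(G,{\bold k}^\times)$. Assembling these three fibration-lifting stages gives exactly the parametrization by triples $(c,M,\alpha)$ subject to $O_3(c) = 0$ and $O_4(c,M) = 0$, which is the assertion.

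For this to be more than a formal restatement, one must also identify the relevant homotopy groups and $k$-invariants concretely, and check that the classical obstruction theory applies verbatim in this setting; that includes verifying that the action of $\pi_1$ on $\pi_2$ and $\pi_3$ is as claimed, and that there are no issues with $X$ being a genuine space (e.g.\ via the nerve/realization of the $3$-groupoid $\uuPic$, which is automatically fibrant). I would also remark that this story is the exact categorical analogue of the Eilenberg--MacLane description of group extensions \cite{EM}, with $\Pic(\C)$, $\pi_2$, ${\bold k}^\times$ playing the roles that there are played by the quotient, the kernel, and (in the $2$-group version) the center.

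The main obstacle I anticipate is not the abstract obstruction-theory skeleton — that is standard once one believes $B\uuPic(\C)$ is a genuine $3$-type with the stated homotopy groups — but rather the concrete, "algebraic" identification of the pieces: computing $\pi_2$ and $\pi_3$ of $B\uuPic(\C)$, describing the $\pi_1$-action, and above all writing down the $k$-invariants and hence the obstructions $O_3(c)$, $O_4(c,M)$ explicitly enough to be usable (and checking that the torsors $T_c^2$, $T_{c,M}^3$ are the correct affine spaces). This requires a careful analysis of the higher-categorical structure of invertible bimodule categories — associators, pentagonators, and so on — and is presumably where most of the real work of the paper lies; the homotopy-theoretic packaging is what makes the bookkeeping tractable.
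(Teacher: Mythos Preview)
Your proposal is correct and matches the paper's own proof: Theorem~\ref{extcl} is deduced from Theorem~\ref{eqcl} by classical obstruction theory, the only cosmetic difference being that you package it via the Postnikov tower of the target while the paper extends maps over successive skeleta of $BG$. Your anticipated ``main obstacle'' is overstated for the theorem as stated --- explicit formulas for the $k$-invariants and obstructions are not needed here (they are worked out separately in the algebraic Section~\ref{classification proper}), only their existence in the correct cohomology groups, and the identification of $\pi_i(B\uuPic(\C))$ is the easy Proposition~\ref{homot}.
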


We also give a purely algebraic proof of 
Theorem \ref{extcl}, which does not rely on homotopy theory. 
(This proof spells out the computations 
that on the topological side are hidden in the machinery
of homotopy theory). After this, we proceed to examples and applications. 
In particular, we give a conceptual proof of the
classification of categorifications of Tambara-Yamagami fusion
rings \cite{TY} (the original proof is by a direct computation)\footnote{We 
note that the quest for a computation-free 
derivation of the remarkable result of Tambara and Yamagami 
was one of motivations for this work.}.

At the end of the paper we discuss a number of related topics. 
In particular, we describe explicitly the monoidal $2$-category of all
bimodule categories over $\C=\Vec_A$ (not necessarily invertible
ones). It turns out to be equivalent to a full subcategory of the category of
Lagrangian correspondences for metric groups (abelian groups 
with a nondegenerate quadratic form). 

\subsection{Organization}
Section~\ref{Sect2} contains background material from the theory of fusion categories
and their module categories.  There is new material in Section~\ref{SubSectSO} where 
we give  a definition (due to V.~Drinfeld)  of the special  orthogonal group of a metric group.

The notion of a tensor product of module categories over a fusion category $\C$ plays a central role
in this work. It extends categorically the notion of tensor product of modules over a ring.
In Section~\ref{Sect3} we define, following \cite{Ta},  the tensor product $\M \bt_\C \N$ of a right $\C$-module category $\M$
and a left $\C$-module category $\N$ by a certain universal property. We prove its existence
and give several equivalent characterizations of it useful for practical purposes.  We also
introduce a monoidal $2$-category $\mathbf{Bimodc}(\C)$  of  $\C$-bimodule categories 
and  explicitly describe the product of bimodule categories over the categories of vector spaces
graded  by abelian groups.

In Section~\ref{Sect4} we study bimodule categories invertible under the above tensor product.
We introduce for a fusion category $\C$ its categorical {\em Brauer-Picard} $2$-group 
$\uuPic(\C)$ consisting of invertible $\C$-bimodule catgeories
and for a braided fusion category  $\B$ its categorical {\em Picard} $2$-group
$\underline{\underline{\rm Pic}}(\B)$ consisting of invertible $\B$-module categories. 

Section~\ref{Sect 5} contains the proof of Theorem~\ref{breq} and its generalization
Theorem~\ref{genbreq}. 

In Section~\ref{Sect6} we prove that homogeneous components of a fusion category $\C =\oplus_{g\in G}\,\C_g$
graded by a finite group $G$ (i.e., a $G$-extension) are invertible bimodule categories over the trivial component $\C_e$.

In Section~\ref{classification topological} we show that morphisms from a group $G$ to various
categorical groups attached to a (braided) fusion category $\C$ 
(or, equivalently, maps between the corresponding  classifying spaces)
are in bijection with fundamental tensor category  
constructions involving $G$ and $\C$: extensions, actions, braided $G$-crossed extensions, etc.
Here we also give a topological version of the proof of Theorem~\ref{extcl}.

In Section~\ref{classification proper} we give a detailed algebraic version of the proof of Theorem~\ref{extcl}. 
We give a formula for the associativity constraint obstruction $O_4(c,\, M)$ in terms of the Pontryagin-Whitehead
quadratic function and prove a divisibility Theorem~\ref{divi} for the order of $O_4(c,\, M)$ 
in $H^4(G,\, \mathbf{k}^\times)$. 

In Section~\ref{ex section} we apply our classification of extensions to recover Tambara-Yamagami
categories \cite{TY} as $\mathbb{Z}/2\mathbb{Z}$-extensions of the category $\Vec_A$ of $A$-graded
vector spaces, where $A$ is an abelian group. 

In Section~\ref{Sect10} we explicitly describe tensor products of $(\Vec_A-\Vec_B)$-bimodule
categories, where $A,\, B$ are finite abelian groups. This description
is given in terms of elementary linear algebra and uses the language of Lagrangian correspondences.

Finally, in the appendix, written by Ehud Meir, it is explicitly shown using
the Lyndon-Hochschild-Serre spectral sequence that it the case of
pointed extensions, our classification of extensions reproduces
the usual theory of extensions of groups with 3-cocycles. 

\begin{remark}
1. We emphasize that the homotopy-theoretic approach to monoidal categories in the style of this paper
is not new, and by now is largely a part of folklore. The principal goal of this paper is to use this approach 
to obtain concrete results about classification of fusion categories. 

2. We expect that the results of this paper extend, with appropriate changes, 
to the case of not necessarily semisimple finite tensor categories, using the methods of \cite{EO}.
One of the new features will be that in the non-semisimple case, the groups  
${\rm Pic}(\C),{\rm EqBr}(\B)$ need not be finite groups - they may be affine algebraic groups of positive 
dimension.  
\end{remark}

\subsection{Acknowledgments.} 
We are deeply grateful to V.~Drinfeld for many inspiring
conversations. Without his influence,  
this paper would not have been written. 
In particular, he suggested the main idea - to use homotopy 
theory of classifying spaces to describe extensions of fusion categories.  
We also thank Jacob Lurie for useful discussions
(in particular, for explanations regarding Proposition
\ref{whit}), and Fernando Muro for explanations and references. 
The work of P.E.\ was  partially supported
by the NSF grant DMS-0504847.  The work of D.N.\ was partially supported 
by the NSA grant H98230-07-1-0081 and the NSF grant DMS-0800545.
The work of V.O.\ was  partially supported
by the NSF grant DMS-0602263.


\section{Preliminaries} 
\label{Sect2}

\subsection{General conventions} 

In this paper, we will freely use the basic theory of 
fusion categories and module categories over them. For basics
on these topics, we refer the reader to  
\cite{BK,O1,ENO1, DGNO}. All fusion categories in this paper
will be over an algebraically
closed field ${\bold k}$ of characteristic zero, 
and all module categories will be semisimple left module
categories (unless noted otherwise). 
We will also use the theory of higher categories and especially
higher groupoids, for which we refer the
reader to \cite{Lu}. However, for reader's convenience, we recall
some of the most important definitions and facts 
that are used below. 


\subsection{Categorical $n$-groups}

For an integer $n\ge 1$, a {\em categorical $n$-group} 
is a monoidal $n$-groupoid whose objects are invertible.
In particular, a categorical 0-group is an ordinary group, and 
a categorical 1-group (or simply a categorical group) is also called 
a {\em gr-category} (if the corresponding group of objects is
abelian, such a structure is often called a Picard groupoid). 
Any categorical $n$-group can be viewed as an $(n+1)$-groupoid
with one object, and vice versa. 

Note that any categorical $n$-group can be truncated 
to a categorical $(n-1)$-group by forgetting the $n$-morphisms
and identifying isomorphic $(n-1)$-morphisms. Conversely, any 
categorical $(n-1)$-group can be regarded as a categorical 
$n$-group by adding the identity $n$-morphism 
from every $(n-1)$-morphism to itself.  

\subsection{Graded tensor categories and extensions}
\label{graded TC}

Let $G$ be a finite group. Recall that a $G$-grading on a tensor category $\C$
is a decomposition 
\begin{equation}
\label{def grading}
\C = \bigoplus_{g\in G}\, \C_g
\end{equation}
into a direct sum of full abelian subcategories
such that the tensor product $\otimes$ maps  
$\C_g \times \C_h$ to $\C_{gh}$ for all $g,h\in G$.
In this case, the trivial component $\C_e$ is a full tensor subcategory of $\C$,
and each $\C_g$ is a $\C_e$-bimodule category. We will always assume that the grading is faithful, i.e., 
$\C_g\neq 0$ for all $g\in G$.

\begin{definition}
A {\em $G$-extension} of a fusion category $\D$ is a
$G$-graded fusion category $\C$ whose trivial component is equivalent to $\D$.
\end{definition}

\subsection{Quadratic forms, bicharacters, metric groups,
Lagrangian subgroups}

Let $E$ be a finite abelian group. 
A {\it bicharacter} on $E$ with values in ${\bold k}^\times$ 
is a biadditive map $b: E\times E\to {\bold k}^\times$.
A {\it symmetric bicharacter} on $E$ (also called an inner product 
or a symmetric bilinear form) is a bicharacter $b$ such that $b(x,y)=b(y,x)$.
A {\it skew-symmetric bicharacter} on $E$ (also called a
 skew-symmetric bilinear form) is a bicharacter $b$ such that $b(x,x)=1$.

Let $E^*={\rm Hom}(E,{\bold k}^\times)$ be the character group of $E$.
By acting on its first argument, any bicharacter $b$ on $E$ defines
a group homomorphism $\widehat{b}: E\to E^*$. We say that $b$  
is {\it nondegenerate} if $\widehat{b}$ is an isomorphism. 
Note that if $E$ admits a nondegenerate skew-symmetric bicharacter, then 
$|E|$ is a square.  

A {\it quadratic form} on $E$ is a function $q: E\to {\bold k}^\times$ 
such that $q(x)=q(x^{-1})$, and $b_q(x,y):=q(x+y)/q(x)q(y)$ is a symmetric
bilinear form. If the order of the group $E$ is odd, the assignment $q\to b_q$ defines a bijection 
between symmetric bilinear forms and quadratic forms, but in
general, it is not a bijection. 

We will say that a quadratic form $q$ is {\it nondegenerate} 
if the bilinear form $b_q$ is nondegenerate. In this case we say that $(E,q)$ 
is a {\it metric group}. To every metric group $(E,q)$, one can attach its {\it orthogonal group}
$O(E,q)$, which is the group of automorphisms of $E$ preserving $q$. 
For example, if $A$ is any finite abelian group then $A\oplus A^*$ is a metric group, 
with hyperbolic quadratic form $q(a,f):=f(a)$. To simplify notation, we will denote the corresponding orthogonal group 
by $O(A\oplus A^*)$.

If $E$ is a finite abelian group with a bicharacter $b$, and $N\subset E$ 
is a subgroup, then the {\it orthogonal complement} $N^\perp$ is the set of $a\in E$ such that 
$b(x,a)=1$ for any $x\in N$. If $b$ is nondegenerate, then $N^\perp$ is identified with $E/N$, so $|N|\cdot 
|N^\perp|=|E|$. 

Let $(E,q)$ be a metric group. We say that a subgroup $L$ of $E$ is {\em isotropic} if 
$q(a)=1$ for any $a\in L$. This implies that $b_q(L)\subset
(E/L)^*$, which implies that $|L|^2\le |E|$. We say that an 
isotropic subgroup $L$ of $E$ is {\em Lagrangian} if $|L|^2=|E|$. 

\subsection{Frobenius-Perron dimensions in module categories}
\label{FPmod}
             
Let $\C$ be a fusion category and let $\M$ be a $\C$-module category.
Recall that for a pair of objects $M,\, N \in \M$ their {\em internal Hom} is the object
of $\C$ denoted $\underline\Hom(M,\,N)$ determined by the natural isomorphism
\[
\Hom_\C(X,\, \underline\Hom(M,\,N)) \cong \Hom_\M(X\ot M,\, N),\qquad X\in \C.
\]
We use this notion to define {\em canonical} Frobenius-Perron dimensions of objects of $\M$.
Let $K_0(\C),\, K_0(\M)$ be the Grothendieck ring of $\C$ and the Grothendieck  group of $\M$.
It follows from \cite{ENO1} that there is a unique $K_0(\C)$-module map 
\[
\FPdim: K_0(\M)\to \mathbb{R}
\]
determined by  
\begin{equation}
\label{can FP}
\FPdim(\underline{\Hom}(M,\,N)) =\FPdim(M)\FPdim(N) 
\end{equation}
for all objects  $M,\, N \in \M$.

Let $\M$ be an indecomposable left $\C$-module category. Let $\O(\C)$
and $\O(\M)$
denote the sets of isomorphism classes of simple objects in $\C$ and $\M$.

\begin{proposition}
$\sum_{M\in \O(\M)}\, \FPdim(M)^2 =\dim(\C)$.
\end{proposition}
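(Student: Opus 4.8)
The plan is to exploit the defining normalization \eqref{can FP} of the canonical Frobenius–Perron dimension together with the fact that $\M$, being indecomposable, is equivalent to the category $A\text{-mod}$ of $A$-modules in $\C$ for a suitable connected algebra $A=\underline{\Hom}(M_0,M_0)$, where $M_0$ is any fixed simple object of $\M$. First I would recall that under this equivalence the internal Hom is computed inside $\C$, so $\underline{\Hom}(M_0,M_0)=A$ and, for a general simple $M\in\O(\M)$, $\underline{\Hom}(M_0,M)$ is a simple right $A$-module (equivalently, $M$ corresponds to a simple $A$-module) and $\underline{\Hom}(M,M_0)$ its dual. The key numerical input is that $\FPdim$ on $\C$ is the usual Frobenius–Perron dimension (a ring homomorphism $K_0(\C)\to\mathbb R$ assigning positive reals to simples), and $\FPdim(A)=\dim(A)$ as an object of $\C$; here I use that $\FPdim$ is multiplicative and additive, and that $\FPdim(\mathbf 1)=1$.

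The central computation is then the following. Fix a simple $M_0\in\O(\M)$ with $\FPdim(M_0)=d_0$. By \eqref{can FP},
\[
\FPdim(\underline{\Hom}(M_0,M))=d_0\,\FPdim(M)
\]
for every $M\in\O(\M)$. Now I would use that the object $A=\underline{\Hom}(M_0,M_0)\in\C$ decomposes, as a regular $A$-bimodule or more simply by counting, via the "base change" relation expressing $A$ in terms of the internal Homs $\underline{\Hom}(M_0,M)$ over all simples $M$: there is a canonical isomorphism in $\C$
\[
\underline{\Hom}(M_0,M_0)\;\cong\;\bigoplus_{M\in\O(\M)}\underline{\Hom}(M,M_0)\otimes\uHom(M_0,M),
\]
coming from the fact that $\bigoplus_M M\otimes\uHom(M_0,M)$ (appropriately interpreted) recovers the free object, together with $\dim\Hom_\M(X\otimes M_0,N)$-type adjunctions. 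Applying $\FPdim$ and using \eqref{can FP} twice — once for $\uHom(M,M_0)$ giving $\FPdim(M)\,d_0$ and once for $\uHom(M_0,M)$ giving $d_0\,\FPdim(M)$ — the summand contributes $d_0^2\,\FPdim(M)^2$, so
\[
\FPdim(A)=d_0^2\sum_{M\in\O(\M)}\FPdim(M)^2.
\]
On the other hand $\FPdim(A)=\FPdim(\uHom(M_0,M_0))=d_0^2$ again by \eqref{can FP}. Comparing gives $\sum_{M}\FPdim(M)^2=1$, which is the wrong normalization — so here I must be careful: the correct statement pins down the overall scaling of $\FPdim$ on $\M$ by matching against $\dim(\C)$, not by the internal consistency above.

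The right argument therefore proceeds instead by comparing two expressions for $\FPdim$ evaluated on the "total" object. Concretely, I would consider the regular $\C$-module category $\C$ itself, for which $\O(\C)=\{X_i\}$ and the canonical $\FPdim$ coincides with ordinary $\FPdim$ on $\C$, so $\sum_i\FPdim(X_i)^2=\dim(\C)$ by definition of $\dim(\C)$. Then the main step is a Morita-invariance statement: the quantity $\sum_{M\in\O(\M)}\FPdim(M)^2$ is independent of the indecomposable module category $\M$. To see this I would use that $\uHom(-,-)$ intertwines the $K_0(\C)$-actions and the normalization \eqref{can FP}, so that choosing $M_0\in\O(\M)$ and expanding $\dim(\C)=\FPdim(\mathbf 1_\C)$-compatibly: one has the identity in $K_0(\C)\otimes\mathbb R$
\[
\sum_{i}\FPdim(X_i)\,[X_i]\;=\;\frac{1}{d_0}\sum_{M\in\O(\M)}\FPdim(M)\,[\uHom(M_0,M)],
\]
because both sides are the unique (up to scalar, fixed by the $\mathbf 1$-coefficient) element of the regular representation with the Perron eigenvalue property; pairing this with itself under the symmetric form $([X],[Y])\mapsto\FPdim(\uHom\text{-type pairing})$ and using \eqref{can FP} yields $\dim(\C)$ on the left and $\sum_M\FPdim(M)^2$ on the right. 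The hard part, and the step I expect to be the main obstacle, is precisely justifying this "Perron uniqueness" comparison cleanly — i.e. identifying the vector $\big(\FPdim(M)\big)_{M\in\O(\M)}$ as the Frobenius–Perron eigenvector of the (symmetrized) action matrix of $K_0(\C)$ on $K_0(\M)$ with the correct normalization, and showing the associated quadratic form evaluates to $\dim(\C)$. Once that is in place, the proposition follows immediately; everything else (existence and properties of $\uHom$, the module-algebra description of $\M$, multiplicativity of $\FPdim$) is quoted from \cite{ENO1} and the preliminaries above.
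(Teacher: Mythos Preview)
Your proposal does not contain a complete proof. The first approach fails because the claimed decomposition
\[
\underline{\Hom}(M_0,M_0)\;\cong\;\bigoplus_{M\in\O(\M)}\underline{\Hom}(M,M_0)\otimes\underline{\Hom}(M_0,M)
\]
is simply false. For instance, take $\C=\Vec_G$ acting on $\M=\Vec$ via the forgetful fiber functor: there is a single simple object $k$, and $\underline{\Hom}(k,k)=\bigoplus_{g\in G}V_g$ has Frobenius--Perron dimension $|G|$, whereas your right-hand side is $\underline{\Hom}(k,k)\otimes\underline{\Hom}(k,k)$ of dimension $|G|^2$. You noticed the contradiction yourself (you obtained $\sum_M\FPdim(M)^2=1$), but the source is not a normalization issue---the identity you wrote down does not hold.

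Your second approach is in the right direction but is not a proof: you explicitly flag the Perron comparison as ``the main obstacle'' and leave it open. The paper's argument is exactly a clean execution of this idea. One invokes \cite[Proposition~8.7]{ENO1} to get a dimension function $d$ on $K_0(\M)$ normalized by $\sum_M d(M)^2=\FPdim(\C)$ and satisfying $R_\C\otimes M=d(M)\,R_\M$ for every simple $M$, where $R_\M=\sum_M d(M)M$. Then one computes directly:
\[
\sum_M\FPdim(M)^2=\sum_M\FPdim\big(\underline{\Hom}(M,M)\big)=\sum_M[R_\C\otimes M:M]=\sum_M d(M)[R_\M:M]=\sum_M d(M)^2=\FPdim(\C).
\]
The second equality is the adjunction $\Hom_\C(X,\underline{\Hom}(M,M))\cong\Hom_\M(X\otimes M,M)$; the third is the Perron eigenvector identity $R_\C\otimes M=d(M)R_\M$ from \cite{ENO1}, which is precisely the ``hard part'' you left unresolved. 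No Morita argument or decomposition of $A$ is needed.
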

\begin{proof}
Let  $R_\C :=\sum_{X\in \O(\C)}\, \FPdim(X)X \in K_0(\C)$ be the virtual regu\-lar object of $\C$. 
We choose a Frobenius-Perron dimension function $d:K_0(\M)\to \mathbb{R}$ 
as in \cite[Proposition 8.7]{ENO1} normalized by
\[
\sum_{M\in \O(\M)}\,  d(M)^2 =\FPdim(\C)
\]
and let $R_\M := \sum_{M\in \O(\M)}\, d(M)M$.
We compute
\begin{eqnarray*}
\sum_{M\in \O(\M)}\, \FPdim(M)^2
&=& \FPdim(  \oplus_{M\in \O(M)}\, \underline{\Hom}(M,\,M) ) \\
&=& \sum_{M\in \O(\M)}\,  [R_\C\ot M : M] \\
&=& \sum_{M\in \O(\M)}\,  d(M) [R_\M : M] \\
&=&  \sum_{M\in \O(\M)}\,  d(M)^2 = \FPdim(\C),
\end{eqnarray*}
as required.
\end{proof}

\begin{remark}
The Frobenius-Perron dimensions in $\M$ defined in \eqref{can FP}
are completely determined by the following properties:
\begin{enumerate}
\item[(i)] $\FPdim(M) >0$ for all $M\in \O(M)$,
\item[(ii)]  $\FPdim(X\ot M) =\FPdim(X)\FPdim(M)$ for all $X\in \C,\,M\in \M$.,
\item[(iii)]   $\sum_{M\in \O(\M)}\, \FPdim(\M)^2 =\dim(\C)$.
\end{enumerate}
\end{remark}

\subsection{The special orthogonal group}
\label{SubSectSO}

Let $(M,q)$ be a metric group. If $L_1,L_2\subset M$ are Lagrangian subgroups, define
$d(L_1,L_2)\in\q$ to be the image of the number
$|L_1|/|L_1\cap L_2|=|L_2|/|L_1\cap L_2|=|M|^{1/2}/|L_1\cap L_2|\in\BN$.
Clearly $d(L_2,L_1)=d(L_1,L_2)=d(L_1,L_2)^{-1}$ and $d(L,L)=1$.

The following proposition and its proof were provided to us by V. Drinfeld.

\begin{proposition} \label{3Lagr}
$d(L_1,L_2)d(L_2,L_3)=d(L_1,L_3)$ for any Lagrangian subgroups $L_1,L_2,L_3\subset M$.
\end{proposition}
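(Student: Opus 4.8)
The plan is to reduce the three-Lagrangian identity to a computation in a single ambient group, namely the product $L_1 L_2 L_3 \subset M$ (or better, to work inside the metric group $M$ directly using counting arguments). The key observation is that $d(L_1,L_2) \in \q$ is the class of $|M|^{1/2}/|L_1 \cap L_2|$, so proving the identity amounts to showing that
\[
\frac{|M|^{1/2}}{|L_1\cap L_2|} \cdot \frac{|M|^{1/2}}{|L_2\cap L_3|} \quad\text{and}\quad \frac{|M|^{1/2}}{|L_1\cap L_3|}
\]
differ by a square in $\mathbb{Q}^\times_{>0}$. Equivalently, one must show that
\[
\frac{|M|}{|L_1\cap L_2|\,|L_2\cap L_3|} \cdot \frac{|L_1\cap L_3|}{|M|^{1/2}} = \frac{|L_1\cap L_3|\,|M|^{1/2}}{|L_1\cap L_2|\,|L_2\cap L_3|}
\]
is a square, or after clearing, that $|L_1\cap L_3|\cdot|M|^{1/2}\cdot|L_1\cap L_2|\cdot|L_2\cap L_3|$ is a square (using that multiplying by the square $(|L_1\cap L_2||L_2\cap L_3|)^2$ doesn't change the class). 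So the whole proposition is the single arithmetic assertion: \emph{$|M|^{1/2}\,|L_1\cap L_2|\,|L_2\cap L_3|\,|L_1\cap L_3|$ is a perfect square} — or, dividing by the square $|L_1\cap L_2|^2$ say, that $|M|^{1/2}|L_2\cap L_3||L_1\cap L_3|/|L_1\cap L_2|$ is a square; one should pick whichever normalization is cleanest.

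First I would set up the relevant subquotients. Inside $M$ consider $N := L_2 \cap (L_1 L_3)$; since $L_1,L_3$ are isotropic and $b_q$ is nondegenerate with $L_i^\perp = L_i$, one has $(L_1 L_3)^\perp = L_1 \cap L_3$, hence $|L_1 L_3| = |M|/|L_1\cap L_3| = |M|^{1/2}\cdot|M|^{1/2}/|L_1\cap L_3|$. The quadratic form $q$ restricted to $L_1 L_3$ descends (since $L_1 \cap L_3$ is isotropic and sits in the radical appropriately) to a form on $(L_1L_3)/(L_1\cap L_3)$, which is a metric group of order $|M|/|L_1\cap L_3|^2$, and $L_1/(L_1\cap L_3)$, $L_3/(L_1\cap L_3)$ become a complementary pair of Lagrangians in it. The next step is to locate the images of $L_1\cap L_2$ and $L_2\cap L_3$ inside this quotient and to compare $|N|=|L_2\cap(L_1L_3)|$ with the product structure: the point is that $(L_2 \cap L_1 L_3)/((L_1\cap L_2)(L_2\cap L_3))$ fibers over things whose orders are controlled by duality between $L_1/(L_1\cap L_3)$ and $L_3/(L_1\cap L_3)$.

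The main obstacle, and the step I expect to take real care, is the bookkeeping of these orders — specifically establishing an identity of the shape
\[
|L_2 \cap L_1 L_3| \cdot |L_1 \cap L_3| = |L_1 \cap L_2|\cdot|L_2\cap L_3|\cdot |M|^{1/2} / |?|
\]
where the correction term $|?|$ is itself forced to be a square (or forced to cancel) by the self-duality $L_i^\perp = L_i$. Concretely, the cleanest route is probably: show $|L_2\cap L_1L_3|\cdot|L_1L_3\cap(L_2+ \text{stuff})|$-type identities using the second isomorphism theorem $|XY| = |X||Y|/|X\cap Y|$ repeatedly, combined with the single non-formal input $|X^\perp| = |M|/|X|$ for all subgroups $X$, which is the one place the metric structure (as opposed to just abelian group theory) enters. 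Once the orders are pinned down, the squareness of $|M|^{1/2}\,|L_1\cap L_2|\,|L_2\cap L_3|\,|L_1\cap L_3|$ — equivalently the fact that all the "odd" prime contributions pair up — follows, and the identity in $\q$ drops out. I would also sanity-check the two degenerate cases $L_1 = L_2$ (both sides give $d(L_1,L_3)$, using $d(L,L)=1$ and the already-noted symmetry $d(L_2,L_3)=d(L_2,L_3)^{-1}$ only in $\q$) and $L_1 = L_3$ (forcing $d(L_1,L_2)d(L_2,L_1)=1$, consistent with $d$ being its own inverse in $\q$), which confirms the normalization is right before committing to the general count.
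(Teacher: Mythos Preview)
Your reformulation is correct: the proposition reduces to showing that
\[
\frac{|L_1|\,|L_2|\,|L_3|}{|L_1\cap L_2|\,|L_2\cap L_3|\,|L_1\cap L_3|}
\]
is a perfect square (equivalently, your quantity $|M|^{1/2}\,|L_1\cap L_2|\,|L_2\cap L_3|\,|L_1\cap L_3|$). But the proposal stops exactly at the hard step. Pure order-counting with the second isomorphism theorem and the single input $|X^\perp|=|M|/|X|$ will let you rewrite the expression above as $|A/B|$ times a square, where $A=(L_1+L_2)\cap L_3$ and $B=(L_1\cap L_3)+(L_2\cap L_3)$ --- this is indeed a bookkeeping computation of the kind you describe. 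What it will \emph{not} do is tell you that $|A/B|$ is itself a square. There is no inclusion-exclusion or duality identity that forces this; you need a structural reason.

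The paper supplies that reason: it constructs a nondegenerate skew-symmetric bicharacter on $A/B$. Writing $x\in A\subset L_3$ as $x=x_1+x_2$ with $x_i\in L_i$, one sets $c(x,y)=b(x_1,y_2)$ where $b=b_q$; one checks $c(x,x)=1$ using that $x,x_1,x_2$ all lie in Lagrangians, and that the kernel of $c$ on $A$ is exactly $B$ using $(L_2\cap(L_1+L_3))^\perp = L_2+(L_1\cap L_3)$. A group carrying a nondegenerate skew-symmetric bicharacter has square order, and that finishes the proof. Your sentence ``the correction term $|?|$ is itself forced to be a square \dots\ by the self-duality $L_i^\perp=L_i$'' is pointing in the right direction, but the mechanism is this bicharacter on the specific subquotient $A/B$, not an order identity. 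Your quotient $(L_1L_3)/(L_1\cap L_3)$ is a reasonable place to look, but you would still need to isolate $A/B$ inside it and build the form --- the counting alone does not close the gap.
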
 

The proposition follows from Lemmas \ref{LagrLem1} -- \ref{LagrLem2} below.

\begin{lemma} \label{LagrLem1}
$d(L_1,L_2)d(L_2,L_3)/d(L_1,L_3)\in\q$ is the image of $|A/B|\in\BN$, where
$A:=(L_1+L_2)\cap L_3$, $B:=(L_1\cap L_3)+(L_2\cap L_3)$.
\end{lemma}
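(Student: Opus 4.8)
The plan is to evaluate both sides of the asserted identity as images in $\q$ of explicit natural numbers, and then to check that these numbers agree modulo squares. The one piece of structure that makes this work is that, $b_q$ being nondegenerate and each $L_i$ being Lagrangian, one has $L_i^\perp=L_i$.

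First I would verify that $B\subseteq A$, so that $|A/B|\in\BN$ makes sense: each of $L_1\cap L_3$ and $L_2\cap L_3$ is contained in $L_3$ and in $L_1+L_2$, hence in $(L_1+L_2)\cap L_3=A$, and therefore so is their sum $B$. Next I would compute the two orders. Using the standard identity $|X+Y|=|X|\,|Y|/|X\cap Y|$ with $X=L_1\cap L_3$ and $Y=L_2\cap L_3$, whose intersection is $L_1\cap L_2\cap L_3$,
\[
|B|=\frac{|L_1\cap L_3|\,|L_2\cap L_3|}{|L_1\cap L_2\cap L_3|}.
\]
For $A$ I would pass to orthogonal complements with respect to $b_q$: from $L_i^\perp=L_i$ together with the elementary identities $(L_1+L_2)^\perp=L_1^\perp\cap L_2^\perp$, $N^\perp\cap P^\perp=(N+P)^\perp$ and $N^{\perp\perp}=N$ (the last two valid since $b_q$ is nondegenerate, as recalled in Section~\ref{Sect2}), one rewrites
\[
A=(L_1+L_2)\cap L_3=(L_1\cap L_2)^\perp\cap L_3^\perp=\big((L_1\cap L_2)+L_3\big)^\perp.
\]
Hence, using $|N^\perp|=|M|/|N|$ and then $|X+Y|=|X|\,|Y|/|X\cap Y|$ once more, $|A|=|M|\,|L_1\cap L_2\cap L_3|\big/\big(|L_1\cap L_2|\,|L_3|\big)$, and therefore
\[
|A/B|=\frac{|M|\,|L_1\cap L_2\cap L_3|^2}{|L_1\cap L_2|\,|L_2\cap L_3|\,|L_1\cap L_3|\,|L_3|}.
\]

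It then remains to match this with the left-hand side. Straight from the definition of $d$, the element $d(L_1,L_2)d(L_2,L_3)/d(L_1,L_3)$ is the image in $\q$ of $|M|^{1/2}\,|L_1\cap L_3|\big/\big(|L_1\cap L_2|\,|L_2\cap L_3|\big)$. In the formula just obtained for $|A/B|$, the factor $|L_1\cap L_2\cap L_3|^2$ is a perfect square and hence trivial in $\q$; the quotient $|M|/|L_3|$ equals $|M|^{1/2}$ because $L_3$ is Lagrangian; and the surviving factor $|L_1\cap L_3|$ may be moved from the denominator to the numerator, since in $\q$ the image of any $n$ equals the image of $1/n$. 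After these simplifications the image of $|A/B|$ is precisely $d(L_1,L_2)d(L_2,L_3)/d(L_1,L_3)$, which is the assertion.

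There is no real obstacle here: the two displayed order computations are routine applications of $|X+Y|=|X|\,|Y|/|X\cap Y|$. The step worth isolating is the passage via $\perp$ from $L_1+L_2$ to the intersection $L_1\cap L_2$ --- this is exactly where Lagrangianness is used, and without it $|A|$ could not be expressed through the data in the statement. The only other thing to watch is the bookkeeping in $\q$, in particular invoking the exact equality $|M|=|L_3|^2$ (not merely an equality up to squares) when simplifying $|M|/|L_3|$.
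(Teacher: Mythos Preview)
Your proof is correct and follows essentially the same route as the paper's: both compute $|B|$ via $|X+Y|=|X|\,|Y|/|X\cap Y|$ and use the Lagrangian condition $L_i^\perp=L_i$ to relate $|A|$ to $|L_1\cap L_2\cap L_3|$. The only cosmetic difference is the order of operations in computing $|A|$: the paper first applies the addition formula to get $|A|=|L_1+L_2|\,|L_3|/|L_1+L_2+L_3|$ and then invokes $(L_1+L_2+L_3)^\perp=L_1\cap L_2\cap L_3$, whereas you dualize first to rewrite $A=\big((L_1\cap L_2)+L_3\big)^\perp$ and then apply the addition formula.
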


\begin{proof} 
By definition, $d(L_1,L_2)d(L_2,L_3)/d(L_1,L_3)\in\q$ is the image of
\[
|L_1|\cdot |L_2|\cdot |L_3|\cdot |L_1\cap L_2|^{-1} \cdot  |L_1\cap L_3|^{-1} \cdot
|L_2\cap L_3|^{-1} \in\BN
\]
On the other hand, 
\begin{eqnarray*}
|B| &=& |L_1\cap L_3|\cdot |L_2\cap L_3|/|L_1\cap L_2\cap L_3|, \\
|A| &=& |L_1+L_2|\cdot |L_3|/|L_1+L_2+L_3|\\
&=& |L_1| \cdot |L_2|\cdot |L_3|\cdot |L_1\cap L_2|^{-1}\cdot  |L_1+L_2+L_3|^{-1}.
\end{eqnarray*}
Finally, $L_1\cap L_2\cap L_3=(L_1+L_2+L_3)^{\perp}$, so 
$|L_1\cap L_2\cap L_3|\cdot |L_1+L_2+L_3|=|M|=|L_i|^2$ is a square.
\end{proof} 

By Lemma \ref{LagrLem1}, proving Proposition \ref{3Lagr} amounts to showing that
$|A/B|$ is a square. To this end, it suffices to construct a non-degenerate skew-symmetric bicharacter
$c: (A/B)\times (A/B)\to {\bold k}^{\times}$.

Here is the construction. Let $x,y\in A:=(L_1+L_2)\cap L_3$. Represent $x$ and $y$ as
\[
x=x_1+x_2 , \; y=y_1+y_2, \quad x_i,y_i\in L_i
\]
and set $c(x,y):=b(x_1,y_2)=b(x,y_2)=b(x_1,y_2)$, where $b: M\times M\to {\bold k}^{\times}$ is
the symmetric bicharacter associated to $q$. It is easy to see that $c:A\times A\to {\bold k}^{\times}$ 
is a well-defined bicharacter.

\begin{lemma} \label{LagrLem2}
\begin{enumerate}
\item[(i)] $c(x,x)=1$.
\item[(ii)] The kernel of $c:A\times A\to {\bold k}^{\times}$  equals $B$.
\end{enumerate}
\end{lemma}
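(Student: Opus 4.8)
\textbf{Proof plan for Lemma \ref{LagrLem2}.}

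The plan is to verify the two assertions directly from the defining formula $c(x,y)=b(x_1,y_2)$, where $x=x_1+x_2$, $y=y_1+y_2$ with $x_i,y_i\in L_i$, keeping in mind the three equivalent expressions $c(x,y)=b(x_1,y_2)=b(x,y_2)=b(x_1,y)$ that follow from the isotropy of $L_1$ and $L_2$ (so that $b(x_2,y_2)=1$ and $b(x_1,y_1)=1$). For part (i), I would write $c(x,x)=b(x_1,x_2)$ and also, by symmetry of $b$ together with the identity $b(x_1,x_1)=1$ (isotropy of $L_1$), observe $c(x,x)=b(x_1,x_2)=b(x_1,x_1+x_2)=b(x_1,x)=b(x,x_1)=b(x_1+x_2,x_1)=b(x_2,x_1)=b(x_1,x_2)$; but more to the point, since $x\in L_3$ and $L_3$ is Lagrangian hence isotropic, $1=q(x)$, and $b(x,x)=q(x+x)/q(x)^2=q(2x)/q(x)^2$. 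The cleanest route: use $c(x,x)=b(x,x_2)=b(x_1+x_2,x_2)=b(x_1,x_2)b(x_2,x_2)=b(x_1,x_2)$ and also $c(x,x)=b(x_1,x)=b(x_1,x_1+x_2)=b(x_1,x_2)$, so these are merely consistent; the actual vanishing comes from $c(x,x)=b(x_1,x_2)$ and noting $x_1 = x - x_2$ with $x\in L_3$, so we cannot yet conclude. Instead I would argue: $x_1+x_2=x\in L_3$, so $x_1\in L_3-x_2\subset L_3+L_2$; hence $b(x_1,x_2)$ — hmm. The safe argument is via the quadratic form: since $c(x,y)c(y,x)=b(x_1,y_2)b(y_1,x_2)=b(x_1,y_2)b(x_2,y_1)=b(x_1+x_2,y_1+y_2)=b(x,y)$ (using isotropy of $L_1,L_2$ to kill $b(x_1,y_1)$ and $b(x_2,y_2)$), we get $c(x,y)c(y,x)=b(x,y)$; but $x,y\in L_3$ which is isotropic, so $b(x,y)=b_q(x,y)$ restricted to $L_3$, and isotropy of $L_3$ gives $b(x,y)=1$ for $x,y\in L_3$ precisely because $L_3$ is Lagrangian, i.e. $L_3\subset L_3^\perp$. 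Wait — $L_3$ Lagrangian means $L_3=L_3^\perp$ with respect to $b_q$, so indeed $b_q(x,y)=1$ for all $x,y\in L_3$; but $b$ here is the bicharacter associated to $q$, and on an isotropic subgroup $b=b_q$ agrees — actually $b=b_q$ everywhere by definition of "the symmetric bicharacter associated to $q$". Thus $b(x,y)=b_q(x,y)=1$ for $x,y\in L_3$, giving $c(x,y)c(y,x)=1$, and combined with a separate check that $c$ is symmetric on $A$ (from the same computation, $c(x,y)/c(y,x)=b(x_1,y_2)/b(x_2,y_1)$, which I'd show equals $1$ using $x,y\in L_3$), we conclude $c(x,x)^2=1$; to upgrade to $c(x,x)=1$ I would use that $c(x,x)=b(x_1,x_2)$ and argue $x_1\in (L_1+L_2)\cap\text{(something)}$ — the honest finish is $c(x,x)=b(x_1,x_2)=b(x_1,x-x_1)=b(x_1,x)b(x_1,x_1)^{-1}=b(x_1,x)$, and $b(x_1,x)=b_q(x_1,x)$; since $x\in L_3$ and... this still needs $x_1\perp x$. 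I will ultimately present (i) as: $c(x,x)=b(x_1,x_2)$ where $x_1+x_2=x\in L_3$; then $q(x)=1$ and expanding $q(x_1+x_2)$ against $q(x_1)q(x_2)$ controls $b(x_1,x_2)$ up to the values $q(x_1),q(x_2)$, and a parity/squaring argument (as in Lemma \ref{LagrLem1}'s use of "$|M|$ is a square") pins it down; I expect to need the refinement that $c$ takes values in roots of unity of order dividing the exponent, so $c(x,x)^2=1$ forces a sign that is then killed because $x\in L_3$ isotropic.

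For part (ii), I would compute the kernel in two inclusions. First, $B=(L_1\cap L_3)+(L_2\cap L_3)\subseteq\Ker c$: if $x=x_1+x_2$ with $x_1\in L_1\cap L_3$, $x_2\in L_2\cap L_3$, then for any $y=y_1+y_2\in A$ we have $c(x,y)=b(x,y_2)$; since $x\in L_3$ and $y_2=y-y_1$ with $y\in L_3$, so $y_2\in L_3+L_1$, we'd want $b(x,y_2)=1$; better: $c(x,y)=c(x_1,y)c(x_2,y)$ — need $c$ biadditive, which is part of "it is easy to see that $c$ is a well-defined bicharacter" — and $c(x_1,y)=b(x_1,y_2)$ where now also $x_1\in L_1$ so we may take the $L_1$-component of $x_1$ to be $x_1$ itself and $L_2$-component $0$; but then I should use the \emph{other} formula $c(x_1,y)=b(x_1,y)$ (valid since the $L_2$-part of $x_1$ is $0$), and since $x_1\in L_3$, $y\in L_3$, $b(x_1,y)=b_q(x_1,y)=1$ by $L_3$ Lagrangian. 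Similarly $c(x_2,y)=b(x_2',y_2)$ with $x_2'$ the $L_1$-part of $x_2$, which is $0$, so $c(x_2,y)=1$. Hence $B\subseteq\Ker c$. Conversely, suppose $x=x_1+x_2\in A$ with $c(x,y)=1$ for all $y\in A$. The key point: $c(x,y)=b(x_1,y_2)$ and as $y$ ranges over $A$, $y_2$ ranges over all of $L_2\cap(L_1+L_3)$ (since $y_2=y-y_1\in L_2$ and $y_2\in L_3+L_1 = y - L_1$... need $y\in L_3$, $y_1\in L_1$, so $y_2\in L_3+L_1$; conversely any such $y_2$ arises). So $c(x,\cdot)\equiv 1$ says $x_1\perp L_2\cap(L_1+L_3)$, i.e. $x_1\in (L_2\cap(L_1+L_3))^\perp = L_2^\perp + (L_1+L_3)^\perp = L_2 + (L_1^\perp\cap L_3^\perp) = L_2+(L_1\cap L_3)$, using nondegeneracy of $b$ and $L_1=L_1^\perp$, $L_3=L_3^\perp$. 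Then $x_1\in L_1$ and $x_1\in L_2+(L_1\cap L_3)$; writing $x_1=\ell_2+\ell_{13}$ with $\ell_2\in L_2$, $\ell_{13}\in L_1\cap L_3$, we get $\ell_2=x_1-\ell_{13}\in L_1$, so $\ell_2\in L_1\cap L_2$, hence $x_1\in (L_1\cap L_2)+(L_1\cap L_3)$. Therefore $x=x_1+x_2$ with $x_1\in(L_1\cap L_2)+(L_1\cap L_3)$ and $x_2=x-x_1\in L_3$ (as $x\in L_3$) and $x_2\in L_2+x_1\subseteq L_2+L_1$; a symmetric argument applied to $y\mapsto c(y,x)$ (or using symmetry of $c$ on $A$) controls $x_2$, showing $x_2\in(L_1\cap L_2)+(L_2\cap L_3)$ modulo $L_1\cap L_2$-ambiguity, and since $L_1\cap L_2\subseteq B$ trivially, we conclude $x\in B$. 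The bookkeeping with the $L_1\cap L_2$ overlaps is where I expect to be careful.

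The main obstacle, I anticipate, is part (i): showing $c(x,x)=1$ rather than merely $c(x,x)^2=1$, since the naive manipulations only give the skew-symmetry relation $c(x,y)c(y,x)=b(x,y)$ and symmetry $c(x,y)=c(y,x)$ on $A$ (forcing squares to be trivial), and extracting the exact value $1$ requires exploiting that $x\in L_3$ with $L_3$ isotropic in a way that interacts with the quadratic refinement $q$ (not just the bilinear $b$); concretely, the cleanest fix is to show $c(x,x)=q(x_2)/q(x)\cdot(\text{something trivial})$ or to directly verify $c(x,x)=b(x_1,x_2)=q(x_1+x_2)q(x_1)^{-1}q(x_2)^{-1}=q(x)q(x_1)^{-1}q(x_2)^{-1}$ and then argue $q(x_1)q(x_2)=q(x)=1$ is forced on $A$ — but that last equality is false in general, so instead I will present the argument that $c$ descends to $A/B$ where the induced form is skew-symmetric because on $A/B$ one can choose representatives making $q(x_1)=q(x_2)=1$, i.e. one reduces to the case where $x_1,x_2$ lie in maximal isotropic pieces; failing that, I fall back on proving $c$ is alternating on $A/B$ by the general fact that a symmetric bicharacter $c$ with $c(x,y)c(y,x)=1$ which takes values in a group with no $2$-torsion would be alternating, handling the $2$-torsion by the explicit quadratic form. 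For part (ii) the argument above via orthogonal complements and the Lagrangian identities $L_i=L_i^\perp$ is routine once one trusts nondegeneracy of $b$; the only delicate point is keeping track of the $L_1\cap L_2$ ambiguity, which is harmless since it is contained in $B$.
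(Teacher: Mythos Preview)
Your treatment of part (ii) is essentially on the right track and close to the paper's argument, but you are making it harder than necessary. Once you observe that as $y$ ranges over $A$ the element $y_2$ ranges over $L_2\cap(L_1+L_3)$, use the expression $c(x,y)=b(x,y_2)$ rather than $c(x,y)=b(x_1,y_2)$. Then the kernel condition becomes a condition on $x$ itself: $x\in\bigl(L_2\cap(L_1+L_3)\bigr)^{\perp}=L_2+(L_1\cap L_3)$. Since $x\in A\subset L_3$, writing $x=\ell_2+\ell_{13}$ forces $\ell_2=x-\ell_{13}\in L_3$, so $\ell_2\in L_2\cap L_3$ and $x\in B$ directly. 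This avoids the detour through $x_1\in(L_1\cap L_2)+(L_1\cap L_3)$, the ``symmetric argument'' for $x_2$, and the $L_1\cap L_2$ bookkeeping you flagged as delicate.

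For part (i), however, there is a genuine gap, and it stems from a misreading of the hypotheses. You actually wrote down the decisive identity,
\[
c(x,x)=b(x_1,x_2)=\frac{q(x_1+x_2)}{q(x_1)\,q(x_2)}=\frac{q(x)}{q(x_1)\,q(x_2)},
\]
and then dismissed it with ``$q(x_1)q(x_2)=q(x)=1$ \dots\ is false in general''. But it \emph{is} true here, and for the simplest reason: by definition an isotropic subgroup $L$ satisfies $q(a)=1$ for all $a\in L$, and each $L_i$ is Lagrangian hence isotropic. So $q(x_1)=1$ (since $x_1\in L_1$), $q(x_2)=1$ (since $x_2\in L_2$), and $q(x)=1$ (since $x\in A\subset L_3$), giving $c(x,x)=1$ in one line. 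Earlier in your argument you invoked isotropy only at the level of the bilinear form (e.g.\ $b(x_1,y_1)=1$), apparently forgetting that isotropy is defined via $q$, not $b$. All the circuitous attempts, the worry about proving only $c(x,x)^2=1$, the appeal to $2$-torsion, and the fallback to ``choosing representatives'' are unnecessary once you use the quadratic form directly.
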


\begin{proof}
(i) $c(x,x)=b(x_1,x_2)=q(x)q(x_1)^{-1}q(x_2)^{-1}=1$ because   $x\in L_3$,  $x_1\in L_1$,
and $x_2\in L_2$.

(ii) An element $x\in A$ belongs to the kernel of $c$ if and only if $b(x,y)=1$ for all
$y\in L_2\cap (L_1+L_3)$. The orthogonal complement of $ L_2\cap (L_1+L_3)$ with respect to
$b: M\times M\to {\bold k}^{\times}$ equals $L_2+(L_1\cap L_3)$, so $\Ker c=A\cap (L_2+(L_1\cap L_3))$.
Since $A\subset L_3$ we see that $\Ker c\subset (L_2\cap L_3 )+(L_1\cap L_3 )=B$.
On the other hand, $B\subset A$ and $B\subset L_2+(L_1\cap L_3)$, so $B\subset\Ker c$.
\end{proof}

Now for a metric group $E$ and $g\in O(E,q)$ define $\det(g)\in\q$ to be the image of 
$|(g-1)E|\in\mathbb{N}$. 

\begin{proposition}\label{dete}
The map  
$\det : O(E,q)\to\q$ is a  homomorphism. 
\end{proposition}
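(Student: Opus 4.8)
The plan is to reduce the claim that $\det$ is a homomorphism to the three-Lagrangian-subgroup identity of Proposition \ref{3Lagr}, by passing from the metric group $E$ to the metric group $M:=E\oplus E$ equipped with the quadratic form $q\oplus q^{-1}$ (i.e.\ $(x,y)\mapsto q(x)q(y)^{-1}$), whose order $|E|^2$ is a square. Inside $M$ one has the diagonal copy $\Delta:=\{(x,x):x\in E\}$, which is Lagrangian since $q(x)q(x)^{-1}=1$ and $|\Delta|^2=|E|^2=|M|$. More generally, for each $g\in O(E,q)$ the graph $\Gamma_g:=\{(x,gx):x\in E\}$ is Lagrangian: it is isotropic because $q(x)q(gx)^{-1}=1$ as $g$ preserves $q$, and $|\Gamma_g|^2=|M|$. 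So $g\mapsto\Gamma_g$ embeds $O(E,q)$ into the set of Lagrangian subgroups of $M$, with $\Gamma_{\id}=\Delta$.

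The key computation is to identify $d(\Gamma_g,\Gamma_h)$ with (the image of) something built from $g,h$. One has $\Gamma_g\cap\Gamma_h=\{(x,gx):gx=hx\}=\{(x,gx):x\in\Ker(g^{-1}h-1)\}$, so $|\Gamma_g\cap\Gamma_h|=|\Ker(g^{-1}h-1)|=|E|/|(g^{-1}h-1)E|$. Hence
\[
d(\Gamma_g,\Gamma_h)=\text{image of } \frac{|\Gamma_g|}{|\Gamma_g\cap\Gamma_h|}
=\text{image of } |(g^{-1}h-1)E|=\det(g^{-1}h),
\]
using $|\Gamma_g|=|E|$. Taking $g=\id$ gives $d(\Delta,\Gamma_h)=\det(h)$, and in general $d(\Gamma_g,\Gamma_h)=\det(g^{-1}h)$; note also $\det(g)=\det(g^{-1})$ since $(g^{-1}-1)E=g^{-1}(1-g)E=(g-1)E$ up to the isomorphism $g^{-1}$, so the formula is symmetric in $g,h$ as it must be. Now apply Proposition \ref{3Lagr} to the three Lagrangians $L_1=\Delta$, $L_2=\Gamma_g$, $L_3=\Gamma_{gh}$:
\[
d(\Delta,\Gamma_g)\,d(\Gamma_g,\Gamma_{gh})=d(\Delta,\Gamma_{gh}),
\]
i.e.\ $\det(g)\cdot\det(g^{-1}\cdot gh)=\det(gh)$, that is $\det(g)\det(h)=\det(gh)$ in $\q$. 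This is exactly the assertion that $\det$ is a homomorphism. One should also check $\det(\id)=1$, which is immediate since $(\id-1)E=0$, consistent with $d(L,L)=1$.

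The one point requiring a little care — and the main obstacle — is verifying that $M=E\oplus E$ with $q\oplus q^{-1}$ is genuinely a metric group, i.e.\ that the associated bilinear form $b_{q\oplus q^{-1}}((x,y),(x',y'))=b_q(x,x')b_q(y,y')^{-1}$ is nondegenerate; this is clear from nondegeneracy of $b_q$. Granting that, everything else is the bookkeeping of orders displayed above, together with the elementary identity $|(g-1)E|=|E|/|\Ker(g-1)|$ for the endomorphism $g-1$ of the finite group $E$. Thus Proposition \ref{dete} follows from Proposition \ref{3Lagr} via the graph construction.
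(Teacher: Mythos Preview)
Your proof is correct and follows essentially the same approach as the paper: both pass to the metric group $E\oplus E$ with the ``difference'' quadratic form, realize elements of $O(E,q)$ as graph Lagrangians, and then invoke Proposition~\ref{3Lagr}. The only cosmetic difference is the particular triple of Lagrangians chosen---you use $\Gamma_{\id},\Gamma_g,\Gamma_{gh}$, while the paper uses the graphs of $\id,g^{-1},h$; your choice yields $\det(g)\det(h)=\det(gh)$ directly, whereas the paper's choice gives $\det(g)\det(gh)=\det(h)$ and then uses that every element of $\q$ is its own inverse.
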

\begin{proof}
Let $g,h\in O(E,q)$, and let $M=E\oplus E$ with quadratic form 
$Q(x,y)=q(y)/q(x)$, $x,y\in E$. Let $L_1,L_2,L_3\subset M$ be 
the graphs of ${\rm Id}$, $g^{-1}$, and $h$.
They are Lagrangian, and $L_1\cap L_2={\rm Ker}(g-1)$, so $d(L_1,L_2)=\det(g)$.
Similarly, $d(L_1,L_3)=\det(h)$, and $d(L_2,L_3)=\det(gh)$. 
Thus, by Proposition \ref{3Lagr}, $\det(gh)=\det(g)\det(h)$.     
\end{proof}
 
\begin{proposition}\label{dete1} 
If $L$ is a Lagrangian subgroup of $E$ then $\det(g)=d(L,g(L))$. 
\end{proposition}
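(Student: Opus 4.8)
The plan is to mimic the proof of Proposition \ref{dete}, realizing $\det(g)$ and $d(L,g(L))$ simultaneously as ``distances'' between suitable Lagrangian subgroups of the doubled metric group $M=E\oplus E$ with $Q(x,y)=q(y)/q(x)$, and then to invoke the cocycle relation of Proposition \ref{3Lagr}.

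First I would record that $g(L)$ is again a Lagrangian subgroup of $E$: since $g\in O(E,q)$ we have $q(g(a))=q(a)=1$ for $a\in L$, and $|g(L)|=|L|$, so $|g(L)|^2=|E|$. Hence $d(L,g(L))$ is defined, and it is the image in $\q$ of $|L|/|L\cap g(L)|$. Next, inside $M=E\oplus E$ with $Q(x,y)=q(y)/q(x)$, I would introduce three Lagrangian subgroups: $L_1$ the graph of ${\rm Id}$, $L_2$ the graph of $g^{-1}$ (both Lagrangian because $g^{-1}\in O(E,q)$, exactly as in the proof of Proposition \ref{dete}), and $L_3=L\oplus L$ (Lagrangian since $L$ is isotropic in $E$ and $|L\oplus L|^2=|L|^4=|E|^2=|M|$). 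Then I would compute the three pairwise intersections: $L_1\cap L_2$ is the graph of ${\rm Id}$ restricted to $\Ker(g-1)$, so $|L_1\cap L_2|=|\Ker(g-1)|$ and $d(L_1,L_2)$ is the image of $|M|^{1/2}/|\Ker(g-1)|=|E|/|\Ker(g-1)|=|(g-1)E|$, i.e.\ $d(L_1,L_2)=\det(g)$; $L_1\cap L_3$ is the graph of ${\rm Id}$ restricted to $L$, so $d(L_1,L_3)$ is the image of $|E|/|L|$; and $L_2\cap L_3=\{(g^{-1}(y),y):y\in L\cap g(L)\}$, so $d(L_2,L_3)$ is the image of $|E|/|L\cap g(L)|$.

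Finally I would feed these into Proposition \ref{3Lagr}, which gives $d(L_1,L_2)\,d(L_2,L_3)=d(L_1,L_3)$, i.e.\ $\det(g)$ equals $d(L_1,L_3)\,d(L_2,L_3)$ in $\q$ (using that every element of $\q$ is its own inverse). Thus $\det(g)$ is the image of $\bigl(|E|/|L|\bigr)\cdot\bigl(|E|/|L\cap g(L)|\bigr)=|E|^2/(|L|\cdot|L\cap g(L)|)$; since $|E|^2$ and $|L\cap g(L)|^2$ are perfect squares, this image coincides with that of $|L|/|L\cap g(L)|$, which is precisely $d(L,g(L))$. There is no genuine obstacle here: the only point requiring care is the bookkeeping of squares in $\q$, for which the identity $|E|=|L|^2$ (Lagrangian!) is the key input.
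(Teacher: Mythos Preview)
Your proof is correct and takes essentially the same approach as the paper's: both work in the doubled metric group $E\oplus E$ with form $q^{-1}\oplus q$, use the diagonal $E_{\rm diag}$ (your $L_1$), its image under $(g,1)$ (your $L_2$), and $L\oplus L$ (your $L_3$), and appeal to Proposition~\ref{3Lagr}. The only difference is organizational: the paper first observes via Proposition~\ref{3Lagr} that $d(L,g(L))$ is independent of the choice of $L$, then computes this common value $\delta((g,1))$ two ways (once with $L\oplus L$, once with $E_{\rm diag}$), whereas you apply the cocycle identity once directly to the triple $L_1,L_2,L_3$.
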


\begin{proof} 
First, note that by Proposition \ref{3Lagr}, $d(L,g(L))$ is
independent on the choice of $L$. So let us call this function
$\delta(g)$. Next, note that $\delta(g)=\delta(g,1)$, where 
$(g,1)\in O(E\oplus E,q^{-1}\oplus q)$. Finally, note that 
$$
\delta(g,1)=d(E_{\rm diag},(g,1)(E_{\rm diag}))=\det(g),
$$
where $E_{\rm diag}$ is the diagonal copy of $E$. 
\end{proof}

\begin{definition}   \label{SOdef}
The kernel of  the homomorphism 
\[
\det : O(E,q)\to\q
\]
is called the  {\em special orthogonal group} 
and denoted by $SO(E,q)$.
\end{definition}

\begin{remark}
If $E$ is a vector space over $\Bbb F_p$ with $p>2$, then it is easy to see that $\det$ is the usual determinant
(so Definition~\ref{SOdef} agrees with the familiar one from linear algebra). 
Indeed, in this case, any orthogonal transformation is the composition of reflections, 
and it is clear that on reflections, the two definitions of the determinant coincide. 
On the other hand, if $E$ is a vector space over $\Bbb F_2$ and $q$ takes values $\pm 1$
(i.e., in $\Bbb F_2$), then $\det(g)$ coincides with the Dickson invariant of $g$ \cite{Di}, 
(which is also known as Dickson's pseudodeterminant \cite{G}), 
while the usual determinant is trivial. 
\end{remark}

\subsection{Module categories over $\Vec_G$} 
\label{VecGmod}

Let $G$ be a finite group and let
$\C:=\Vec_G$ be the fusion category of $G$-graded vector spaces.
We will denote simple 
objects of $\Vec_G$ simply by $g\in G$.

Recall that equivalence classes of  indecomposable left
$\Vec_G$-module categories correspond to pairs 
$(H,\psi)$ where $H \subset G $
is a subgroup and $\psi \in Z^2(H, {\bold k}^\times)$ 
is a 2-cocycle (modulo cohomological equivalence).  
Namely, let $\M$ be an indecomposable left 
$\Vec_G$-module category, and let $Y$ be a simple
object of $\M$.  Set $H:= \{  g\in G \mid   g\ot Y \cong Y   \}$ 
and let $\psi(x_1, x_2), x_1,x_2\in H$ 
be the scalar such that the associativity constraint
\[
(x_1\ot x_2)\ot Y \xrightarrow{\sim}  x_1\ot (x_2\ot Y)
\]
is given by $\psi(x_1, x_2) \id_{x_1x_2 Y}$.
Let $\M(H, \psi)$ denote the $\Vec_G$-module 
category corresponding to $(H,\psi)$.
Note that the set of isomorphism classes of simple objects of $\M(H, \psi)$  
is in bijection with the set $G/H$ of right cosets of $H$ in $G$.

For any $x \in G $ 
set $H^x:= xHx^{-1}$ and define $\psi^x \in Z^2(H^x, {\bold k}^\times)$ by
\[
\psi^x(xy_1x^{-1}, xy_2x^{-1}) := \psi(y_1, y_2), \quad y_1,y_2\in H.
\]
Two $\Vec_G$-module
categories $\M(H,\psi)$  and 
$\M(H',\psi')$ are equivalent if and only if there is $x\in G$ such that
$H' =xHx^{-1} $ and  $\psi'$ is cohomologous to $\psi^x$.

If $H$ is abelian, then $H^2(H,{\bold k}^\times)$ is 
the group of skew-symmetric bicharacters of $H$.
Thus, if $A$ is a finite abelian group, then 
the indecomposable left module categories over $\Vec_A$
are $\M(H,\psi)$, where $H\subset A$ is a subgroup, and 
$\psi$ is a skew-symmetric bicharacter of $H$. 

Let $A,B$ be abelian groups, $\phi: B\to A$ be a group 
homomorphism (not necessarily injective), 
and $\xi$ be a skew-symmetric bicharacter of 
$B$ with coefficients in ${\bold k}^\times$. 
Let $K={\rm Ker}\phi$, $K^\perp$ be the orthogonal complement
of $K$ in $B$ under $\xi$, and $H=\phi(K^\perp)$. It is easy to
show that $\xi$ descends to a skew-symmetric bicharacter of 
$H$, which we will denote by $\psi$.   

\begin{proposition}\label{noninj}
Let $\N$ be the category of $A$-graded vector spaces
which are right-equivariant under the action of $B$ (via $\phi$)
with 2-cocycle $\xi$. Then as a left $\Vec_A$ module, 
$\N\cong m\cdot \M({H,\psi})$, where
$$
m=\frac{|K|\cdot |K^{\perp}|}{|B|}=|K\cap {\rm Rad}(\xi)|.
$$
\end{proposition}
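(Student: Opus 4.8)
The plan is to analyze the left $\Vec_A$-module category $\N$ directly via its simple objects, compute their stabilizers and associativity scalars, and then count multiplicities using Frobenius-Perron dimensions. First I would describe $\N$ concretely: an object of $\N$ is an $A$-graded vector space $V=\oplus_{a\in A}V_a$ equipped with a right $B$-action $\rho_b: V\to V$ (twisting the grading by $\phi(b)$, so $\rho_b(V_a)\subseteq V_{a\phi(b)}$) satisfying $\rho_{b_1}\rho_{b_2}=\xi(b_1,b_2)\rho_{b_1 b_2}$. The simple objects of such a category of projective $B$-representations-with-grading are indexed by orbits of $B$ on $A$ (via $\phi$, i.e. cosets of $\phi(B)$) together with an irreducible projective representation of the stabilizer with the appropriate cocycle; since $B$ is abelian, the relevant stabilizer of any $a\in A$ is all of $B$, and the irreducible $\xi$-projective representations of $B$ all have dimension $\sqrt{|B/\mathrm{Rad}(\xi)|}$ and are permuted simply transitively by $B/\mathrm{Rad}(\xi)$ up to isomorphism (here $\mathrm{Rad}(\xi)=B^\perp$ under $\xi$). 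So the simple objects of $\N$ are naturally indexed by $(A/\phi(B))\times(\widehat{\mathrm{something}})$, and I would pin down that the left $\Vec_A$-action permutes the $A/\phi(B)$-factor by left translation.

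Next I would identify the left $\Vec_A$-module structure. Pick a simple object $Y\in\N$ supported on a single coset of $\phi(B)=\phi(K^\perp)$ — note $\phi(B)=\phi(K^\perp)=H$ since $K=\Ker\phi$. The stabilizer $\{a\in A: a\ot Y\cong Y\}$ is exactly $H$, because tensoring by $a$ shifts the $A$-support by $a$ and we need to land back in the same $\phi(B)$-coset, i.e. $a\in\phi(B)=H$. Then I would compute the associativity scalar $\psi(a_1,a_2)$ for $a_1,a_2\in H$ by unwinding the isomorphisms $a_i\ot Y\cong Y$ through the projective $B$-action and the cocycle $\xi$; the claim (asserted in the paragraph preceding the proposition) is that this scalar is precisely the descent $\psi$ of $\xi$ to $H=\phi(K^\perp)$. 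Granting this, each indecomposable summand of $\N$ is equivalent to $\M(H,\psi)$, so $\N\cong m\cdot\M(H,\psi)$ for some multiplicity $m$, and it remains only to compute $m$.

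For the multiplicity, I would use the Frobenius-Perron dimension count. By the Proposition in \S\ref{FPmod}, $\sum_{M\in\O(\M(H,\psi))}\FPdim(M)^2=\dim(\Vec_A)=|A|$, and likewise $\sum_{N\in\O(\N)}\FPdim(N)^2=m\,|A|$. On the other hand I compute $\sum_{N}\FPdim(N)^2$ directly: the simple objects of $\N$ are indexed by cosets in $A/H$ (there are $|A|/|H|$ of them in each isomorphism-type-of-$B$-rep class) times the $|B/\mathrm{Rad}(\xi)|^{1/2}$ isomorphism classes of irreducible $\xi$-projective $B$-representations, and each such simple has $\FPdim$ equal to (the $\Vec_A$-module dimension of a single coset) times (the dimension $|B/\mathrm{Rad}(\xi)|^{1/2}$ of that projective rep). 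Alternatively, and more cleanly, I would compute $\FPdim$ of the free object $\Vec_A\ot(\text{point})$ inside $\N$, or use $\FPdim(\N)=\dim(\N)$ as an algebra: $\N$ is the category of modules over the twisted group algebra, of total dimension $|A|\cdot|B|/|\mathrm{Rad}(\xi)|$ suitably interpreted, giving $m\cdot|A|=|A|\cdot|K^\perp|/|\mathrm{Rad}(\xi)\cap K^\perp|\cdots$ — rather than guess, I would set it up as: total number of simples times square of their common FP-dimension equals $m|A|$, and solve. The bookkeeping should yield $m=|K|\cdot|K^\perp|/|B|$, and the identity $|K|\cdot|K^\perp|/|B|=|K\cap\mathrm{Rad}(\xi)|$ follows from $|K^\perp|=|B|/|K|$ being false in general for degenerate $\xi$ — rather, from $\mathrm{Rad}(\xi)=B^\perp$ and the standard formula $|K^\perp|\cdot|K|=|B|\cdot|K\cap\mathrm{Rad}(\xi)|$ for a possibly degenerate form, which I would verify by passing to $B/\mathrm{Rad}(\xi)$ where the form is nondegenerate.

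\textbf{Main obstacle.} The genuinely delicate step is the second one: verifying that the associativity scalar on a simple summand of $\N$ is exactly the claimed descended bicharacter $\psi$ on $H=\phi(K^\perp)$, rather than merely \emph{some} skew-symmetric bicharacter on $H$. This requires carefully tracking how the chosen isomorphisms $a_i\ot Y\xrightarrow{\sim}Y$ (which exist only because $a_i\in\phi(K^\perp)$, so $a_i=\phi(k_i)$ for some $k_i\in K^\perp$) interact: the natural candidate isomorphism is $\rho_{k_i}$ up to scalar, and the associativity constraint then picks up $\xi(k_1,k_2)$, but one must check this is independent of the lifts $k_i$ (which it is, precisely because changing $k_i$ by an element of $K=\Ker\phi$ changes $\xi(k_1,k_2)$ by a factor that is trivial since $k_i\in K^\perp$). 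The multiplicity computation, by contrast, is purely a dimension/counting argument once the structural identification is in place, and the final numerical identity is elementary linear algebra over the pairing $\xi$.
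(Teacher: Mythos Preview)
Your overall strategy—identify the simple objects of $\N$, compute their stabilizer under the left $\Vec_A$-action, read off the cocycle, and then count—is exactly the paper's approach. But there is a genuine error in your stabilizer computation that makes the argument as written incorrect.

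First, the stabilizer in $B$ of a point $a\in A$ under the action $a\mapsto a\phi(b)$ is $K=\Ker\phi$, not all of $B$; the fact that $B$ is abelian is irrelevant here. So the simple objects of $\N$ are indexed by pairs $(z,\rho)$ with $z\in A/\phi(B)$ and $\rho$ an irreducible projective representation of $K$ with cocycle $\xi|_K$, not of $B$ with cocycle $\xi$.

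Second, and more seriously, you assert $\phi(B)=H$, which is false in general: $H=\phi(K^\perp)$ is only a subgroup of $\phi(B)$. Your argument that the $\Vec_A$-stabilizer of a simple $Y$ equals $\phi(B)$ because ``we need to land back in the same $\phi(B)$-coset'' gives only the \emph{necessary} condition $a\in\phi(B)$. You are missing the second constraint: an element $a=\phi(g)\in\phi(B)$ acts on the projective-representation label $\rho$ by tensoring with the character $\xi(g,\cdot)|_K$ of $K$, and this fixes $\rho$ if and only if $\xi(g,k)=1$ for all $k\in\mathrm{Rad}(\xi|_K)=K\cap K^\perp$, i.e.\ $g\in(K\cap K^\perp)^\perp=K+K^\perp$. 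Hence the stabilizer is $\phi(K+K^\perp)=\phi(K^\perp)=H$, and this is the substantive step, not the verification of $\psi$ that you flagged. Once the stabilizer is correctly identified as $H$, your proposed method of lifting $a_i\in H$ to $k_i\in K^\perp$ and computing $\xi(k_1,k_2)$ does give the cocycle $\psi$.

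Finally, your FP-dimension count for $m$ is tangled because it is built on the wrong description of the simples. The paper simply counts: the number of simples of $\N$ is $|A/\phi(B)|$ times the number of irreducible $\xi|_K$-projective representations of $K$, and matching this against $m\cdot|A|/|H|$ (the number of simples in $m\cdot\M(H,\psi)$) gives $m$ directly.
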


\begin{proof}
The simple objects of $\N$ are obviously parameterized 
by pairs $(z,\rho)$, where $z\in A/\phi(B)$, and $\rho$ is an irreducible
projective representation of $K$ with cohomology class $\xi|_K$,
which implies that the number of simple objects of 
$\N$ and $m\cdot \M({H,\psi})$ is the same. 

Now consider the stabilizer $S$ of a pair $(z,\rho)$ in $A$. Obviously, 
$S$ is contained in $\phi(B)$, since an element of $S$ must
preserve $z$. Further, if $g\in B$, the action of $\phi(g)$ on 
$\rho$ is by tensoring with the character
$\xi(g,\cdot)$. So the condition that $\phi(g)$ fixes $\rho$
is that $\xi(g,k)=1$ for any $k\in {\rm Rad}(\xi|_K)=K^\perp\cap
K$, i.e. $g\in K+K^\perp$ (indeed, we have the equality 
$(K\cap K^\perp)^\perp=K^{\perp\perp}+K^\perp=K+K^\perp$, since 
$K^{\perp \perp}=K+{\rm Rad}(\xi)$, and 
$K^\perp$ contains ${\rm Rad}(\xi)$). 
Thus $S=H$. It is straightforward to check that the corresponding
second cohomology class on $S$ is exactly $\psi$.
The proposition is proved. 
\end{proof} 

We also have the following proposition, whose proof is easy and omitted.  

\begin{proposition}\label{subg}
Let $A$ be a finite abelian group, $H,B\subset A$ subgroups, and $\psi\in H^2(H,{\bold k}^\times)$
be a skew-symmetric bicharacter. Then one has an equivalence of left $\Vec_B$-module categories
$$
\M(H,\psi)|_{\Vec_B}\cong m\cdot \M(H\cap B,\psi|_{H\cap B}),
$$
where $m$ is the index of $B+H$ in $A$. 
\end{proposition}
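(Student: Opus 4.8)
The plan is to decompose the restriction $\M(H,\psi)|_{\Vec_B}$ into $\Vec_B$-module categories by counting simple objects in each isotypic component, using the classification of indecomposable $\Vec_B$-module categories recalled above. First I would observe that the simple objects of $\M(H,\psi)$ are parametrized by the coset space $A/H$ (as recalled in \S\ref{VecGmod}), and that $\Vec_B$ acts on this set through the natural action of $B\subset A$ on $A/H$. Thus the decomposition of $\M(H,\psi)|_{\Vec_B}$ into indecomposable $\Vec_B$-module categories is indexed by the $B$-orbits on $A/H$, which (since $A$ is abelian) are the cosets of $B+H$ in $A$; hence there are $|A/(B+H)| = m$ of them, all of the same size $|B|/|B\cap H|$, so at the level of underlying categories $\M(H,\psi)|_{\Vec_B}$ is $m$ copies of something with $|B|/|B\cap H|$ simple objects, matching $m\cdot\M(H\cap B,\psi|_{H\cap B})$.

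Next I would pin down the module-category structure on one such orbit. Fixing a simple object $Y$ in a chosen orbit, its $\Vec_B$-stabilizer is $\{b\in B : b\ot Y\cong Y\} = B\cap H$ (the intersection of the $\Vec_A$-stabilizer $H$ with $B$), and the relevant $2$-cocycle on $B\cap H$ is read off from the associativity constraint $(b_1\ot b_2)\ot Y \to b_1\ot(b_2\ot Y)$, which is exactly the restriction of the constraint for the $\Vec_A$-action, i.e. the scalar $\psi(b_1,b_2)$. Since the associator constraints of $\M(H,\psi)$ as a $\Vec_B$-module category are just the restrictions of those of $\M(H,\psi)$ as a $\Vec_A$-module category, this identifies the indecomposable summand with $\M(B\cap H, \psi|_{B\cap H})$ as a $\Vec_B$-module category, and the same analysis applies verbatim to every orbit, giving $\M(H,\psi)|_{\Vec_B}\cong m\cdot\M(B\cap H,\psi|_{B\cap H})$.

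The main obstacle, such as it is, is the verification that the stabilizer data $(B\cap H, \psi|_{B\cap H})$ extracted from the restricted action genuinely determines the isomorphism class of each indecomposable summand as a $\Vec_B$-module category — i.e. confirming that restriction of the $\Vec_A$-module structure is compatible with the classification in \S\ref{VecGmod} and that no extra twisting appears when passing to a different base point within the same orbit (this is controlled by the conjugation/coset ambiguity in the classification, which for abelian $A$ collapses since $B^x = B$ and the cocycle is determined up to coboundary). Because $A$ is abelian and the cocycles here are skew-symmetric bicharacters, all of this is routine, which is presumably why the authors omit it; I would simply state the orbit count, identify the stabilizer and its cocycle, and invoke the classification to conclude.
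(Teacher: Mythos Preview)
Your argument is correct and complete: the orbit decomposition of $A/H$ under $B$, the stabilizer computation $B\cap H$, and the identification of the restricted cocycle as $\psi|_{B\cap H}$ are exactly what is needed, and the abelian hypothesis makes the base-point independence immediate. The paper omits the proof entirely (``easy and omitted''), so there is nothing to compare against; your writeup is presumably what the authors had in mind.
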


\subsection{The center of a bimodule category} 
\label{bimodule center}

Let $\C$ be a fusion category with unit object $\be$ and
associativity constraint  $\alpha_{X,Y,Z}:(X\ot Y)\ot Z \xrightarrow{\sim} X\ot (Y \ot Z)$,
and let $\M$ be a $\C$-bimodule category. The following definition
was given in \cite{GNN}.


\begin{definition}
\label{center of a module}
The {\em center} of $\M$ is the category $\Z_\C(\M)$ of $\C$-bimodule functors
from $\C$ to $\M$.
\end{definition}

Explicitly, the objects of $\Z_\C(\M)$ are pairs $(M,\, \gamma)$, where $M$ is an object of $\M$ and
\begin{equation}
\label{gamma}
\gamma = \{ \gamma_{X} : X\ot M \xrightarrow{\sim}  M \ot X \}_{ X \in \C}
\end{equation}
is a natural family of isomorphisms making the following diagram
commutative:
\begin{equation}
\label{central object}
\xymatrix{
 X \ot (M \ot Y) \ar[rr]^{\alpha_{X,M,Y}^{-1}}  & &
(X \ot M) \ot Y  \ar[d]^{\gamma_{X}\otimes \id_Y} & \\
X\ot (Y\ot M) \ar[u]^{\id_X\otimes \gamma_{Y}}
\ar[d]_{\alpha_{X,Y,M}^{-1}}   &  & (M \ot X) \ot Y \\
 (X\ot Y)\ot M \ar[rr]_{\gamma_{X\ot Y}} &  & M\ot (X\ot Y)
\ar[u]_{\alpha_{M,X,Y,}^{-1}} & }
\end{equation}
where $\alpha$'s denote the associativity constraints in $\M$.

Indeed, a $\C$-bimodule functor $F: \C \to \M$ is completely determined
by the pair $(F(\be),\, \{\gamma_X\}_{X\in \C})$, where $\gamma = \{\gamma_X\}_{X\in \C}$
is the collection of isomorphisms
\[
\gamma_X : X \ot F(\be) \xrightarrow{\sim}  F(X) \xrightarrow{\sim}   F(\be) \ot X
\]
coming from the $\C$-bimodule structure on $F$.


\begin{remark}
 $\Z_\C(\M)$ is a semisimple abelian category. It has a natural  structure
of a $\Z(\C)$-module category. Also, it is clear that
$\Z_\C(\C)=\Z(\C)$. 
\end{remark}

\subsection{The opposite module category}
\label{op modcat}

Let $\C$ be a fusion category, and $\M$ a right $\C$-module category.
Let $\M^\op$ be the category opposite to $\M$.
Then $\M^\op$ is a left $\C$-module category with
the $\C$-action $\odot$ given by $X\odot M := M\ot {}^*X$.
Similarly, if $\N$ is a left $\C$-module category, then 
$\N^\op$ is a right $\C$-module category, 
with the $\C$-action $\odot$ given by $N\odot X := X^*\otimes N$.
Note that $(\M^\op)^\op$ is canonically equivalent to $\M$ as a 
$\C$-module category.

More generally, given a $(\C,\D)$-bimodule category $\M$, 
the above definitions make   $\M^\op$ a $(\D,\C)$-bimodule category.

\section{Tensor product of module categories}
\label{Sect3}

\subsection{Definition of the  tensor product of module categories over a fusion category}

Let $\C$, $\D$ be fusion categories. By definition, a $(\C,\D)$-bimodule category is
a module category over $\C \boxtimes \D^{\rev}$, where
$\D^{\rev}$ is the category $\D$ with reversed tensor product. 

Let $\M =(\M,\, m)$ be a right $\C$-module category and let $\N =(\N, \, n)$ 
be a left $\C$-module category. Here $m$ and $n$ are the associativity constraints:
\begin{eqnarray*}
m_{M,X,Y} &:& M \ot (X \ot  Y) \to (M\ot X)\ot Y,\\
n_{X,Y, N} &:& (X\ot Y)\ot N\to X \ot (Y \ot N),
\end{eqnarray*}
where $X,Y\in \C, M\in \M,\, N\in \N$.

Let $\A$ be a semisimple  abelian category.

\begin{definition}
\label{C-balanced functor}
Let $F : \M \times \N \to \A$ be a bifunctor additive in every argument. 
We say that $F$ is  {\em $\C$-balanced} if there is 
a natural family of isomorphisms 
$$
b_{M,X,N} : F(M\otimes X,\, N)  \cong F(M,\, X \otimes N), 
$$
satisfying the following commutative diagram
\begin{equation}
\label{C-balanced diagram}
\xymatrix{
F(M \otimes (X \otimes Y),\, N)   \ar[d]_{b_{M, X\otimes Y, N}}   
\ar[rr]^{m_{M,X,Y}} & & F((M \otimes X )\otimes Y,\, N) 
\ar[d]^{b_{M \otimes X, Y, N}}    \\
F(M ,\, (X \otimes Y) \otimes N)   & &
F(M\otimes X ,\, Y\otimes N) \ar[dl]^{b_{M,X, Y\otimes N}}  \\
&  F(M,\, X \otimes(Y \otimes N) )  \ar[ul]^{n^{-1}_{X,Y,N}}  , & }
\end{equation}
for all $M\in \M,\, N\in \N,\,X,Y\in \C$.
\end{definition}

\begin{remark}
\label{balanced ext}
A bifunctor $\M \times \N \to \A$ as above  canonically extends 
to a functor $\M \boxtimes \N \to \A$, where $M\boxtimes \N$ 
is the Deligne product of abelian categories \cite{D}. Clearly, one can formulate
the balancing property in terms of  functors $\M \boxtimes \N \to \A$.
\end{remark}

We define tensor product of $\C$-module categories 
by  ``categorifying''  the definition of a tensor product of modules over a ring.
This extends the notion of Deligne's tensor product of abelian categories 
(i.e., module categories over $\Vec$) to the context of module categories over tensor categories. 
In the setting of additive ${\bold k}$-linear (not necessarily abelian) categories
the notion of tensor product of module categories was given by D.~Tambara in \cite{Ta}.

\begin{definition}
\label{tensor product in BC}
A {\em tensor product} of a right $\C$-module category $\M$ and a left $\C$-module
category $\N$ is an abelian category $\M \boxtimes_\C \N$ together with a
$\C$-balanced functor
\begin{equation}
\label{B-map}
B_{\M,\N}: \M \times \N \to \M \boxtimes_\C \N
\end{equation}
inducing, for every abelian category $\A$,
an equivalence
between the category of $\C$-balanced functors from $\M \times \N$ to $\A$ and 
the category of functors from $\M \boxtimes_\C \N$ to $\A$:
\begin{equation}
\Fun_{bal} (\M \times \N,\, \A) \cong \Fun (\M \boxtimes_\C \N,\, \A).
\end{equation}
\end{definition}

\begin{remark}
Equivalently, bifunctor \eqref{B-map}
is universal for all $\C$-balanced
bifunctors from $\M \times \N$ to abelian categories. In other words, 
for any  $\C$-balanced functor 
$F : \M \times \N \to \A$ there exists a unique additive functor
$F' : \M \boxtimes_\C \N  \to \A$ making the following diagram commutative
\begin{equation}
\label{universality diagram}
\xymatrix{
\M \times \N \ar[dd]_{B_{\M,\N}} \ar[ddrr]^F & & \\
& &  \\
\M\boxtimes_\C \N \ar@{-->}[rr]_{F'} & & \A.
}
\end{equation}
If $\M$ and $\N$ are $\C$-bimodule categories then so is $\M \boxtimes_\C \N$.
\end{remark}

\subsection{Tensor  product as a category of module functors}

Let us show that the tensor product of bimodule categories 
introduced in Definition~\ref{tensor product in BC} does exist. 

Let $\C$ be a fusion category, let $\M$ be a right $\C$-module category
and $\N$ be a left $\C$-module category. There is an obvious equivalence
\begin{equation}
\label{bt = fun}
\M \bt \N \xrightarrow{\sim} \Fun(\M^\op,\, \N) : M\bt N \mapsto \Hom_\M( -,\, M)\ot N,
\end{equation}

Under the isomorphism \eqref{bt = fun} $\C$-balanced functors $\M\bt \N\to \A$
correspond to functors $F: \Fun(\M^\op,\, \N) \to \A$  with an isomorphism
\begin{equation}
\label{functor coherence}
F(T(X\ot ?)) \cong F(X\ot T(?)),\qquad \mbox{where } T: \M^\op \to \N,
\end{equation}
satisfying a coherence condition similar to diagram \eqref{C-balanced diagram}.

\begin{proposition}
\label{tp = functors}
There is an equivalence of abelian categories
\begin{equation}
\label{check}
\M \bt_\C \N \cong \Fun_\C(\M^\op,\, \N).
\end{equation}
\end{proposition}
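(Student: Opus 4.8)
The plan is to verify the universal property of Definition~\ref{tensor product in BC} directly for the candidate $\Fun_\C(\M^\op,\N)$, using the equivalence \eqref{bt = fun} to reduce everything to a statement about functor categories. First I would recall that, by \eqref{bt = fun}, a right $\C$-module structure on $\M$ makes $\Fun(\M^\op,\N)$ into a category equipped with the endofunctors $T\mapsto T(X\ot ?)$ for $X\in\C$, and that $\C$-balanced functors out of $\M\bt\N$ correspond under this equivalence to functors $F$ out of $\Fun(\M^\op,\N)$ together with coherent isomorphisms $F(T(X\ot ?))\cong F(X\ot T(?))$ as in \eqref{functor coherence}. The claim \eqref{check} then amounts to: the inclusion of $\Fun_\C(\M^\op,\N)$ (those $T$ for which the two endofunctor actions are \emph{identified}, i.e. $T$ carries a $\C$-module structure) into $\Fun(\M^\op,\N)$ is the universal recipient of such coherently-balanced functors.

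Next I would define the balanced functor $B_{\M,\N}\colon \M\times\N\to\Fun_\C(\M^\op,\N)$ and check it is indeed $\C$-balanced. The natural candidate is $M\bt N\mapsto \uHom_\M(-,M)\ot N$ as in \eqref{bt = fun}, where now one observes that $\uHom_\M(-,M)\ot N$ is in fact a $\C$-module functor $\M^\op\to\N$: the right $\C$-module structure on $\M$ gives $\Hom_\M(Y\ot X,M)$, and via the rigidity of $\C$ this produces the module constraint identifying $(\uHom_\M(-,M)\ot N)(Y\ot X)$ with $X\ot(\uHom_\M(-,M)\ot N)(Y)$; the balancing isomorphisms $b_{M,X,N}\colon B(M\ot X,N)\cong B(M,X\ot N)$ come from the adjunction-type isomorphism $\Hom_\M(-,M\ot X)\cong \Hom_\M(-\ot X^*,M)$ together with the left module structure of $\N$, and the pentagon \eqref{C-balanced diagram} follows from the compatibility of these constraints (a routine but somewhat lengthy diagram chase, which I would not spell out in full).

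Then comes the heart of the argument: given any abelian category $\A$ and any $\C$-balanced functor $G\colon \M\times\N\to\A$, I must produce a unique (up to unique isomorphism) functor $G'\colon \Fun_\C(\M^\op,\N)\to\A$ with $G'\circ B_{\M,\N}\cong G$ compatibly with the balancing. The idea is to extend $G$ first along the Deligne product (Remark~\ref{balanced ext}) to $\M\bt\N\to\A$, transport it through \eqref{bt = fun} to a functor on $\Fun(\M^\op,\N)$ equipped with the coherence data \eqref{functor coherence}, and then observe that such data is exactly a descent datum forcing the functor to factor through the full subcategory $\Fun_\C(\M^\op,\N)$; indeed any object $T\in\Fun_\C(\M^\op,\N)$ can be written as a (coequalizer-type) colimit of objects of the form $\uHom_\M(-,M)\ot N$ with the two $\C$-actions glued by the module structure of $T$, and the coherence isomorphisms make the value of the extended functor on $T$ canonically determined by its values on the $B_{\M,\N}(M,N)$. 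Uniqueness follows because these generating objects together with the balancing morphisms generate $\Fun_\C(\M^\op,\N)$ under colimits, and $G'$ is forced on generators and on the gluing morphisms.

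The main obstacle I anticipate is precisely this last factorization/descent step: showing rigorously that a functor on $\Fun(\M^\op,\N)$ carrying the coherence data \eqref{functor coherence} descends \emph{uniquely} to $\Fun_\C(\M^\op,\N)$, and conversely that every functor on $\Fun_\C(\M^\op,\N)$ arises this way. This requires knowing that every $\C$-module functor $\M^\op\to\N$ is built, in a way compatible with the module structure, from the representable-type objects $\uHom_\M(-,M)\ot N$ — i.e. a ``resolution'' of an arbitrary module functor by free ones — and that the module constraint is recovered from the balancing. In the semisimple setting this is manageable: $\M$ has finitely many simple objects, every $T$ is determined by the finite-dimensional multiplicity spaces $\Hom_\N(T(M_i),N_j)$ together with the $\C$-action, and one can make the colimit presentation explicit. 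I would phrase this carefully, perhaps isolating the statement that $\Fun_\C(\M^\op,\N)$ is generated under the relevant 2-categorical colimits by the image of $B_{\M,\N}$, and then the universal property is formal. Everything else — that $B_{\M,\N}$ lands in $\C$-module functors, that it is balanced, that the constraints satisfy the pentagon — is bookkeeping with rigidity and the module associativity constraints.
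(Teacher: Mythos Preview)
Your approach is a valid strategy, but the paper's proof is considerably shorter because it sidesteps exactly the descent/colimit step you correctly flag as the main obstacle. The paper's trick is to pass to \emph{right adjoints}: given a balanced $F\colon \M\bt\N\to\A$, take its right adjoint $G\colon\A\to\M\bt\N\cong\Fun(\M^\op,\N)$. The balancing data on $F$ dualizes to give, for each $A\in\A$, a canonical $\C$-module functor structure on $G(A)$, so $G$ factors through the forgetful functor $U\colon\Fun_\C(\M^\op,\N)\to\Fun(\M^\op,\N)$ as $G=U\circ G'$. Taking left adjoints of this factorization yields the desired $F'\colon\Fun_\C(\M^\op,\N)\to\A$ with $F\cong F'\circ(\text{left adjoint of }U)$.

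The difference in viewpoint is this: you try to realize $\Fun_\C(\M^\op,\N)$ as a \emph{quotient} of $\Fun(\M^\op,\N)$ (via the left adjoint of $U$) and argue that balanced functors descend along it, which requires the resolution-by-representables argument you sketch. The paper instead uses that $\Fun_\C(\M^\op,\N)$ sits inside $\Fun(\M^\op,\N)$ as a \emph{subcategory} (via $U$), and on the adjoint side the factorization through a subcategory is immediate once you check $G(A)$ carries a module structure. Your route works in the semisimple setting and has the virtue of being self-contained, but the adjoint trick is what makes the paper's proof a paragraph rather than a page.
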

\begin{proof}
Let $F: \M \boxtimes \N \to \mathcal{A}$ be 
the extension of some   $\C$-balanced bifunctor as in Remark~\ref{balanced ext}
and let $G: \mathcal{A} \to \M \boxtimes \N$ be its right adjoint.
Using the equivalence \eqref{bt = fun} and coherence \eqref{functor coherence}  
one can check that for every 
$A\in \mathcal{A}$ the functor $G(A)$ in $\Fun(\M^\op,\, \N)$  has a canonical
structure of a  $\C$-module functor. 
Thus, $G$ factors through the obvious forgetful functor 
$U: \Fun_\C(\M^\op,\, \N) \to \Fun(\M^\op,\, \N)$:
\begin{equation}
\label{adjoint universality diagram}
\xymatrix{
\Fun(\M^\op,\, \N)  & & \\
& &  \\
\Fun_\C(\M^\op,\, \N) \ar[uu]^{U} 
& & \A \ar@{-->}^(.4){G'}[ll] \ar[uull]_{G}.
}
\end{equation}
Taking left adjoints we recover diagram~\eqref{universality diagram}.
\end{proof}

\begin{remark}
\label{properties of btD}
\begin{enumerate}
\item[(i)] It is easy to see that if $\M$ is a $(\D,\C)$-bimodule category, 
and $\N$ is a $(\C,\E)$-bimodule category then \eqref{check} is an equivalence 
of $(\D,\E)$-bimodule categories.
\item[(ii)]  Let $\M$ be a right $\C$-module category, 
$\N$ a $(\C,\D)$-bimodule category, and $\K$ a left $\D$-module
category. Then there is a canonical 
equivalence $(\M\bt_\C \N) \bt_\D \K \cong \M\bt_\C (\N \bt_\D \K )$ of categories.
Hence the notation $\M\bt_\C \N \bt_\D \K$ will yield no ambiguity.
\end{enumerate}
\end{remark}

We refer the reader to the work of J.~Greenough \cite{Gr} for an
alternative proof of Proposition~\ref{tp = functors}. It is shown in \cite{Gr}
that for any fusion category  
$\C$ its bimodule categories equipped with the tensor product  
$\boxtimes_\C$  form a (non semi-strict)
monoidal $2$-category in the sense of Kapranov and Voevodsky \cite{KV}.
We denote this monoidal 2-category 
by ${\bf Bimodc}(\C)$.

More generally, one can define the 
tricategory ${\bf Bimodc}$ of
bimodule categories over fusion categories, 
in which 1-morphisms from $\C$ to $\D$ are 
$(\C,\D)$-bimodule categories (with composition being tensor
product of bimodule categories as defined above), 2-morphisms are
bimodule functors 
between such bimodule categories, and 3-morphisms are morphisms of such
bimodule functors. Then ${\bf Bimodc}(\C)$ consists of
1-morphisms from $\C$ to $\C$ 
in ${\bf Bimodc}$, and the corresponding 2-morphisms and
3-morphisms. 

\begin{remark} The tricategory ${\bf Bimodc}$ is a categorification
of the 2-category ${\bf Bimod}$, whose objects are rings,
1-morphisms are bimodules, and 2-morphisms are homomorphisms of
bimodules. 
\end{remark}

\subsection{Tensor product as the center of a bimodule category}

Let $\C$ be a fusion category.
Below we describe the tensor product of $\C$-module categories 
in a way convenient for computations. Recall that the center
of a $\C$-bimodule category was defined in Section~\ref{bimodule center}.

As before, let $\M$ be a right $\C$-module category and let $\N$
be a left $\C$-module category. The category $\M\bt \N$ has 
a natural structure of a $\C$-bimodule category.
It turns out that its center $\Z_\C({\M \boxtimes \N})$ can be identified
with $\M \boxtimes_\C \N$.

Let $F: \M \times \N \to \A$ be a $\C$-balanced functor.
Let $\bar{F} :\M \boxtimes \N \to \A$ be the extension of $F$
and let  $G : \A \to \M \boxtimes \N$ be the functor right adjoint to $\bar{F}$.
Let
\begin{equation}
i: \Hom_\A(\bar{F}(V),\, W) \cong \Hom_{\M \boxtimes \N}(V,\, G(W) ) 
\end{equation}
be the adjunction isomorphism.
Let  
\[
c_{X, G(A)} : G(A) \otimes (X \boxtimes {\bold 1}) 
\cong ({\bold 1}\boxtimes X) \otimes G(A),\quad  A\in\A,
\]
be the image under $i$ of the isomorphism
\[
b_{V, {}^*X} : F(V \otimes ({}^*X \boxtimes {\bold 1})) \cong F(({\bold 1}\boxtimes {}^*X) \otimes V),\quad V\in \M.
\]
Then $G(A)$ is an object of $\Z_\C({\M \boxtimes \N})$
and 
$G' : \A \to  \Z_\C({\M \boxtimes \N}): A \mapsto  G(A)$
satisfies $UG' = G$, where
\begin{equation}
\label{U}
U: \Z_\C({\M \boxtimes \N}) \to \M \boxtimes \N
\end{equation}
is the obvious forgetful functor. Let 
\begin{equation}
\label{IMN}
I_{\M,\N}: \M \boxtimes \N \to
\Z_\C({\M \boxtimes \N})
\end{equation}
be the right adjoint of $U$. 

\begin{proposition}
\label{tensor product = centralizer}
There is a canonical equivalence
\[
\M \boxtimes_\C \N \cong \Z_\C({\M \boxtimes \N})
\]  
such that
$I_{\M,\N}:  \M \boxtimes \N \to \Z_\C({\M \boxtimes \N})$  is identified with
the extension of the universal bifunctor $\B_{\M,\N} : \M \times \N \to \M \boxtimes_\C \N$.
\end{proposition}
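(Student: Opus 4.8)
The plan is to identify $\M \boxtimes_\C \N$, which by Definition~\ref{tensor product in BC} is characterized by a universal property among $\C$-balanced functors, with $\Z_\C(\M \boxtimes \N)$, by showing that the latter represents the same $2$-functor. Concretely, I would first recall from Proposition~\ref{tp = functors} (and its proof) that $\M \boxtimes_\C \N$ is realized as $\Fun_\C(\M^\op, \N)$, and that under the equivalence $\M \boxtimes \N \cong \Fun(\M^\op, \N)$ of \eqref{bt = fun} the forgetful functor $U$ of \eqref{U} corresponds to the forgetful functor $\Fun_\C(\M^\op,\N) \to \Fun(\M^\op,\N)$. So the real content is the comparison: \emph{a $\C$-module functor structure on $T \in \Fun(\M^\op,\N)$ is the same data as a half-braiding $\gamma$ making $(T,\gamma)$ an object of $\Z_\C(\M \boxtimes \N)$}. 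This is already essentially asserted in the paragraph preceding the proposition, where $c_{X,G(A)}$ is built from the balancing isomorphism $b_{V,{}^*X}$ via the adjunction $i$; the first step is therefore to check carefully that the hexagon/pentagon coherence \eqref{central object} defining a central object translates precisely into the balancing coherence \eqref{C-balanced diagram} (equivalently, into the associativity axiom for a $\C$-module functor), using the dualities ${}^*X$ that convert the right $\C$-action on $\M^\op$ into the left action, and the canonical equivalence $(\M^\op)^\op \cong \M$ from Section~\ref{op modcat}.

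Second, I would assemble this into the stated equivalence. Given a $\C$-balanced $F : \M \times \N \to \A$, extend to $\bar F : \M \boxtimes \N \to \A$, take the right adjoint $G : \A \to \M \boxtimes \N$, and observe (as in the displayed construction) that $G$ lifts canonically to $G' : \A \to \Z_\C(\M \boxtimes \N)$ with $UG' = G$: the half-braiding on $G(A)$ is the image of $b_{V,{}^*X}$ under the adjunction isomorphism $i$, and naturality of $i$ in both variables propagates the coherence from Step~1. Then, taking the left adjoint $I_{\M,\N}$ of $U$, I claim $I_{\M,\N} \circ G'$ is right adjoint to $\bar F$ \emph{as functors into $\Z_\C(\M \boxtimes \N)$} — wait, more precisely: $G'$ is itself already the right adjoint of the composite $\bar F \circ U : \Z_\C(\M\boxtimes\N) \to \A$, since $U$ is faithful with both adjoints and $G'$ was constructed from $G = $ right adjoint of $\bar F$ together with the universal lift. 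One checks that the assignment $F \mapsto (\text{left adjoint of } G')$ is inverse to the assignment $(H : \Z_\C(\M\boxtimes\N) \to \A) \mapsto H \circ I_{\M,\N} \circ B_{\M,\N}^{\,\mathrm{ext}}$, where $B_{\M,\N}^{\,\mathrm{ext}}$ is the extension of the universal $\C$-balanced bifunctor; this yields the sought equivalence
\[
\Fun_{bal}(\M \times \N, \A) \cong \Fun(\Z_\C(\M \boxtimes \N), \A),
\]
natural in $\A$, whence by Yoneda $\Z_\C(\M \boxtimes \N)$ satisfies the defining universal property of $\M \boxtimes_\C \N$. Finally, unwinding these adjunctions at $\A = \Z_\C(\M \boxtimes \N)$ and $H = \id$ identifies $I_{\M,\N}$ with the extension of $\B_{\M,\N}$, giving the last clause.

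The main obstacle will be the coherence bookkeeping in Step~1 — verifying that the pentagon \eqref{central object} for a central object is \emph{equivalent} (not merely implied) to the balancing hexagon \eqref{C-balanced diagram}, keeping track of the associativity constraints $m$, $n$ of the two one-sided module structures on $\M$ and $\N$, the constraints of the bimodule structure on $\M \boxtimes \N$, and the left/right duality isomorphisms identifying $X \boxtimes \be$ acting on the right with ${}^*X \boxtimes \be$ acting on the left. All of this is routine diagram-chasing of the kind already done implicitly in \cite{GNN} and in the proof of Proposition~\ref{tp = functors}, so rather than reproduce it I would cite those computations and the construction of $c_{X,G(A)}$ above, and simply record that the adjunction $i$ is compatible with the relevant coherences. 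The remaining steps (Step~2) are then formal consequences of uniqueness of adjoints and the Yoneda lemma for $2$-functors on abelian categories.
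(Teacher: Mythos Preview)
Your proposal is correct and follows essentially the same strategy as the paper: show that for any $\C$-balanced $F$ the right adjoint $G$ lifts canonically through $U$ to $G':\A\to\Z_\C(\M\boxtimes\N)$, then take left adjoints to recover the universal diagram \eqref{universality diagram} with $I_{\M,\N}$ playing the role of the universal functor. The paper's proof is just the two-line version of your Step~2; your Step~1 detour through $\Fun_\C(\M^\op,\N)$ and the explicit matching of the hexagon \eqref{central object} with the module-functor coherence is extra scaffolding the paper absorbs into the phrase ``from the above discussion,'' but it does no harm and makes the argument more self-contained.
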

\begin{proof}
From the above discussion we have a commutative diagram
\begin{equation}
\label{adjoint universality diagram1}
\xymatrix{
\M \boxtimes \N & &\\
& &  \\
\Z_\C({\M\boxtimes_\C \N}) \ar[uu]^{U} 
& & \A. \ar@{-->}[ll]^(.4){G'} \ar[uull]_{G}
}
\end{equation}
Taking the adjoint diagram gives the result.
\end{proof}

%
%

\begin{remark}
\label{yet one more}
There is yet one more description of $\M \bt_\C \N$. Namely, let 
$A \in \C \bt \C^\rev$ be the object representing the functor
$ \otimes : \C \bt \C^\rev \to \C$. Then $A=\oplus_{X}\,X^*\bt X$ (summation taken over simple objects of $\C$)
is an algebra in  $\C \bt \C^\rev$, and $\M \bt_\C \N$ is equivalent to the category
of  left $A$-modules in $\M \bt \N$. The canonical functor
\[
\M \bt \N \to \M\bt_\C \N
\]
is identified with the middle multiplication by $A$.
\end{remark}


\subsection{Tensor product  of module categories over a braided category}
\label{mod prod}

Let $\B$ be a braided fusion category.  Since every left (or right) $\B$-module category is
automatically a $\B$-bimodule category (using the braiding in
$\B$), one can tensor any two
such categories, to get a third one.
Let ${\bf Modc}(\B)$ denote the monoidal $2$-category of
(left) $\B$-module categories. Clearly, ${\bf Modc}(\B)$
is a full subcategory  of the monoidal 2-category ${\bf Bimodc}(\B)$ of
all $\B$-bimodule  categories.

\begin{remark}
The monoidal 2-category ${\bf Modc}(\B)$ is a categorification
of the monoidal category ${\bf Mod}(A)$ of modules over a
commutative ring $A$. Note that unlike ${\bf Mod}(A)$, 
the monoidal 2-category ${\bf Modc}(\B)$ is, in general, not
symmetric or braided; in fact, in this category, $X\otimes Y$
may be non-isomorphic to $Y\ot X$. 
\end{remark}

Let $\C$ be a fusion category.
Recall \cite{EO} that there is a $2$-equivalence  
\begin{equation}
\label{EO-equiv}
\Z_\C: {\bf Bimodc}(\C) 
\xrightarrow{\sim} {\bf Modc}(\Z(\C)) : \M \mapsto \Z_\C(\M),
\end{equation}
where the center $\Z_\C(\M)$ is defined in Section~\ref{bimodule center}.

The next Proposition is proved in \cite{Gr}. We include its proof for reader's convenience.

\begin{proposition}
\label{bimod =modZC}
The 2-equivalence $\Z_\C$ is monoidal. That is, 
for any pair $\M,\,\N$ of $\C$-bimodule categories we have 
a natural equivalence
\begin{equation}
\label{Z is eq}
\Z_\C(\M \bt_\C \N) \cong \Z_\C(\M)  \bt_{\Z(\C)} \Z_\C(\N),
\end{equation}
which satisfies appropriate compatibility conditions. 
\end{proposition}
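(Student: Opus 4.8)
The plan is to use the three equivalent descriptions of the relative tensor product established above and to exploit the fact that $\boxtimes_\C$ is, by Proposition \ref{tensor product = centralizer}, computable as a center of a $\C$-bimodule category, together with the known $2$-equivalence \eqref{EO-equiv}. First I would recall that for $\C$-bimodule categories $\M$ and $\N$, the category $\M\boxtimes\N$ (Deligne product) carries a natural $\C$-bimodule structure, and by Remark \ref{yet one more} the relative tensor product $\M\boxtimes_\C\N$ is the category of left $A$-modules in $\M\boxtimes\N$, where $A=\oplus_X X^*\boxtimes X$ is the canonical algebra in $\C\boxtimes\C^\rev$ representing $\otimes$. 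The key observation is that $\Z_\C$ carries the "middle $\C$-action" on $\M\boxtimes\N$ (the one used to form $\boxtimes_\C$) to the relevant action on the $\Z(\C)$-side, so that taking $A$-modules commutes with applying $\Z_\C$.

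The steps, in order, would be: (1) Identify, via \eqref{EO-equiv} applied to $\C\boxtimes\C^\rev$ with its center $\Z(\C)\boxtimes\Z(\C)^\rev$, the algebra $A$ with the canonical algebra $\tilde A=\oplus_Z Z^*\boxtimes Z$ in $\Z(\C)\boxtimes\Z(\C)^\rev$ representing $\otimes$ on $\Z(\C)$ — more precisely, show $\Z_\C$ sends $A$-module structures on a $\C$-bimodule category to $\tilde A$-module structures on the corresponding $\Z(\C)$-module category. (2) Apply the monoidal $2$-equivalence $\Z_{\C\boxtimes\C^\rev}$ (or directly $\Z_\C$ on each tensor factor) to $\M\boxtimes\N$, using that $\Z_\C(\M\boxtimes\N)\cong\Z_\C(\M)\boxtimes\Z_\C(\N)$ as $\Z(\C)$-bimodule categories — this is the Deligne-product compatibility of $\Z_\C$, which should follow since $\Z$ of a Deligne product of bimodule categories is the Deligne product of the centers. (3) Combine: $\Z_\C(\M\boxtimes_\C\N)=\Z_\C(A\text{-mod in }\M\boxtimes\N)\cong\tilde A\text{-mod in }\Z_\C(\M)\boxtimes\Z_\C(\N)=\Z_\C(\M)\boxtimes_{\Z(\C)}\Z_\C(\N)$. (4) Check the compatibility (hexagon/associativity) constraints: unitality $\Z_\C(\C)=\Z(\C)$ is already noted, and associativity follows from the uniqueness in Remark \ref{properties of btD}(ii) together with naturality of all the identifications.

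Alternatively — and this may be cleaner for the write-up — I would work with the functor-category model \eqref{check}: $\M\boxtimes_\C\N\cong\Fun_\C(\M^\op,\N)$, and $\Z_\C(\M)\boxtimes_{\Z(\C)}\Z_\C(\N)\cong\Fun_{\Z(\C)}(\Z_\C(\M)^\op,\Z_\C(\N))$. Since $\Z_\C$ is a $2$-equivalence ${\bf Bimodc}(\C)\to{\bf Modc}(\Z(\C))$, it induces an equivalence on Hom-categories, i.e. $\Fun_{\C\boxtimes\C^\rev}(\M,\N)\cong\Fun_{\Z(\C)\boxtimes\Z(\C)^\rev}(\Z_\C(\M),\Z_\C(\N))$; one then needs to match the $\C$-balanced / $\C$-module-functor condition defining $\Fun_\C$ with its $\Z(\C)$-counterpart under this equivalence, using that $\Z_\C(\M^\op)\cong\Z_\C(\M)^\op$ compatibly with the module structures. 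This reduces the whole statement to bookkeeping with the equivalence \eqref{EO-equiv}.

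The main obstacle I expect is not any single hard idea but the coherence: verifying that the equivalence \eqref{Z is eq} is compatible with the associativity constraints of the two monoidal $2$-categories ${\bf Bimodc}(\C)$ and ${\bf Modc}(\Z(\C))$ (the "appropriate compatibility conditions" in the statement), i.e. that $\Z_\C$ upgrades to a genuine monoidal $2$-functor and not merely one preserving tensor products object-wise. This requires tracking the natural isomorphisms through the adjunctions defining $I_{\M,\N}$ in Proposition \ref{tensor product = centralizer} and checking a pentagon-type diagram; I would handle it by appealing to the universal property in Definition \ref{tensor product in BC} (and Remark \ref{properties of btD}(ii)) to reduce each coherence cell to an equality of $\C$-balanced functors, where uniqueness forces commutativity.
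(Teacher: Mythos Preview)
Your alternative approach is essentially the paper's proof. The paper identifies both sides with $\Z_{\C\bt\C^\rev}(\M\bt\N)$ for a suitable $(\C\bt\C^\rev)$-bimodule structure on $\M\bt\N$: the left-hand side via Proposition~\ref{tensor product = centralizer} (taking the center once for the middle action and once for the outer action amounts to taking the center for the combined $\C\bt\C^\rev$-action), and the right-hand side via the chain $\Z_\C(\M)\bt_{\Z(\C)}\Z_\C(\N)\cong\Fun_{\Z(\C)}(\Z_\C(\M)^\op,\Z_\C(\N))\cong\Fun_{\C\bt\C^\rev}(\M^\op,\N)\cong\Z_{\C\bt\C^\rev}(\M\bt\N)$, exactly as you outline. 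One slip: the Hom-category on the $\Z(\C)$-side is $\Fun_{\Z(\C)}$, not $\Fun_{\Z(\C)\bt\Z(\C)^\rev}$, since \eqref{EO-equiv} lands in ${\bf Modc}(\Z(\C))$, not ${\bf Bimodc}(\Z(\C))$. Also, your treatment of the left-hand side is vague; the paper's route through $\Z_{\C\bt\C^\rev}(\M\bt\N)$ is what makes that step clean, and you should adopt it rather than trying to ``match the $\C$-balanced condition with its $\Z(\C)$-counterpart'' directly.

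Your first approach via $A$-modules is a genuine detour. Step~(2), the claim $\Z_\C(\M\bt\N)\cong\Z_\C(\M)\bt\Z_\C(\N)$ as $\Z(\C)$-bimodule categories, is itself a nontrivial instance of the monoidality you are trying to prove (with $\C$ replaced by $\Vec$, or rather with the Deligne product playing the role of $\bt_\C$), and step~(1) requires comparing algebras in different categories under a $2$-equivalence, which is more bookkeeping than the functor-category route. Drop it in favor of the alternative.
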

\begin{proof}
By Proposition~\ref{tensor product = centralizer} the left hand side
of \eqref{Z is eq} is identified as a $\Z(\C)$-module category 
with  $\Z_{\C\bt \C^\rev}(\M \bt \N)$
where the left and right actions of the object $X\bt Y \in \C\bt \C^\rev$ 
on $M\bt N \in \M \bt \N$ are given by
\begin{eqnarray*}
(X\bt Y) \ot (M\bt N) &:=&  ( X \ot M ) \bt (Y\ot N)  \\
(M\bt N)  \ot (X\bt Y)  &:=&  (M \ot Y) \bt  (N \ot X).
\end{eqnarray*}
On the other hand, combining Proposition~\ref{tp = functors} and the
$2$-equivalence \eqref{EO-equiv} we obtain a sequence of
$\Z(\C)$-module category  equivalences
\begin{eqnarray*}
\Z_\C(\M)  \bt_{\Z(\C)} \Z_\C(\N) 
&\cong& \mbox{Fun}_{\Z(\C)}(\Z_\C(\M)^\op,\, \Z_\C(\N)) \\ 
&\cong& \mbox{Fun}_{\C\bt \C^\rev}(\M^\op,\, \N) \\
&\cong& \Z_{\C\bt \C^\rev}(\M \bt \N),
\end{eqnarray*}
where the bimodule action of $\C\bt \C^\rev$ on $\M \bt \N$ is the same 
as the one described above.
\end{proof}

\begin{remark}
It is possible to show that \eqref{EO-equiv} is  an equivalence of monoidal $2$-categories.
\end{remark}

\begin{corollary}
Any Morita equivalence between fusion categories $\C_1$ and $\C_2$ canonically gives rise to an equivalence
of monoidal $2$-categories ${\bf Bimodc}(\C_1)$  and ${\bf Bimodc}(\C_2)$. 
\end{corollary}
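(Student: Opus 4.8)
The plan is to deduce the corollary directly from Proposition~\ref{bimod =modZC} together with M\"uger's theorem on centers of Morita equivalent fusion categories. First I would recall that a Morita equivalence between $\C_1$ and $\C_2$ is, by definition, the data of an invertible $(\C_1,\C_2)$-bimodule category $\M$; by M\"uger's result (quoted in the introduction, via \cite{Mu}) such an $\M$ induces a braided equivalence $\Z(\M)\colon \Z(\C_1)\xrightarrow{\sim}\Z(\C_2)$. A braided equivalence of braided fusion categories induces, in the obvious way, an equivalence of the monoidal $2$-categories of module categories ${\bf Modc}(\Z(\C_1))\xrightarrow{\sim}{\bf Modc}(\Z(\C_2))$, since all the structure (module structure, balanced tensor products) is expressed using only the tensor product and braiding. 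Composing with the two instances of the monoidal $2$-equivalence $\Z_{\C_i}\colon {\bf Bimodc}(\C_i)\xrightarrow{\sim}{\bf Modc}(\Z(\C_i))$ from \eqref{EO-equiv} and Proposition~\ref{bimod =modZC}, we obtain the desired equivalence of monoidal $2$-categories
\[
{\bf Bimodc}(\C_1)\xrightarrow[\sim]{\Z_{\C_1}}{\bf Modc}(\Z(\C_1))\xrightarrow[\sim]{\Z(\M)_*}{\bf Modc}(\Z(\C_2))\xrightarrow[\sim]{\Z_{\C_2}^{-1}}{\bf Bimodc}(\C_2).
\]

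The key steps, in order, are: (1) unpack the definition of Morita equivalence to produce the invertible bimodule category $\M$ and the braided equivalence $\Z(\M)$; (2) check that a braided equivalence $\phi\colon \B_1\to\B_2$ functorially transports left $\B_1$-module categories to left $\B_2$-module categories (a module category over $\B_1$ becomes one over $\B_2$ by restriction of scalars along $\phi^{-1}$, or equivalently by transport of structure along $\phi$), and that this respects the relative tensor product $\bt_{\B_i}$ up to coherent equivalence, so that $\phi_*$ is a monoidal $2$-functor; (3) invoke Proposition~\ref{bimod =modZC} to know that each $\Z_{\C_i}$ is a monoidal $2$-equivalence; (4) assemble the composite and note that a composite of monoidal $2$-equivalences is one.

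The main obstacle is step~(2): verifying, at the level of $2$-categories, that transport of structure along a braided equivalence is genuinely monoidal, i.e.\ that it intertwines the balanced tensor products $\bt_{\B_1}$ and $\bt_{\B_2}$ coherently. Since $\bt_\B$ is built from $\C$-balanced functors whose balancing constraints are defined purely in terms of the associativity and (through the bimodule structure coming from the braiding) the braiding of $\B$, a braided monoidal equivalence carries balanced functors to balanced functors and universal objects to universal objects; the coherence isomorphisms for the composite are then obtained by pasting those of $\phi$ with those supplied by Proposition~\ref{bimod =modZC}. In practice this is routine ``transport of structure'' bookkeeping of the kind that the paper elsewhere treats as folklore, so I would state it as such and refer to \cite{Gr,EO} for the relevant coherences rather than write out the pasting diagrams; the substance of the corollary is entirely contained in Proposition~\ref{bimod =modZC} and M\"uger's theorem.
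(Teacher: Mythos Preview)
Your proof is correct and follows essentially the same route as the paper: invoke M\"uger's theorem to get a braided equivalence $\Z(\C_1)\simeq\Z(\C_2)$, then compose with the monoidal $2$-equivalences $\mathbf{Bimodc}(\C_i)\simeq\mathbf{Modc}(\Z(\C_i))$ from Proposition~\ref{bimod =modZC}. The only difference is that you spell out the transport-of-structure step (2) explicitly, whereas the paper leaves it implicit.
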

\begin{proof}
By \cite{Mu}, the Morita equivalence between $\C_1$ and $\C_2$ gives rise to an equivalence 
$\Z(\C_1)\cong \Z(\C_2)$ as braided fusion categories, and so the result follows from
Proposition~\ref{bimod =modZC}.
\end{proof}

The next statement was formulated in \cite[Section 4.3]{DGNO}.

\begin{corollary}
Let $\C_1$ and $\C_2$ be Morita equivalent fusion categories.  Let $\K_i,\, i=1,2$
be the $2$-category of fusion categories $\C$ equipped with a tensor functor $\C_i\to \C$.
Then $\K_1$ and $\K_2$ are $2$-equivalent.
\end{corollary}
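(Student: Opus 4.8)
The plan is to interpret each $2$-category $\K_i$ as a slice (coming-under) $2$-category and then transport it along the canonical equivalence of monoidal $2$-categories supplied by the previous corollary. Concretely, for a fixed fusion category $\C_i$, to give a tensor functor $\C_i\to\C$ is, up to $2$-equivalence, the same as to give a $\C_i$-bimodule category $\M$ that admits the structure of a monoidal category compatible with the bimodule structure — equivalently, to regard $\C$ as an algebra object in the monoidal $2$-category ${\bf Bimodc}(\C_i)$ (a ``$\C_i$-ring''), with its module-category structure over $\C_i$ being exactly $\C$ viewed as a $\C_i$-bimodule via the functor $\C_i\to\C$ and $({}^*X)\otimes(-)\otimes Y$. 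Thus $\K_i$ is $2$-equivalent to the $2$-category of algebra objects in ${\bf Bimodc}(\C_i)$, with $1$-morphisms the algebra maps (corresponding to tensor functors over $\C_i$) and $2$-morphisms the $2$-morphisms of such.

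The key steps, in order, are as follows. First I would make precise the claim that $\K_i \simeq \mathrm{Alg}({\bf Bimodc}(\C_i))$: a tensor functor $\iota:\C_i\to\C$ makes $\C$ into a $\C_i$-bimodule category via $X\cdot c\cdot Y := \iota(X)\otimes c\otimes\iota(Y)$, and the tensor product on $\C$ together with $\iota$ being a tensor functor endows this bimodule category with an associative multiplication $\C\bt_{\C_i}\C\to\C$ and unit $\C_i\to\C$ in the monoidal $2$-category sense; conversely any such algebra object recovers the data of a tensor functor out of $\C_i$. One should check this is an equivalence of $2$-categories by unwinding the definition of $1$- and $2$-morphisms on both sides — a tensor functor $F:\C\to\C'$ over $\C_i$ is precisely a morphism of algebra objects. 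Second, invoke the previous corollary: a Morita equivalence between $\C_1$ and $\C_2$ gives a monoidal $2$-equivalence $\Phi:{\bf Bimodc}(\C_1)\xrightarrow{\sim}{\bf Bimodc}(\C_2)$ (obtained via $\Z_{\C_i}$ and the braided equivalence $\Z(\C_1)\simeq\Z(\C_2)$ of M\"uger). Third, observe that any monoidal $2$-equivalence induces a $2$-equivalence on $2$-categories of algebra objects, since it preserves the $\bt$ used to state associativity and the unit objects; hence $\mathrm{Alg}({\bf Bimodc}(\C_1))\simeq\mathrm{Alg}({\bf Bimodc}(\C_2))$, and combining with Step~1 gives $\K_1\simeq\K_2$.

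The main obstacle I expect is Step~1: carefully identifying $\K_i$ with the $2$-category of algebra objects in ${\bf Bimodc}(\C_i)$, including all the higher coherence data. In particular one must check that the associativity $2$-isomorphism of the multiplication on the algebra object $\C$ corresponds exactly to the tensor structure constraint on $\iota$, that the unit is handled correctly (the unit object of ${\bf Bimodc}(\C_i)$ is $\C_i$ itself as a bimodule over itself, and the unit map of the algebra is $\iota$ viewed as a bimodule functor), and that morphisms and $2$-morphisms match on the nose. This is essentially a bookkeeping exercise in the monoidal $2$-category formalism of Kapranov--Voevodsky, but getting the coherences right is the substantive part; once it is done, Steps~2 and~3 are formal consequences of functoriality of the $\mathrm{Alg}$ construction under monoidal $2$-equivalences. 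An alternative, more hands-on route that avoids naming ``algebra objects'' explicitly would be to transport a given tensor functor $\C_1\to\C$ directly: view $\C$ as an object of ${\bf Bimodc}(\C_1)$, apply $\Phi$ to get an object of ${\bf Bimodc}(\C_2)$, and transport the multiplication and unit morphisms along the monoidal structure of $\Phi$; this produces a tensor functor $\C_2\to\Phi(\C)$, and one checks this assignment is a $2$-equivalence $\K_1\to\K_2$ with inverse built from $\Phi^{-1}$.
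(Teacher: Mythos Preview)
Your approach is correct and closely parallel in spirit to the paper's, but the two encodings of $\K_i$ sit at different categorical levels. You realize $\K_i$ as the $2$-category of \emph{algebra objects in the monoidal $2$-category} ${\bf Bimodc}(\C_i)$, so that the previous corollary (the monoidal $2$-equivalence ${\bf Bimodc}(\C_1)\simeq{\bf Bimodc}(\C_2)$) applies immediately via the functoriality of $\mathrm{Alg}(-)$. The paper instead invokes the identification of $\K_i$ with the $2$-category of \emph{algebras in $\C_i$ itself} (objects $A\in\C_i$ with $A\otimes A\to A$, via $\C\mapsto I(\mathbf{1}_\C)$ and $A\mapsto A\text{-bimod}_{\C_i}$; cf.\ \cite[Remarks~4.38(i)]{DGNO}), and then uses Morita invariance of module categories. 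Your route is more self-contained relative to the surrounding text---it uses exactly the monoidal $2$-equivalence just established and nothing external---at the cost of the coherence bookkeeping you correctly flag in Step~1. The paper's route outsources that bookkeeping to the cited reference but requires one to unpack why ``algebras in $\C_i$'' is a Morita-invariant notion, which ultimately comes down to the same equivalence of bimodule $2$-categories. Neither argument has a gap; they are two presentations of the same underlying mechanism.
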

\begin{proof}
This follows from the observation that $\K_i$ can be interpreted as the $2$-category of
algebras in $\C_i,\, i=1,2$ (cf.\ \cite[Remarks 4.38(i)]{DGNO}).
\end{proof}

\subsection{Tensor product of module categories 
over $\Vec_A$, where $A$ is a finite abelian group}

Let $A$ be an abelian group and $\C=\Vec_A$. 
Since $\C$ is a symmetric category, any left
$\C$-module category can be viewed as a right $\C$-module
category, and thus the dual $\M^{\rm op}$ of a left $\C$-module
category $\M$ is again a left $\C$-module category. 
Also, the tensor product over $\C$ of two left $\C$-module
categories is again a left $\C$-module category.

For any subgroup $H\subset A$ and a skew-symmetric bicharacter $\psi$
on $H$ let $\M({H,\psi})$ be the $\C$-module category constructed in Section~\ref{VecGmod}.
The following Lemma is easy, and its proof is omitted. 

\begin{lemma}\label{dua}
One has $\M({H,\psi})^{\rm op}=\M({H,\psi^{-1}})$.
\end{lemma}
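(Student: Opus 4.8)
The plan is to unwind the definition of $\M(H,\psi)$ from Section~\ref{VecGmod} and the definition of the opposite module category, then match up the data. Recall that $\M(H,\psi)$ has simple objects indexed by the coset space $A/H$, with the $\Vec_A$-action permuting cosets by translation and the associativity/module constraint on the stabilizer $H$ of each simple object encoded by the $2$-cocycle $\psi$ (equivalently, since $H$ is abelian, by the skew-symmetric bicharacter $\psi$). First I would observe that since $\C = \Vec_A$ is symmetric, the $\C$-action $\odot$ on $\M^{\rm op}$ defined in Section~\ref{op modcat} by $X \odot M := X^* \otimes M$ differs from the original action only by precomposition with the (tensor) autoequivalence $X \mapsto X^*$ of $\Vec_A$, which on simple objects sends $g \in A$ to $-g \in A$. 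Hence the underlying module-category structure of $\M(H,\psi)^{\rm op}$ is again indecomposable, and its ``stabilizer subgroup'' of any simple object is still $H$ (since $H = -H$).

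The heart of the matter is to compute the new associativity constraint. The point is that taking the opposite category reverses all morphisms, so the scalar implementing the associativity constraint $(x_1 x_2) Y \xrightarrow{\sim} x_1(x_2 Y)$ in $\M^{\rm op}$ is the \emph{inverse} of the corresponding scalar in $\M$; composed with the relabeling $x \mapsto -x$ coming from the dualization, one gets the cocycle $(x_1,x_2) \mapsto \psi(-x_1,-x_2)^{-1}$ on $H$. Then I would invoke that for a skew-symmetric bicharacter $\psi$ one has $\psi(-x_1,-x_2) = \psi(x_1,x_2)$ (bicharacters are bi-additive, so the two sign changes cancel), hence the resulting bicharacter is $\psi^{-1}$. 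This identifies $\M(H,\psi)^{\rm op}$ with $\M(H,\psi^{-1})$ via the classification of indecomposable $\Vec_A$-module categories recalled in Section~\ref{VecGmod}.

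The main obstacle — really the only place care is needed — is bookkeeping the direction of arrows and the dualization twist simultaneously, i.e.\ making sure one does not accidentally apply the inversion twice or forget the $x \mapsto -x$ relabeling. A clean way to organize this is: (1) write down an explicit skeletal model of $\M(H,\psi)$ with chosen simple objects $Y_c$, $c \in A/H$, and explicit module constraint; (2) transport this model through $(-)^{\rm op}$ and through the dualization functor, tracking the cocycle; (3) read off the isomorphism class. Since the paper explicitly says this lemma is easy and omits the proof, I would keep the exposition to these three bullet-points-worth of remarks and not belabor the coherence diagrams, which are automatic.
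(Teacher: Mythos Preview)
Your proof is correct: the key points are that passing to $\M^{\rm op}$ twists the $\Vec_A$-action by $a\mapsto -a$ (leaving the stabilizer $H=-H$ unchanged) and inverts the associativity scalar, yielding $\psi(-x_1,-x_2)^{-1}=\psi(x_1,x_2)^{-1}$ by biadditivity. The paper omits its own proof of this lemma, declaring it easy, so there is nothing to compare against; your write-up is exactly the sort of routine unpacking the authors had in mind.
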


Now let us give an explicit description of the 
tensor product of $\C$-module categories.

We repeat the construction preceding Proposition \ref{noninj}.
Let $H_1,H_2\subset A$ be subgroups of a finite abelian group
$A$, and $\psi_1,\psi_2$ be skew-symmetric bicharacters on
them. Consider the group $H_1\cap H_2$ embedded antidiagonally (i.e., by $h\mapsto
(-h,h)$) into $H_1\oplus H_2$. Let $(H_1\cap H_2)^\perp$ be the
orthogonal complement of this group under the bicharacter
$\psi_1\times \psi_2$ on $H_1\oplus H_2$. Let $H$ be the image of
$(H_1\cap H_2)^\perp$ in $H_1+H_2\subset A$, under the map
$(h_1,h_2)\mapsto h_1+h_2$. We have an exact sequence 
$$ 
0\to {\rm Rad}((\psi_1\times \psi_2)|_{H_1\cap H_2})\to (H_1\cap
H_2)^\perp\to H\to 0. 
$$ 
Therefore, the bicharacter
$(\psi_1\times \psi_2)|_{(H_1\cap H_2)^\perp}$ descends to a
bicharacter on $H$, which we will denote by $\psi$.

\begin{proposition}\label{tenpro} One has 
$$ 
\M({H_1,\psi_1})\boxtimes_\C
\M({H_2,\psi_2})= 
m\cdot \M({H,\psi}), 
$$ 
where 
$$ 
m=\frac{|(H_1\cap H_2)^\perp|\cdot |H_1\cap H_2|}{|H_1|\cdot
|H_2|}=|H_1\cap H_2\cap {\rm Rad}(\psi_1\times \psi_2)|. 
$$
\end{proposition}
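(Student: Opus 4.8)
The strategy is to compute $\M(H_1,\psi_1)\boxtimes_\C \M(H_2,\psi_2)$ by reducing it to a question about $\Vec_A$-module categories of the form treated in Proposition~\ref{noninj}. First I would use Lemma~\ref{dua} to rewrite $\M(H_1,\psi_1)^{\op}=\M(H_1,\psi_1^{-1})$, and then invoke Proposition~\ref{tp = functors} (in the braided/symmetric form valid for $\C=\Vec_A$) to identify
\[
\M(H_1,\psi_1)\boxtimes_\C \M(H_2,\psi_2)\;\cong\;\Fun_\C\big(\M(H_1,\psi_1^{-1}),\,\M(H_2,\psi_2)\big).
\]
The plan is then to describe a $\C$-module functor $\M(H_1,\psi_1^{-1})\to \M(H_2,\psi_2)$ by its value on the generating simple object; since the simple objects of $\M(H_i,\psi_i)$ are indexed by $A/H_i$ and the stabilizer data is governed by the cocycle $\psi_i$, a module functor is the same as an object of $\M(H_2,\psi_2)$ together with an $H_1$-equivariance datum twisted by $\psi_1^{-1}$. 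In other words, $\Fun_\C(\M(H_1,\psi_1^{-1}),\M(H_2,\psi_2))$ is the category of $A$-graded vector spaces carrying compatible projective actions of $H_1$ (with cocycle $\psi_1^{-1}$, contributing via the antidiagonal embedding) and of $H_2$ (with cocycle $\psi_2$). This is precisely a category of the type $\N$ appearing in Proposition~\ref{noninj}, for the homomorphism $\phi\colon H_1\oplus H_2\to A$, $(h_1,h_2)\mapsto h_1+h_2$, with bicharacter $\xi=\psi_1^{-1}\times\psi_2$ on $B:=H_1\oplus H_2$ — except that the antidiagonal sign means $K=\Ker\phi$ is $H_1\cap H_2$ embedded antidiagonally, and $\xi|_K=(\psi_1^{-1}\times\psi_2)|_{H_1\cap H_2}$, which has the same radical as $(\psi_1\times\psi_2)|_{H_1\cap H_2}$ since inverting the first factor doesn't change isotropy on the antidiagonal. (Here I would be slightly careful: the convention $h\mapsto(-h,h)$ in the statement versus $h\mapsto(h,h)$ is exactly what converts $\psi_1^{-1}$ back to $\psi_1$, so the multiplicity and the resulting bicharacter come out in terms of $\psi_1\times\psi_2$ as stated.)

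Granting this identification, Proposition~\ref{noninj} applies verbatim: with $K=H_1\cap H_2$ (antidiagonal), $K^\perp=(H_1\cap H_2)^\perp$ inside $H_1\oplus H_2$, and $H=\phi(K^\perp)=$ image of $(H_1\cap H_2)^\perp$ in $H_1+H_2$, it gives $\N\cong m\cdot\M(H,\psi)$ with
\[
m=\frac{|K|\cdot|K^\perp|}{|B|}=\frac{|H_1\cap H_2|\cdot|(H_1\cap H_2)^\perp|}{|H_1|\cdot|H_2|}=|K\cap\mathrm{Rad}(\xi)|=|H_1\cap H_2\cap\mathrm{Rad}(\psi_1\times\psi_2)|,
\]
and with $\psi$ the descent of $\xi|_{K^\perp}$ to $H$, which matches the bicharacter $\psi$ defined just before the statement via the exact sequence $0\to\mathrm{Rad}((\psi_1\times\psi_2)|_{H_1\cap H_2})\to(H_1\cap H_2)^\perp\to H\to 0$. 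This is exactly the claimed formula.

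\textbf{Main obstacle.} The routine-looking but genuinely delicate step is the identification of $\Fun_\C(\M(H_1,\psi_1^{-1}),\M(H_2,\psi_2))$ with the equivariantization category $\N$ of Proposition~\ref{noninj}, i.e. correctly bookkeeping the two cocycles, the antidiagonal embedding, and the direction of duality so that the cohomology class on $H$ that emerges is precisely $\psi$ and not, say, $\psi^{-1}$ or a shifted version. Concretely one must check that a $\C$-module functor, evaluated on the distinguished simple object of $\M(H_1,\psi_1^{-1})$, produces an object of $\M(H_2,\psi_2)$ whose $H_1$-isotypic structure is governed by $\psi_1^{-1}$, and that combining the $H_1$- and $H_2$-projective structures yields a projective $B=H_1\oplus H_2$ action with cocycle $\psi_1^{-1}\times\psi_2$, whence — after the sign flip in the first coordinate — the hypotheses of Proposition~\ref{noninj} hold for $\psi_1\times\psi_2$. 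Everything after that is a direct quotation of Proposition~\ref{noninj}. (An alternative, perhaps cleaner, route is to realize all $\M(H_i,\psi_i)$ as categories of modules over the twisted group algebra objects $\bigoplus_{h\in H_i}h$ with cocycle $\psi_i$ in $\Vec_A$, use Remark~\ref{yet one more} to compute $\boxtimes_\C$ as a relative tensor product of these algebras, and read off the answer as the category of modules over the resulting algebra $\bigoplus_{h\in H_1+H_2}$-something; but the combinatorial content is identical.)
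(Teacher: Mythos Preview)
Your proposal is correct and follows essentially the same route as the paper: rewrite the tensor product as $\Fun_\C(\M(H_1,\psi_1^{-1}),\M(H_2,\psi_2))$ via Lemma~\ref{dua} and Proposition~\ref{tp = functors}, identify this functor category with the category of $A$-graded vector spaces equivariant under $H_1\oplus H_2$ with cocycle $\psi_1\times\psi_2$, and then quote Proposition~\ref{noninj}. The paper dispatches the ``delicate bookkeeping'' step you flagged by a citation to \cite{O1} rather than spelling out the antidiagonal sign argument, but the content is the same.
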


\begin{proof}
Using Lemma \ref{dua}, we get
$$
\M({H_1,\psi_1})\boxtimes_\C
\M({H_2,\psi_2})= 
{\rm Fun}_\C(\M({H_1,\psi_1^{-1}}),\M({H_2,\psi_2})). 
$$
According to \cite{O1}, this category can be described 
as the category of $A$-graded vector spaces 
which are left equivariant under the action of $H_1$
with 2-cocycle $\psi_1$ and right equivariant 
under the action of $H_2$ with 2-cocycle $\psi_2$. 
Since $A$ is abelian, this is the same as considering 
$A$-graded vector spaces which are right-equivariant 
under the action $H_1\oplus H_2$ with cocycle $\psi_1\times
\psi_2$. So the result follows immediately from Proposition \ref{noninj}. 
\end{proof}

\begin{corollary}\label{inve} 
(i) The $\C$-module category 
$\M({H,\psi})$ is invertible
if and only if 
$\psi$ is nondegenerate. 

(ii) The group of equivalence classes of 
invertible $\C$-module categories
is naturally isomorphic to the group $H^2(A^*,{\bold k}^\times)$ of 
skew-symmetric bicharacters of $A^*$ via
$\M({H,\psi})\mapsto \psi^\vee|_{A^*}$, where 
$\psi^\vee$ is the bicharacter 
on $H^*$ dual to $\psi$ (i.e., $\widehat{\psi^\vee}=\widehat{\psi}^{-1}$).
\end{corollary}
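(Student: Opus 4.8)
\textbf{Plan of proof for Corollary~\ref{inve}.}
The whole statement is a formal consequence of the explicit product formula in Proposition~\ref{tenpro}, so the plan is to feed that formula the right special cases. First I would establish (i). The unit object of ${\bf Modc}(\C)$ is $\M(0,1)=\C$ itself, so $\M(H,\psi)$ is invertible if and only if there is some $\M(H',\psi')$ with $\M(H,\psi)\boxtimes_\C\M(H',\psi')\cong \C$. By Lemma~\ref{dua} a natural candidate for the inverse is $\M(H,\psi)^{\rm op}=\M(H,\psi^{-1})$, so I would first compute $\M(H,\psi)\boxtimes_\C\M(H,\psi^{-1})$ using Proposition~\ref{tenpro} with $H_1=H_2=H$, $\psi_1=\psi$, $\psi_2=\psi^{-1}$. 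Here $H_1\cap H_2=H$ embedded antidiagonally, and $\psi_1\times\psi_2$ restricted to the antidiagonal copy of $H$ is $(h,h')\mapsto \psi(h,h')\psi^{-1}(h,h')=1$ once we account for the sign in the antidiagonal embedding; a short computation shows $(H_1\cap H_2)\cap{\rm Rad}(\psi_1\times\psi_2)$ is all of $H$ precisely when $\psi$ is nondegenerate (in which case ${\rm Rad}(\psi\times\psi^{-1})$ meets the antidiagonal trivially — I need to be careful about the sign convention here, this is the one genuinely fiddly check), giving multiplicity $m=1$ and resulting subgroup $H_{\rm res}=0$, i.e.\ $\M(0,1)=\C$. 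Conversely, if $\psi$ is degenerate then for \emph{any} $H',\psi'$ the multiplicity $m=|H\cap H'\cap{\rm Rad}(\psi_1\times\psi_2)|$ will exceed $1$ (take $H'=H$ again and note ${\rm Rad}\psi\neq 0$ forces the antidiagonal to meet the radical nontrivially), so $\M(H,\psi)$ cannot be invertible since an invertible object has multiplicity-free products with its inverse. This proves (i).

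For (ii), I would first note that by (i) the invertible $\C$-module categories are exactly the $\M(H,\psi)$ with $\psi$ nondegenerate on $H$; nondegeneracy of a skew-symmetric form forces $|H|$ to be a square and, more to the point, the datum $(H,\psi)$ with $\psi$ nondegenerate skew-symmetric is equivalent to the datum of an isotropic (indeed Lagrangian-in-$H$) structure. The cleanest route is to exhibit directly a group isomorphism from $\{\M(H,\psi):\psi\text{ nondeg.}\}$ to $H^2(A^*,{\bold k}^\times)$. Given $(H,\psi)$ with $\psi$ nondegenerate, $\widehat\psi:H\to H^*$ is an isomorphism, so its inverse gives a skew-symmetric bicharacter $\psi^\vee$ on $H^*$; composing with the surjection $A^*\twoheadrightarrow H^*$ dual to the inclusion $H\hookrightarrow A$, pull $\psi^\vee$ back to a (possibly degenerate) skew-symmetric bicharacter $\psi^\vee|_{A^*}$ on $A^*$. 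This is the claimed map. I would check it is a bijection by writing down the inverse: a skew-symmetric bicharacter $\beta$ on $A^*$ has a radical ${\rm Rad}(\beta)\subset A^*$; set $H:=({\rm Rad}\,\beta)^\perp\subset A$ (equivalently the annihilator, using $A^{**}=A$), so that $\beta$ descends to a nondegenerate skew-symmetric bicharacter on $A^*/{\rm Rad}\,\beta\cong H^*$, and then dualize to get $\psi$ on $H$. One verifies these two constructions are mutually inverse by unwinding the identifications $A^{**}=A$ and $(A/K)^*\cong K^\perp$.

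Finally I would check the map is a group homomorphism, and this is where I expect the real work to be: the group law on the left is $\boxtimes_\C$, computed by Proposition~\ref{tenpro}, while the group law on the right is pointwise multiplication of bicharacters on $A^*$. So the content is: if $\psi_1,\psi_2$ are nondegenerate on $H_1,H_2$ and $(H,\psi)$ is the triple produced by Proposition~\ref{tenpro} (with $m=1$, automatic by (i) since the product of invertibles is invertible), then $\psi^\vee|_{A^*}=\psi_1^\vee|_{A^*}\cdot\psi_2^\vee|_{A^*}$ as bicharacters on $A^*$. This is a finite-abelian-group linear-algebra identity about orthogonal complements of antidiagonals and dual forms; I would prove it by evaluating both sides on a pair $f,g\in A^*$, writing $f=\widehat\psi(h)$-type decompositions via the surjections to $H_i^*$, and tracking through the definition of $H=\{h_1+h_2:(h_1,h_2)\in(H_1\cap H_2)^\perp\}$ and the descended form $\psi$. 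The main obstacle, and the place a sign error is most likely to creep in, is keeping the antidiagonal embedding $h\mapsto(-h,h)$ and the associated radical computation consistent between the proof of (i) and the homomorphism check in (ii); once the conventions are pinned down, everything reduces to the bookkeeping already packaged in Proposition~\ref{tenpro} together with the standard identity $|N|\cdot|N^\perp|=|E|$ for nondegenerate forms recalled in Section~\ref{Sect2}.
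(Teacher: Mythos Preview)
Your approach is the paper's approach: the paper's proof is literally ``This follows from Proposition~\ref{tenpro} via a direct calculation,'' and you are spelling out that calculation. So the strategy is correct.

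Two places need cleaning up. First, in (i) you write that $(H_1\cap H_2)\cap{\rm Rad}(\psi_1\times\psi_2)$ ``is all of $H$ precisely when $\psi$ is nondegenerate,'' but your own parenthetical (and your conclusion $m=1$) contradicts this: ${\rm Rad}(\psi\times\psi^{-1})={\rm Rad}(\psi)\oplus{\rm Rad}(\psi)$, so its intersection with the antidiagonal is $\{(-h,h):h\in{\rm Rad}(\psi)\}$, which has order $|{\rm Rad}(\psi)|$ and is \emph{trivial} exactly when $\psi$ is nondegenerate. Second, for the converse you claim that if $\psi$ is degenerate then \emph{for any} $(H',\psi')$ the multiplicity exceeds $1$; this is false (take $H'=0$). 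The correct argument is the one you almost give: if $\M(H,\psi)$ is invertible then by Definition~\ref{definition inv} its inverse must be $\M(H,\psi)^{\rm op}=\M(H,\psi^{-1})$, so it suffices to compute that single product and observe $m=|{\rm Rad}(\psi)|$. With those two fixes, part (i) is complete, and your outline for (ii) is the right bookkeeping exercise.
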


\begin{proof}
This follows from Proposition \ref{tenpro} via a direct
calculation. 
\end{proof} 

\begin{example}
Assume that $H_1=H_2=A$, and $\psi_1,\psi_2$ are such that 
$\psi_1\psi_2$ is a nondegenerate bicharacter. 
In this case, Proposition \ref{tenpro} implies that $H=A$ and 
$$
\widehat{\psi}=\widehat{\psi_1}\circ
(\widehat{\psi_1\psi_2})^{-1} \circ \widehat{\psi_2}=
\widehat{\psi_2}\circ
(\widehat{\psi_1\psi_2})^{-1} \circ \widehat{\psi_1}.
$$
Note that if $\psi_1,\psi_2$ are themselves nondegenerate,
this is a special case of Corollary \ref{inve}. 
\end{example}

\subsection{Tensor product of bimodule categories} 

Let us now compute the tensor product of bimodule categories over the categories of vector spaces graded by 
finite abelian groups. 

Let $A_1,A_2,A_3$ be finite abelian groups. 
Let $H\subset A_1\oplus A_2$, $H'\subset A_2\oplus A_3$ be subgroups, and let $\psi,\psi'$ be 
skew-symmetric bicharacters of $H$, $H'$ respectively. Let us repeat, with some modifications, the construction preceding 
Proposition \ref{noninj}. Namely, let $H\circ H'$ be the subgroup of elements $(a_1,-a_2,a_2,a_3)$ 
in $H\oplus H'$, and $H\cap H'\subset A_2$ be the intersection of 
$H$ and $H'$ with $A_2$. We regard $H\cap H'$ as a subgroup of $H\circ H'$ 
via the antidiagonal embedding $h\to (-h,h)$, and let $(H\cap H')^\perp$ denote the orthogonal complement 
of $H\cap H'$ in $H\circ H'$ with respect to the bicharacter $(\psi\times \psi')|_{H\circ H'}$. Finally, let $H''$ be the image of 
$(H\cap H')^\perp$ in $A_1\oplus A_3$. Obviously, the bicharacter $(\psi\times \psi')|_{H\circ H'}$ 
descends to a skew-symmetric bicharacter of $H''$, which we denote by $\psi''$.  

Then we have the following proposition, whose proof is parallel to the proof of Proposition \ref{tenpro}.

For a subgroup $B\subset A$ of a finite abelian group $A$, write $B_\perp$ for the annihilator of $B$ in $A^*$
(to avoid confusion with the orthogonal complement with respect to a bicharacter, we use a subscript rather than a superscript). 

\begin{proposition}\label{tenspro1}
$$
\M(H,\psi)\boxtimes_{\Vec_{A_2}} \M(H',\psi')=m\cdot \M(H'',\psi''),
$$
where 
$$
m=
\frac{|H\cap H'|\cdot |(H\cap H')^\perp|\cdot |A_2|}{|H|\cdot |H'|}=
$$
$$
\frac{|H\cap H'|\cdot |(H\cap H')^\perp|\cdot |H_\perp\cap H_\perp'|}{|H\circ H'|}
$$
\end{proposition}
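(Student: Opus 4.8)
The plan is to reduce Proposition \ref{tenspro1} to Proposition \ref{tenpro} exactly as the authors suggest, by unfolding the bimodule tensor product over $\Vec_{A_2}$ into an ordinary module-category tensor product over a larger symmetric fusion category. First I would recall that a $(\Vec_{A_1},\Vec_{A_2})$-bimodule category is the same as a left module category over $\Vec_{A_1}\bt \Vec_{A_2}^{\rev}=\Vec_{A_1\oplus A_2}$ (using that $\Vec_{A_2}$ is symmetric, so $\rev$ changes nothing up to equivalence), and similarly $\M(H',\psi')$ becomes a left $\Vec_{A_2\oplus A_3}$-module category. The balanced tensor product $\M(H,\psi)\bt_{\Vec_{A_2}}\M(H',\psi')$ is, by Proposition \ref{tp = functors}, the category $\Fun_{\Vec_{A_2}}(\M(H,\psi)^{\op},\M(H',\psi'))$; using Lemma \ref{dua} to replace $\psi$ by $\psi^{-1}$ on the left factor and then, as in the proof of Proposition \ref{tenpro}, rewriting module functors as $(A_1\oplus A_2\oplus A_3)$-graded vector spaces that are simultaneously equivariant for $H$ (with cocycle $\psi$, acting on the $A_1\oplus A_2$ coordinates) and for $H'$ (with cocycle $\psi'$, acting on the $A_2\oplus A_3$ coordinates), where the two $A_2$-actions are glued antidiagonally. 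Since all groups are abelian, this double equivariance is a single right-equivariance for the group $H\circ H'$ (the fiber product of $H$ and $H'$ over $A_2$, realized as $\{(a_1,-a_2,a_2,a_3)\}$) with the product cocycle $(\psi\times\psi')|_{H\circ H'}$, acting on $A_1\oplus A_2\oplus A_3$ through the evident map that kills the diagonal $A_2$. Then Proposition \ref{noninj} applies verbatim with $B=H\circ H'$, $\phi$ this map into $A_1\oplus A_3$ (extended by zero on the $A_2$-slot, whose kernel $K$ is exactly $H\cap H'\subset A_2$ embedded antidiagonally), $\xi=(\psi\times\psi')|_{H\circ H'}$, and it outputs $m\cdot\M(H'',\psi'')$ with $H''=\phi(K^\perp)$ the image of $(H\cap H')^\perp$ in $A_1\oplus A_3$ and $\psi''$ the descended bicharacter, precisely matching the construction preceding the statement.

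For the multiplicity, Proposition \ref{noninj} gives $m=|K|\cdot|K^\perp|/|B|=|H\cap H'|\cdot|(H\cap H')^\perp|/|H\circ H'|$, so I only need to verify the two stated forms of this number are equal to it. The first form $|H\cap H'|\cdot|(H\cap H')^\perp|\cdot|A_2|/(|H|\cdot|H'|)$ follows from the identity $|H\circ H'|=|H|\cdot|H'|/|A_2|$, which is the statement that $H\circ H'$ is the fiber product of $H$ and $H'$ over their (common, full by construction — or else one restricts to the actual image) projections to $A_2$; more carefully, $H\circ H'$ sits in an exact sequence with $H\cap H'$ as kernel of projection to $H\oplus H'$ composed appropriately, and a short counting argument using $|p_{A_2}(H)\cap p_{A_2}(H')|$ reconciles the two — this is the one spot I would be careful, since if the projections of $H$ and $H'$ to $A_2$ are not both surjective the naive formula $|H|\cdot|H'|/|A_2|$ can fail, and one should either assume or note what is meant (in the intended application $H''$ and $m$ are defined by the construction, so consistency is what is at issue). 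The second form replaces $|A_2|/(|H|\cdot|H'|)$ by $|H_\perp\cap H_\perp'|/1$ up to the same kind of bookkeeping, using that for a subgroup $B\subset A$ one has $|B|\cdot|B_\perp|=|A|$ and that $(H_1\oplus H_2)_\perp=(H_1)_\perp\oplus(H_2)_\perp$, so $|H_\perp\cap H_\perp'|$ measures the overlap of the annihilators inside $(A_1\oplus A_2\oplus A_3)^*$.

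The main obstacle I expect is purely bookkeeping: tracking which copy of $A_2$ is which under the two forgetful/action maps, getting the signs in the antidiagonal embeddings right so that $H\circ H'$ really is the kernel one wants, and confirming that the double-equivariance-equals-single-equivariance step is an equivalence of $\Vec_{A_1}\bt\Vec_{A_3}^{\rev}$-module categories and not merely of abelian categories (this needs checking the left $A_1$-action and right $A_3$-action survive the identification, which they do since those slots are untouched by the gluing). There is no new conceptual content beyond Propositions \ref{tp = functors}, \ref{noninj} and \ref{tenpro}; accordingly I would present the argument as: set up the bimodule-to-module dictionary; apply Lemma \ref{dua} and Proposition \ref{tp = functors}; identify functors with multiply-equivariant graded vector spaces as in the proof of Proposition \ref{tenpro}; collapse to single equivariance for $H\circ H'$; invoke Proposition \ref{noninj}; and finally do the elementary order computation to match the three expressions for $m$. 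I would keep the write-up short, flagging the order identity $|H\circ H'|=|H|\cdot|H'|/|A_2|$ (valid under the relevant surjectivity, or absorbed into the definition) as the only place warranting an explicit sentence.
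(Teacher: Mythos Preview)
Your reduction to Proposition~\ref{noninj} is the right idea, but the specific way you collapse the two copies of $A_2$ is where the argument breaks. The category $\Fun_{\Vec_{A_2}}(\M(H,\psi^{-1}),\M(H',\psi'))$ is \emph{not} naturally $(A_1\oplus A_2\oplus A_3)$-graded with $(H\circ H')$-equivariance; the correct description (following \cite{O1}) keeps \emph{two} copies of $A_2$, giving $(A_1\oplus A_2\oplus A_2\oplus A_3)$-graded vector spaces right-equivariant under $B:=H\oplus H'\oplus A_2$ via $\phi(h,h',a)=(h,h')+(0,a,-a,0)$. The extra copy of $A_2$ in $B$ is precisely what encodes the $\Vec_{A_2}$-module-functor condition. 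One then applies Proposition~\ref{noninj} with this $B$ (whose kernel is indeed $H\cap H'$), and afterwards restricts from $\Vec_{A_1\oplus A_2\oplus A_2\oplus A_3}$ down to $\Vec_{A_1\oplus A_3}$ using Proposition~\ref{subg}, which contributes an extra multiplicity factor.

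That this is not merely a bookkeeping quibble is visible in your multiplicity: applying Proposition~\ref{noninj} with $B=H\circ H'$ and $K=H\cap H'$ yields $m=|H\cap H'|\cdot|(H\cap H')^\perp|/|H\circ H'|$, which is the paper's second formula \emph{divided by} $|H_\perp\cap H'_\perp|$. Your suspicion about the identity $|H\circ H'|=|H|\cdot|H'|/|A_2|$ is well-founded --- the exact sequence $0\to H\circ H'\to H\oplus H'\to A_2\to (H_\perp\cap H'_\perp)^*\to 0$ shows the correct relation is $|H\circ H'|=|H|\cdot|H'|\cdot|H_\perp\cap H'_\perp|/|A_2|$ --- but this does not rescue the count; it just moves the discrepancy around. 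The missing $|H_\perp\cap H'_\perp|$ is exactly the cokernel factor that appears when you restrict via Proposition~\ref{subg}, a step your outline omits entirely.
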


\begin{proof} Let $B:=H\oplus H'\oplus A_2$, and 
$\phi: B\to A_1\oplus A_2\oplus A_2\oplus A_3$ 
be the homomorphism given by the formula $\phi(h,h',a)=(h,h')+(0,a,-a,0)$. 
Let $\xi$ denote the bicharacter $\psi\times \psi'\times 1$ of $B$.

Using Lemma \ref{dua}, we get
$$
\N:=\M({H,\psi})\boxtimes_{\Vec_{A_2}}
\M({H',\psi'})= 
{\rm Fun}_{\Vec_{A_2}}(\M({H,\psi^{-1}}),M({H',\psi'})). 
$$
Thus, according to \cite{O1}, $\N$ can be described 
as the category of $A_1\oplus A_2\oplus A_2\oplus A_3$-graded vector spaces 
which are right equivariant under the action $B$ 
with cocycle $\xi$. By Proposition \ref{noninj}, 
this means that as a left $\Vec_{A_1\oplus A_2\oplus A_2\oplus A_3}$-module 
category, $\N$ is equivalent to $r\cdot \M(E,\theta)$, with
$E=\phi(K_B^\perp)$, $\theta=\xi|_E$ (the pushforward of $\xi$ to $E$, which is obviously well-defined), and 
$$
r=\frac{|K|\cdot |K_B^\perp|}{|H|\cdot |H'|\cdot |A_2|},
$$ 
where $K={\rm Ker}(\phi)=H\cap H'$ embedded into $H\oplus H'\oplus A_2$ 
via $a\mapsto (a,-a,a)$. (Here $K_B^\perp$ stands for the orthogonal complement of $K$ in $B$.)
Thus, by Proposition \ref{subg}, 
as a left $\Vec_{A_1\oplus A_3}$-module 
category, $\N$ is indeed a multiple of $\M(H'',\psi'')$. 

It remains to prove the formulas for the coefficient $m$. 
>From the above we get 
$$
m=\frac{|H\cap H'|\cdot |K_B^\perp|}{|H|\cdot |H'|\cdot |A_2|}|{\rm Coker}(K^\perp_B\to A_2\oplus A_2)|=
$$ 
$$
\frac{|H\cap H'|\cdot |A_2|}{|H|\cdot |H'|}|{\rm Ker}(K^\perp_B\to A_2\oplus A_2)|=
$$ 
$$
\frac{|H\cap H'|\cdot |A_2|}{|H|\cdot |H'|}|(H\cap H')^\perp|,
$$ 
which is the first formula for $m$. To get the second formula, 
note that we have an exact sequence 
$$
0\to H\circ H'\to H\oplus H'\to A_2\to (H_\perp\cap H_\perp')^*\to 0,
$$
so 
$$
\frac{|A_2|}{|H|\cdot |H'|}=\frac{|H_\perp\cap H_\perp'|}{|H\circ H'|}.
$$ 
Substituting this into the first formula, we get 
$$
m=
\frac{|H\cap H'|\cdot |H_\perp\cap H_\perp'|}{|H\circ H'|}|(H\cap H')^\perp|,
$$
which is the second formula for $m$.
\end{proof} 

\section{Higher groupoids attached to fusion categories}
\label{Sect4}

\subsection{Invertible bimodule categories over fusion categories and the Brauer-Picard
3-groupoid}
\label{inv+BP}

Let $\C,\D$ be fusion categories. Recall from Section~\ref{op modcat}
that given a $(\C,\D)$-bimodule category $\M$  its opposite  $\M^\op$
is  a $(\D,\C)$-bimodule category.

\begin{definition}
\label{definition inv}
We will say that a $(\C,\D)$-bimodule category $\M$ is {\em invertible} 
if there exist bimodule equivalences
\begin{equation}
\label{Mop times M}
\M^\op \bt_\C \M \cong \D \quad \mbox{and} \quad \M \bt_\D \M^\op \cong \C.
\end{equation}
\end{definition}

Let $\M$ be a $(\C,\D)$-bimodule category.
We will denote $\Fun_\C(\M,\, \M)$ (respectively,  $\Fun(\M,\, \M)_\D$) 
the category of left (respectively, right) module endofunctors of $\M$.
Note that these categories $\Fun_\C(\M,\, \M)$
and $\Fun(\M,\, \M)_\D$ are at the same time multifusion categories and bimodule 
categories (over $\D$ and $\C$, respectively).

Note that for any object $X$ in $\D$ (respectively, $\C$) the right  (respectively, left)  multiplication
by $X$ gives rise to a left (respectively, right)  $\C$-module 
(respectively, $\D$-module) endofunctor of $\M$ denoted $R(X)$ (respectively, $L(X)$). 
Thus, we have tensor functors
\begin{eqnarray}
\label{R}
R: X  \mapsto R(X) &:& \D^\rev \to \Fun_\C(\M,\, \M) \quad \mbox{and} \\
\label{L}
L: X \mapsto  L(X) &:& \C \to \Fun(\M,\, \M)_\D.
\end{eqnarray}

\begin{proposition}
\label{inv criterion}
Let $\M$ be a $(\C,\D)$-bimodule category. The following conditions are equivalent
\begin{enumerate}
\item[(i)] $\M$ is invertible,
\item[(ii)]  There exists a $\D$-bimodule equivalence $ \M^\op \bt_\C \M \cong \D$,
\item[(iii)]  There exists a $\C$-bimodule equivalence $\M \bt_\D \M^\op \cong \C$,
\item[(iv)] The functor \eqref{R} is an equivalence,
\item[(v)] The functor \eqref{L} is an equivalence.
\end{enumerate}
\end{proposition}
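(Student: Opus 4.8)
\textbf{Proof plan for Proposition \ref{inv criterion}.}

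The plan is to establish the cycle of implications (i) $\Rightarrow$ (ii) $\Rightarrow$ (iv) $\Rightarrow$ (i), and symmetrically (i) $\Rightarrow$ (iii) $\Rightarrow$ (v) $\Rightarrow$ (i), so that all five conditions become equivalent. The implications (i) $\Rightarrow$ (ii) and (i) $\Rightarrow$ (iii) are immediate from Definition \ref{definition inv}, since (ii) and (iii) are just half of the defining data of invertibility; the content is in upgrading one of these one-sided equivalences to the full pair, and in relating them to the statements (iv) and (v) about the regular functors $R$ and $L$.

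First I would identify $\Fun_\C(\M,\M)$ with $\M^\op \bt_\C \M$ as a $\D$-bimodule category. By Proposition \ref{tp = functors} (and Remark \ref{properties of btD}(i)) applied to the $(\D,\C)$-bimodule category $\M^\op$ and the $(\C,\D)$-bimodule category $\M$, we have $\M^\op\bt_\C \M \cong \Fun_\C((\M^\op)^\op,\M)=\Fun_\C(\M,\M)$ as $\D$-bimodule categories. Under this identification, the functor $R:\D^\rev\to \Fun_\C(\M,\M)$ of \eqref{R} corresponds to the canonical $\D$-bimodule structure functor $\D\to \M^\op\bt_\C\M$ sending the unit to the image of the balanced bifunctor on the unit objects; in other words, $R$ is (up to the above equivalence) the structure map exhibiting $\M^\op\bt_\C\M$ as a $\D$-bimodule category. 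Hence ``$R$ is an equivalence'' is literally the same statement as ``$\M^\op\bt_\C\M\cong \D$ as $\D$-bimodule categories'', giving (ii) $\Leftrightarrow$ (iv) at once. Symmetrically, using $\M\bt_\D \M^\op\cong \Fun(\M,\M)_\D$ as $\C$-bimodule categories, one gets (iii) $\Leftrightarrow$ (v).

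It then remains to close the loop: (ii) $\Rightarrow$ (i), i.e. a one-sided $\D$-bimodule equivalence $\M^\op\bt_\C\M\cong\D$ forces the other equivalence $\M\bt_\D\M^\op\cong\C$. Here I would invoke the monoidal $2$-equivalence $\Z_\C:{\bf Bimodc}(\C)\xrightarrow{\sim}{\bf Modc}(\Z(\C))$ from \eqref{EO-equiv}, together with its monoidality (Proposition \ref{bimod =modZC}); but more directly, one argues via Frobenius--Perron dimensions and the functor $L$. Given (ii) $=$ (iv), the functor $R$ is an equivalence, so $\Fun_\C(\M,\M)\cong\D^\rev$; this means $\M$ is an invertible object in the $2$-category ${\bf Bimodc}(\C,\D)$ in the sense that its ``endomorphism algebra'' on the $\C$-side is as small as possible. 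A clean way to finish is to compute $\FPdim$: using the Proposition in Section \ref{FPmod}, $\sum_{M}\FPdim(M)^2=\dim\C$ whenever $\M$ is an indecomposable $\C$-module category, and invertibility of $R$ pins down $\dim\Fun_\C(\M,\M)=\dim\D$, whence by a dimension count the natural evaluation $\C$-bimodule functor $\M\bt_\D\M^\op\to\Fun(\M,\M)_\D\xleftarrow{\sim}\C$ (i.e. the unit of the would-be duality) must be an equivalence, giving (iii) and hence (i). The main obstacle is precisely this last step: showing that one-sidedness is enough — that an equivalence $\M^\op\bt_\C\M\cong\D$ automatically yields $\M\bt_\D\M^\op\cong\C$ — which requires either the dimension/rigidity argument sketched above or an appeal to the fact that in ${\bf Bimodc}$ a $1$-morphism with a one-sided inverse (up to equivalence) has a genuine two-sided inverse, a categorical analogue of the ring-theoretic statement that over suitable rings a one-sided-invertible bimodule is invertible.
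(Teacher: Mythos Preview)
Your overall architecture matches the paper's: reduce everything to the equivalence (iv)$\Leftrightarrow$(v), using Proposition~\ref{tp = functors} to identify $\M^\op\bt_\C\M$ with $\Fun_\C(\M,\M)$ and $\M\bt_\D\M^\op$ with $\Fun(\M,\M)_\D$. But two points need correction.

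First, (ii) and (iv) are not ``literally the same statement''. Condition (ii) only asserts the existence of \emph{some} $\D$-bimodule equivalence $\phi:\D\to\Fun_\C(\M,\M)$; it does not say that the specific tensor functor $R$ is one. The direction (iv)$\Rightarrow$(ii) is indeed immediate, since $R$ is a $\D$-bimodule functor and hence a $\D$-bimodule equivalence once it is an equivalence. For the converse the paper gives the short but necessary argument: writing $F=\phi(\be)$, the $\D$-bimodule structure forces $\phi(X)\cong F\circ R(X)$ for all $X$, whence $F$ is invertible and $R$ is an equivalence. Your phrase ``the structure map exhibiting $\M^\op\bt_\C\M$ as a $\D$-bimodule category'' does not pin down a canonical functor $\D\to\M^\op\bt_\C\M$ in any sense that makes this step automatic.

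Second, and more seriously, your ``dimension count'' for (iv)$\Rightarrow$(v) is not a proof. Equality of Frobenius--Perron dimensions does not by itself force a tensor functor between fusion categories to be an equivalence; one still needs either full faithfulness or surjectivity of $L$, neither of which you establish. The paper's key observation is that once $R$ is an equivalence, $\D^\rev$ is identified with the dual $\C^*_\M$, and then $L:\C\to\Fun(\M,\M)_\D$ is precisely the canonical double-dual functor $\C\to(\C^*_\M)^*_\M$. That this is an equivalence is \cite[Theorem 3.27]{EO}, a nontrivial result which is exactly the ``one-sided inverse $\Rightarrow$ two-sided inverse'' principle you allude to at the end but do not supply. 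Invoking Proposition~\ref{bimod =modZC} does not help either, since it concerns $\C$-bimodule categories for a single $\C$, not $(\C,\D)$-bimodules, and in any case the passage from a one-sided to a two-sided inverse would still require the same double-dual input.
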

%
\begin{proof}
By definition, (i) is equivalent to (ii) and (iii) combined. 

Recall that by Proposition~\ref{tp = functors} $\M^\op \bt_\C \M \cong \Fun_\C(\M,\, \M)$
as $\D$-bimodule categories. We also have $\M \bt_\D \M^\op \cong  \Fun_\D(\M^\op,\, \M^\op)
= \Fun(\M,\, \M)_\D$ as $\C$-bimodule categories. So  (iv) implies (ii) and (v) implies (iii).

Next, suppose that $\M$ is invertible and let $\phi: \D \cong \Fun_\C(\M,\, \M)$
be a $\D$-bimodule  equivalence.  Let $F = \phi(\be)$. Then any functor in 
$\Fun_\C(\M,\, \M)$ is isomorphic to $F\circ R(X)$ for some $X\in \D$. This
is only possible when $R$ is an equivalence. Thus, (ii) implies (iv).
The proof that (iii) implies (v)  is completely similar.

It remains to show that (iv) is equivalent to (v).  If \eqref{R} is an equivalence then
$\D^\rev$ is the dual $\C^*_\M$ of $\C$ with respect to $\M$ and
the functor \eqref{L} identifies with the canonical tensor functor $\C \to (\C^*_\M)^*_\M$.
By \cite[Theorem 3.27]{EO} this functor is an equivalence. So (iv) implies
(v). The opposite implication is completely similar.
\end{proof}

\begin{remark}
\label{they are tensor}
%
In view of Proposition~\ref{inv criterion} invertible $(\C,\D)$-bimodule categories 
can be thought of as {\em Morita equivalences} $\C\to \D$.
\end{remark}


\begin{corollary}
\label{inv --> indec}
An invertible $(\C,\D)$-bimodule category is indecomposable
as both a left $\C$-module category and a right $\D$-module category.
\end{corollary}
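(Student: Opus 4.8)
The plan is to deduce indecomposability from the characterizations in Proposition~\ref{inv criterion}, in particular from the fact that $\M^\op \bt_\C \M \cong \D$ as $\D$-bimodule categories (and symmetrically $\M \bt_\D \M^\op \cong \C$). It suffices to prove that $\M$ is indecomposable as a left $\C$-module category; the statement for the right $\D$-action follows by applying the same argument to $\M^\op$, which is an invertible $(\D,\C)$-bimodule category.

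First I would suppose for contradiction that $\M \cong \M_1 \oplus \M_2$ as left $\C$-module categories with both $\M_i$ nonzero. Using the equivalence $\M^\op \bt_\C \M \cong \Fun_\C(\M,\M)$ from Proposition~\ref{tp = functors}, the decomposition of $\M$ induces a decomposition of $\Fun_\C(\M,\M)$ as a category, with the identity functor $\id_\M$ decomposing as a direct sum of the two projections $p_1, p_2$ onto $\M_1, \M_2$ (together with the off-diagonal Hom-pieces). But under the equivalence $\Fun_\C(\M,\M) \cong \D$, the identity functor $\id_\M$ corresponds to the unit object $\be$ of $\D$, which is simple since $\D$ is a fusion category. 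A simple object cannot be written as a direct sum of two nonzero objects, giving the contradiction. The one point requiring care is to check that the $p_i$ are genuinely nonzero objects of $\Fun_\C(\M,\M)$ when $\M_i \neq 0$ — this is clear since $p_i$ acts as the identity on the nonzero subcategory $\M_i$.

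The step I expect to be the main (minor) obstacle is being precise about how a module-category decomposition $\M = \M_1 \oplus \M_2$ translates into a decomposition of $\Fun_\C(\M,\M)$ and, in particular, identifying the image of $\id_\M$; one must make sure that the equivalence $\M^\op\bt_\C\M \cong \Fun_\C(\M,\M)$ is used in the form where it is an equivalence of $\D$-bimodule categories, so that the distinguished object $\be \in \D$ really does go to $\id_\M$ (this is exactly the content of the invertibility isomorphism together with the monoidal-unit bookkeeping in the proof of Proposition~\ref{inv criterion}). Once that identification is in place, the simplicity of $\be$ in a fusion category closes the argument immediately, and the right-module statement follows by the $\op$ symmetry noted above.
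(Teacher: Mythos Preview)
Your proof is correct and follows the same idea the paper has in mind: since $\Fun_\C(\M,\M)$ is equivalent (as a tensor category) to the fusion category $\D^{\rev}$, its unit $\id_\M$ is simple, which forces $\M$ to be indecomposable as a left $\C$-module. Your worry about identifying $\be$ with $\id_\M$ is unnecessary if you invoke Proposition~\ref{inv criterion}(iv) directly: the functor $R:\D^{\rev}\to\Fun_\C(\M,\M)$ is by definition right multiplication, so $R(\be)=\id_\M$ tautologically, and since $R$ is a tensor equivalence the simplicity of $\be$ transfers immediately.
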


\begin{definition} The {\it Brauer-Picard groupoid of fusion
categories} $\uuPic$ is a 3-groupoid, whose objects are
fusion categories, 1-morphisms from $\C$ to $\D$ 
are invertible $(\C,\D)$-bimodule categories, 2-morphisms are equivalences
of such bimodule categories, and 3-morphisms are isomorphisms of such
equivalences. 
\end{definition}

In other words, $\uuPic$ is the subcategory 
of ${\bf Bimodc}$ obtained by
extracting the invertible morphisms
at all levels. 

The 3-groupoid $\uuPic$ can be truncated  
(by forgetting 3-morphisms and identifying isomorphic 2-morphisms)
to a 2-groupoid $\uPic$, and further truncated 
(by forgetting 2-morphisms and identifying isomorphic 1-morphisms)
to a 1-groupoid (i.e., an ordinary groupoid) $\Pic$.

In particular, for every fusion category $\C$ we have 
the following hierarchy of objects: 

$\bullet$ the categorical $2$-group $\uuPic(\C)$ of automorphisms of $\C$ in
$\uuPic$ (which is obtained by extracting invertible objects and
morphisms at all levels from ${\bf Bimodc}(\C)$);

$\bullet$ the categorical group $\uPic(\C)$ 
of automorphisms of $\C$ in
$\uPic$;

$\bullet$ the group 
$\Pic(\C)$ 
of automorphisms of $\C$ in
$\Pic$, which we will call the 
{\it Brauer-Picard group} 
of $\C$.

\begin{remark}
\label{mortorsor}
For any pair of isomorphic objects in $\uPic(\C)$ the set of morphisms between them
is a torsor over the group ${\rm Inv}(\Z(\C))$ of invertible objects of $\Z(\C)$.
\end{remark}



\begin{remark} The 3-groupoid $\uuPic$ is a categorification of 
the 2-groupoid $\underline{\rm Pic}$, whose objects are rings, 
1-morphisms from $A$ to $B$ are invertible $(A,B)$-bimodules, and 
2-morphisms are isomorphisms of such bimodules. 
In particular, the Brauer-Picard group $\Pic(\C)$ 
is a categorical analog of the classical Picard group 
${\rm Pic}(A)$ of a ring $A$. 
\end{remark}

\begin{remark}
The terminology ``Brauer-Picard group'' is justified by the
following observation. 

For a moment, let ${\bold k}$ be any field (not necessarily
algebraically closed). 

\begin{proposition} $\Pic(\Vec_{\bold k})$ is isomorphic to the
classical Brauer group ${\rm Br}({\bold k})$. 
\end{proposition}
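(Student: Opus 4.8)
The plan is to exhibit a bijection between equivalence classes of invertible $\Vec_{\bold k}$-bimodule categories and elements of ${\rm Br}({\bold k})$, and to check that it respects composition (tensor product) and inversion. The key structural point is that a $\Vec_{\bold k}$-bimodule category, in the sense used here, is nothing but a $\bold k$-linear abelian category $\M$ with no extra data (since $\Vec_{\bold k}$ acting on either side just scales), so invertibility of $\M$ means precisely that $\M \bt_{\bold k} \M^{\rm op} \cong \Vec_{\bold k}$. First I would recall that a semisimple $\bold k$-linear abelian category of finite type is equivalent to the category of finite-dimensional modules over a finite product of division algebras (i.e.\ $\M \simeq \bigoplus_i D_i\text{-mod}$ with each $D_i$ a finite-dimensional division $\bold k$-algebra); indecomposability, which holds for invertible bimodule categories by Corollary~\ref{inv --> indec}, forces a single division algebra, so $\M \simeq D\text{-mod}$ for one finite-dimensional division algebra $D$ over $\bold k$. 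This is where one has to be a little careful about what ``module category'' and ``tensor product over $\Vec_{\bold k}$'' mean in the non-algebraically-closed setting: one should adopt the convention that objects are dualizable and the categories are finite semisimple, and note that $D\text{-mod} \bt_{\bold k} E\text{-mod} \cong (D\otimes_{\bold k} E)\text{-mod}$ (the balancing over $\Vec_{\bold k}$ contributes nothing), with $(D\text{-mod})^{\rm op} \cong D^{\rm op}\text{-mod}$.

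Next I would compute the invertibility condition explicitly: $D\text{-mod}$ is invertible iff $(D^{\rm op}\otimes_{\bold k} D)\text{-mod} \cong \Vec_{\bold k} = {\bold k}\text{-mod}$, i.e.\ iff $D^{\rm op}\otimes_{\bold k} D \cong M_n({\bold k})$ for some $n$, which is exactly the statement that $D$ is a central simple $\bold k$-algebra, i.e.\ $D$ defines a class in ${\rm Br}({\bold k})$. (A finite-dimensional division algebra over $\bold k$ is central simple iff its center is $\bold k$; the condition $D^{\rm op}\otimes D \cong M_n(\bold k)$ pins this down.) Thus the assignment $\M = D\text{-mod} \mapsto [D] \in {\rm Br}({\bold k})$ is well-defined on isomorphism classes of invertible $\Vec_{\bold k}$-bimodule categories. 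It is injective because two central division algebras give equivalent module categories iff they are isomorphic, and Wedderburn says a Brauer class has a unique division-algebra representative, so $[D] = [D']$ forces $D \cong D'$ hence $D\text{-mod} \simeq D'\text{-mod}$. It is surjective since every Brauer class is represented by a central division algebra $D$, and the corresponding $D\text{-mod}$ is invertible by the computation just made.

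Then I would verify that this bijection is a group homomorphism $\Pic(\Vec_{\bold k}) \to {\rm Br}({\bold k})$. Composition of $1$-morphisms in $\uuPic$ is $\bt_{\bold k}$, and $D_1\text{-mod} \bt_{\bold k} D_2\text{-mod} \cong (D_1\otimes_{\bold k} D_2)\text{-mod}$; passing to the Wedderburn division part of $D_1 \otimes_{\bold k} D_2$ is precisely the product in ${\rm Br}({\bold k})$. The identity $\Vec_{\bold k}$ corresponds to $[{\bold k}]$, the identity of the Brauer group, and $\M^{\rm op}$ corresponds to $[D^{\rm op}] = [D]^{-1}$, consistent with $\M \bt \M^{\rm op} \cong \Vec_{\bold k}$. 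Hence the map is an isomorphism of groups.

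The main obstacle I expect is not any one computation but rather setting up the non-algebraically-closed framework correctly: everywhere else in the paper $\bold k$ is algebraically closed, so one must be explicit about which finiteness/semisimplicity hypotheses on module categories are being used, check that $\bt_{\bold k}$ still exists and is computed by $\otimes_{\bold k}$ of the associated algebras in this generality (Remark~\ref{yet one more} applied with $\C = \Vec_{\bold k}$ makes $\bt_{\Vec_{\bold k}}$ the Deligne-type product, which for module categories $D\text{-mod}, E\text{-mod}$ is $(D\otimes_{\bold k}E)\text{-mod}$), and confirm that indecomposability of an invertible bimodule category (Corollary~\ref{inv --> indec}) really does cut us down to a single division algebra rather than a product. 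Once those foundational points are nailed down, the identification with ${\rm Br}({\bold k})$ is essentially the classical Morita-theoretic description of the Brauer group.
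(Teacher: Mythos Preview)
Your proposal is correct and follows essentially the same approach as the paper: identify $\Vec_{\bold k}$-bimodule categories with module categories, realize each indecomposable one as $D\text{-mod}$ for a finite-dimensional simple (or division) $\bold k$-algebra, show that $\bt_{\Vec_{\bold k}}$ corresponds to $\otimes_{\bold k}$ of algebras, and observe that invertibility is precisely the central-simple condition. The only cosmetic difference is that the paper phrases things in terms of simple algebras up to Morita equivalence while you normalize to division algebras via Wedderburn; these are equivalent packagings of the same argument.
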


\begin{proof} 
First of all, note that bimodule categories over $\C=\Vec_{\bold k}$ 
is the same thing as module  categories. 

Let $R$ be a finite dimensional simple algebra over ${\bold k}$. Then  
$\M(R):=R$-mod is an indecomposable module category over $\C$. 
It is easy to see that ${\rm Mat}_N(R)$-mod is naturally
equivalent to $R$-mod as a module category, and 
that 
$$
\M(R)\boxtimes_\C \M(S)=\M(R\otimes_{\bold k} S).
$$
Also, any indecomposable semisimple module category 
over $\C$ is of the form $\M(R)$ for a finite dimensional simple 
${\bold k}$-algebra $R$, determined uniquely up to a Morita equivalence. 
This implies that $M(R)$ is invertible if and only if $R$ is
central simple, which implies the claim. 
\end{proof}
\end{remark}

\begin{proposition}\label{relp}
Let $\C_1$, $\C_2$ be two fusion categories of relatively prime Frobenius-Perron dimensions. 
Then 
$$
\Pic(\C_1\boxtimes \C_2)=\Pic(\C_1)\times \Pic(\C_2).
$$ 
\end{proposition}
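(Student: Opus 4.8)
The plan is to use Theorem \ref{breq}, which identifies $\Pic(\C)$ with $\EqBr(\Z(\C))$ for any fusion category $\C$, and reduce the statement to a fact about braided autoequivalences of a product of Drinfeld centers whose components have relatively prime Frobenius-Perron dimensions. First I would recall that $\Z(\C_1\boxtimes\C_2)\cong \Z(\C_1)\boxtimes\Z(\C_2)$ as braided fusion categories; this is standard, and it follows quickly from the universal property of the center together with the fact that $\FPdim(\Z(\C_i))=\FPdim(\C_i)^2$, so the two factors still have relatively prime Frobenius-Perron dimensions. Hence by Theorem \ref{breq} it suffices to show
\begin{equation}
\label{eqbr-product}
\EqBr(\B_1\boxtimes\B_2)\cong \EqBr(\B_1)\times\EqBr(\B_2)
\end{equation}
for braided fusion categories $\B_1,\B_2$ with $\gcd(\FPdim\B_1,\FPdim\B_2)=1$.

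Next I would establish \eqref{eqbr-product}. There is an obvious homomorphism from the right-hand side to the left-hand side, sending a pair of braided autoequivalences to their Deligne product; it is clearly injective (restrict back to each factor, using that $\B_i\boxtimes\be\subset\B_1\boxtimes\B_2$ is recovered intrinsically). The content is surjectivity: any braided autoequivalence $F$ of $\B_1\boxtimes\B_2$ must preserve the two ``canonical'' tensor subcategories $\B_1\boxtimes\be$ and $\be\boxtimes\B_2$. The key point is that these subcategories admit an intrinsic characterization that a braided (or even just tensor) autoequivalence must respect. Concretely, $\B_1\boxtimes\be$ is the fusion subcategory generated by those simple objects $X$ whose Frobenius-Perron dimension — more precisely, the order of $X$ in an appropriate sense, or the prime divisors of $\FPdim(X)^2$, or the index of the centralizer — involves only primes dividing $\FPdim\B_1$. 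Since $F$ is a tensor equivalence it preserves Frobenius-Perron dimensions of objects and of fusion subcategories, and since $\FPdim(\B_1\boxtimes\be)=\FPdim\B_1$ is coprime to $\FPdim\B_2$, the image $F(\B_1\boxtimes\be)$ is a fusion subcategory of $\B_1\boxtimes\B_2$ of Frobenius-Perron dimension $\FPdim\B_1$; by the classification of fusion subcategories of a Deligne product into products of fusion subcategories of the factors (again using coprimality to rule out ``diagonal'' subcategories), $F(\B_1\boxtimes\be)$ must equal $\B_1\boxtimes\be$. Likewise $F$ preserves $\be\boxtimes\B_2$, so $F=F_1\boxtimes F_2$ with $F_i\in\EqBr(\B_i)$, proving surjectivity.

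The main obstacle I anticipate is the clean statement that any fusion subcategory of $\B_1\boxtimes\B_2$ of Frobenius-Perron dimension coprime to $\FPdim\B_2$ is automatically of the form $\D_1\boxtimes\be$ for a fusion subcategory $\D_1\subseteq\B_1$. This is the place where the relative primality is genuinely used: a simple object $X_1\boxtimes X_2$ lies in such a subcategory only if $\FPdim(X_2)^2$ divides a power of $\FPdim\B_1$, which forces $X_2=\be$ because $\FPdim(X_2)^2$ divides $\FPdim\B_2$. Making this rigorous requires knowing that simple objects of $\B_1\boxtimes\B_2$ are exactly the $X_1\boxtimes X_2$ and that $\FPdim$ is multiplicative under $\boxtimes$, both standard, plus the divisibility of $\FPdim(X)^2$ by $\FPdim$ of the ambient category for any simple $X$ in a fusion category — which is a theorem of \cite{ENO1}. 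Everything else (the center-of-a-product identity, multiplicativity of $\FPdim$, and naturality of the Deligne product construction on autoequivalences) is routine, so this subcategory-splitting lemma is where I would focus the actual writing.
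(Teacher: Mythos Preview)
Your approach is correct in outline but takes a substantially longer route than the paper. The paper's proof is a single citation: \cite[Proposition 8.55]{ENO1} says that when $\FPdim(\C_1)$ and $\FPdim(\C_2)$ are coprime, every indecomposable module category over $\C_1\boxtimes\C_2$ is of the form $\M_1\boxtimes\M_2$. Applying this to $(\C_1\boxtimes\C_1^{\rm rev})\boxtimes(\C_2\boxtimes\C_2^{\rm rev})$-module categories (whose factors still have coprime dimensions) shows every indecomposable $(\C_1\boxtimes\C_2)$-bimodule category splits as a product of $\C_i$-bimodule categories, and invertibility of the product is easily seen to be equivalent to invertibility of each factor.

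Your route via Theorem~\ref{breq} works, but note two things. First, it is a forward reference: Proposition~\ref{relp} sits in Section~\ref{Sect4} while Theorem~\ref{breq} is only proved in Section~\ref{Sect 5}; there is no logical circularity, but you are invoking a much deeper result than needed. Second, the ``subcategory-splitting lemma'' you isolate as the crux is essentially the same ENO1 input in disguise, so you are not avoiding it. Your sketched divisibility argument also needs tightening: the statement from \cite{ENO1} is that $\FPdim(\C)/\FPdim(X)^2$ is an \emph{algebraic integer}, not a rational integer, so ``coprime'' and ``divides'' must be read in the ring of algebraic integers, and one has to control $\FPdim(X_1)$ as well before concluding $X_2\cong\be$. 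This can be made to work, but the direct bimodule-category argument bypasses all of it.
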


\begin{proof}
This follows from \cite{ENO1}, Proposition 8.55.  
\end{proof}

\subsection{Integral bimodule categories}
\label{int bim}
 
Let $\C$ be an integral fusion cate\-gory, i.e., a cate\-gory such that the Frobenius-Perron
dimension of any simple object of $\C$ is an integer. Recall that the Frobenius-Perron
dimensions in module categories were defined in Section~\ref{FPmod}. It is clear
that the Frobenius-Perron dimension of any object in a $\C$-module cate\-gory
is a square root of an integer.

\begin{definition}
We will say that a $\C$-module category $\M$ is {\em integral} if 
$\FPdim(M)\in \mathbb{Z}$  for every object $M\in \M$. 
\end{definition}

Equivalently, $\M$ is integral  if 
for every simple object $M\in \M$ the number
$\FPdim(\underline\Hom(M,\, M))$ is the square of an integer.

Now let $\M$ be an invertible $\C$-bimodule category. To avoid possible confusion 
let us agree that we compute Frobenius-Perron dimensions in $\M$  by
regarding it as a one-sided (left or right) $\C$-module category. 
In particular, 
\[
\sum_{M\in \O(\\M)}\FPdim(M)^2 =\FPdim(\C).
\]

\begin{definition}
We will say that an invertible $\C$-bimodule category $\M$ is integral if it is integral
as one-sided (left or right) module category.
\end{definition}

It is clear that here the choice of left or right $\C$-module structure is not important
since the Frobenius-Perron dimensions in $\M$ defined using these structures coincide.

\begin{proposition}
Let $\C$ be an integral fusion category.
If $\M,\, \N$ are invertible integral $\C$-bimodule categories then $\M\bt_\C \N$ is integral.
\end{proposition}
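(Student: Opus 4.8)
The plan is to reduce the integrality of $\M\bt_\C\N$ to the integrality of $\M$ and $\N$ by tracking Frobenius-Perron dimensions through the description of the tensor product given in Section~\ref{Sect3}. First I would fix simple objects and use the characterization of integrality in terms of internal Homs: $\M$ is integral as a one-sided $\C$-module category iff $\FPdim(\uHom(M,M))$ is a perfect square for every simple $M$, and likewise for $\N$. The key point is that for an invertible bimodule category, the internal Hom computed using the left $\C$-action controls $\FPdim$ of all objects, and on the tensor product $\M\bt_\C\N$ the internal Hom of a simple summand of $B_{\M,\N}(M,N)$ should be expressible through $\uHom_\M(M,M)$ and $\uHom_\N(N,N)$.

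The main steps, in order: (1) Recall from Proposition~\ref{tp = functors} that $\M\bt_\C\N\cong\Fun_\C(\M^{\op},\N)$, so a simple object of $\M\bt_\C\N$ is a simple $\C$-module functor $\M^{\op}\to\N$; alternatively, by Remark~\ref{yet one more}, it is a simple left $A$-module in $\M\bt\N$ where $A=\oplus_X X^*\bt X$. (2) Compute $\FPdim$ on $\M\bt\N$ (the Deligne product before taking $A$-modules) multiplicatively: $\FPdim_{\M\bt\N}(M\bt N)=\FPdim_\M(M)\FPdim_\N(N)$, so all objects of $\M\bt\N$ have $\FPdim$ in $\BZ$ by hypothesis, with $\FPdim(\M\bt\N)=\FPdim(\C)^2$ (using $\sum_{M}\FPdim(M)^2=\FPdim(\C)$ on each side, which holds by Corollary~\ref{inv --> indec} since invertible bimodule categories are indecomposable one-sided). (3) Relate $\FPdim$ on $\M\bt_\C\N$ to $\FPdim$ on $\M\bt\N$: the free $A$-module functor $\M\bt\N\to\M\bt_\C\N$ multiplies $\FPdim$ by $\FPdim(A)=\FPdim(\C)$, and since $\M\bt_\C\N$ is again invertible (the tensor product of invertibles is invertible), $\sum_{P\in\O(\M\bt_\C\N)}\FPdim(P)^2=\FPdim(\C)$. (4) Finally, for a simple $P\in\M\bt_\C\N$, express $\FPdim(\uHom(P,P))$ — which by the internal-Hom formula equals $\FPdim(P)^2$ in the normalization of Section~\ref{FPmod} — in terms of multiplicities of $P$ inside $B_{\M,\N}(M,N)$ for simple $M,N$, together with $\FPdim_\M(M)^2=\FPdim(\uHom_\M(M,M))\in\BZ$ and $\FPdim_\N(N)^2\in\BZ$; the balancing/adjunction forces $\FPdim(P)$ to lie in the subring of $\mathbb{R}$ generated by such square roots of integers, and a counting argument using step (3) pins it down to an integer.

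The honest way to run step (4) is probably: pick any simple $M\in\M$, $N\in\N$; then $B_{\M,\N}(M,N)=\oplus_i n_i P_i$ with $P_i$ simple in $\M\bt_\C\N$, and $\FPdim_{\M\bt\N}(M\bt N)=\FPdim_\M(M)\FPdim_\N(N)$ relates to $\sum_i n_i\FPdim(P_i)$ up to the factor $\FPdim(\C)$ coming from the $A$-module structure; since the left-hand quantities are integers (hypothesis on $\M$ and $\N$) and the $n_i$ are nonnegative integers, one gets that $\sum_i n_i\FPdim(P_i)\in\BZ$ for every choice of $M,N$, and ranging over all simple $M,N$ one can solve for the individual $\FPdim(P_i)$, concluding they are rational, hence (being square roots of integers) integers.

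\textbf{Main obstacle.} The hard part will be step (4): passing from "every relevant sum of the $\FPdim(P_i)$ is an integer" to "each $\FPdim(P_i)$ is an integer" requires a genuine argument, not just multiplicativity — one needs to exploit that the functors $M\mapsto B_{\M,\N}(M,N)$ for varying $N$ (or the internal Hom structure) separate the simple objects $P_i$ strongly enough, or else argue directly that $\FPdim(\uHom_{\M\bt_\C\N}(P,P))$ factors as a product of the perfect-square internal-Hom dimensions from $\M$ and $\N$. I would expect the cleanest route is to show $\uHom_{\M\bt_\C\N}(B(M,N),B(M,N))$ contains $\uHom_\M(M,M)\ot\uHom_\N(N,N)$ compatibly with $\FPdim$, forcing $\FPdim(P)^2$ to divide an integer and be a rational square root of an integer, hence an integer.
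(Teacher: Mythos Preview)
Your overall strategy --- push objects through the canonical functor $B_{\M,\N}:\M\bt\N\to\M\bt_\C\N$, use multiplicativity of $\FPdim$, and exploit surjectivity --- is exactly the paper's. Two corrections are needed.

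First, a minor one: in step (3) the canonical functor does \emph{not} introduce a factor of $\FPdim(\C)$. In the paper's normalization (one-sided $\FPdim$ on $\M\bt_\C\N$, so that $\sum_P\FPdim(P)^2=\FPdim(\C)$) one has simply
\[
\FPdim\bigl(B_{\M,\N}(M\bt N)\bigr)=\FPdim(M)\,\FPdim(N).
\]
Sanity check: for $\M=\N=\C$ the functor $B_{\C,\C}$ is just $\otimes$, and $\FPdim(X\ot Y)=\FPdim(X)\FPdim(Y)$ with no extra factor. This error is harmless for the integrality conclusion (an integer times an integer is still an integer), but it muddles your step (4).

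Second, and this is the real point: you are massively overcomplicating step (4). You do not need to solve any linear system, separate simple objects, or compute $\uHom(P,P)$. A \emph{single} equation already does the job. Fix simple $M\in\M$, $N\in\N$, and write $B_{\M,\N}(M\bt N)=\bigoplus_i n_iP_i$ with $n_i\in\BZ_{>0}$. Then
\[
\sum_i n_i\,\FPdim(P_i)=\FPdim(M)\,\FPdim(N)\in\BZ
\]
by hypothesis. Since $\C$ is integral, each $\FPdim(P_i)$ is the positive square root of a positive integer. Now invoke the elementary fact the paper states in one line: \emph{an integer cannot be written as a positive-integer combination of non-integer square roots of integers}. (Write $\sqrt{a_i}=b_i\sqrt{c_i}$ with $c_i$ squarefree; the numbers $\sqrt{c}$ for distinct squarefree $c$ are linearly independent over $\Bbb Q$, so the coefficient of each $\sqrt{c}$ with $c\neq 1$ is a sum of positive terms equal to zero --- impossible.) Hence every $\FPdim(P_i)\in\BZ$. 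Since $B_{\M,\N}$ is surjective, every simple object of $\M\bt_\C\N$ arises as some $P_i$, and you are done.

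So your ``main obstacle'' is not an obstacle at all once you remember this square-root lemma; the internal-Hom and system-of-equations detours are unnecessary.
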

\begin{proof}
It is easy to see that according to our conventions the canonical $\C$-bimodule functor
$F: \M \bt \N \to \M \bt_\C \N$ satisfies 
\[
\FPdim(F(M\bt N)) =\FPdim(M)\FPdim(N) \qquad \mbox{for all } M,N\in \M.
\]
Since this functor $F$ is surjective, we conclude that  $\M\bt_\C \N$ is integral. 
Indeed, no integer can be equal to a sum of non-integer square roots.
\end{proof}

It is easy to show that equivalence classes of invertible integral $\C$ bimodule categories form a normal subgroup 
of $\Pic(\C)$, denoted by $\Pic_+(\C)$, such that $\Pic(\C)/\Pic_+(\C)$ is an elementary abelian $2$-group.
It gives rise to a full categorical 2-subgroup $\uuPic_+(\C)$ of the categorical 2-group
$\uuPic(\C)$.

\subsection{The categorical 2-group of outer autoequivalences a fusion category}\label{outer}\label{quasitr}
	
Let $\C$ be a fusion category. 
Let us say that an invertible $(\C,\C)$-bimodule category $\M$ 
is {\it quasi-trivial} if it is equivalent to $\C$ as a left module category. 
It is easy to see that if $\M$ is quasi-trivial, then there exists a tensor autoequivalence 
$\phi: \C\to \C$, such that $\M=\C$ with the left action of $\C$ by left multiplication, 
and the right action of $\C$ by right multiplication twisted by $\phi$. Moreover, $\phi$ is 
uniquely determined up to composing with conjugation by an invertible object of $\C$. 
In other words, it is uniquely determined as an {\it outer autoequivalence}. 

Now define the categorical 2-group $\underline{\underline{\rm Out}}(\C)$ to be the 2-subgroup 
of $\uuPic(\C)$ which includes only the quasi-trivial invertible bimodule categories
(and all the corresponding equivalences and isomorphisms). This 2-group can be truncated to a 1-group 
$\underline{\rm Out}(\C)$ and further to the usual group 
${\rm Out}(\C)$, of isomorphism classes of outer tensor autoequivalences of $\C$
(i.e. autoequivalences modulo conjugations by invertible objects). 

\subsection{The Picard 2-groupoid of a braided fusion category}\label{braided Picard}

Let $\B$ be a braided fusion category. 
The monoidal 2-category ${\bf Modc}(\B)$ contains a categorical 
2-group $\underline{\underline{\rm Pic}}(\B)$, obtained by
extracting invertible objects and morphisms at all levels, which we will call 
{\it the Picard 2-group} of $\B$. 
This categorical 2-group is a categorical analog of 
the categorical 1-group of invertible modules over a commutative ring
$A$ (or, more generally, of the Picard 1-group, or groupoid, of a scheme). 
By truncating it one obtains a categorical 1-group 
$\underline{\rm Pic}(\B)$, and an ordinary group ${\rm Pic}(\B)$,
called {\it the Picard group} of the braided category $\B$. 

\begin{remark}
\label{uPic0}
If $\B$ is a braided fusion category 
then $\uuPic(\B)$ contains 
$\underline{\underline{\rm Pic}}(\B)$ as a full categorical
2-subgroup (of bimodule categories in which the left and right
action are related via the braiding). 
\end{remark}

\subsection{The 2-groupoid of equivalences}\label{2grpEq}

Following \cite{Ga}, we define the 2-groupoid $\underline{\rm Eq}$, whose objects are 
fusion categories, 1-morphisms are tensor equivalences,
and 2-morphisms are isomorphisms of such equivalences. 
It can be truncated to an ordinary groupoid 
${\rm Eq}$. So for every fusion category 
$\C$, we obtain the groupoid $\underline{\rm Eq}(\C)$ 
of tensor autoequivalences of $\C$, and the corresponding group 
${\rm Eq}(\C)$ of isomorphism classes of tensor autoequivalences
of $\C$. 

\subsection{The 2-groupoid of braided equivalences} 
\label{2grp EqBr}

Here is the braided version of the construction of the previous subsection.
We define the 2-groupoid $\uEqBr$, whose objects are 
braided fusion categories, 1-morphisms are braided equivalences,
and 2-morphisms are isomorphisms of such equivalences. 
It can be truncated to an ordinary groupoid 
$\EqBr$. So for every braided fusion category 
$\B$, we obtain the groupoid $\uEqBr(\B)$ 
of braided autoequivalences of $\B$, and the corresponding group 
$\EqBr(\B)$ of isomorphism classes of braided autoequivalences
of $\B$. 

\subsection{The finiteness theorem}

\begin{theorem}\label{finit}
The groups $\Pic(\C)$, ${\rm Out}(\C)$, ${\rm Eq}(\C)$, $\EqBr(\B)$, 
${\rm Pic}(\B)$ are finite. 
\end{theorem}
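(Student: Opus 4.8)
The plan is to bound the order of each of these groups by a quantity depending only on combinatorial invariants of $\C$ (respectively $\B$), using the concrete descriptions of bimodule/module categories as module categories over a multifusion category together with the Frobenius–Perron dimension bookkeeping developed above. First I would handle $\Pic(\C)$. By Corollary~\ref{inv --> indec}, an invertible $(\C,\C)$-bimodule category $\M$ is indecomposable as a left $\C$-module category, and by Proposition~\ref{inv criterion} it is in particular a module category over the multifusion category $\C\boxtimes\C^\rev$; moreover $\sum_{M\in\O(\M)}\FPdim(M)^2=\FPdim(\C)$. Since the simple objects of $\M$ have Frobenius–Perron dimensions that are square roots of rational numbers bounded above by $\FPdim(\C)$ and lying in a fixed number field of bounded degree, there are only finitely many possibilities for the rank of $\M$ and for the multiplication table expressing the $\C\boxtimes\C^\rev$-action on $\O(\M)$. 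One then invokes the fact (from \cite{ENO1}, via Ocneanu rigidity) that module categories with a given fusion-type datum over a fixed (multi)fusion category form a finite set up to equivalence: there are finitely many ways to equip the given combinatorial data with associativity/module constraints, and rigidity says the resulting categories have no deformations, hence finitely many equivalence classes. Therefore $\Pic(\C)$ is finite.

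Next, ${\rm Out}(\C)$, ${\rm Eq}(\C)$, and $\EqBr(\B)$: ${\rm Out}(\C)$ is a quotient of ${\rm Eq}(\C)$ (autoequivalences modulo conjugation by invertibles), and ${\rm Out}(\C)$ sits inside $\Pic(\C)$ as the subgroup of quasi-trivial classes (Section~\ref{outer}), so its finiteness already follows from the previous paragraph; but I would also note directly that $\Aut_\otimes(\C)$ is finite because a tensor autoequivalence permutes the finite set $\O(\C)$, so it is determined up to isomorphism by an element of the (finite) permutation group together with a choice of the finitely many possible monoidal structures on the underlying functor — again finitely many by Ocneanu rigidity. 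The same argument applies verbatim to $\EqBr(\B)$: a braided autoequivalence permutes $\O(\B)$ and carries finitely many braided-monoidal structures. For ${\rm Pic}(\B)$ one uses Remark~\ref{uPic0}: $\upic(\B)$ embeds into $\uuPic(\B)$, so $\pic(\B)$ embeds into $\Pic(\B)$, which is finite by the first paragraph; alternatively one repeats the $\C\boxtimes\C^\rev$-module-category argument with $\B$ acting on itself through the braiding.

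The cleanest organization, and the one I would actually write, is: (1) reduce everything to $\Pic(\C)$ and $\Aut_\otimes(\C)$ being finite, using the embeddings ${\rm Out}(\C)\hookrightarrow\Pic(\C)$, $\pic(\B)\hookrightarrow\Pic(\C)$ with $\C$ Morita-trivial over $\B$... actually more carefully $\pic(\B)\hookrightarrow\Pic(\B)$ via Remark~\ref{uPic0}, and ${\rm Out}(\C)$, ${\rm Eq}(\C)$, $\EqBr(\B)$ governed by permutations of simple objects; (2) prove $\Pic(\C)$ finite via the FP-dimension bound plus Ocneanu rigidity for module categories over the multifusion category $\C\boxtimes\C^\rev$. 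The main obstacle is step (2), specifically making precise the claim that only finitely many combinatorial data can arise: one must argue that an invertible bimodule category, being indecomposable with total FP-dimension $\FPdim(\C)$, has uniformly bounded rank and that the structure constants $[\,X\otimes M : N\,]$ for $X\in\O(\C\boxtimes\C^\rev)$, $M,N\in\O(\M)$ lie in a fixed finite set — this needs the integrality-type estimates on $\FPdim$ in module categories and the observation that these constants are nonnegative integers bounded in terms of $\FPdim(X)\FPdim(M)/\min_N\FPdim(N)$, together with the lower bound $\FPdim(N)\ge 1$ coming from \cite{ENO1}. Once the datum set is finite, Ocneanu rigidity (as in \cite{ENO1}) closes the argument, since each datum supports only finitely many categorical structures up to equivalence and none of them deform.
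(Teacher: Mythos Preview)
Your proposal is correct and follows essentially the same route as the paper: both reduce everything to the finiteness results of \cite{ENO1} (Ocneanu rigidity and the finiteness of indecomposable module categories over a fixed multifusion category). The paper's own proof is a one-line citation of \cite{ENO1}, Theorem~2.31 and Corollary~2.35, whereas you spell out in more detail the reductions (bimodule categories as $\C\boxtimes\C^{\rev}$-module categories, rank bounds via FP-dimension, permutations of simples for autoequivalences) that those cited results already encapsulate.
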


\begin{proof}
This follows from the finiteness results from \cite{ENO1}
(Theorem 2.31, Corollary 2.35).
\end{proof}


\section{Proof of Theorem~\ref{breq}}
\label{Sect 5}

It is sufficient to prove for every fusion category 
$\C$, the functor $\Z: \uPic(\C)\to \uEqBr(\Z(\C))$
is an equivalence.   

\subsection{A monoidal functor $\mathbf{\Phi : \uPic(\C) \to \uEqBr(\Z(\C))}$}
\label{Phi-section}

Let $\M$ be an indecomposable right $\C$-module category. Let $\C_\M^*$
denote the dual of $\C$ with respect to $\M$, i.e., the category of right
$\C$-module endofucnctors of $\M$. By \cite{O1}  $\C^*_\M$ is a fusion
category. We can regard $\M$ as a $\C^*_\M \bt \C^\rev$-module category.
Its $\C^*_\M \bt \C^\rev$-module endofunctors can be identified, on the one hand,
with functors of left multiplication by objects of  $\Z(\C_\M^*)$, and on the other hand,
with functors of right multiplication by objects of $\Z(\C)$. Combined, these identifications 
yield a canonical equivalence of braided categories 
\begin{equation}
\label{schauenburg}
\Z(\C)\xrightarrow{\sim} \Z(\C_\M^*).
\end{equation} 
This result is due to Schauenburg, see  \cite{S}.

Now suppose that $\M$ is an invertible $\C$-bimodule category. 
Let us view it as a right $\C$-module category.
By Proposition~\ref{inv criterion} and Remark~\ref{they are tensor}
we have an equivalence of tensor categories 
\begin{equation}
\label{invMor}
\C^*_\M \cong \C
\end{equation}
obtained by identifying right $\C$-module endofunctors of $\M$ 
with the functors of left multiplication by objects of $\C$.

Thus, we have a braided tensor equivalence
\begin{equation}
\label{Phi}
\Phi(\M) : \Z(\C) \xrightarrow{\sim} \Z(\C_\M^*)  \xrightarrow{\sim}  \Z(\C),
\end{equation}
where the first equivalence is \eqref{schauenburg} and the second one
is induced from \eqref{invMor}.

Clearly, a $\C$-bimodule equivalence between $\M,\,\N\in \uPic(\C)$ gives
rise to an isomorphism of  tensor  functors $\Phi(\M)$ and $\Phi(\N)$.

To see that the functor \eqref{Phi} is monoidal, observe that the  $\C$-bimodule 
functor of  right multiplication by an object $Z\in \Z(\C)$ on  $\M \bt_\C \N$
is isomorphic  to the well-defined functor of ``middle" multiplication by $\left(\Phi(\N)\right)(Z)$,
which, in turn,  is isomorphic to the functor of left multiplication by $\left(\Phi(\M)\circ \Phi(\N)\right)(Z)$.
This gives a natural isomorphism of tensor functors 
$\Phi(\M) \circ\Phi(\N) \cong \Phi(\M\bt_\C \N)$, i.e., a monoidal structure on~$\Phi$.

\subsection{A functor $\mathbf{\Psi :  \uEqBr(\Z(\C))  \to \uPic(\C) }$}
\label{Psi-section}

Let $\alpha$ be a braided tensor autoequivalence of $\Z(\C)$.  Below we recall
a construction of an invertible $\C$-bimodule category from $\alpha$ given
in \cite{ENO3}.

Let $F: \Z(\C)\to \C$ and $I: \C \to \Z(\C)$
denote the forgetful functor and its adjoint.
Given an algebra $A$ in $\C$ let $A-\mbox{mod}_\C$ and $A-\mbox{bimod}_\C$ 
denote, respectively, the categories of left $A$-modules and $A$-bimodules in $\C$.

The object $I(\be)$ is a commutative algebra in $\Z(\C)$ and so is  
\begin{equation}
\label{algebra L}
L:=\alpha^{-1}(I(\be)).
\end{equation} 
Furthermore, there is a tensor  equivalence
\begin{equation}
\label{D=L-mod}
\C  \xrightarrow{\sim}  L-\mbox{mod}_{\Z(\C)} : X \mapsto I(X).
\end{equation}

Note that $L$  is indecomposable in $\Z(\C)$ but 
might be decomposable as an algebra {\em in $\C$}, i.e., 
\[
L =\bigoplus_{i\in J} \, L_i,
\]
where $L_i,\, i\in J,$ are indecomposable algebras in $\C$ such that the 
multiplication of $L$ is zero on $L_i \ot L_j,\, i\neq j$ .
Here and below 
we abuse notation and write $L$ for an object of $\Z(\C)$ and its
forgetful image in $\C$.

For any $i\in J$ let 
\begin{equation}
\label{Psi-i}
\Psi_i(\alpha):= L_i-\mbox{mod}_\C.
\end{equation}
Clearly, it is a right $\C$-module category.
We would
like to show that $\Psi_i(\alpha)$ is, in fact, an invertible $\C$-bimodule category.


Consider the following commutative diagram of tensor functors:

\begin{equation}
\label{our dear diagram}
\xymatrix{
\Z(\C)  \ar[d]_{Z \mapsto L\ot Z} \ar[rr]^{Z\mapsto L_i\ot Z} & &
\Z(L_i-\text{bimod}_\C) \ar[d]^{F_i} \\
L-\text{mod}_{\Z(\C)}  \ar[r]^(.35){F} & \bigoplus\, L_i-\text{bimod}_\C \subset L-\text{bimod}_\C
\ar[r]^(.63){\pi_i} & L_i-\text{bimod}_\C.
}
\end{equation}
Here $F_i: \Z(L_i-\text{bimod}_\C)\to L_i-\text{bimod}_\C$ is the  forgetful functor 
 and $\pi_i$ is a projection from 
$L-\mbox{bimod}_\C =\oplus_{ij}\, (L_i-L_j)-\mbox{bimod}_\C$ to its
$(i,\,i)$ component. We have $\pi_i(L\ot X) = L_i \ot X$
for all $X\in \C$.  The top arrow is an equivalence and
the forgetful functor
$\Z(L_i-\mbox{bimod}_\C)\to L_i-\mbox{bimod}_\C$ (the right down arrow)
is surjective. Hence,  the composition  $G_i := \pi_i F$ of the functors
in the bottom row is surjective. But $G_i$ is a tensor functor between
fusion categories of equal Frobenius-Perron dimension and hence it is an 
equivalence by \cite[Proposition 2.20]{EO}.

In view of \eqref{D=L-mod} this gives a tensor equivalence between 
$\C$ and $\C_{\Psi_i(\alpha)}^*$. Hence, $\Psi_i(\alpha)$ is a
$\C$-bimodule category. It is easy to see that the above functor $G_i$
identifies  with \eqref{L} when $\M=\Psi_i(\alpha)$, 
therefore  $\Psi_i(\alpha)$ is invertible by Proposition~\ref{inv criterion}.

We claim that definition \eqref{Psi-i} does not depend on a choice of $i\in J$.

\begin{lemma}
\label{no choice}
For all $i,\,j\in J$ there is an equivalence of  
$\C$-bimodule categories $\Psi_i(\alpha)$ and $\Psi_j(\alpha)$.
\end{lemma}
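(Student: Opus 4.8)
\textbf{Proof proposal for Lemma~\ref{no choice}.}

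The plan is to exhibit, for any two indices $i,j\in J$, an invertible $L_i$-$L_j$-bimodule category in $\C$ and use it to transport $\Psi_i(\alpha)=L_i\text{-mod}_\C$ to $\Psi_j(\alpha)=L_j\text{-mod}_\C$, in a way that respects the $\C$-bimodule structure on both sides. The key point is that $L=\bigoplus_{k\in J}L_k$ is \emph{indecomposable as an object of $\Z(\C)$}, even though it may split as an algebra in $\C$; this indecomposability is exactly what forces the various summands $L_i$ to be ``conjugate'' inside $L$, and it is the source of the desired equivalences.

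First I would unwind the structure of $L$ as an algebra in $\Z(\C)$. Since $L$ is a commutative algebra in $\Z(\C)$ (being $\alpha^{-1}(I(\be))$ with $\alpha$ braided, and $I(\be)$ commutative), the forgetful functor sends it to an algebra in $\C$ carrying its half-braiding. The central structure on $L$ gives, for each simple summand $L_i$ of $L$ in $\C$, an isomorphism in $\Z(\C)$ permuting the summands; more precisely, I would consider the component $L_i\text{-}L_j$ of $L\text{-bimod}_\C$ appearing in the decomposition $L\text{-bimod}_\C=\bigoplus_{i,j}(L_i\text{-}L_j)\text{-bimod}_\C$ used in diagram~\eqref{our dear diagram}, and observe that the half-braiding of $L$ provides a distinguished invertible object $P_{ij}$ in $(L_i\text{-}L_j)\text{-bimod}_\C$ — concretely, the image of the unit under the composite that witnesses centrality. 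Tensoring with $P_{ij}$ over $L_j$ gives an equivalence $L_j\text{-mod}_\C\xrightarrow{\sim}L_i\text{-mod}_\C$ of right $\C$-module categories, since $P_{ij}$ is an invertible $(L_i,L_j)$-bimodule in the multifusion category sense.

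Next I would check that this equivalence is one of $\C$-bimodule categories, not merely of right $\C$-module categories. For this I would use the characterization already in hand: by the construction in Section~\ref{Psi-section}, the left $\C$-action on $\Psi_i(\alpha)$ is the one coming from the canonical tensor equivalence $G_i:\C\xrightarrow{\sim}\C^*_{\Psi_i(\alpha)}$ of \eqref{our dear diagram}, which in turn is pinned down by the half-braiding of $L$ (the top horizontal arrow $Z\mapsto L_i\otimes Z$ lands in $\Z(L_i\text{-bimod}_\C)$). Since $P_{ij}$ itself was manufactured from the same half-braiding data, the functor $-\otimes_{L_j}P_{ij}$ intertwines $G_i$ and $G_j$ up to coherent isomorphism; chasing the two squares in \eqref{our dear diagram} for $i$ and for $j$ against the automorphism of $L\text{-bimod}_\C$ induced by $P_{ij}$ yields the required compatibility with the left $\C$-action. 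Alternatively — and this is probably cleaner to write — I would invoke Theorem~\ref{breq}'s ingredients in the form already established in this section: both $\Psi_i(\alpha)$ and $\Psi_j(\alpha)$ are invertible $\C$-bimodule categories whose associated braided autoequivalence of $\Z(\C)$ (via \eqref{Phi}) is $\alpha$ itself, by construction; so once full faithfulness of $\Phi$ on $\pi_0$ is available they are automatically equivalent. But since Lemma~\ref{no choice} is used \emph{in the course of} proving that $\Phi$ is an equivalence, I would avoid circularity and give the direct $P_{ij}$ argument.

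\emph{Main obstacle.} The hard part will be producing the invertible bimodule $P_{ij}$ and verifying its invertibility cleanly: one must show the multiplication on $L$ restricted to $L_i\otimes L_j$ (for $i\neq j$) is zero while the half-braiding nonetheless ``connects'' the blocks, so that $P_{ij}\otimes_{L_j}P_{jk}\cong P_{ik}$ and $P_{ij}\otimes_{L_j}P_{ji}\cong L_i$. Concretely this is a Frobenius-Perron dimension count together with the observation that the central idempotents $e_i\in\End_{\Z(\C)}(L)$ do not commute with the half-braiding (otherwise $L$ would decompose in $\Z(\C)$, contradicting indecomposability), so the half-braiding $\sigma_{X}\colon X\otimes L\to L\otimes X$ has nonzero $(i,j)$-components; extracting these components and checking they assemble into an invertible bimodule is the technical heart. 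Once $P_{ij}$ is in place, the rest is formal.
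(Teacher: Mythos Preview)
Your instinct is right: the equivalence $\Psi_i(\alpha)\simeq\Psi_j(\alpha)$ is implemented by tensoring with an invertible $(L_j,L_i)$-bimodule in $\C$, and once such an object is in hand the compatibility with the left $\C$-action is indeed formal. But the construction of $P_{ij}$ that you sketch is not a construction. The $(i,j)$-components of the half-braiding $\sigma_X\colon X\otimes L\to L\otimes X$ are morphisms in $\C$, not bimodules; there is no evident way to ``assemble'' a family of such maps into an object of $(L_i\text{-}L_j)\text{-bimod}_\C$. Worse, the most natural candidate --- $L$ itself with the left $L_i$-action by multiplication and the right $L_j$-action induced by the half-braiding --- collapses: since $L$ is \emph{commutative} in $\Z(\C)$ one has $m\circ\sigma_L=m$, so the half-braiding-induced right action on any summand $L_k$ agrees with the multiplication action, which kills $L_k\cdot L_j$ for $j\neq k$. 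So the ``obvious'' $P_{ij}$ is zero, and you have not said what to replace it with. This is a genuine gap, not just a technicality.

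The paper closes this gap by a completely different, global argument that avoids constructing $P_{ij}$ by hand. One considers $\D:=L\text{-mod}_\C$ as a \emph{multifusion} category: because $L\in\Z(\C)$, every left $L$-module in $\C$ acquires a right $L$-action via the half-braiding, so $\otimes_L$ is defined and the unit $L=\oplus_k L_k$ decomposes, yielding $\D=\oplus_{ij}\D_{ij}$ with $\D_{ij}=L_i\otimes_L\D\otimes_L L_j$. The key step is Schauenburg's result that $\Z(\D)\cong\Vec$. A multifusion category with trivial center is forced to be of matrix type: each $\D_{ij}\cong\Vec$, with unique simple object $E_{ij}$ satisfying $E_{ij}\otimes_L E_{kl}=\delta_{jk}E_{il}$. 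One reads off $L_i=E_{ii}$, each $L_i\text{-mod}_\C$ is spanned by $\{E_{ik}\}_k$, and $X\mapsto E_{ji}\otimes_L X$ is the desired $\C$-bimodule equivalence $\Psi_i(\alpha)\to\Psi_j(\alpha)$. In other words, the existence and invertibility of your $P_{ij}\,(=E_{ij})$ are not established by a local analysis of the half-braiding but are forced by the vanishing of $\Z(\D)$ --- which is exactly the structural shadow of the indecomposability of $L$ in $\Z(\C)$ that you were trying to exploit.
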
 
\begin{proof}
Let us consider the category $\D:=L-\text{mod}_\C$.  It is a {\em multifusion}
category in the  sense of \cite[Section 2.4]{ENO1}, i.e., it has a decomposition
\[
\D = \bigoplus_{ij\in J}\, \D_{ij}, 
\]
such that $\D_{ii}$ is a fusion category and $\D_{ij}$ is a $(\D_{ii},\D_{jj})$-bimodule
category for all $i,j\in J$. Furthermore for $X\in \D_{ij}$ and $Y\in \D_{kl}$
we have $X\ot Y \in \D_{il}$ if $j=k$ and $X\ot Y =0$ if $j\neq k$.

It follows from the result of Schauenburg \cite[Corollary 4.5]{S} that $\Z(\D)\cong \Vec$
as a tensor category. Therefore, $\D_{ij} \cong \Vec$ for all $i,j\in J$, i.e.,
simple objects of $\D$ can be labeled $E_{ij}$ in such a way
that the tensor product $\ot_L$ satisfies the usual matrix multiplication rules:
\[
E_{ij} \ot_L E_{kl} =\delta_{jk} E_{il},\quad i,j,k,l\in J.
\]
It follows that $L_i = E_{ii}$ and $L_i-\text{mod}_\C$ is spanned by $E_{ik},\,k\in J$.
Thus, the functor
\[
X \mapsto E_{ji}\ot_L X :  L_i-\text{mod}_\C \to L_j-\text{mod}_\C,\qquad i,j\in J
\]
is an equivalence of $\C$-bimodule categories. 
\end{proof}

Let us choose a $\C$-bimodule category $\Psi(\alpha)\in \uPic(\C)$ in the equivalence class
of $\C$-bimodule  categories $\Psi_i(\alpha),\, i\in J$.

Let $f:\alpha\xrightarrow{\sim} \alpha'$ be an isomorphism in $\uEqBr(\Z(\C))$. It gives
rise to an equivalence of the corresponding algebras $L,\, L'$ in $\Z(\C)$ and, 
consequently, to a $\C$-bimodule  equivalence 
$\Psi_i(f):  \Psi_i(\alpha) \xrightarrow{\sim} \Psi_i(\alpha')$.
By Lemma~\ref{no choice} we obtain a $\C$-bimodule  equivalence 
$\Psi(f):  \Psi(\alpha) \xrightarrow{\sim} \Psi(\alpha')$.

Thus, we have a functor
\begin{equation}
\label{Psi}
\Psi: \uEqBr(\Z(\C)) \to \uPic(\C).
\end{equation}
It remains to check that $\Psi$ is an inverse
of the monoidal functor $\Phi$ introduced in Section~\ref{Phi-section}.

\subsection{Equivalences $\mathbf{\Phi\circ\Psi \cong {\rm Id}_{\uEqBr(\Z(\C))}}$
and $\mathbf{\Psi\circ \Phi \cong {\rm Id}_{\uPic(\C)}}$}

First we prove an equivalence $\Phi\circ\Psi \cong {\rm Id}_{\uEqBr(\Z(\C))}$.
Given  $\alpha\in \uEqBr(\Z(\C))$ let $\M =\Psi(\alpha) \cong L_i -\text{mod}_\C$, where
the algebra $L_i$ is defined as in Section~\ref{Psi-section}. From
\eqref{Phi} we see that $\Phi(\M)$ is defined by
\[
\Phi(\M) : \Z(\C) \xrightarrow{Z\mapsto L_i\ot Z} \Z(L_i -\text{bimod}_\C) \xrightarrow{\iota} \Z(\C),
\]
where the second equivalence $\iota$ is induced from the inverse of the equivalence
in the bottom row of \eqref{our dear diagram}.  Since $\C \cong I(\be)-\text{mod}_{\Z(\C)}$
we have 
\[
\iota^{-1}(Z) = \pi_i F\alpha^{-1} I(Z) = \pi_i F\alpha^{-1} (I(\be)\ot Z)  =L_i \ot \alpha^{-1}(Z)
\]
for all $Z\in \Z(\C)$. Therefore,  $\Phi\circ \Psi(\alpha) \cong \alpha$. 

Next, we prove that $\Psi\circ \Phi \cong {\rm Id}_{\uPic(\C)}$.
Take $\M \in \uPic(\C)$.  Let $A\in \C$ be an algebra such that $\M\cong A-\text{mod}_\C$
as a right $\C$-module category.  Since $\M$ is invertible, we have an equivalence
$\C \cong A-\text{bimod}_\C$ by Proposition~\ref{inv criterion}.

Construct a braided autoequivalence $\alpha:=\Phi(\M)\in \uEqBr(\Z(\C))$ as in  \eqref{Phi}. 
Upon the identification $\Z(\C) \cong \Z(A-\text{bimod}_\C)$ we have
\[
\alpha(Z)= A \ot Z,\quad Z\in \Z(\C),
\]
where $A\ot Z$ has an obvious  structure of a central object in the category of $A$-bimodules.
So the algebra $L$ in $\Z(\C)$ defined by \eqref{algebra L} is identified with 
the algebra $A\ot I(\be)$ in $\Z(A-\text{bimod}_\C)$.   Hence, the category of $L-\text{mod}_\C$
is identified with the category $\widetilde{\M}$ of $A-I(\be)$-bimodules in $\C$ (recall that
$I(\be)$ is a commutative algebra in $\Z(\C)$).  Indecomposable components
of $\widetilde{\M}$ are equivalent to $A-\text{mod}_\C$ and so they are identified with $\M$
as $\C$-bimodule categories, i.e.,  $\Psi\circ\Phi(\M) \cong \M$, as required. 


It is easy to check that $\Phi$ and $\Psi$ are bijective on morphisms (cf.\ Remark~\ref{mortorsor}).

This completes the proof of Theorem~\ref{breq}.


\subsection{Generalization} Let $\B$ be a non-degenerate braided fusion cate\-gory 
(see \cite[Definition 2.28]{DGNO}). By \cite[Proposition 3.7]{DGNO} this means that the braiding 
on $\B$ induces an
equivalence $\B\bt \B^{\rev}\simeq \Z (\B)$. 
Now let $\M$ be an invertible module category over $\B$ (see Section~\ref{braided Picard}) and let 
$\B^*_\M=\Fun_\B(\M,\M)$. Combining the equivalence above with \eqref{schauenburg} we get
an equivalence $\B \bt \B^{\rev}\simeq \Z (\B_\M^*)$. The compositions
\begin{equation}
\alpha_+: \B =\B \bt \be \subset \B\bt \B^{\rev}\simeq \Z(\B_\M^*)\to \B_\M^*
\end{equation}
and 
 \begin{equation}
\alpha_-: \B =\be \bt \B^{\rev} \subset \B\bt \B^{\rev}\simeq \Z(\B_\M^*)\to \B_\M^*
\end{equation}
are called {\em alpha-induction functors}, see e.g. \cite{O1}. Proposition \ref{inv criterion} says that invertibility
of $\M$ is equivalent to $\alpha_+$ and $\alpha_-$ being tensor equivalences. 
Thus $$\alpha_+=\alpha_-\circ \theta_\M$$ where $\theta_\M:\B \to \B$ is an autoequivalence. One verifies
directly that $\theta_\M$ is actually a {\em braided} autoequivalence of $\B$. 
Furthermore, the same argument as the one in the end of Section~\ref{Phi-section} 
shows that  $\theta_\M$ naturally extends to a functor ${\underline {\rm Pic}}(\B)\to \uEqBr(\B)$.

Conversely, let $\gamma \in \EqBr(\B)$. Then $\id \bt \gamma \in \EqBr(\B \bt \B^\rev)=\EqBr(\Z(\B))$.
Thus Theorem \ref{breq} assigns to $\gamma$ an invertible $\B-$bimodule category $\M_\gamma$.
It follows
immediately from definitions that right and left actions of $\B$ on $\M_\gamma$ are related
by the braiding, so $\M_\gamma$ is an invertible module category over $\B$. It is clear that
this assignment $\gamma \mapsto \M_\gamma$ extends naturally to a functor
$\uEqBr(\B)\to {\underline {\rm Pic}}(\B)$. A careful examination of the constructions involved 
shows the following result:

\begin{theorem} \label{genbreq}
For a non-degenerate braided fusion category $\B$ the functors above are
mutually inverse equivalences of ${\underline {\rm Pic}}(\B)$ and {\em $\uEqBr(\B)$}.
\end{theorem}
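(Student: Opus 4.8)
The plan is to verify that the two functors constructed just above the statement --- call them $\theta_\bullet : {\underline{\rm Pic}}(\B)\to \uEqBr(\B)$ (sending an invertible $\B$-module category $\M$ to the braided autoequivalence $\theta_\M$ measuring the discrepancy between $\alpha_+$ and $\alpha_-$) and $\M_\bullet : \uEqBr(\B)\to {\underline{\rm Pic}}(\B)$ (sending $\gamma$ to the invertible bimodule category $\M_\gamma$ attached to $\id\bt\gamma\in\EqBr(\Z(\B))$ by Theorem~\ref{breq}) --- are mutually inverse as $2$-functors. The key observation is that, by Remark~\ref{uPic0}, ${\underline{\rm Pic}}(\B)$ sits inside $\uuPic(\B)$ as a full $2$-subgroup, and under the identification $\Z(\B)\simeq\B\bt\B^{\rev}$ furnished by non-degeneracy (\cite[Proposition 3.7]{DGNO}), a $\B$-bimodule category in which the left and right actions are related via the braiding corresponds precisely to a $\Z(\B)$-module structure of the form coming from the factor $\B\bt\be$ (equivalently $\be\bt\B^{\rev}$). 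So everything should reduce, by restriction, to the equivalence $\Phi,\Psi$ of Section~\ref{Phi-section}--\ref{Psi-section} applied to the fusion category $\B$, cut down to the appropriate full subgroupoids on both sides.

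Concretely, first I would show that $\theta_\bullet$ is the restriction of $\Phi$. For an invertible $\B$-module category $\M$, viewed as an invertible $\B$-bimodule category via the braiding, the composite equivalence $\Phi(\M):\Z(\B)\xrightarrow{\sim}\Z(\B^*_\M)\xrightarrow{\sim}\Z(\B)$ of \eqref{Phi} should, under $\Z(\B)\simeq\B\bt\B^{\rev}$ and the corresponding factorization of $\Z(\B^*_\M)$, become exactly $\id\bt\theta_\M$ (or $\theta_\M\bt\id$, depending on conventions); this is a bookkeeping identification of the Schauenburg equivalence \eqref{schauenburg} with the alpha-induction picture, using that $\alpha_\pm$ are the two tensor functors $\B\to\B^*_\M$ obtained from the two embeddings of $\B$ into $\B\bt\B^{\rev}$. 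Dually, I would check that for $\gamma\in\EqBr(\B)$ the bimodule category $\M_\gamma=\Psi(\id\bt\gamma)$ does land in ${\underline{\rm Pic}}(\B)$ --- i.e. its left and right $\B$-actions are intertwined by the braiding --- which is asserted in the text and follows by unwinding the construction of $\Psi$ in \eqref{Psi-i} for the algebra $L=(\id\bt\gamma)^{-1}(I(\be))$, noting that $I(\be)$ decomposes compatibly with the tensor decomposition $\Z(\B)\simeq\B\bt\B^{\rev}$.

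Once both functors are recognized as restrictions of $\Phi$ and $\Psi$ to matching full $2$-subgroups, the identities $\theta_{\M_\gamma}\cong\gamma$ and $\M_{\theta_\M}\cong\M$ (together with the statements on $1$- and $2$-morphisms) follow immediately from the already-proved equivalences $\Phi\circ\Psi\cong\id$ and $\Psi\circ\Phi\cong\id$ of Theorem~\ref{breq}, provided one checks that $\Phi$ carries the subgroup ${\underline{\rm Pic}}(\B)\subset\uuPic(\B)$ \emph{onto} the subgroup $\uEqBr(\B)\subset\uEqBr(\Z(\B))$ (braided autoequivalences of $\Z(\B)=\B\bt\B^{\rev}$ of the form $\id\bt\gamma$) and not merely into it; surjectivity here is exactly the content of ``$\M_\gamma$ is a module, not just a bimodule, category.'' The main obstacle I anticipate is precisely this compatibility of the two tensor-factorizations: matching, on the nose, the Schauenburg double-centralizer equivalence $\Z(\B)\simeq\Z(\B^*_\M)$ with the factorization $\Z(\B)\simeq\B\bt\B^{\rev}$ coming from the braiding, so that $\Phi(\M)$ is visibly of the form $\id\bt\theta_\M$ rather than some a priori arbitrary braided autoequivalence of $\B\bt\B^{\rev}$ --- this is the ``careful examination of the constructions involved'' alluded to in the statement, and it is where all the coherence data must be tracked; the monoidality of $\theta_\bullet$ and the behavior on higher morphisms are then formal consequences of the corresponding properties of $\Phi$ established in Section~\ref{Phi-section}.
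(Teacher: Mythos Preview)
Your proposal is correct and follows precisely the approach the paper sketches: the paper itself does not give a proof here but only says ``a careful examination of the constructions involved shows'' the result and that ``details of the proof of Theorem~\ref{genbreq} will be given in a subsequent article,'' so there is nothing further to compare against. Your identification of $\theta_\bullet$ and $\M_\bullet$ as the restrictions of $\Phi$ and $\Psi$ to matching full $2$-subgroups under the non-degeneracy equivalence $\Z(\B)\simeq\B\bt\B^{\rev}$, and your singling out of the coherence check that $\Phi(\M)$ takes the form $\id\bt\theta_\M$ as the substantive step, is exactly the content the paper defers.
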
 
Details of the proof  of Theorem~\ref{genbreq} will be given in a subsequent article.

\begin{remark} 
\begin{enumerate}
\item[(i)] We notice that the construction of $\theta_\M$ above makes sense for arbitrary
braided fusion category $\B$, see \cite{O1}. Thus, we have a monoidal functor
\begin{equation}
\label{we formally agree with the government policy concerning hallucinogenic mushrooms}
\Theta: \underline{\rm Pic}(\B)\to \uEqBr(\B) : \M \mapsto \theta_\M.
\end{equation}
However it is clear that \eqref{we formally agree with the government policy concerning hallucinogenic mushrooms} 
does not produce an equivalence as in 
Theorem \ref{genbreq}. For example it is clear that for a symmetric braided fusion 
category $\alpha_+=\alpha_-$
for any $\M$, so $\theta_\M=\id_\B$ for any $\M$ in this case.
\item[(ii)] For a fusion category $\C$ the braided category $\Z(\C)$ is non-degene\-rate, see 
\cite[Corollary 3.9]{DGNO}.
Thus combining Theorem \ref{breq} and Proposition \ref{bimod =modZC} we get an equivalence 
${\underline {\rm Pic}}(\Z(\C))\simeq \uEqBr(\Z(\C))$ in this case. One verifies that
this equivalence and equivalence from Theorem \ref{genbreq} are canonically identified.
\end{enumerate}
\end{remark}

\begin{remark}
\label{conceptual}
Given a braided category $\B$ 
we have  a monoidal functor $\Theta: \underline{\rm Pic}(\B)\to \uEqBr(\B)$ given by 
\eqref{we formally agree with the government policy concerning hallucinogenic mushrooms}. 
Recall that in Section~\ref{Phi-section} we constructed a monoidal equivalence 
$\Phi : \uPic(\C) \to \uEqBr(\Z(\C))$  for any fusion category $\C$. The
following conceptual explanation of these functors were suggested to us by
V.~Drinfeld.  

Namely, let $\A$ be a monoidal $2$-category (see \cite{KV}).
Then the monoidal category $\underline{\End}(\be_\A)$ of endofunctors of the unit object of $\A$
has a canonical structure of a braided category (this is a higher categorical version of the well known fact
that endomorphisms of the unit object in a monoidal category form a commutative monoid).
The categorical group $\A^\times$ of invertible objects of $\A$ acts on $\underline{\End}(\be_\A)$
by tensor conjugation. Hence, we have a monoidal functor
\begin{equation}
\label{concept functor}
\A^\times \to \EqBr(\underline{\End}(\be_\A)).
\end{equation}
\end{remark}
For $\A =\mathbf{Bimodc}(\C)$, the monoidal $2$-category of $\C$-bimodule categories
over a fusion category $\C$, one has $\underline{\End}(\be_\A) = \Z(\C)$ and the above
functor \eqref{concept functor} is  precisely  the functor $\Phi : \uPic(\C) \to \uEqBr(\Z(\C))$ 
from Section~\ref{Phi-section}. For $\A= \mathbf{Modc}(\B)$, the monoidal $2$-category of module 
categories   over a braided fusion category $\B$, it gives the functor 
\eqref{we formally agree with the government policy concerning hallucinogenic mushrooms}.

\subsection{The truncation of the categorical 2-group of outer autoequivalences
of a fusion category}\label{ximap}

For any fusion category $\C$, we have a natural homomorphism of categorical groups 
$\xi: \underline{\rm Eq}(\C)\to \underline{\underline{\rm Out}}(\C)$, attaching to every tensor autoequivalence 
its class of outer autoequivalences. 

\begin{proposition}\label{xiiso}
If $\C$ has no nontrivial invertible objects, then 
$\xi$ is an isomorphism of $\underline{\rm Eq}(\C)$ onto the truncation 
$\underline{\rm Out}(\C)$.
\end{proposition}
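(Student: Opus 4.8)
The plan is to post-compose $\xi$ with the truncation homomorphism $\underline{\underline{\rm Out}}(\C)\to\underline{\rm Out}(\C)$, obtaining a homomorphism of categorical $1$-groups $\bar\xi:\underline{\rm Eq}(\C)\to\underline{\rm Out}(\C)$; the assertion of the proposition is exactly that $\bar\xi$ is an equivalence. Since $\bar\xi$ is a homomorphism of categorical groups, it is enough to check that it induces isomorphisms on $\pi_0$ and on $\pi_1$ (a morphism of categorical groups is an equivalence precisely when it is bijective on $\pi_0$ and $\pi_1$, the classifying $3$-cocycles then matching automatically). Equivalently, one may verify directly that the underlying functor of $\bar\xi$ is essentially surjective, full, and faithful.

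On $\pi_0$: we have $\pi_0(\underline{\rm Eq}(\C))={\rm Eq}(\C)$, while $\pi_0(\underline{\rm Out}(\C))$ is the set of equivalence classes of quasi-trivial invertible $\C$-bimodule categories. By the discussion in \S\ref{quasitr}, every such bimodule category is of the form $\C_\phi$ for a tensor autoequivalence $\phi$ of $\C$ (so $\bar\xi$ is essentially surjective), and $\C_\phi\cong\C_\psi$ precisely when $\phi$ and $\psi$ differ by conjugation by an invertible object of $\C$; hence $\pi_0(\underline{\rm Out}(\C))={\rm Out}(\C)$ and $\pi_0(\bar\xi)$ is the canonical surjection ${\rm Eq}(\C)\twoheadrightarrow{\rm Out}(\C)$. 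Since $\C$ has no nontrivial invertible objects, conjugation by an invertible object is trivial, so this surjection is an isomorphism.

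On $\pi_1$: we have $\pi_1(\underline{\rm Eq}(\C))={\rm Aut}^\otimes({\rm id}_\C)$, the group of monoidal natural automorphisms of the identity functor, while $\pi_1(\underline{\rm Out}(\C))$ is the automorphism group of the unit object $\C$ in $\underline{\rm Out}(\C)$, which by Remark~\ref{mortorsor} equals ${\rm Inv}(\Z(\C))$. The forgetful tensor functor $\Z(\C)\to\C$ sends an invertible object to an invertible object; since $\C$ has no nontrivial invertible objects, every invertible object of $\Z(\C)$ is therefore $\be$ equipped with a half-braiding, which is the same datum as a monoidal natural automorphism of ${\rm id}_\C$. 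Thus ${\rm Inv}(\Z(\C))={\rm Aut}^\otimes({\rm id}_\C)$, and unwinding the construction of $\xi$ in \S\ref{ximap} identifies $\pi_1(\bar\xi)$ with this isomorphism. Hence $\pi_1(\bar\xi)$ is an isomorphism, and $\bar\xi$ is an equivalence.

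The only point requiring genuine care --- and what I would regard as the crux --- is matching $\pi_1(\bar\xi)$ with the canonical isomorphism ${\rm Aut}^\otimes({\rm id}_\C)={\rm Inv}(\Z(\C))$, i.e. checking that the bimodule autoequivalence of $\C$ attached by $\xi$ to a monoidal natural automorphism $h$ of ${\rm id}_\C$ corresponds to the object $\be\in\Z(\C)$ carrying the half-braiding $h$; this is a routine unwinding of \S\ref{ximap}. If one prefers to bypass $\Z(\C)$, the same content is the fullness and faithfulness of $\bar\xi$: a bimodule equivalence $F:\C_\phi\to\C_\psi$ has $F(\be)$ invertible in $\C$, hence $F(\be)\cong\be$, so after normalization $F$ is the identity on underlying left $\C$-module categories and its right-module structure constraint at the unit object is a monoidal natural isomorphism $\phi\xrightarrow{\sim}\psi$ from which $F$ is recovered up to isomorphism (fullness); and a short computation with the module-functor axioms shows that two such natural isomorphisms inducing isomorphic bimodule equivalences must coincide (faithfulness, which uses no hypothesis on $\C$).
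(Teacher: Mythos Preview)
Your proof is correct and complete. The approach differs from the paper's, which is much terser: the paper simply observes that when $\C$ has no nontrivial invertible objects, any left $\C$-module equivalence $\C\to\M$ (for $\M$ quasi-trivial) is unique up to isomorphism, since the autoequivalences of $\C$ as a left $\C$-module category are right multiplications by invertible objects; hence $\M$ determines $\phi$ up to isomorphism of tensor autoequivalences (not merely up to inner ones), and one sets $\xi^{-1}(\M)=\phi$. This is really your ``bypassing $\Z(\C)$'' paragraph compressed to two sentences, with the morphism-level bijectivity left implicit.

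Your main argument via $\pi_0$ and $\pi_1$ is genuinely different in packaging and somewhat more informative: it makes explicit that the hypothesis is used twice, once to collapse ${\rm Eq}(\C)\to{\rm Out}(\C)$ to an isomorphism, and once to identify ${\rm Inv}(\Z(\C))$ with ${\rm Aut}^\otimes({\rm id}_\C)$ (via the observation that any invertible central object must forget to $\be$). The paper's direct construction of $\xi^{-1}$ is quicker but hides this second use of the hypothesis inside the uniqueness-up-to-isomorphism claim. Both arguments are equally valid; yours is more thorough, the paper's more economical.
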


\begin{proof} 
Let $\M$ be a quasi-trivial invertible bimodule category over $\C$. 
Then there exists a unique, up to an isomorphism, equivalence of left module categories 
$\C\to \M$, so we may assume that $\M=\C$ as a left module category. Then 
the right action of $\C$ is given by some uniquely determined autoequivalence $\phi$.
Then we can define $\xi^{-1}(\M)=\phi$. 
\end{proof} 

\section{Invertibility of components of graded fusion categories}
\label{Sect6}

Let $G$ be a finite group and let
\[
\C = \bigoplus_{g\in G}\, \C_g
\]
be a graded fusion category, cf.\ Section~\ref{graded TC}.
The trivial component $\C_e$ is a tensor subcategory of $\C$,
and each $\C_g$ is a $\C_e$-bimodule category. It follows that for all  $g,h\in G$
the tensor product of $\C$ restricts to a $\C_e$-balanced bifunctor  
\begin{equation}
\label{def M0}
\otimes: \C_g \times \C_h \to \C_{gh},
\end{equation}
which gives rise to a functor
\begin{equation}
\label{def M}
M_{g,h} : \C_g \boxtimes_{\C_e} \C_h \to \C_{gh}.
\end{equation}

\begin{theorem}
\label{component invertibility}
Let $\C= \oplus_{g\in G}\, \C_g$ be a $G$-extension. Then:
\begin{enumerate}
\item[(i)] each  $\C_g,\, g\in G,$ is an invertible $\C_e$-bimodule category;
\item[(ii)]  the functor $M_{g,h} : \C_g \boxtimes_{\C_e} \C_h \to \C_{gh},\, g,h\in G,$
is an equivalence of  $\C_e$-bimodule categories. 
\end{enumerate}
\end{theorem}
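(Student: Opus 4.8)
The plan is to prove both parts simultaneously by an inductive/bootstrapping argument on the structure of the grading, reducing everything to the invertibility criterion of Proposition~\ref{inv criterion}. First I would establish the key auxiliary fact that the multiplication functors are always \emph{surjective} (dominant): since $\C$ is a fusion category and the grading is faithful, for any $g,h$ the subcategory generated by the image of $\otimes: \C_g\times\C_h\to\C_{gh}$ is a nonzero $\C_e$-submodule category of $\C_{gh}$; but $\C_{gh}$ is an indecomposable $\C_e$-bimodule category (here one needs that $\C$ is a fusion category, so $\C_{gh}$ has no proper nonzero bimodule subcategories once we know $\C$ itself is indecomposable — more precisely one uses that every $\C_g$ contains an invertible-like generating structure because $\C_g\otimes\C_{g^{-1}}\ni\be$). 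The cleanest route: the unit object $\be\in\C_e$ appears in $X\otimes {}^*X$ for any simple $X\in\C_g$, so ${}^*X\in\C_{g^{-1}}$ and the functor $\C_g\boxtimes_{\C_e}\C_{g^{-1}}\to\C_e$ hits $\be$, hence (being a $\C_e$-bimodule functor whose image contains $\be$) is surjective onto all of $\C_e$.

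Next I would compute Frobenius--Perron dimensions. By the Proposition in Section~\ref{FPmod}, for each $g$ one has $\sum_{M\in\O(\C_g)}\FPdim(M)^2 = \FPdim(\C_e) = \dim(\C_e)$ — this holds because $\C_g$ is an indecomposable $\C_e$-module category (indecomposability of $\C_g$ follows from faithfulness of the grading together with rigidity, exactly as above). Consequently $\FPdim(\C_g)=\FPdim(\C_e)$ for all $g$. Now for the multiplication functor $M_{g,h}:\C_g\boxtimes_{\C_e}\C_h\to\C_{gh}$: the source has FP-dimension (as a left $\C_e$-module category) equal to $\FPdim(\C_g)\FPdim(\C_h)/\FPdim(\C_e) = \FPdim(\C_e) = \FPdim(\C_{gh})$, using that $\boxtimes_{\C_e}$ multiplies dimensions and divides by $\FPdim(\C_e)$ (this dimension count for the relative tensor product is the module-category analogue of $\dim_k(V\otimes_R W)$; it can be read off from Remark~\ref{yet one more}, realizing $\C_g\boxtimes_{\C_e}\C_h$ as $A$-modules in $\C_g\boxtimes\C_h$ where $A=\oplus X^*\boxtimes X$ has FP-dimension $\dim(\C_e)$). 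A surjective $\C_e$-module functor between semisimple module categories of equal FP-dimension must be an equivalence — here I would cite the surjectivity established above plus the fact that on a simple object it cannot increase the relevant internal-Hom dimension. This gives part (ii).

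For part (i), having (ii) in hand, I would verify the hypotheses of Proposition~\ref{inv criterion}, e.g.\ condition (ii) or (iii) of that proposition: I claim there is a $\C_e$-bimodule equivalence $\C_g^{\op}\boxtimes_{\C_e}\C_g\cong\C_e$. Indeed $\C_g^{\op}$ is naturally identified with $\C_{g^{-1}}$ as a $\C_e$-bimodule category via $M\mapsto {}^*M$ (the dual lands in the $g^{-1}$-component and the opposite module structure matches the one coming from $\C$, by rigidity of $\C$), and then the composition $\C_g^{\op}\boxtimes_{\C_e}\C_g \cong \C_{g^{-1}}\boxtimes_{\C_e}\C_g \xrightarrow{M_{g^{-1},g}} \C_e$ is an equivalence by part (ii). Symmetrically $\C_g\boxtimes_{\C_e}\C_g^{\op}\cong\C_e$. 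Hence $\C_g$ is invertible, proving (i). (In fact, once (i) is known via Proposition~\ref{inv criterion}, one can also run the logic in the other order: invertibility forces $\C_g\boxtimes_{\C_e}(-)$ to be an equivalence of $2$-categories, and then $M_{g,h}$ is identified with a canonical equivalence — but the FP-dimension argument for (ii) is self-contained and cleaner.)

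The main obstacle I anticipate is the careful bookkeeping showing that the functor $M_{g,h}$ is \emph{surjective} and that $\C_g$ is \emph{indecomposable} as a $\C_e$-bimodule (equivalently, as a one-sided module) category — everything downstream is a dimension count, but these two qualitative facts are where rigidity of $\C$ and faithfulness of the grading genuinely enter, and one must be slightly careful that "indecomposable as a left $\C_e$-module" (needed to apply the Proposition of Section~\ref{FPmod}) really does follow, rather than merely "indecomposable as a bimodule". The resolution is the unit-object argument: $\be\in\C_e$ is simple, and for any simple $X\in\C_g$ the object $\underline{\Hom}(X,X)\in\C_e$ contains $\be$; chasing how $\be$ propagates under the $\C_e$-action (using that tensoring by all simple objects of $\C_e$ connects all simples of $\C_g$, which in turn uses $\FPdim(\C_g)=\FPdim(\C_e)$ and a counting argument, or more directly Proposition~\ref{relp}-style techniques) yields indecomposability as a one-sided module category.
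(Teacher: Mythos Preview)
Your strategy differs from the paper's. The paper proves (i) first, and directly: pick any nonzero $Y_g\in\C_g$, set $A_g=Y_g\otimes Y_g^*\in\C_e$, and observe that $X\mapsto Y_g\otimes X\otimes Y_g^*$ is a tensor equivalence $\C\xrightarrow{\sim}A_g\text{-bimod}_\C$ (standard Morita theory for fusion categories). Restricting this to $\C_e$ lands in $A_g\text{-bimod}_{\C_e}\cong\Fun_{\C_e}(\C_g,\C_g)$ and is checked to coincide with the functor $R$ of \eqref{R}; hence $R$ is an equivalence, which is condition (iv) of Proposition~\ref{inv criterion}. Part (ii) is then obtained by the analogous explicit identification of $\C_{gh}$ with $(A_{g^{-1}},A_h)$-bimodules in $\C_e$, i.e., with $\Fun_{\C_e}(\C_{g^{-1}},\C_h)\cong\C_g\boxtimes_{\C_e}\C_h$ via Proposition~\ref{tp = functors}. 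No dimension counting is used.

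Your argument has a genuine gap at the step ``$\FPdim(\C_g\boxtimes_{\C_e}\C_h)=\FPdim(\C_g)\FPdim(\C_h)/\FPdim(\C_e)$''. This multiplicativity for $\boxtimes_{\C_e}$ is never established in the paper, and in fact fails for general module categories: take $\C=\Vec_A$ and $\M=\M(A,1)$; by Proposition~\ref{tenpro} one has $\M\boxtimes_\C\M\cong|A|\cdot\M$, whose total squared FP-dimension is $|A|^2$, not the $|A|$ your formula predicts. What would rescue the formula is knowing \emph{a priori} that $\C_g\boxtimes_{\C_e}\C_h$ is indecomposable---but by Corollary~\ref{inv --> indec} that follows from invertibility of the factors, and in your scheme (i) is deduced from (ii), so the reasoning is circular. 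The appeal to Remark~\ref{yet one more} does not close the gap: realizing the tensor product as $A$-modules in $\M\boxtimes\N$ does not come with a dimension formula for module categories over $\C\boxtimes\C^{\rev}$. Relatedly, the assertion ``surjective module functor between module categories of equal FP-dimension is an equivalence'' is not available in the paper and would itself require proof.

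On the other hand, your worry about one-sided indecomposability of $\C_g$ is misplaced and easy to dispatch: for simples $X,Y\in\C_g$ set $Z=Y\otimes X^*\in\C_e$; then $\Hom(Z\otimes X,Y)\cong\Hom(Y\otimes X^*,Y\otimes X^*)\neq 0$, so all simples of $\C_g$ lie in a single $\C_e$-orbit. This part of your plan is fine; it is the dimension of the \emph{product} that is the real obstacle.
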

\begin{proof}
For each $g\in G$ let us pick a non-zero object $Y_g$ in $\C_g$. 
Then $A_g = Y_g \ot Y_g^*$ is an algebra in $\C_e$ (and, therefore, in $\C$).  
By \cite{EO, O1} the regular left $\C$-module category $\C$ is equivalent
to the category of right $A_g$-modules in $\C$, and the left $\C_e$-module 
category $\C_g$ is equivalent to the category of right $A_g$-modules in $\C_e$.
Furthermore, there are tensor 
equivalences
\[
F_g : \C   \xrightarrow{\sim} A_g\mbox{-bimodules in } \C : X \mapsto Y_g \ot X \ot Y_g^*,
\qquad g\in G.
\]
Let $R_g,\, g\in G$ denote the restriction of $F_g$ to $\C_e$. It establishes
a tensor equivalence 
\[
R_g : \C_e  \xrightarrow{\sim}  A_g\mbox{-bimodules in } \C_e \cong 
\Fun_{\C_e}(\C_g,\, \C_g).
\]
It is straightforward to see that $R_g$  coincides with functor defined in \eqref{R}.
Passing from right to left $A_g$-modules, one similarly obtains an equivalence
$L_g: \C_e^\rev  \xrightarrow{\sim} \Fun(\C_g,\, \C_g)_{\C_e}$.
By Proposition~\ref{inv criterion}, $\C_g$ is an invertible $\C_e$-bimodule category. 
This proves (i).

To prove (ii), note that tensor
equivalences $F_g,\, g\in G,$ make the category of $(A_g,A_h)$-bimodules in $\C$
into a $\C$-bimodule category, with the left (respectively, right) action of an object 
$X$ in $\C$ by multiplication by $F_g(X)$  (respectively, by $F_h(X)$).
Thus we have $\C$-bimodule equivalences
\[
F_{g,h}: \C \cong  (A_g-A_h)\mbox{-bimodules in } \C : X \mapsto Y_g \ot X \ot Y_h^*,
\qquad g,h\in G.
\]
Therefore, the restriction of $F_{g^{-1},h}$ to $\C_{gh}$ establishes
a  $\C_e$-bimodule equivalence between $\C_{g h}$ and the category of
$(A_{g^{-1}},A_h)$-bimodules in  $\C_e$. The latter category is equivalent to 
$\Fun_{\C_e}(\C_{g^{-1}},\, \C_h) \cong \C_g  \boxtimes_{\C_e} \C_h$.

Thus, we have constructed a $\C_e$-bimodule equivalence
\[
 \C_g \boxtimes_{\C_e} \C_h \to \C_{gh},\qquad g,h\in G.
\]
It is easy to see that it coincides with the functor \eqref{def M} induced by
the $\C_e$-balanced bifunctor $\otimes: \C_g \times \C_h \to \C_{gh}$.
Indeed, both functors are identified with
\[
\C_{gh} \to \Fun_{\C_e}(\C_{g^{-1}},\, \C_h) : X \mapsto\,  ? \ot X,
\]
so the proof is complete.
\end{proof}

\begin{corollary}
\label{dual is center}
The dual category of $\C_e\bt \C_e^\rev$ with respect to each  
$\C_e$-bimodule category $\C_g,\, g\in G$ is equivalent
to the center $\Z(\C_e)$ of $\C_e$: 
\begin{equation}
(\C_e\bt \C_e^\rev)^*_{\C_g} \cong \Z(\C_e).
\end{equation}
\end{corollary}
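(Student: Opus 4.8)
The plan is to recognize the statement as a special case of the Schauenburg-type identification already exploited in Section~\ref{Phi-section}, applied to an invertible $\C_e$-bimodule category in place of $\C_g$. By definition, $(\C_e\bt\C_e^\rev)^*_{\C_g}$ is the category $\Fun_{\C_e\bt\C_e^\rev}(\C_g,\C_g)$ of $\C_e$-bimodule endofunctors of $\C_g$. So it suffices to show that for any invertible $\C_e$-bimodule category $\M$ one has $\Fun_{\C_e\bt\C_e^\rev}(\M,\M)\cong\Z(\C_e)$, and then to specialize to $\M=\C_g$, which is invertible by Theorem~\ref{component invertibility}(i).

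First I would view $\M$ as a right $\C_e$-module category; it is indecomposable by Corollary~\ref{inv --> indec}. Let $(\C_e)^*_{\M}=\Fun(\M,\M)_{\C_e}$ be its dual. Invertibility of $\M$, via Proposition~\ref{inv criterion}(v) together with the identification~\eqref{invMor}, gives a tensor equivalence $(\C_e)^*_{\M}\cong\C_e$ implemented by sending $X\in\C_e$ to the functor of left multiplication by $X$ on $\M$. Under this equivalence the $(\C_e)^*_{\M}\bt\C_e^\rev$-module structure on $\M$ is precisely the $\C_e\bt\C_e^\rev$-module structure coming from the original bimodule structure: the left $(\C_e)^*_{\M}$-action is left $\C_e$-multiplication, the right $\C_e^\rev$-action is right $\C_e$-multiplication.

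Now I invoke the discussion around~\eqref{schauenburg}: for the indecomposable right $\C_e$-module category $\M$ with dual $(\C_e)^*_{\M}$, the $(\C_e)^*_{\M}\bt\C_e^\rev$-module endofunctors of $\M$ are identified both with left multiplications by objects of $\Z((\C_e)^*_{\M})$ and with right multiplications by objects of $\Z(\C_e)$, producing a canonical braided equivalence $\Fun_{(\C_e)^*_{\M}\bt\C_e^\rev}(\M,\M)\cong\Z(\C_e)$. Transporting along the equivalence $(\C_e)^*_{\M}\cong\C_e$ of the previous step yields $\Fun_{\C_e\bt\C_e^\rev}(\M,\M)\cong\Z(\C_e)$; taking $\M=\C_g$ proves the corollary.

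The only genuine work --- and the step I expect to be the main, if modest, obstacle --- is verifying that the two module-category structures in the second paragraph really coincide, i.e.\ that the equivalence of Proposition~\ref{inv criterion} is the one given by left multiplication, so that no twist creeps in when passing from $(\C_e)^*_{\C_g}\bt\C_e^\rev$-modules to $\C_e\bt\C_e^\rev$-modules. This is the same compatibility implicitly used in Section~\ref{Phi-section}; once it is spelled out the statement follows, and as a byproduct the resulting equivalence is canonical, natural in $g\in G$, and compatible with the braidings, so in particular it refines the bare equivalence of categories asserted in the corollary.
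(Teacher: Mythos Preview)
Your proof is correct and takes essentially the same approach as the paper. The paper's proof is a one-liner citing \cite[Theorem 3.34]{EO} together with the invertibility of $\C_g$ from Theorem~\ref{component invertibility}; you have simply unpacked the content of that external reference using the Schauenburg identification already recalled in Section~\ref{Phi-section} of this paper, and verified the compatibility of module structures needed for the transport along $(\C_e)^*_{\C_g}\cong\C_e$.
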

\begin{proof}
This follows by \cite[Theorem 3.34]{EO} since $(\C_e)^*_{\C_g} \cong \C_e^\rev$
by Theorem~\ref{component invertibility}.
\end{proof}


Thus, a $G$-extension $\C$ defines a group homomorphism 
\[
c: G \to \Pic(\C_e).
\]
The tensor product and associator of $\C$ give rise to 
an additional data which we will investigate next.

\section{Classification of extensions (topological version)}
\label{classification topological}

\subsection{The classifying space of a categorical $n$-group} 

It is well known that any categorical $n$-group ${\mathcal G}$ 
gives rise to a (connected) classifying space $B{\mathcal G}$
(well defined up to homotopy), 
which determines the equivalence class of ${\mathcal G}$ uniquely 
(so that $B{\mathcal G}$ carries the same information 
as ${\mathcal G}$). Moreover, the homotopy groups 
of $B{\mathcal G}$ are as follows: 
$\pi_i(B{\mathcal G})={\rm Mor}_{i+1}(X_i,X_i)$ 
for any $i$-morphism $X_i$ for $i=1,...,n+1$, and zero 
if $i\ge n+2$. 

A convenient model for the 
space $B{\mathcal G}$ is the simplicial complex 
given by the well known ``nerve''
construction. For the convenience of the readers, we 
recall this construction in the case of $n=2$ (which is 
the highest value of $n$ we will need). 
For brevity we omit associativity isomorphisms. 

{\bf Step 0.} We start with one  
0-simplex. 

{\bf Step 1.} For every  
isomorphism class $x$ of objects of ${\mathcal G}$, 
we pick an object representing
$x$ (which we also call $x$, abusing the notation) 
and add a 1-simplex $s_x$. 

{\bf Step 2.} For every 
isomorphism classes of objects $x_1,x_2$ and an 
isomorphism class of 1-morphisms $f: x_1\otimes x_2\to x_1x_2$,
where $x_1x_2$ is the representative of $x_1\otimes x_2$
chosen in the previous step,
we pick a 1-morphism representing $f$ 
(which we also call $f$, abusing the notation),  
and add a 2-simplex $s_f$, such
that $\partial s_f=s_{x_1}+s_{x_2}-s_{x_1x_2}$.

{\bf Step 3.} For each isomorphism classes of objects 
$x_1,x_2,x_3$, isomorphism classes of 1-morphisms
$f_{1,2}: x_1\otimes x_2\to x_1x_2$, 
$f_{2,3}: x_2\otimes x_3\to x_2x_3$, 
$f_{12,3}: x_1x_2\otimes x_3\to x_1x_2x_3$, 
$f_{12,3}: x_1\otimes x_2x_3\to x_1x_2x_3$, 
where $x_1x_2, x_1x_2x_3,$ etc., are representatives
of tensor products chosen in step 1, 
and a 2-morphism 
$$
g: f_{12,3}\circ (f_{1,2}\otimes \id_3)\to f_{1,23}\circ
(\id_1\otimes f_{2,3})
$$
we add a 3-simplex $s_g$ such that 
$\partial
s_g=s_{f_{1,2}}-s_{f_{1,23}}+s_{f_{12,3}}-s_{f_{2,3}}$. 

{\bf Step 4.} Given isomorphism classes of objects 
$x_1,x_2,x_3,x_4$, isomorphism classes of 1-morphisms
$f_{1,2}: x_1\otimes x_2\to x_1x_2$, 
$f_{2,3}: x_2\otimes x_3\to x_2x_3$, 
$f_{3,4}: x_3\otimes x_4\to x_3x_4$, 
$f_{12,3}: x_1x_2\otimes x_3\to x_1x_2x_3$, 
$f_{1,23}: x_1\otimes x_2x_3\to x_1x_2x_3$, 
$f_{23,4}: x_2x_3\otimes x_4\to x_2x_3x_4$, 
$f_{2,34}: x_2\otimes x_3x_4\to x_2x_3x_4$, 
$f_{1,234}: x_1\otimes x_2x_3x_4\to x_1x_2x_3x_4$, 
$f_{123,4}: x_1x_2x_3\otimes x_4\to x_1x_2x_3x_4$, 
$f_{12,34}:x_1x_2\otimes x_3x_4\to x_1x_2x_3x_4$, 
and 2-morphisms 
$$
g_{1,2,3}: f_{12,3}\circ (f_{1,2}\otimes \id_3)\to f_{1,23}\circ
(\id_1\otimes f_{2,3}),
$$
$$
g_{2,3,4}: f_{23,4}\circ (f_{2,3}\otimes \id_4)\to f_{2,34}\circ
(\id_2\otimes f_{3,4}),
$$
$$
g_{1,23,4}: f_{123,4}\circ (f_{1,23}\otimes \id_4)\to f_{1,234}\circ
(\id_1\otimes f_{23,4}),
$$
$$
g_{12,3,4}: f_{123,4}\circ (f_{12,3}\otimes \id_4)\to f_{12,34}\circ
(\id_{12}\otimes f_{3,4}),
$$
$$
g_{1,2,34}: f_{12,34}\circ (f_{1,2}\otimes \id_{34})\to f_{1,234}\circ
(\id_{1}\otimes f_{2,34}),
$$
such that 
$$
(1_1\otimes g_{2,3,4})\circ g_{1,23,4}
\circ (g_{1,2,3}\otimes 1_4)=g_{1,2,34}\circ g_{12,3,4}, 
$$
we add a single 4-simplex $s$ whose boundary is 
$$
\partial
s=s_{g_{1,2,3}}-s_{g_{1,2,34}}+s_{g_{1,23,4}}-s_{g_{12,3,4}}+
s_{g_{2,3,4}}.
$$

{\bf Step ${k}$, $k\ge 5$.} 
Any boundary of a $k$-simplex, $k\ge 5$, is filled in with a 
$k$-simplex. 

Note that the obtained model is a Kan complex. 

\subsection{Homotopy groups of classifying spaces of higher
groupoids attached to fusion categories}

\begin{proposition}\label{homot}
Let $\C$ be a fusion category and let $\uuPic(\C)$ be its 
Brauer-Picard $2$-group introduced in Section~\ref{inv+BP}. We have:
\begin{enumerate} 
\item[(i)] $\pi_1(B\uuPic(\C))=\Pic(\C)$;
\item[(ii)] $\pi_2(B\uuPic(\C))={\rm Inv}(\Z(\C))$, the group of
isomorphism classes of invertible objects in the Drinfeld center
of $\C$;
\item[(iii)] $\pi_3(B\uuPic(\C))={\bold k}^\times$;
\item[(iv)] $\pi_i(B\uuPic(\C))=0$ for all $i\ge 4$. 
\end{enumerate}
\end{proposition}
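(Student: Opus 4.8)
The plan is to compute the homotopy groups of $B\uuPic(\C)$ directly from the general description of $\pi_i$ of a classifying space of a categorical $n$-group. Recall that for any categorical $n$-group $\G$ we have $\pi_i(B\G)={\rm Mor}_{i+1}(X_i,X_i)$ for an $i$-morphism $X_i$, with $\pi_i=0$ for $i\ge n+2$. Since $\uuPic(\C)$ is a categorical $2$-group (a monoidal $2$-groupoid, regarded as a $3$-groupoid with one object), this immediately gives part (iv): $\pi_i(B\uuPic(\C))=0$ for $i\ge 4$. For the remaining parts I would unwind the meaning of the $(i+1)$-morphisms at each level.

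For part (i), $\pi_1$ is by definition the group of isomorphism classes of $1$-morphisms from the unit object to itself in $\uuPic(\C)$, i.e. equivalence classes of invertible $\C$-bimodule categories, which is precisely $\Pic(\C)$ as defined in Section~\ref{inv+BP}. For part (ii), $\pi_2$ is the group of isomorphism classes of $2$-morphisms from the identity $1$-morphism (the bimodule category $\C$ itself) to itself; these are the automorphisms of $\C$ as a $\C$-bimodule category, up to isomorphism. The group of $\C$-bimodule autoequivalences of $\C$ is trivial (the identity is the only one up to isomorphism, since $\C$ is the unit), so the relevant group is the automorphisms of the identity bimodule functor $\id_\C$; by Remark~\ref{mortorsor} (with both objects equal to $\C$), this torsor is $ {\rm Inv}(\Z(\C))$, and here it is canonically a group with identity the identity $2$-morphism. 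Alternatively, and perhaps more cleanly, one invokes Theorem~\ref{breq}: $\uPic(\C)$ is equivalent to $\uEqBr(\Z(\C))$, so $\pi_2(B\uuPic(\C))=\pi_2(B\uPic(\C))=\pi_2(B\uEqBr(\Z(\C)))$, and the latter is the group of automorphisms of the identity braided autoequivalence of $\Z(\C)$, which is the group of monoidal natural automorphisms of $\id_{\Z(\C)}$, namely ${\rm Inv}(\Z(\C))$ (invertible objects of $\Z(\C)$ acting by the associated monoidal natural transformations of the identity functor — equivalently, the group of tensor-natural automorphisms of $\id$ is $\Hom(\text{(universal grading group)},\mathbf{k}^\times)$-type data, which for a braided fusion category is exactly ${\rm Inv}$ of the category; I would cite \cite{ENO1,DGNO} for this standard fact).

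For part (iii), $\pi_3$ is the group of $3$-morphisms from an identity $2$-morphism to itself, i.e. automorphisms of the identity natural isomorphism between identity bimodule functors. Such automorphisms are scalars: an automorphism of the identity $2$-morphism of $\id_\C$ is an element of $\End(\id_\C)$ that is central in the appropriate sense, and for a fusion category over an algebraically closed field $\mathbf{k}$, $\End(\id_\C)=\mathbf{k}$ on the simple unit, so the invertible ones form $\mathbf{k}^\times$. More structurally, via Theorem~\ref{breq} again, $\pi_3(B\uuPic(\C))=\pi_3(B\uEqBr(\Z(\C)))=\Aut(\id_{\id_{\Z(\C)}})=\mathbf{k}^\times$, since the automorphisms of the identity $2$-morphism in the $2$-groupoid of braided equivalences are precisely nonzero scalars (the ground field being algebraically closed and the categories being $\mathbf{k}$-linear with simple unit).

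The main obstacle is part (ii): identifying $\pi_2$ requires knowing that the group of $\C$-bimodule natural automorphisms of the identity functor is canonically ${\rm Inv}(\Z(\C))$ rather than merely a torsor over it, and this is where the extra structure of the basepoint (the canonical identification with $\Z(\C)$ from Remark~\ref{mortorsor} and the isomorphism $\Z_\C(\C)=\Z(\C)$) must be used carefully; the cleanest route is to reduce everything to $\uEqBr(\Z(\C))$ via Theorem~\ref{breq} and then invoke the standard computation of $\pi_1,\pi_2,\pi_3$ of the classifying space of the $2$-groupoid of braided autoequivalences of a braided fusion category, for which I would give the short direct argument sketched above (isomorphism classes of braided autoequivalences, monoidal natural transformations of the identity given by invertible objects, and scalar automorphisms) and cite \cite{ENO3,DGNO} for the pieces that are already in the literature.
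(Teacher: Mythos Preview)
Your overall strategy---use the general formula $\pi_i(B\G)={\rm Mor}_{i+1}(X_i,X_i)$---is exactly the paper's approach, and your treatment of (i) and (iv) matches. The difficulties are in (ii) and (iii), where your writeup contains a false step and a genuinely wrong alternative.

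For (ii), your direct argument starts with the incorrect assertion that ``the group of $\C$-bimodule autoequivalences of $\C$ is trivial.'' It is not: the category of $\C$-bimodule endofunctors of $\C$ is canonically $\Z(\C)$ (this is the content of Definition~\ref{center of a module} with $\M=\C$, or \cite[Corollary~3.37]{EO}), so its invertible objects---i.e., the isomorphism classes of bimodule autoequivalences of $\C$---form exactly ${\rm Inv}(\Z(\C))$. This is precisely the paper's one-line argument. Your subsequent appeal to Remark~\ref{mortorsor} salvages the conclusion, but that remark is itself proved by the identification $\Fun_{\C\text{-}\C}(\C,\C)\cong\Z(\C)$, so you are going in a circle. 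Your alternative route via Theorem~\ref{breq} does work in principle, but it requires the nontrivial fact that for a \emph{non-degenerate} braided category $\B$ the double-braiding map $Z\mapsto (X\mapsto c_{Z,X}c_{X,Z})$ gives an isomorphism ${\rm Inv}(\B)\xrightarrow{\sim}\Aut_\otimes(\id_\B)$; your parenthetical justification does not say this and is not correct as written.

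For (iii), your alternative via Theorem~\ref{breq} is simply wrong. Theorem~\ref{breq} is an equivalence $\uPic(\C)\simeq\uEqBr(\Z(\C))$ of categorical \emph{1}-groups (i.e., of the 2-truncation), and $B\uEqBr(\Z(\C))$ has $\pi_i=0$ for $i\ge 3$ by Proposition~\ref{homot2}. You cannot recover $\pi_3(B\uuPic(\C))$ from the truncation. Your first, direct computation---automorphisms of the identity $2$-morphism are scalars since the unit object is simple over an algebraically closed field---is correct and is exactly what the paper does.
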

\begin{proof}
(i) is clear. To prove (ii), we need to calculate the
group of equivalence classes of automorphisms of any object. Take
the object $\C$ regarded as a $\C$-bimodule. Its endomorphisms 
as a $\C$-bimodule is the dual category to $\C$ with respect to
$\C\boxtimes \C^{\rm rev}$, so it is $\Z(\C)$ \cite[Corollary 3.37.]{EO}. 
Thus the automorphisms are the invertible objects in $\Z(\C)$. To prove (iii), 
we need to compute the group of automorphisms of
any 1-morphism. Take this 1-morphism to be the neutral object in
$\Z(\C)$. Then the group of automorphisms is ${\bold k}^\times$. 
(iv) is clear, since by construction we have killed all
the homotopy groups of degree $\ge 4$. 
\end{proof} 

\begin{proposition}\label{homot11}
Let $\C$ be a fusion category, and let $\underline{\underline{\rm Out}}(\C)$
its categorical $2$-group of outer autoequivalences introduced in Section~\ref{outer}. We have
\begin{enumerate}
\item[(i)] $\pi_1(B\underline{\underline{\rm Out}}(\C))={\rm Out}(\C)$,
\item[(ii)] $\pi_2(B\underline{\underline{\rm Out}}(\C))={\rm Inv}(\Z(\C))$, 
\item[(iii)] $\pi_3(B\underline{\underline{\rm Out}}(\C))={\bold k}^\times$,
\item[(iv)] $\pi_i(B\underline{\underline{\rm Pic}}(\C))=0$ for $i\ge 4$.
\end{enumerate}
\end{proposition}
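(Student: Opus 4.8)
The plan is to treat this as a minor variation of Proposition~\ref{homot}, using that, by its definition in Section~\ref{outer}, $\underline{\underline{\rm Out}}(\C)$ is the full categorical $2$-subgroup of the Brauer-Picard $2$-group $\uuPic(\C)$ whose objects are the quasi-trivial invertible $\C$-bimodule categories, with all the bimodule equivalences between them and all isomorphisms of such equivalences retained. In particular $\underline{\underline{\rm Out}}(\C)$ and $\uuPic(\C)$ share the same unit object --- the category $\C$ with its standard $\C$-bimodule structure --- together with the same $2$- and $3$-morphism data at that unit object. So I would simply transcribe the computations of $\pi_2$, $\pi_3$ and of the vanishing of $\pi_i$ for $i\ge 4$ from the proof of Proposition~\ref{homot}: $\pi_2$ is the group of isomorphism classes of bimodule autoequivalences of $\C$, which is ${\rm Inv}(\Z(\C))$ because $\C$-bimodule endofunctors of $\C$ form $\Z(\C)$ (cf.\ \cite[Corollary 3.37]{EO}) and such a functor is an equivalence precisely when the corresponding object of $\Z(\C)$ is invertible; $\pi_3$ is the group of automorphisms of the identity autoequivalence of $\C$, i.e.\ of the unit object $\be\in\Z(\C)$, which is ${\rm End}_{\Z(\C)}(\be)^\times={\bold k}^\times$ since $\Z(\C)$ is fusion and $\be$ is simple; and $\pi_i=0$ for $i\ge 4$ because the nerve model for the classifying space of a categorical $2$-group has every simplex of dimension $\ge 5$ filled in (here the statement should read $\underline{\underline{\rm Out}}(\C)$, not $\underline{\underline{\rm Pic}}(\C)$).

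That leaves (i), the only place where $\underline{\underline{\rm Out}}(\C)$ genuinely differs from $\uuPic(\C)$. Here $\pi_1(B\underline{\underline{\rm Out}}(\C))$ is the group of isomorphism classes of objects of $\underline{\underline{\rm Out}}(\C)$ under $\boxtimes_\C$, i.e.\ of quasi-trivial invertible $\C$-bimodule categories. I would invoke the discussion in Section~\ref{outer}: every such bimodule category is equivalent to $\C$ with its standard left action and with right action twisted by a tensor autoequivalence $\phi$, and $\phi$ is determined uniquely up to conjugation by an invertible object of $\C$, hence uniquely as an element of ${\rm Out}(\C)$. This yields a bijection between the object set and ${\rm Out}(\C)$; to see it is a group isomorphism I would unwind the definition of $\boxtimes_\C$ from Section~\ref{Sect3} on two bimodule categories of this form and check that the twisting autoequivalence of the tensor product is the composite of the two twisting autoequivalences (up to an inner autoequivalence).

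I expect the whole proof to be essentially immediate, being a bookkeeping of the proof of Proposition~\ref{homot}; the only step carrying content of its own is (i), and there the sole point needing care --- the main, and very mild, obstacle --- is the compatibility of this bijection with the group structures, which should be routine given the explicit description of $\boxtimes_\C$ over $\C$ already established.
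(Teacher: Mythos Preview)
Your proposal is correct and matches the paper's approach: the paper simply states that the proof is analogous to that of Proposition~\ref{homot}, and what you have written is exactly the natural unpacking of that analogy (including the correct observation that the typo in (iv) should read $\underline{\underline{\rm Out}}(\C)$). The only content beyond Proposition~\ref{homot} is in (i), and there your argument via the description in Section~\ref{outer} is precisely what is intended.
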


\begin{proposition}\label{homot1}
Let $\B$ be a braided fusion category, and let $\underline{\underline{\rm Pic}}(\B)$
its Picard $2$-group  introduced in Section~\ref{braided Picard}. We have
\begin{enumerate}
\item[(i)] $\pi_1(B\underline{\underline{\rm Pic}}(\B))={\rm Pic}(\B)$,
\item[(ii)] $\pi_2(B\underline{\underline{\rm Pic}}(\B))={\rm Inv}(\B)$, the group of
isomorphism classes of invertible objects of $\B$,
\item[(iii)] $\pi_3(B\underline{\underline{\rm Pic}}(\B))={\bold k}^\times$,
\item[(iv)] $\pi_i(B\underline{\underline{\rm Pic}}(\B))=0$ for $i\ge 4$.
\end{enumerate}
\end{proposition}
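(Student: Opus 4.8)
The plan is to follow the proof of Proposition~\ref{homot} essentially verbatim. Since $\underline{\underline{\rm Pic}}(\B)$ is a categorical $2$-group, its classifying space $B\underline{\underline{\rm Pic}}(\B)$ is connected and $\pi_i(B\underline{\underline{\rm Pic}}(\B))$ is the group of $(i+1)$-automorphisms of any chosen $i$-morphism (abelian for $i\ge 2$, and independent of the choice up to canonical isomorphism). With this in hand, (i) is immediate: $\pi_1$ is the group of isomorphism classes of objects of $\underline{\underline{\rm Pic}}(\B)$ under $\boxtimes_\B$, which is ${\rm Pic}(\B)$ by definition. Likewise (iv) is immediate from the nerve model recalled in Section~\ref{classification topological}: every boundary of a $k$-simplex with $k\ge 5$ is filled, and the $4$-cells are exactly the ones prescribed by the pentagon axiom, so $B\underline{\underline{\rm Pic}}(\B)$ has no homotopy in degrees $\ge 4$.

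For (ii) I would take as basepoint $1$-morphism the unit, i.e.\ $\B$ regarded as a left $\B$-module category; then $\pi_2$ is the group of isomorphism classes of $\B$-module autoequivalences of $\B$. A left $\B$-module endofunctor $F$ of $\B$ is determined by the object $F(\be)$ via $F(X)\cong X\ot F(\be)$, which yields an equivalence $\Fun_\B(\B,\B)\cong\B$ (of monoidal categories, up to reversal of the tensor product, which is irrelevant for the group of invertible objects); hence module autoequivalences correspond to invertible objects of $\B$ and $\pi_2(B\underline{\underline{\rm Pic}}(\B))={\rm Inv}(\B)$. Note that the braiding on $\B$ is precisely what makes ${\rm Inv}(\B)$ abelian, as $\pi_2$ must be. For (iii) I would take as basepoint $2$-morphism the identity module autoequivalence $\id_\B$; then $\pi_3$ is the group of its automorphisms as a $\B$-module functor, i.e.\ the natural automorphisms $\eta$ of $\id_\B$ with $\eta_{X\ot M}=\id_X\ot\eta_M$ for all $X\in\B$, $M\in\B$. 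Taking $M=\be$ forces $\eta_X=\id_X\ot\eta_{\be}$ with $\eta_{\be}\in\End(\be)^\times={\bold k}^\times$, so $\pi_3(B\underline{\underline{\rm Pic}}(\B))={\bold k}^\times$.

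There is no real obstacle here; the only point demanding (routine) care is the identification $\Fun_\B(\B,\B)\cong\B$ as categorical $2$-groups, together with the compatibility of module structures used in the computations of $\pi_2$ and $\pi_3$ — the same ingredients as in the proof of Proposition~\ref{homot}. One may alternatively relate $B\underline{\underline{\rm Pic}}(\B)$ to the picture of Proposition~\ref{homot} through Remark~\ref{uPic0}, but the direct computation above is more transparent and makes clear why the answer in degree $2$ is ${\rm Inv}(\B)$ rather than ${\rm Inv}(\Z(\B))$.
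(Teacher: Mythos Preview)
Your proposal is correct and follows exactly the approach the paper indicates: the paper's own ``proof'' is simply the sentence that the argument is analogous to that of Proposition~\ref{homot}, and you have carried out that analogy in detail, with the key adaptation being that $\Fun_\B(\B,\B)\cong\B$ (one-sided module endofunctors) replaces the bimodule computation $\Fun_{\C\text{-}\C}(\C,\C)\cong\Z(\C)$, which is precisely why $\pi_2={\rm Inv}(\B)$ rather than ${\rm Inv}(\Z(\B))$.
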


\begin{proposition}\label{homot22}
Let $\C$ be a fusion category, and let  $\underline{\rm Eq}(\C)$ be the 
categorical group of autoequivalences of $\C$ introduced in Section~\ref{2grpEq}. We have
\begin{enumerate}
\item[(i)] $\pi_1(B\underline{\rm Eq}(\C))={\rm Eq}(\C)$,
\item[(ii)] $\pi_2(B\underline{\rm Eq}(\C))={\rm Aut}_{\otimes}({\rm Id}_\C)$, the group of
tensor isomorphisms of the identity functor of $\C$,  
\item[(iii)] $\pi_i(B\underline{\rm Eq}(\C))=0$ for $i\ge 3$. 
\end{enumerate}
\end{proposition}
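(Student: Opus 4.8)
The plan is to read off the homotopy groups of $B\underline{\rm Eq}(\C)$ from the general relation between a categorical $n$-group and the homotopy type of its classifying space recalled at the start of this section, applied with $n=1$. Indeed, by the construction of Section~\ref{2grpEq} the categorical group $\underline{\rm Eq}(\C)$ is a categorical $1$-group: extracting the object $\C$ from the $2$-groupoid $\underline{\rm Eq}$ leaves a monoidal groupoid whose objects are the tensor autoequivalences of $\C$ (with composition as monoidal product) and whose morphisms are the isomorphisms of tensor functors between them, all objects being invertible. Hence $B\underline{\rm Eq}(\C)$ is a connected space with at most two nonzero homotopy groups, $\pi_1$ and $\pi_2$, and $\pi_i(B\underline{\rm Eq}(\C))=0$ for $i\ge n+2=3$. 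This is part (iii); in terms of the nerve model it reflects the fact that from Step~$3$ on every boundary of a simplex is filled in, exactly as in the proof of Proposition~\ref{homot}(iv), but one dimension lower because we are dealing with a $1$-group rather than a $2$-group.

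For part (i), the general formula identifies $\pi_1(B\underline{\rm Eq}(\C))$ with the group of isomorphism classes of objects of $\underline{\rm Eq}(\C)$ under the monoidal product; since the objects of $\underline{\rm Eq}(\C)$ are precisely the tensor autoequivalences of $\C$, this group is ${\rm Eq}(\C)$ by the very definition of ${\rm Eq}(\C)$ in Section~\ref{2grpEq}. As in Proposition~\ref{homot}(i) this step is a tautology once the definitions are unwound.

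For part (ii), the general formula identifies $\pi_2(B\underline{\rm Eq}(\C))$ with the automorphism group, inside the categorical group $\underline{\rm Eq}(\C)$, of any of its objects; equivalently, with the group of automorphisms of the monoidal unit ${\rm Id}_\C$, which is abelian by the Eckmann--Hilton argument. Taking the object to be ${\rm Id}_\C$ and recalling that the morphisms of $\underline{\rm Eq}(\C)$ are, by definition, isomorphisms of tensor functors, an automorphism of ${\rm Id}_\C$ in $\underline{\rm Eq}(\C)$ is exactly a tensor isomorphism of the identity functor of $\C$ with itself. Hence $\pi_2(B\underline{\rm Eq}(\C))={\rm Aut}_\otimes({\rm Id}_\C)$, as claimed.

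There is no serious obstacle here: all three statements are immediate consequences of the general machinery for classifying spaces of categorical $n$-groups together with an unwinding of the definitions of ${\rm Eq}(\C)$ and of $\underline{\rm Eq}$, and the argument is parallel to (and simpler than) the proofs of Propositions~\ref{homot}, \ref{homot11} and \ref{homot1}. The only point needing a word of care is the basepoint-independence used in (ii), namely that the automorphism group of an object of the connected categorical group $\underline{\rm Eq}(\C)$ does not depend on the chosen object; this is the standard fact that $\pi_2$ of a connected space is well defined, and can be seen directly by translating the automorphism group of ${\rm Id}_\C$ onto that of an arbitrary autoequivalence $F$ via tensoring with $F$.
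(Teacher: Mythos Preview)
Your argument is correct and is exactly what the paper intends: it states only that the proof is analogous to that of Proposition~\ref{homot}, and your write-up carries out precisely that analogy by specializing the general formula $\pi_i(B\mathcal{G})={\rm Mor}_{i+1}(X_i,X_i)$ to the categorical $1$-group $\underline{\rm Eq}(\C)$.
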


\begin{proposition}\label{homot2}
Let $\B$ be a braided fusion category, and let  $\underline{\rm EqBr}(\B)$ be the 
categorical group of braided autoequivalences of $\B$ introduced in Section~\ref{2grp EqBr}. We have
\begin{enumerate}
\item[(i)] $\pi_1(B\underline{\rm EqBr}(\B))={\rm EqBr}(\B)$,
\item[(ii)] $\pi_2(B\underline{\rm EqBr}(\B))={\rm Aut}_{\otimes}({\rm Id}_\B)$,  
\item[(iii)] $\pi_i(B\underline{\rm EqBr}(\B))=0$ for $i\ge 3$. 
\end{enumerate}
\end{proposition}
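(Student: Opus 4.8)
The plan is to compute the homotopy groups of the classifying space of the categorical 1-group $\uEqBr(\B)$ directly from the general description of $\pi_i(B\mathcal{G})$ recalled at the beginning of this section: for a categorical $n$-group $\mathcal{G}$, $\pi_i(B\mathcal{G}) = \mathrm{Mor}_{i+1}(X_i, X_i)$ for any $i$-morphism $X_i$. Since $\uEqBr(\B)$ is a categorical 1-group (i.e. $n=1$), we have $\pi_i = 0$ automatically for $i \geq 3$, which gives part (iii). This is the easy part and requires no work beyond invoking the general machinery.

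For part (i), $\pi_1(B\uEqBr(\B))$ is the group of isomorphism classes of objects of $\uEqBr(\B)$ viewed as a one-object groupoid — equivalently, $\mathrm{Mor}_2$-classes, i.e. the automorphisms of the single object $\B$ in the truncated groupoid $\EqBr$. By definition this is exactly $\EqBr(\B)$, the group of isomorphism classes of braided autoequivalences of $\B$. For part (ii), $\pi_2(B\uEqBr(\B))$ is the automorphism group of the identity $1$-morphism of $\B$, i.e. the group of $2$-morphisms from $\id_\B$ to itself. A $2$-morphism in $\uEqBr(\B)$ is an isomorphism of braided equivalences; a $2$-automorphism of $\id_\B$ is thus a monoidal natural automorphism of the identity functor that is moreover compatible with the braiding. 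One checks that for a braided functor the braiding-compatibility condition on a monoidal natural transformation $\eta: \id_\B \Rightarrow \id_\B$ is automatic (the hexagon/compatibility constraint reduces to a tautology once $\eta$ is monoidal), so this group coincides with $\mathrm{Aut}_\otimes(\id_\B)$, the group of tensor automorphisms of the identity functor. This matches the statement, and also runs exactly parallel to Proposition~\ref{homot22} for $\underline{\rm Eq}(\C)$, of which this is the braided analogue.

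The only point requiring a small argument — and the mild obstacle — is the claim in part (ii) that passing from tensor automorphisms of $\id_\B$ to braided-compatible ones imposes no extra condition. I would handle it by writing out what it means for $\eta_X: X \to X$ (for $X \in \B$, with $\eta$ monoidal) to be compatible with the braiding $c_{X,Y}: X\otimes Y \to Y\otimes X$: the required square says $c_{X,Y} \circ (\eta_X \otimes \eta_Y) = (\eta_Y \otimes \eta_X)\circ c_{X,Y}$, which holds by naturality of $c$ applied to the morphisms $\eta_X, \eta_Y$ together with monoidality of $\eta$ (so that $\eta_{X\otimes Y} = \eta_X \otimes \eta_Y$ up to the monoidal structure isomorphisms). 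Hence every monoidal natural automorphism of $\id_\B$ is automatically a morphism of braided functors, and $\pi_2 = \mathrm{Aut}_\otimes(\id_\B)$ as claimed. The proof is then complete; I would present it in a couple of lines, citing the general formula for $\pi_i(B\mathcal{G})$ and the parallel with Proposition~\ref{homot22}.
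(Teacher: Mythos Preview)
Your proposal is correct and follows essentially the same approach as the paper, which simply states that the proof is analogous to that of Proposition~\ref{homot} (i.e., read off $\pi_i$ from the general formula $\pi_i(B\mathcal{G}) = \mathrm{Mor}_{i+1}(X_i,X_i)$). You in fact supply more detail than the paper does: your verification that any monoidal natural automorphism of $\mathrm{Id}_\B$ is automatically compatible with the braiding (via naturality of $c$) is a point the paper leaves entirely implicit in writing $\mathrm{Aut}_\otimes(\mathrm{Id}_\B)$ rather than some a priori smaller ``braided'' automorphism group.
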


The proofs of Propositions~\ref{homot11}, \ref{homot1}, \ref{homot22} and \ref{homot2} are analogous to that of Proposition \ref{homot}.

\subsection{The Whitehead half-square and the braiding}

Recall that for any $i,j>1$ we have the Whitehead bracket 
$[,]: \pi_i\times \pi_j\to \pi_{i+j-1}$ on homotopy groups 
of any topological space. Also, since there is a 
map $S^3\to S^2$ of Hopf invariant 1, we have 
the Whitehead half-square map $W: \pi_2\to \pi_3$ such that 
$W(x+y)-W(x)-W(y)=[x,y]$. 

The following proposition was pointed out to us by V. Drinfeld. 

\begin{proposition}\label{whit} Let $\B$ be a braided fusion category. 
For the space $B\underline{\underline{Pic}}(\B)$, the
Whitehead half-square map $W: \pi_2\to \pi_3$ 
is given by the braiding $c_{ZZ}$ on invertible objects $Z\in \B$. 
Therefore, the Whitehead bracket 
$[\, ,\,]: \pi_2\times \pi_2\to \pi_3$ 
coincides with the squared braiding $c_{ZY}c_{YZ}$ on invertible objects
$Y,Z\in \B$. 
\end{proposition}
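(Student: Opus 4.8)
The plan is to compute the $k$-invariants of the Postnikov tower of $B\uuPic(\B)$ — or rather just the portion relevant to $\pi_2$ and $\pi_3$ — by unwinding what the braiding does in the nerve construction. The point is that the Whitehead half-square $W\colon\pi_2\to\pi_3$ is a homotopy invariant that, for a $2$-type with the given homotopy groups, is encoded in a single piece of algebraic data; I want to identify that data with the self-braiding $c_{Z,Z}$ on invertible objects of $\B$.

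First I would recall the standard identification: for a connected space $X$ with $\pi_1$ acting trivially on $\pi_2$ (which holds here after restricting attention to a fixed $1$-morphism, i.e. working with the $1$-connected cover, or equivalently with the sub-$2$-group generated by $\be$), the map $W\colon \pi_2\to\pi_3$ together with the bracket $[\,,\,]$ is exactly the quadratic refinement data of a quadratic function $\pi_2\to\pi_3$; this is J.H.C. Whitehead's certain exact sequence / the $\Gamma$-functor, and it is precisely the $k$-invariant distinguishing the relevant stable vs. unstable information. Concretely, the relevant sub-$2$-groupoid of $\uuPic(\B)$ is the Picard $2$-groupoid of the braided category $\B$ itself: objects are invertible objects $Z\in\B$ (as $\be\otimes\be$-summands, or via Remark~\ref{uPic0}), $1$-morphisms are isomorphisms, with monoidal structure $\otimes$ and braiding $c$. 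So I would reduce to the following purely categorical statement: for the classifying space of the braided Picard (Picard-type) groupoid of $\B$ with group of objects $A:=\mathrm{Inv}(\B)$ and automorphisms ${\bold k}^\times$, the half-square $W\colon A\to{\bold k}^\times$ is $Z\mapsto c_{Z,Z}$.

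The key steps, in order: (1) Identify the relevant $2$-type with $B$ of a Picard groupoid $\mathcal P$ (a symmetric-ish gr-category — here braided), with $\pi_1(B\mathcal P)=A$, $\pi_2(B\mathcal P)={\bold k}^\times$; by Propositions~\ref{homot} and \ref{homot1} this is the appropriate connected cover of $B\uuPic(\B)$, with the degree shift so that $\pi_2,\pi_3$ of the original become $\pi_1,\pi_2$ here. (2) Recall the classical dictionary (Joyal–Street, Sinh, Eilenberg–Mac Lane): a braided gr-category with $\pi_1=A$, $\pi_2={\bold k}^\times$ is classified by an element of $H^3_{ab}(A,{\bold k}^\times)$, the abelian cohomology à la Eilenberg–Mac Lane, and the associated quadratic function $q\colon A\to{\bold k}^\times$ is $q(Z)=c_{Z,Z}$ (the double braiding $c_{Z,Y}c_{Y,Z}$ is the associated bilinear form). (3) Match this with topology: for the Eilenberg–Mac Lane–type space built from such data, the Whitehead half-square $W\colon\pi_2\to\pi_3$ (after the degree shift, $\pi_1\to\pi_2$) is exactly the quadratic function $q$ appearing in $H^3_{ab}$; this is the content of the identification $H^3_{ab}(A,{\bold k}^\times)\cong\{\text{quadratic forms }A\to{\bold k}^\times\}$ being realized by the Hopf map $S^3\to S^2$ cupped in. I would cite Eilenberg–Mac Lane \cite{EM} (and the discussion Lurie provided, already acknowledged) for the precise statement that the $k$-invariant term detected by the Hopf map is the quadratic refinement. (4) Conclude: since the braided structure on $\B$ contributes precisely $q(Z)=c_{Z,Z}$, we get $W=c_{-,-}$ on invertibles, and then the bracket formula $[Z,Y]=W(Z+Y)W(Z)^{-1}W(Y)^{-1}=c_{Z,Y}c_{Y,Z}$ follows formally from $b_q(x,y)=q(x+y)/q(x)q(y)$ together with the hexagon axioms identifying $c_{Z,Y}c_{Y,Z}$ as the bilinear form of $q$.

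The main obstacle I expect is step (3): making rigorous the claim that the homotopy-theoretic Whitehead half-square coincides, under the Joyal–Street/Eilenberg–Mac Lane classification, with the categorical quadratic function $c_{Z,Z}$. This requires knowing that the stable $k$-invariant in $\pi_3$ ($=H^3_{st}$, the Mac Lane cohomology / suspension of $H^2$) is killed and that the genuinely unstable contribution — the difference between braided and symmetric, equivalently the image of the Hopf class $\eta\in\pi_3^s$ — is exactly what $W$ measures. I would handle this either by a direct cochain-level computation in the nerve (tracking the $3$-simplex attached via the $2$-morphism $g$ in Step 3 and the relation in Step 4, and the Hopf construction on the map $S^3\to S^2$), or, more cleanly, by invoking the known computation of $\pi_3$ of $B$ of a braided gr-category and its $k$-invariant (this is where Drinfeld's remark and Lurie's explanation enter), then noting both $W$ and $c_{-,-}$ satisfy the same defining quadratic-refinement identity and agree on the generator of the Hopf summand, hence are equal. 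Everything else — the reduction to the Picard sub-$2$-group, the degree shift, and the passage from $W$ to the bracket — is formal.
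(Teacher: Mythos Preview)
Your proposal is correct and would work, but it takes a genuinely different route from the paper. You work with the specific space $B\uuPic(\B)$, pass to the relevant connected cover, invoke the Joyal--Street/Sinh classification of braided gr-categories by quadratic forms in $H^3_{ab}$, and then match this classification with the topological $k$-invariant detected by the Hopf map; you correctly flag that last matching as the delicate step. The paper instead argues universally: for \emph{any} pointed space $X$, the fundamental groupoid of the double loop space $\Omega^2(X)$ is a braided monoidal category with $\pi_0=\pi_2(X)$ and $\pi_1=\pi_3(X)$, so the self-braiding $Z\mapsto c_{ZZ}$ already defines a map $\pi_2(X)\to\pi_3(X)$ for every $X$; one then checks this equals the Whitehead half-square on the universal example $X=S^2$, where it becomes the elementary verification that the map $\Bbb Z\to\Bbb Z$ is $n\mapsto n^2$. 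Applied to $X=B\underline{\underline{\rm Pic}}(\B)$, the double loop space recovers the braided groupoid of invertible objects of $\B$, and the result drops out. The paper's approach buys you a clean reduction to a single explicit computation and avoids any appeal to Postnikov $k$-invariants or Eilenberg--Mac Lane abelian cohomology; your approach buys a more structural explanation of \emph{why} the answer is a quadratic form, at the cost of the matching step you identified.
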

\begin{proof}
For any pointed space $X$, the fundamental groupoid
of the double loop space $\Omega^2(X)$ is a braided monoidal category (see \cite[Subsection 2.3]{Q}). 
Note that $\pi_2(X)=\pi_0(\Omega^2(X))$ and $\pi_3(X)=\pi_1(\Omega^2(X))$.
So, the map $Z \mapsto c_{ZZ}$, where $c$ denotes the braiding of the above category, defines a map
$\pi_2(X)\to \pi_3(X)$. We claim that this map is the Whitehead half-square map. 
To prove this, it suffices to treat the universal
example $X = S^2$. That is, one needs to show that 
for $X=S^2$, the map in question is the  map 
$\Bbb Z=\pi_2(S^2)\to \Bbb Z=\pi_3(S^2)$ given by $n\mapsto n^2$. This is done by a straightforward 
verification. 
\end{proof}

In particular, taking $\B=\Z(\C)$, we find that the Whitehead half-square map and the Whitehead bracket 
for $B\uuPic(\C)$ are given by the braiding on invertible objects of $\Z(\C)$. 

{\bf Remark.} 
Proposition \ref{whit} can also be derived from 
\cite{B}, Chapter IV. 

\subsection{Classification of extensions}

Now we would like to classify $G$-extensions $\C$ 
of a given fusion category $\D$. 
As we have seen in Theorem~\ref{component invertibility}, such a category 
necessarily defines  a group homomorphism $c: G \to \Pic(\D)$. We would  like to study 
additional data and conditions on them that define a category
$\C$ given a homomorphism $c$.  

\begin{theorem}\label{eqcl}
Equivalence classes of $G$-extensions $\C$ of $\D$ are in bijection
with morphisms of categorical 2-groups 
$G\to \uuPic(\D)$, or, equivalently, with 
homotopy classes of maps between their classifying spaces: 
$BG\to B\uuPic(\D)$. 
\end{theorem}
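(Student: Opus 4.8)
The plan is to show that a $G$-extension of $\D$ and a morphism of categorical $2$-groups $G\to\uuPic(\D)$ are two packagings of the same ``categorical descent data'', to deduce that this persists on equivalence classes, and then to invoke the standard dictionary between categorical $2$-groups and their classifying spaces for the reformulation in terms of maps of spaces. Throughout, $G$ is regarded as a discrete categorical $2$-group (all higher morphisms identities), a $G$-extension is taken with a chosen equivalence $\C_e\simeq\D$, and ``morphisms of categorical $2$-groups $G\to\uuPic(\D)$'' is read up to the natural notion of equivalence, so as to match ``homotopy classes of maps''.

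First I would unwind a $G$-extension $\C=\bigoplus_{g\in G}\C_g$ with $\C_e\simeq\D$. By Theorem~\ref{component invertibility} each homogeneous component $\C_g$ is an invertible $\D$-bimodule category, i.e.\ an object of $\uuPic(\D)$, with $\C_e\simeq\D$ the unit object, and the tensor product of $\C$ restricts to bimodule equivalences $M_{g,h}:\C_g\boxtimes_\D\C_h\xrightarrow{\sim}\C_{gh}$, i.e.\ $1$-morphisms in $\uuPic(\D)$. The associativity constraint of $\C$, applied to triples of homogeneous objects, then produces for every $(g,h,k)$ a $2$-isomorphism
$$
M_{gh,k}\circ(M_{g,h}\boxtimes\id)\;\cong\;M_{g,hk}\circ(\id\boxtimes M_{h,k})
$$
in $\uuPic(\D)$, and the pentagon axiom of $\C$ becomes precisely the coherence (``$3$-cocycle'') condition on this family of $2$-isomorphisms; the unit constraints of $\C$ supply the unitality data. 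This is exactly the data of a morphism of categorical $2$-groups $G\to\uuPic(\D)$. Conversely, from such a morphism $g\mapsto F(g)$ one reconstructs $\C:=\bigoplus_g F(g)$, with tensor product built from the universal balanced bifunctors $B_{F(g),F(h)}$ followed by the structure $1$-morphisms and with associativity furnished by the structure $2$-morphisms; one checks that the result is a fusion category — its components are finite semisimple and indecomposable by Corollary~\ref{inv --> indec}, rigidity is inherited componentwise, and the grading is faithful because invertible bimodule categories are nonzero. These constructions are mutually inverse. The correspondence moreover descends to equivalence classes: a grading-preserving tensor equivalence of $G$-extensions amounts, component by component, to bimodule equivalences $\C_g\xrightarrow{\sim}\C'_g$ compatible with the $M_{g,h}$ up to coherent $2$-isomorphism — that is, to an equivalence of the associated $2$-group morphisms — while an isomorphism of two such tensor equivalences is a modification. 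Hence equivalence classes of $G$-extensions of $\D$ biject with equivalence classes of morphisms $G\to\uuPic(\D)$.

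For the maps-of-spaces formulation I would appeal to the standard correspondence (see \cite{Lu}): the classifying-space construction $\G\mapsto B\G$ induces an equivalence from the homotopy category of categorical $2$-groups onto that of pointed connected spaces $X$ with $\pi_i(X)=0$ for $i\ge 4$, carrying equivalence classes of morphisms $\G_1\to\G_2$ bijectively onto $[B\G_1,B\G_2]$, the set of pointed homotopy classes of maps. Taking $\G_1=G$, so that $B\G_1=BG=K(G,1)$, and $\G_2=\uuPic(\D)$, whose homotopy groups are those computed in Proposition~\ref{homot}, identifies equivalence classes of morphisms $G\to\uuPic(\D)$ with $[BG,B\uuPic(\D)]$; since $BG$ has no higher homotopy, pointed and free homotopy classes differ only by $\pi_1$-conjugation, the expected ambiguity for classifying extensions up to equivalence.

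The genuinely laborious step is the first one: the term-by-term matching of the coherence diagrams of a $G$-graded fusion category (the pentagon for $\otimes$ and its interaction with the grading) with those defining a morphism of categorical $2$-groups into $\uuPic(\D)$, together with the analogous matching one level up for (iso)morphisms of extensions. Everything else is a direct application of Theorem~\ref{component invertibility} or formal homotopy theory; and since essentially this bookkeeping is carried out explicitly on the algebraic side in Section~\ref{classification proper}, here it suffices to indicate the dictionary rather than to draw every diagram.
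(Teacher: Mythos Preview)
Your proposal is correct and reaches the same conclusion, but the emphasis differs from the paper's argument. You work categorically: you unpack a $G$-extension directly into the data of a morphism of categorical $2$-groups $G\to\uuPic(\D)$ (objects $\C_g$, $1$-morphisms $M_{g,h}$, $2$-morphisms from the associator, pentagon as coherence), build the inverse construction, and then invoke the abstract equivalence between categorical $2$-groups and their classifying spaces as a black box. The paper instead argues on the topological side from the outset: using the explicit simplicial (nerve) model of $B\uuPic(\D)$, it analyzes skeleton by skeleton what a simplicial map $BG\to B\uuPic(\D)$ amounts to, translating each extension step into the corresponding categorical datum ($1$-skeleton $\leftrightarrow$ map of sets $c$; $2$-skeleton $\leftrightarrow$ homomorphism plus choice of $M_{g,h}$; $3$-skeleton $\leftrightarrow$ existence and choice of $\alpha$; $4$-skeleton $\leftrightarrow$ pentagon). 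Your route is arguably cleaner for the bare bijection, while the paper's skeleton-by-skeleton approach has the advantage of flowing directly into the obstruction-theoretic proof of Theorem~\ref{extcl} that follows, where the obstructions $O_3(c)$ and $O_4(c,M)$ appear as the successive extension obstructions in $H^3(G,\pi_2)$ and $H^4(G,\pi_3)$.
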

\begin{proof}
Let us consider what it takes to define a continuous map 
$\xi: BG\to B\uuPic(\D)$, using the simplicial model of
$\uuPic(\D)$ described above. Note that since our model of 
$B\uuPic(\D)$ is a Kan complex, any map $\xi$ is homotopic to a
simplicial map, so it suffices to restrict our attention to
simplicial maps (which we will do from now on).  

{\bf Step 1.} Defining the map $\xi$ at the level of 1-skeletons 
(up to homotopy) obviously amounts to a choice of a set-theoretical map 
of fundamental groups $c: G\to \Pic(\D)$. On the categorical side, this is just a choice of 
an assignment $g\mapsto c(g)=\C_g$, $g\in G$, where $\C_g$ is an invertible 
bimodule category over $\D$. They can be combined into a single $\D$-bimodule category 
$\C=\oplus_g \C_g$. 

{\bf Step 2.} Extendability of this $\xi$ to the level of 2-skeletons
amounts to the condition that $c$ is a group homomorphism.
On the categorical side, this means that one has equivalences
$\C_g\boxtimes_\D \C_h\cong \C_{gh}$, and in particular
$\C_e\cong \D$. 

Next, any choice of an extension of $\xi$ to the level of 2-skeletons
amounts to picking the equivalences $M_{g,h}: \C_g\boxtimes_\D \C_h\to
\C_{gh}$, which defines a functor $\otimes$ of tensor multiplication on $\C$. 

{\bf Step 3.} Further, extendability of such a $\xi$ to the level of 3-skeletons amounts, on the categorical side, 
to the condition that there exists a functorial isomorphism 
$$
\alpha: (\bullet \otimes \bullet)\otimes \bullet \to \bullet \otimes (\bullet \otimes \bullet)
$$
(respecting the $\D$-bimodule structure, but 
not necessarily satisfying the pentagon relation), and once a good 
$M$ (for which $\alpha$ exists) has been fixed, the freedom of choosing an extension of $\xi$ to the level of 3-skeletons 
is a choice of $\alpha$. 

{\bf Step 4.} Finally, once $\xi$ has been extended to 3-skeletons, its extendability to the level of 4-skeletons 
amounts to the condition that $\alpha$ satisfies the pentagon relation. 
Once such an $\alpha$ has been fixed, there is a unique extension of $\xi$ to the level of 4-skeletons.

{\bf Step 5.} Once $\xi$ has been extended to a map of 4-skeletons, it canonically extends to a 
map of skeletons of all dimensions. 

The theorem is proved. 
\end{proof} 

\subsection{Proof of Theorem \ref{extcl}}

Theorem \ref{extcl} follows from Theorem \ref{eqcl} and classical obstruction theory 
in algebraic topology. Let us describe this derivation in more detail. 
For brevity we denote the homotopy 
groups $B\uuPic(\D)$ just by $\pi_i$, without specifying the space. 

Let us go back to the proof of Theorem \ref{eqcl}. 
At Step 3, it may be necessary to modify $M=(M_{g,h})$ to secure the existence of $\alpha$. 
But even if we allow modifications of $M$, there is an obstruction $O_3(c)$ to the 
existence of $\alpha$. Let us discuss the nature of this obstruction. 

If we have a map $\xi$ of 2-skeletons, then the condition for this map to be extendable to 3-skeletons
is that for every 3-simplex $\sigma\subset BG$, $\xi(\partial \sigma)$ represents the trivial element 
in the group $\pi_2$. Thus we get an obstruction which is a 3-cochain of $G$ with values in 
$\pi_2$. It is easy to see that this 3-cochain is actually a cocycle (where $G$ acts on $\pi_2$ via the homomorphism $c$), i.e. 
we get an obstruction $\psi\in Z^3(G,\pi_2)$. Now, $M$ can be modified by adding a 2-cochain 
$\chi$ on $G$ with coefficients in $\pi_2$, and this modification replaces $\psi$ with $\psi+d\chi$. 
This implies that the actual obstruction to extending $\xi$ to 3-skeletons (allowing the modifications of $M$) is 
the cohomology class $[\psi]=O_3(c)\in H^3(G,\pi_2)$. 

If the obstruction $O_3(c)$ vanishes, then, as we see from the above, the freedom of choosing $M$ so that $\xi$ is extendable 
to 3-skeletons is in $H^2(G,\pi_2)$. That is, we can modify $M$ by adding a cocycle $\chi\in Z^2(G,\pi_2)$, but 
if $\chi$ is a coboundary, then the homotopy class of the extension does not change.

Further, at Step 4, there is an obstruction $O_4(c,M)$ to choosing $\alpha$. 
Let us discuss its nature. 

If we have a map $\xi$ of 3-skeletons, then the condition for this map to be extendable to 4-skeletons
is that for every 4-simplex $\sigma\subset BG$, $\xi(\partial \sigma)$ represents the trivial element 
in the group $\pi_3$. Thus we get an obstruction which is a 4-cochain of $G$ with values in 
$\pi_3$. It is easy to see that this 4-cochain is actually a cocycle, i.e. 
we get an obstruction $\eta\in Z^4(G,\pi_3)$. Now, $\alpha$ can be modified by adding a 3-cochain 
$\theta$ on $G$ with coefficients in $\pi_3$, and this modification replaces $\eta$ with $\eta+d\theta$. 
This implies that the actual obstruction to extending $\xi$ to 4-skeletons (allowing the modifications of $\alpha$) is 
the cohomology class $[\eta]=O_4(c,M)\in H^4(G,\pi_3)$. 

If the obstruction $O_4(c,M)$ vanishes, then, as we see from the above, the freedom of choosing $\alpha$ so that $\xi$ is extendable 
to 4-skeletons is in $H^3(G,\pi_3)$. That is, we can modify $\alpha$ by adding a cocycle $\theta\in Z^3(G,\pi_3)$, but 
if $\theta$ is a coboundary, then the homotopy class of the extension does not change.

This proves Theorem \ref{extcl}.

\subsection{Classification of group actions on fusion categories}

\begin{proposition}\label{groupbrai} 
(i) Actions of a group $G$ by autoequivalences 
of a fusion category $\C$, up to an isomorphism, 
are in natural bijection with homotopy classes of mappings 
$BG\to B\underline{\rm Eq}(\C)$.  

(ii) Actions of a group $G$ by braided autoequivalences 
of a braided fusion category $\B$, up to an isomorphism, 
are in natural bijection with homotopy classes of mappings 
$BG\to B\uEqBr(\B)$.  
\end{proposition}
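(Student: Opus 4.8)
The plan is to reduce both statements to the skeleton-by-skeleton analysis already carried out in the proof of Theorem~\ref{eqcl}, after reinterpreting the $G$-actions in question as morphisms of categorical groups. Recall that an action of $G$ by tensor autoequivalences of a fusion category $\C$ amounts to a monoidal functor $\rho$ from $G$, regarded as a discrete monoidal category (i.e.\ a categorical $0$-group), to the categorical group $\underline{\rm Eq}(\C)$ of Section~\ref{2grpEq}: concretely, a collection of tensor autoequivalences $T_g:\C\to\C$ ($g\in G$), isomorphisms of tensor functors $\gamma_{g,h}:T_g\circ T_h\xrightarrow{\sim}T_{gh}$, and a unit constraint identifying $T_e$ with ${\rm Id}_\C$, subject to the associativity condition $\gamma_{gh,k}\circ(\gamma_{g,h}\otimes{\rm Id})=\gamma_{g,hk}\circ({\rm Id}\otimes\gamma_{h,k})$. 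An isomorphism of two such actions is exactly a monoidal natural isomorphism of the corresponding functors $\rho,\rho'$, i.e.\ a family of natural isomorphisms $T_g\xrightarrow{\sim}T_g'$ compatible with the $\gamma$'s. Part~(ii) is identical, with $\C$ a braided fusion category, the $T_g$ braided autoequivalences, and $\underline{\rm Eq}(\C)$ replaced by $\uEqBr(\B)$.

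Next I would redo the skeletal bookkeeping of the proof of Theorem~\ref{eqcl} for the categorical $1$-group $\mathcal{G}=\underline{\rm Eq}(\C)$ (resp.\ $\uEqBr(\B)$), using the nerve model of $B\mathcal{G}$ from the preceding subsections. By Proposition~\ref{homot22} (resp.\ Proposition~\ref{homot2}), $B\mathcal{G}$ is a $2$-type: $\pi_1={\rm Eq}(\C)$ (resp.\ $\EqBr(\B)$), $\pi_2={\rm Aut}_{\otimes}({\rm Id}_\C)$ (resp.\ ${\rm Aut}_{\otimes}({\rm Id}_\B)$), and $\pi_i=0$ for $i\ge 3$. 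A simplicial map $\xi:BG\to B\mathcal{G}$ is then determined up to homotopy by its restrictions to skeletons, exactly as in Theorem~\ref{eqcl} but terminating one dimension earlier: the $1$-skeleton of $\xi$ is the assignment $g\mapsto T_g$; an extension to the $2$-skeleton is a choice of the isomorphisms $\gamma_{g,h}$; extendability to the $3$-skeleton is precisely the vanishing of the obstruction $3$-cochain valued in $\pi_2$, i.e.\ the associativity condition on the $\gamma$'s; and, since $\pi_i=0$ for $i\ge 3$, once the $3$-skeleton is defined the map extends uniquely to all higher skeletons, with no further data or obstructions. Because the nerve of a categorical $n$-group is a Kan complex, every continuous map $BG\to B\mathcal{G}$ is homotopic to a simplicial one and homotopic simplicial maps are simplicially homotopic; so homotopy classes of continuous maps coincide with homotopy classes of simplicial maps, and the latter match isomorphism classes of $G$-actions via the dictionary of the previous paragraph.

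Assembling these observations proves part~(i), and part~(ii) follows verbatim upon replacing $\underline{\rm Eq}(\C)$ by $\uEqBr(\B)$ and Proposition~\ref{homot22} by Proposition~\ref{homot2}; the braiding enters only through the fact that braided autoequivalences of $\B$ and their isomorphisms form the categorical $1$-group $\uEqBr(\B)$. I expect the only genuinely delicate point to be the bookkeeping in the middle step: matching the associativity constraint of a $G$-action with the $3$-skeleton extendability condition, and matching a monoidal natural isomorphism of actions with a simplicial homotopy of the associated maps. This is, however, precisely the lower-dimensional analogue of the Step~3/Step~4 analysis already performed (for the categorical $2$-group $\uuPic(\D)$, whose classifying space is a $3$-type) in the proof of Theorem~\ref{eqcl}, so no essentially new difficulty arises.
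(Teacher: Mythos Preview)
Your proposal is correct and follows essentially the same approach as the paper's own proof: a skeleton-by-skeleton analysis identifying the $1$- and $2$-skeleton data with the assignment $g\mapsto T_g$ and the isomorphisms $\gamma_{g,h}$, the $3$-skeleton extendability with the $2$-cocycle (associativity) condition on the $\gamma$'s, and noting that extensions to higher skeletons are unique since $\pi_i=0$ for $i\ge 3$. Your write-up is somewhat more detailed (explicitly invoking Propositions~\ref{homot22} and~\ref{homot2} and the Kan condition), but the argument is the same.
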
 

\begin{proof} (i) We argue as in the previous subsection. 
Namely, a map between 2-skeletons of the spaces in question
is the same thing as an assignment $g\to F_g$, which attaches to every $g\in G$ 
a tensor equivalence $F_g: \C\to \C$, and a collection of
functorial isomorphisms $\eta_{g,h}: F_g\circ F_h\to F_{gh}$, 
$g,h\in G$. This map is extendable to 3-skeletons 
if and only if $\eta_{g,h}$ satisfies the 2-cocycle condition,
i.e. if the data $(F_g,\eta_{g,h})$ is an action of $G$ on $\C$
by tensor autoequivalences. Note that the extension to
3-skeletons is unique if exists, and extends uniquely to 
skeletons of higher dimensions. So (i) is proved.

(ii) is proved similarly.  
\end{proof} 

\begin{corollary}\label{braiparam} 
(i) (\cite[Theorem 5.5]{Ga}) Actions of a group $G$ by tensor autoequivalences 
of a fusion category $\C$, up to an isomorphism, 
are parametrized by pairs $(c,\eta)$, where $c: G\to {\rm Eq}(\B)$ is a homomorphism, 
such that the corresponding first obstruction $O_3(c)\in
H^3(G,{\rm Aut}_{\otimes}({\rm Id}_\C))$ vanishes, and 
$\eta=(\eta_{g,h})$ is the equivalence class of the identification $F_g\circ F_h\to
F_{gh}$, belonging to a torsor over $H^2(G,{\rm Aut}_{\otimes}({\rm Id}_\C))$. 

(ii) Actions of a group $G$ by braided autoequivalences 
of a braided fusion category $\B$, up to an isomorphism, 
are parametrized by pairs $(c,\eta)$, where $c: G\to \EqBr(\B)$ is a homomorphism, 
such that the corresponding first obstruction $O_3(c)\in
H^3(G,{\rm Aut}_{\otimes}({\rm Id}_\B))$ vanishes, and 
$\eta=(\eta_{g,h})$ is the equivalence class of the identification $F_g\circ F_h\to
F_{gh}$, belonging to a torsor over $H^2(G,{\rm Aut}_{\otimes}({\rm Id}_\B))$. 
\end{corollary}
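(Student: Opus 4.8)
The plan is to deduce this corollary from Proposition~\ref{groupbrai} by the same obstruction-theoretic argument that takes Theorem~\ref{eqcl} to Theorem~\ref{extcl}, only now the target classifying space is homotopically much simpler. By Proposition~\ref{groupbrai}(i), isomorphism classes of $G$-actions on $\C$ by tensor autoequivalences are in bijection with homotopy classes of maps $BG\to B\underline{\rm Eq}(\C)$, and by Proposition~\ref{homot22} the space $B\underline{\rm Eq}(\C)$ is a connected $2$-type with $\pi_1={\rm Eq}(\C)$, $\pi_2={\rm Aut}_{\otimes}({\rm Id}_\C)$, and $\pi_i=0$ for $i\ge 3$. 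Hence its Postnikov tower has a single nontrivial stage, classified by one $k$-invariant $k\in H^3(B{\rm Eq}(\C),{\rm Aut}_{\otimes}({\rm Id}_\C))$, and the analysis of maps out of the aspherical space $BG$ reduces to a two-step computation; this also yields a new proof of \cite[Theorem 5.5]{Ga}.

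Concretely, I would run the skeleton-by-skeleton bookkeeping already begun in the proof of Proposition~\ref{groupbrai}. Restricting a simplicial map $\xi:BG\to B\underline{\rm Eq}(\C)$ to the $1$-skeleton amounts to a set-theoretic map $G\to{\rm Eq}(\C)$; extendability over the $2$-skeleton forces this map to be a group homomorphism $c$, and, with $c$ fixed, the possible extensions over the $2$-skeleton are exactly the choices of a $1$-cochain $\eta=(\eta_{g,h})$, i.e. of the identifications $F_g\circ F_h\xrightarrow{\sim}F_{gh}$. Extendability over the $3$-skeleton is precisely the $2$-cocycle (pentagon) condition on $\eta$; the resulting cellular obstruction is a $3$-cocycle in $Z^3(G,\pi_2)$, with $G$ acting on $\pi_2$ through $c$, which changes by a coboundary when $\eta$ is modified, so the genuine obstruction to the existence of a valid $\eta$ is the class $O_3(c)=c^*k\in H^3(G,{\rm Aut}_{\otimes}({\rm Id}_\C))$. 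Since $\pi_i=0$ for $i\ge 3$, once a $3$-skeletal map is chosen it extends uniquely over every higher skeleton; therefore, when $O_3(c)=0$ the homotopy classes of maps inducing a given $c$ form a torsor over $H^2(G,\pi_2)$, and when $O_3(c)\ne 0$ there are none.

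Assembling the two bijections gives the claim: a $G$-action on $\C$ corresponds to the pair $(c,\eta)$, where $c:G\to{\rm Eq}(\C)$ records the isomorphism classes of the $F_g$, where $\eta$ is the equivalence class of $\{\eta_{g,h}\}$ in the torsor over $H^2(G,{\rm Aut}_{\otimes}({\rm Id}_\C))$, and where $O_3(c)=0$ is exactly the condition that some $\eta$ exists. Part (ii) is proved verbatim, replacing $\underline{\rm Eq}(\C)$ by $\uEqBr(\B)$, ${\rm Eq}(\C)$ by $\EqBr(\B)$, ${\rm Aut}_{\otimes}({\rm Id}_\C)$ by ${\rm Aut}_{\otimes}({\rm Id}_\B)$, and Proposition~\ref{homot22} by Proposition~\ref{homot2}; nothing in the argument uses more than the $2$-type structure of the target.

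I expect the only real work — as in the derivation of Theorem~\ref{extcl} — to be the careful identification of the topological quantity $c^*k$ with the explicitly defined algebraic obstruction coming from the failure of a chosen family $\eta_{g,h}$ to satisfy the pentagon identity, together with the verification that the module structure on $\pi_2={\rm Aut}_{\otimes}({\rm Id}_\C)$ entering $H^*(G,-)$ is indeed the one induced by $c$. One should also keep track of the distinction between pointed and free homotopy classes, hence between actions and actions considered up to conjugation, but this is handled exactly as in the classical theory of group actions and in the proof of Theorem~\ref{extcl}.
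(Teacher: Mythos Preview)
Your proposal is correct and follows exactly the approach implicit in the paper: the corollary is stated without proof because it follows from Proposition~\ref{groupbrai} via the same skeleton-by-skeleton obstruction analysis used to derive Theorem~\ref{extcl} from Theorem~\ref{eqcl}, only simplified by the fact that the target (by Propositions~\ref{homot22} and~\ref{homot2}) is a $2$-type rather than a $3$-type. Your identification of the single obstruction $O_3(c)\in H^3(G,\pi_2)$ and the $H^2(G,\pi_2)$-torsor of choices for $\eta$ is precisely what the paper intends.
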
 

\subsection{Classification of quasi-trivial extensions}

Let $G$ be a finite group, and $\D$ a fusion category. 
We call a $G$-extension $\C$ of $\D$ {\it quasi-trivial} 
if every category $\C_g$ is a quasi-trivial bimodule category over 
$\D$. This condition is equivalent to the condition 
that $\C$ is strongly $G$-graded in the sense of \cite{Ga}, i.e.,
every category $\C_g$ contains an invertible object. 

The following proposition is a corollary of   
Theorem \ref{eqcl}. 

\begin{proposition}
Quasi-trivial $G$-extensions of $\D$, up to graded equivalence, 
are in natural bijection with homotopy classes of mappings 
$BG\to B\underline{\underline{\rm Out}}(\D)$.  
\end{proposition}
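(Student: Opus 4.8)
The plan is to derive this proposition as a direct specialization of Theorem \ref{eqcl}, using the fact that the categorical 2-group $\uuPic(\D)$ of outer autoequivalences sits inside $\uuPic(\D)$ as the full categorical 2-subgroup spanned by quasi-trivial invertible bimodule categories, as set up in Section \ref{outer}. The key observation is that a $G$-extension $\C = \oplus_{g\in G}\C_g$ of $\D$ is quasi-trivial precisely when the associated homomorphism $c: G\to \Pic(\D)$ lands in the subgroup $\Out(\D)\subset\Pic(\D)$ of classes of quasi-trivial bimodule categories, and moreover when all the higher data in the classifying map factor through the subspace $B\uuOut(\D)\subset B\uuPic(\D)$.

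First I would note that $B\uuOut(\D)$ is, by construction, a connected subcomplex of $B\uuPic(\D)$ which is itself the classifying space of the categorical 2-group $\uuOut(\D)$; by the definition in Section \ref{outer} this 2-group is the full sub-2-group of $\uuPic(\D)$ on the quasi-trivial objects, so $\pi_1(B\uuOut(\D)) = \Out(\D)$ while $\pi_2$ and $\pi_3$ are unchanged (they are $\Inv(\Z(\D))$ and $\mathbf{k}^\times$ respectively, since 2- and 3-morphisms between quasi-trivial bimodule categories are the same as in $\uuPic(\D)$). Then I would observe that, by the same step-by-step analysis as in the proof of Theorem \ref{eqcl}, a simplicial map $BG\to B\uuPic(\D)$ factors through $B\uuOut(\D)$ if and only if on the 1-skeleton the induced map $c: G\to\Pic(\D)$ has image in $\Out(\D)$ — that is, if and only if each $\C_g = c(g)$ is a quasi-trivial $\D$-bimodule category — since the higher cells of $B\uuOut(\D)$ over such a $c$ are literally the same as those of $B\uuPic(\D)$. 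Hence homotopy classes of maps $BG\to B\uuOut(\D)$ are in bijection with those homotopy classes of maps $BG\to B\uuPic(\D)$ whose underlying homomorphism factors through $\Out(\D)$.

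Combining this with Theorem \ref{eqcl}, which identifies homotopy classes of maps $BG\to B\uuPic(\D)$ with equivalence classes of $G$-extensions of $\D$, and with Theorem \ref{component invertibility}, which says that the bimodule category $\C_g$ attached to a $G$-extension is precisely $c(g)$, we get that maps $BG\to B\uuOut(\D)$ correspond exactly to $G$-extensions $\C$ for which every $\C_g$ is quasi-trivial, i.e. to quasi-trivial $G$-extensions. The notion of ``graded equivalence'' of extensions matches the notion of homotopy of classifying maps already used in Theorem \ref{eqcl}, so no new bookkeeping is needed there.

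The main obstacle — though it is mostly conceptual rather than computational — is justifying that the inclusion $\uuOut(\D)\hookrightarrow\uuPic(\D)$ is ``full enough'' that a classifying map lands in $B\uuOut(\D)$ as soon as its 1-skeletal part does: one must check that for a homomorphism $c: G\to\Out(\D)\subset\Pic(\D)$, the bimodule categories $\C_g\boxtimes_\D\C_h$ and the chosen equivalences $M_{g,h}$, together with the associator $\alpha$, can all be taken within the quasi-trivial sub-2-group, which follows because the tensor product of quasi-trivial invertible bimodule categories is quasi-trivial (composition of outer autoequivalences) and because 2- and 3-morphisms are unrestricted. Once this is in place the proof is a formal consequence of Theorem \ref{eqcl}; I would present it in two or three sentences invoking that theorem and the remark that $B\uuOut(\D)$ is the evident sub-Kan-complex of $B\uuPic(\D)$.
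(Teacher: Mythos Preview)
Your proposal is correct and takes essentially the same approach as the paper, which simply states that the proposition is a corollary of Theorem \ref{eqcl}. In fact you are more careful than the paper: you make explicit the point that $\underline{\underline{\rm Out}}(\D)$ is a \emph{full} categorical 2-subgroup of $\uuPic(\D)$ (so that $\pi_2$ and $\pi_3$ are unchanged and only the $\pi_1$-condition matters), and you flag the closure of quasi-trivial bimodule categories under $\boxtimes_\D$, both of which the paper leaves implicit.
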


\begin{remark}
Note that a mapping 
$\tau: BG\to B\underline{\underline{\rm Out}}(\D)$
is representable as $\tau=B\xi\circ \zeta$, where $\xi: \underline{\rm Eq}(\D)\to \underline{\underline{\rm Out}}(\D)$ is defined in Subsection 
\ref{ximap} and $\zeta: BG\to \underline{\rm Eq}(\D)$ if and only if the corresponding quasi-trivial extension 
is trivial, i.e. $\C$ is the semidirect product category $\Vec_G\ltimes \D$ 
for the  $G$-action on $\D$ corresponding to $\zeta$.     
\end{remark} 

\subsection{Classification of faithfully graded braided $G$-crossed fusion categories}

Let $G$  be a finite group.
The notion of  a braided $G$-crossed fusion category is due to Turaev, see \cite{Tu1, Tu2}.
By definition, it is a $G$-graded category 
\begin{equation}
\label{Gcrossgrad}
\C =\bigoplus_{g\in G}\,\C_g,
\end{equation}
equipped with an action of $G$ such that $g(\C_h)  = \C_{ghg^{-1}}$ and a natural
family of isomorphisms
\begin{equation}
\label{G-braiding}
c_{X,Y} : X \ot Y \xrightarrow{\sim} g(Y) \ot X,\qquad g\in G,\, X\in \C_g,\, Y\in \C,
\end{equation}
called the {\em $G$-braiding}. The above action and $G$-braiding are required to satisfy certain 
natural compatibility conditions. In particular, the trivial component $\B:=\C_e$ is a braided fusion
category. We refer the reader to  \cite{Tu1, Tu2} for the precise definition
and to \cite[\S 4.4.3]{DGNO} for a detailed discussion of braided $G$-crossed categories.

Below we only consider braided $G$-crossed fusion categories with a {\em faithful} grading
\eqref{Gcrossgrad}.  The general case will be treated elsewhere.

\begin{theorem}
\label{ClassGcross}
Let $\B$ be a braided fusion category. Equivalence classes of braided $G$-crossed categories
$\C$ having a faithful $G$-grading with the trivial component  $\B$ 
are in bijection with morphisms of categorical
$2$-groups $G \to \underline{\underline{\rm Pic}}(\B)$, or,  equivalently, with homotopy classes of maps between
their classifying spaces $BG \to B\underline{\underline{\rm Pic}}(\B)$. 
\end{theorem}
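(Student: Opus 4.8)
\emph{Proof idea.}
The plan is to run the skeleton-by-skeleton argument from the proof of Theorem~\ref{eqcl} essentially verbatim, with the monoidal $2$-category $\mathbf{Bimodc}(\D)$ replaced by $\mathbf{Modc}(\B)$ --- equivalently, with $\uuPic(\D)$ replaced by its full categorical $2$-subgroup $\underline{\underline{\rm Pic}}(\B)$ (Remark~\ref{uPic0}) --- and carrying along the extra data of the $G$-action and the $G$-braiding. Since the classifying-space construction identifies morphisms of categorical $2$-groups $G\to\mathcal G$ (with their higher identifications) with homotopy classes of maps $BG\to B\mathcal G$, it suffices to produce an equivalence between the $2$-groupoid of faithfully $G$-graded braided $G$-crossed fusion categories with trivial component $\B$ and the $2$-groupoid of morphisms of categorical $2$-groups $G\to\underline{\underline{\rm Pic}}(\B)$; the latter is the same as a ``$G$-extension internal to $\mathbf{Modc}(\B)$'', i.e.\ a $G$-graded monoidal object all of whose homogeneous components are invertible $\B$-module categories.

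Next I would carry out the skeletal matching, parallel to the proof of Theorem~\ref{eqcl}. \textbf{Step 1.} A map on $1$-skeletons is an assignment $g\mapsto\C_g$ of invertible $\B$-module categories; conversely, given a braided $G$-crossed $\C=\bigoplus_g\C_g$ with $\C_e=\B$, Theorem~\ref{component invertibility} shows each $\C_g$ is an invertible $\B$-bimodule category, and the $G$-braiding $c_{X,Y}\colon X\ot Y\xrightarrow{\sim} g(Y)\ot X$ for $X\in\C_g$, $Y\in\B$, exhibits the right $\B$-action on $\C_g$ as obtained from the left one via the autoequivalence $g|_\B$, so $\C_g$ lies in $\underline{\underline{\rm Pic}}(\B)$; the braided autoequivalence $g|_\B$ is precisely the image of $\C_g$ under the functor $\Theta$ of Section~\ref{Sect 5}, which is how the $G$-action on $\B$ is recovered. \textbf{Step 2.} Extendability to $2$-skeletons says $g\mapsto[\C_g]$ is a homomorphism $G\to{\rm Pic}(\B)$, and a choice of extension is a choice of $\B$-module equivalences $M_{g,h}\colon\C_g\boxtimes_\B\C_h\xrightarrow{\sim}\C_{gh}$, i.e.\ the tensor product on $\C$; because composition in $\underline{\underline{\rm Pic}}(\B)$ is computed with the balancing isomorphisms built from the braiding of $\B$, requiring $M_{g,h}$ to be a morphism there is exactly the compatibility of $\otimes$ on $\C$ with the $G$-braiding. \textbf{Step 3.} Extendability to $3$-skeletons is the existence of a $\B$-bimodule natural isomorphism $\alpha$, the associativity constraint of $\C$, not yet subject to the pentagon; once a good $M$ is fixed, the freedom is a choice of such $\alpha$. \textbf{Step 4.} Extendability to $4$-skeletons is the pentagon axiom for $\alpha$, after which the extension is unique, and \textbf{Step 5}, it extends uniquely to all higher skeletons. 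Reading off the homotopy groups of $B\underline{\underline{\rm Pic}}(\B)$ from Proposition~\ref{homot1} ($\pi_1={\rm Pic}(\B)$, $\pi_2={\rm Inv}(\B)$, $\pi_3={\bold k}^\times$), the obstruction-theoretic bookkeeping is then identical to that in Theorems~\ref{eqcl}--\ref{extcl}.

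The main obstacle is the one point genuinely new relative to Theorem~\ref{eqcl}: one must check that Turaev's full list of compatibility axioms for a braided $G$-crossed category --- the $G$-equivariance $g(\C_h)=\C_{ghg^{-1}}$ of the grading together with the hexagon-type coherences relating $c_{X,Y}$ to $\otimes$, to $\alpha$, and to the $G$-action --- is precisely equivalent to demanding that the monoidal $2$-functor $G\to\uuPic(\B)$ supplied by Theorem~\ref{eqcl} factor through $\underline{\underline{\rm Pic}}(\B)$, i.e.\ that all its structure cells be taken with respect to the braiding-induced bimodule structure (diagram~\eqref{C-balanced diagram} and its higher analogues). This is a careful but essentially formal unwinding of definitions; I expect matching the hexagon axioms of the $G$-braiding with the balancing and monoidal-$2$-functor coherence cells to be the bulk of the work, with everything downstream being the same homotopy theory as before. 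A slightly slicker alternative is to note that $\mathbf{Modc}(\B)\subset\mathbf{Bimodc}(\B)$ is a sub-monoidal-$2$-category and that a faithfully graded braided $G$-crossed extension of $\B$ is the same thing as a $G$-graded monoidal object of $\mathbf{Modc}(\B)$ with invertible components, reducing the theorem to the internal version of Theorem~\ref{eqcl} inside $\mathbf{Modc}(\B)$; verifying that reduction is again the unwinding just described.
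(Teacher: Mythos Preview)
Your proposal is correct and follows essentially the same route as the paper: both arguments rest on the inclusion $\underline{\underline{\rm Pic}}(\B)\subset\uuPic(\B)$ and on Theorem~\ref{eqcl}, and both identify the new content as the equivalence between ``each $\C_g$ lies in $\underline{\underline{\rm Pic}}(\B)$'' and ``$\C$ carries a $G$-action and $G$-braiding.'' Your ``slicker alternative'' at the end is in fact precisely the paper's strategy.

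The only difference is one of organization and concreteness. The paper does not re-run the skeleton-by-skeleton argument; it simply invokes Theorem~\ref{eqcl} once to get the $G$-extension, and then isolates the single extra step. For that step it gives an explicit construction you do not: for $g,h\in G$ it identifies $\Fun_\B(\C_g,\C_{gh})$ simultaneously with right multiplication by objects of $\C_h$ and with left multiplication by objects of $\C_{ghg^{-1}}$, obtaining an equivalence $g\colon\C_h\to\C_{ghg^{-1}}$ characterized by a $\B$-module functor isomorphism $?\ot Y\cong g(Y)\ot ?$ on $\C_g$; evaluating this at $X\in\C_g$ produces the $G$-braiding $c_{X,Y}$ directly. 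This replaces your promised ``careful but essentially formal unwinding'' of Turaev's hexagons with a one-line construction, and makes transparent why the $G$-action satisfies $g(\C_h)=\C_{ghg^{-1}}$. Conversely, your Step~1 argument (that the $G$-braiding with $Y\in\B$ exhibits the right $\B$-action on $\C_g$ as the left one twisted by $g|_\B$) is exactly the content needed for the converse direction, which the paper leaves as ``follow the proof of Theorem~\ref{component invertibility}.''
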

\begin{proof}
Since $\underline{\underline{\rm Pic}}(\B) \subseteq \uuPic(\B)$, it follows from Theorem~\ref{eqcl} that a morphism
$G \to \underline{\underline{\rm Pic}}(\B)$ determines a $G$-extension $\C$ of $\B$. It remains to check that the additional
condition that each $\C_g,\, g\in G,$ is an invertible $\B$-module category  is equivalent
to the existence of an action of $G$ and a $G$-braiding on $\C$. 

Indeed, if the image of $G$ is inside $\underline{\underline{\rm Pic}}(\B)$ then for all
$g,h\in G$ the category $\Fun_\B(\C_g,\, \C_{gh})$ of $\B$-module functors from $\C_g$
to $\C_{gh}$ is identified, on one hand, with functors of right tensor multiplication by objects
of $\C_h$ and, on the other hand, with functors of left tensor multiplication by objects
of $\C_{ghg^{-1}}$.  So there is an equivalence $g: \C_h \to \C_{ghg^{-1}}$  
defined by the isomorphism of $\B$-module functors 
\begin{equation}
\label{defGact}
?\, \ot Y \cong g(Y) \ot\, ? : \C_g \to \C_{gh},\qquad Y\in \C_h.
\end{equation}
Extending it to $\C$ by linearity we obtain an action  of $G$ by tensor autoequivalences of $\C$.
Furthermore, evaluating \eqref{defGact} on $X\in \C_g$ we obtain a natural family
of isomorphisms
\[
X \ot Y \xrightarrow{\sim} g(Y) \ot X,\qquad g\in G,\, X\in \C_g,\, Y\in \C,
\]
which gives a $G$-braiding on $\C$.

To prove the converse, one can follow the proof of Theorem~\ref{component invertibility} 
to verify  that  components of a braided $G$-crossed
category are invertible module categories over its trivial component.
\end{proof}

\begin{remark} The somewhat similar problem of classifying
$G$-extensions of braided 2-groups is discussed in by E. Jenkins
in \cite{Je} (the notion of a $G$-extension of a braided 2-group is defined in
\cite{DGNO}, Appendix E, Definition E.8).
\end{remark}

\section{Classification of extensions (algebraic version)}
\label{classification proper}

Now we would like to retell the contents of the previous section in a purely algebraic language,
without using homotopy theory, and thus give an algebraic proof of Theorem \ref{extcl}.

\subsection{Decategorification} We start by recalling a well known decategorified version
of Theorem \ref{extcl}. 
Let $G$ be a group and let $R=\oplus_{g\in G}R_g$ be a $G-$graded
ring.
Recall that $R$ is called 
{\em strongly graded} if the multiplication map $R_g\ot_\BZ R_h\to R_{gh}$ is surjective;
in this situation we say that $R$ is strongly $G-$graded extension of $R_e$.
By \cite{Da} for a strongly $G-$graded ring $R$ the induced maps $R_g\ot_{R_e}R_h\to R_{gh}$ 
are isomorphisms;
in particular $R_g$ is an invertible $R_e-$bimodule for any $g\in G$. Thus any strongly $G-$graded 
ring $R$ defines a homomorphism $g\mapsto R_g$ of 2-groupoids $G\to \upic(R_e)$,  where 
$\upic(R_e)$ is the 2-groupoid of invertible $R_e-$bimodules. Conversely, it is clear that any
homomorphism of 2-groupoids $G\to \upic(S)$ determines a strongly $G-$graded extension
of $S$.

By definition, the group of 1-morphisms in $\upic(S)$ is the group of
isomorphism classes of invertible $S-$bimodules $\pic(S)$ and the group
of 2-endomorphisms of the unit 1-morphism (which is $S$ considered as an $S-$bimodule)
is the group $\Z(S)^\times$ of invertible elements of the center $\Z(S)$ of $S$. Thus the group
$\pic(S)$ acts on the abelian group $\Z(S)^\times$ and the equivalence class of 2-groupoid 
$\upic(S)$ is completely determined by a class $\omega \in H^3(\pic(S), \Z(S)^\times)$.
Thus we obtain the following result:

\begin{theorem} {\em (see \cite{CG})} There exists a class $\omega \in H^3(\pic(S), \Z(S)^\times)$
such that strongly $G-$graded extensions of a ring $S$ corresponding to a homomorphism
$\phi: G\to \pic(S)$ form an $H^2(G, \Z(S)^\times)-$torsor which is nonempty if and only if
$\phi^*(\omega)=0\in H^3(G, \Z(S)^\times)$. \qed
\end{theorem}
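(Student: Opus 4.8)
The plan is to reduce the statement to a purely $2$-groupoid-theoretic fact: that any $2$-groupoid with a single connected component is classified up to equivalence by its fundamental group together with its second homotopy group (with the induced module structure) and a cohomology class in $H^3$ of the former with coefficients in the latter. I would first recall the relevant Postnikov-type classification of $2$-groupoids (equivalently, of homotopy $2$-types, via the classifying space): a connected pointed $2$-groupoid $\mathcal{G}$ with $\pi_1(\mathcal{G})=P$ and $\pi_2(\mathcal{G})=N$ (a $P$-module) is determined up to equivalence by a class $\omega\in H^3(P,N)$, and conversely every such triple $(P,N,\omega)$ is realized. This is the $2$-dimensional case of the general fact used topologically elsewhere in the paper; here it suffices to invoke it for the specific $2$-groupoid $\upic(S)$.

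Next I would identify the invariants of $\upic(S)$ concretely. The objects of $\upic(S)$ are invertible $S$-bimodules, $1$-morphisms are bimodule isomorphisms, $2$-morphisms are identities among these; it is connected (any two invertible bimodules are isomorphic after tensoring by the appropriate inverse, and $S$ itself is a $1$-morphism object so all objects are equivalent). Its $\pi_1$, the group of isomorphism classes of objects under tensor product, is the classical Picard group $\pic(S)$; its $\pi_2$, the automorphism group of the unit object $S$ as an $S$-bimodule, is $\Z(S)^\times$, with $\pic(S)$ acting in the evident way by transport of structure through a bimodule equivalence. Feeding these into the classification gives the distinguished class $\omega\in H^3(\pic(S),\Z(S)^\times)$ attached to $\upic(S)$.

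Then I would translate the correspondence between strongly $G$-graded extensions of $S$ and homomorphisms of $2$-groupoids $G\to\upic(S)$, which was already established in the paragraph preceding the theorem (using Dade's theorem \cite{Da} that $R_g\ot_{R_e}R_h\to R_{gh}$ is an isomorphism). Under the classifying-space dictionary, a $2$-groupoid homomorphism $G\to\upic(S)$ extending a fixed $\phi\colon G\to\pic(S)$ corresponds to a map $BG\to B\upic(S)$ inducing $\phi$ on $\pi_1$; by obstruction theory such a map exists iff the obstruction $\phi^*(\omega)\in H^3(G,\Z(S)^\times)$ vanishes (here $G$ acts on $\Z(S)^\times$ through $\phi$), and when it does, the set of homotopy classes of such maps is a torsor over $H^2(G,\Z(S)^\times)$. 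This is exactly the assertion of the theorem, so it remains only to check that the equivalence relation on extensions (graded isomorphism) matches homotopy of maps, which is immediate from the construction.

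The main obstacle, and the only real content beyond bookkeeping, is establishing — or precisely citing — the classification of $2$-groupoids by the triple $(\pi_1,\pi_2,\omega\in H^3)$ and the accompanying obstruction-theory statement in the form needed here; everything else is the dictionary between $G$-graded rings and $2$-groupoid homomorphisms (already in hand from \cite{Da}) and the identification of the two homotopy groups of $\upic(S)$, which is routine. Since this is a known decategorified result, the cleanest route is to attribute it to \cite{CG} and present the above as the conceptual derivation; I would not belabor the cocycle-level verification that $\omega$ is well defined independent of the chosen representatives.
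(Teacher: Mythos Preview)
Your proposal is correct and follows essentially the same approach as the paper: the theorem is stated with a \qed\ immediately after the discussion preceding it, which already (i) identifies strongly $G$-graded extensions with $2$-groupoid homomorphisms $G\to\upic(S)$ via \cite{Da}, (ii) computes $\pi_1(\upic(S))=\pic(S)$ and $\pi_2(\upic(S))=\Z(S)^\times$, and (iii) invokes the classification of categorical $1$-groups by a class $\omega\in H^3(\pi_1,\pi_2)$. One small slip: your parenthetical about connectedness (``any two invertible bimodules are isomorphic after tensoring by the appropriate inverse'') is not right as stated---distinct classes in $\pic(S)$ are by definition non-isomorphic bimodules---but this is just a confusion between viewing $\upic(S)$ as a monoidal groupoid versus as a one-object $2$-groupoid, and it does not affect the argument.
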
 

\subsection{An action determined by a $G$-extension}
\label{action pi}

Let 
\[
\C =\bigoplus_{g \in G}\, \C_g
\]
 be a $G$-graded fusion category and let $\C_e:=\D$
(i.e., $\C$ is a $G$-extension of $\D$).
By Theorem~\ref{component invertibility}
for each pair $g,h \in G$ there is an equivalence of $\D$-bimodule categories
\begin{equation}
\label{Mgh}
M_{g,h} : \C_g \boxtimes_\D \C_h \cong \C_{gh}
\end{equation}
which comes from the restriction of the tensor product of $\otimes: \C\bt \C \to \C$ to 
\begin{equation}
\label{otgh}
\ot_{g,h} : \C_g \bt \C_h \to\C_{gh}.
\end{equation}

By Corollary~\ref{dual is center}
the group $Z_g:=\Aut_\D(\C_g)$ of $\D$-bimodule autoequivalences of $\C_g$ is abelian and 
is isomorphic to the group $Z:=Z_e$ of isomorphism classes of 
invertible objects in $\Z(\D)$.

Observe that for all $g, f \in G$ there are group isomorphisms
$i_{f,g} : Z_g \cong Z_{gf}$ and $j_{f,g} : Z_g \cong Z_{fg}$ defined by
\begin{eqnarray}
\label{i-iso}
i_{f,g}(b) &:=& M_{g,f} \circ (b \boxtimes_\D {\rm Id}_{\C_f}) \circ M_{g,f}^{-1} \\
\label{j-iso}
j_{f,g}(b) &:=& M_{f,g} \circ ( {\rm Id}_{\C_f} \boxtimes_\D b) \circ M_{f,g}^{-1}
\end{eqnarray}
for all $b\in Z_g$.
One can easily check that for all $f,h,g \in G$ there are equalities
\begin{equation}
\label{truth about actions}
i_{fh, g} =  i_{h, gf}i_{f,g}, \qquad \mbox{ and } \qquad
j_{fh, g} =  j_{f, hg}j_{h, g}.
\end{equation}
It follows from \eqref{Tfgh} that
the isomorphisms $i$ and $j$ commute
with each other, i.e.,  for all $f,g,h\in G$
there is an equality
\begin{equation}
\label{i,j commute}
i_{f, hg}j_{h,g} = j_{h, gf}i_{f,g}.
\end{equation}
Define an action of $G$ on $Z$ by $\rho : G \to Aut(Z) : g \mapsto \rho_g$ where
\begin{equation}
\label{pig}
\rho_g := i_{g^{-1}, g}\, j_{g, 1},\qquad g\in G.
\end{equation}

\begin{proposition}
\label{action independent}
The action $\rho : G \to Aut(Z)$ depends only on the homomorphism $c: G \to \Pic(\C_e)$
and does not depend on the choice of equivalences $M_{g,h} : \C_g \boxtimes_\D \C_h \cong \C_{gh}$.
\end{proposition}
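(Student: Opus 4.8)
The plan is to show that the transition isomorphisms $i_{f,g}\colon Z_g\to Z_{gf}$ and $j_{f,g}\colon Z_g\to Z_{fg}$ are \emph{themselves} independent of the chosen equivalences $M_{g,h}$. Since $\rho_g=i_{g^{-1},g}\,j_{g,1}$, this immediately gives independence of $\rho$ from all of the $M_{g,h}$; a short transport-of-structure argument then upgrades this to dependence on $c$ alone.

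First I would record the structural input. By Theorem~\ref{component invertibility} each $\C_g$ is an invertible $\D$-bimodule category; by Corollary~\ref{dual is center} the group $Z_g=\Aut_\D(\C_g)$ is \emph{abelian} (and canonically $\cong {\rm Inv}(\Z(\D))$), and for $b\in Z_g$ the external product $b\boxtimes_\D{\rm Id}_{\C_f}$ is a well-defined $\D$-bimodule autoequivalence of $\C_g\boxtimes_\D\C_f$. Finally, by Remark~\ref{mortorsor} any two $\D$-bimodule equivalences $\C_g\boxtimes_\D\C_f\xrightarrow{\sim}\C_{gf}$ differ by postcomposition with a unique element of $Z_{gf}$.

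The main step is then a one-line computation. Let $M'_{g,f}=\psi\circ M_{g,f}$ with $\psi\in Z_{gf}$ be a second choice of equivalence. For $b\in Z_g$,
\[
M'_{g,f}\circ(b\boxtimes_\D{\rm Id}_{\C_f})\circ (M'_{g,f})^{-1}
=\psi\circ\bigl(M_{g,f}\circ(b\boxtimes_\D{\rm Id}_{\C_f})\circ M_{g,f}^{-1}\bigr)\circ\psi^{-1}
=\psi\circ i_{f,g}(b)\circ\psi^{-1},
\]
which equals $i_{f,g}(b)$ because $\psi$ and $i_{f,g}(b)$ both lie in the abelian group $Z_{gf}$. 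Hence $i_{f,g}$ does not depend on $M_{g,f}$; conjugating instead inside the abelian group $Z_{fg}$ shows $j_{f,g}$ does not depend on $M_{f,g}$. Therefore $\rho_g=i_{g^{-1},g}\,j_{g,1}$ is independent of every choice of $M_{g,h}$.

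For the remaining assertion, let $\C'=\oplus_{g}\C'_g$ be another $G$-extension of $\D$ with the same associated homomorphism $c$, and pick $\D$-bimodule equivalences $\Phi_g\colon \C_g\xrightarrow{\sim}\C'_g$. Conjugation $b\mapsto\Phi_g b\Phi_g^{-1}$ gives an isomorphism $Z_g\cong Z'_g$ which is \emph{canonical}: two choices of $\Phi_g$ differ by elements of the abelian groups $Z_g,Z'_g$, to which conjugation is blind. Using the $M$-independence just proved, one may evaluate the primed maps with the convenient choice $M'_{g,f}:=\Phi_{gf}\circ M_{g,f}\circ(\Phi_g\boxtimes_\D\Phi_f)^{-1}$; with this choice the canonical isomorphisms intertwine $i_{f,g}$ with $i'_{f,g}$ and $j_{f,g}$ with $j'_{f,g}$, hence $\rho_g$ with $\rho'_g$, and under the extension-independent identification $Z_1\cong{\rm Inv}(\Z(\D))$ the two actions agree on the nose. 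The only point requiring genuine care — and the main (mild) obstacle — is verifying that $b\boxtimes_\D{\rm Id}_{\C_f}$ indeed lies in the automorphism group over which the relevant torsor is formed, and that the chosen $M'_{g,f}$ in the last step makes the intertwining hold strictly; beyond this bookkeeping the argument is formal, resting entirely on the triviality of conjugation in an abelian group.
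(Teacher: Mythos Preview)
Your argument is correct and is essentially the same as the paper's: both reduce to showing that $i_{f,g}$ and $j_{f,g}$ are unchanged when $M_{g,f}$ is replaced by $\psi\circ M_{g,f}$ with $\psi\in Z_{gf}$, and both conclude by observing that the resulting conjugation by $\psi$ is trivial because $Z_{gf}$ is abelian. Your final transport-of-structure paragraph (independence of the choice of representatives $\C_g$ within their equivalence classes) is a welcome elaboration of the phrase ``depends only on $c$'', which the paper leaves implicit.
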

\begin{proof}
It suffices to show that isomorphisms $i_{f,g},\, j_{f,g}$ 
defined by equations \eqref{i-iso} and \eqref{j-iso}
do not depend on the choice of equivalences $M_{g,h}$. 

Indeed, if $M'_{g,h}:
\C_g \boxtimes_\D \C_h \cong \C_{gh}$ is another $\D$-bimodule equivalence
then $M'_{g,h} = L_{g,h}\circ M_{g,h}$ where $L_{g,h} \in \Aut_\D(\C_{gh})$.
It follows that isomorphisms $\Aut_\D(\C_g) \cong   \Aut_\D(\C_{gh})$
determined by $M_{g,h}$ and $M'_{g,h}$ differ by a conjugation by $L_{g,h}$.
But since $\Aut_\D(\C_{gh})$ is abelian, this conjugation is trivial.
\end{proof}

\subsection{Cohomological data determined by a $G$-extension}
\label{algebraic data}

Let $\C =\oplus_{g \in G}\, \C_g$ be a $G$-extension of $\C_e=:\D$.
We continue to use the notation introduced in Section~\ref{action pi}, see
\eqref{Mgh} and \eqref{otgh}.

For all $g,h\in G$ let 
\begin{equation}
\label{Bgh}
B_{g,h} := B_{\C_g,\C_h}: \C_g \boxtimes \C_h \to \C_g \boxtimes_\D \C_h
\end{equation} 
be the canonical functor coming from Definition~\ref{tensor product in BC}. 
For all $f,g,h\in G$ consider the following diagram of $\D$-bimodule categories
and functors:
\begin{equation}
\label{two associativities}
\xymatrix{
\C_f \boxtimes_\D \C_g \boxtimes \C_h \ar[d]_{M_{f,g}} \ar@{-->}[rd]^>>>>>{B_{g,h}}  &
\C_f \boxtimes \C_g \boxtimes \C_h \ar[l]_{B_{f,g}} \ar[r]^{B_{g,h}}
\ar[dl]^>>>>>>>{\otimes_{f,g}}  \ar[dr]^>>>>>{\otimes_{g,h}} &
\C_f \boxtimes \C_g \boxtimes_\D \C_h  \ar[d]^{M_{g,h}} \ar@{-->}[dl]^>>>>>>>{B_{f,g}} \\
\C_{fg} \boxtimes \C_h \ar[d]_{B_{fg,h}}  \ar[rd]^>>>>>>>{\otimes_{fg,h}} &
\C_f \boxtimes_\D \C_g \boxtimes_\D \C_h  \ar@{-->}[dl]_>>>>>>{M_{f,g}} 
\ar@{-->}[dr]^>>>>>{M_{g,h}}&
\C_f \boxtimes \C_{gh} \ar[d]^>>>>>>>>{B_{f,gh}} \ar[dl]_>>>>>>>{\otimes_{f,gh}} \\
\C_{fg} \boxtimes_\D \C_h \ar[r]_{M_{fg,h}} & 
\C_{fgh} &
\C_f \boxtimes_\D \C_{gh} \ar[l]^{M_{f,gh}}.
}
\end{equation}
In this diagram we
will refer to polygons formed by solid lines as those in the ``front''
and to polygons formed by dotted lines as  the ``rear''.
The four triangles in the front commute 
by the universal property of tensor product of module categories. The square in the front commutes
up to an associativity constraint,
which is an isomorphism of $\D$-bimodule functors.
Hence, the perimeter of the diagram commutes 
up to a natural isomorphism of $\D$-bimodule functors. The upper rear quadrangle 
commutes by Remark~\ref{properties of btD}(ii)
and the left and right rear quadrangles commute since functors $B_{g,h}, g,h\in G,$ 
come from  $\D$-balanced functors.
Therefore, the lower quadrangle in the rear commutes up to 
a natural isomorphism  of $\D$-bimodule functors :
\begin{equation}
\label{etafgh}
\alpha_{f,g,h}: 
M_{f, gh} ( {\rm Id}_{\C_f} \boxtimes_\D M_{g,h}) \cong M_{fg,h} (M_{f,g} \boxtimes_\D {\rm Id}_{\C_h}).
\end{equation}
Equivalently, the following $\D$-bimodule functor 
\begin{equation}
\label{Tfgh}
T_{f,g,h}: = M_{fg,h} (M_{f,g} \boxtimes_\D {\rm Id}_{\C_h}) 
( {\rm Id}_{\C_f} \boxtimes_\D M_{g,h}^{-1}) M_{f, gh}^{-1} :\C_{fgh} \to \C_{fgh}
\end{equation}
is isomorphic (as a $\D$-bimodule functor) to the identity.

The pentagon axiom for the tensor product in $\C$ implies the following equality
of natural transformations
\begin{multline}
\label{pentagon for eta}
M_{f,ghk} (\id_f \boxtimes_\D
\alpha_{g,h,k})\circ \alpha_{f,gh,k} ({\rm Id}_{\C_f} \boxtimes_\D M_{g,h} 
\boxtimes_\D {\rm Id}_{\C_k}) \circ M_{fgh,k}(\alpha_{f,g,h} \boxtimes_\D \id_k)  \\
= \alpha_{f,g,hk} ({\rm Id}_{\C_f} \boxtimes_\D {\rm Id}_{\C_g}
\boxtimes_\D M_{h,k})
\circ \alpha_{fg,h,k} (M_{f,g} \boxtimes_\D {\rm Id}_{\C_h} \boxtimes_\D {\rm Id}_{\C_k}),
\end{multline}
for all $f,g,h,k\in G$ (note that we use the notation ${\rm Id}$ for the identity functor, and $\id$ for
the identity morphism). 

To summarize, a $G$-extension $\C$ determines the following data:
\begin{enumerate}
\item[(1)] a fusion category $\D$, a collection of invertible $\D$-bimodule categories
$\C_g,\, g\in G$ such that $\C_e \cong \D$, and an action $\rho$ of $G$ 
by automorphisms of the group $Z$ of invertible objects of $\Z(\D)$,
\item[(2)] a collection of $\D$-bimodule isomorphisms 
$M_{gh}: \C_g \boxtimes_\D \C_h \cong \C_{gh}$ 
such that each $T_{f,g,h}$ defined by \eqref{Tfgh} is isomorphic to the identity
as a $\D$-bimodule functor,
\item[(3)] natural isomorphisms $\alpha_{f,g,h}$ \eqref{etafgh} satisfying identity
\eqref{pentagon for eta}.
\end{enumerate}

In the next few subsections we will show that, conversely,
a set of data with the above properties gives rise to a $G$-extension.



\subsection{Obstruction to the existence of tensor product}\label{obsten}

Let us consider a situation opposite to the one studied in Section~\ref{algebraic data}. 
Let $G$ be a finite group. 
Suppose that we are given a fusion category $\D$,
a group homomorphism  $c: G \to \Pic(\D),\ g \mapsto \C_g$, 
and  there are $\D$-bimodule equivalences 
\begin{equation}
M_{g,h}: \C_g \boxtimes_\D \C_h \cong \C_{gh}, 
\end{equation}
for all $g,h\in G$.  Let $\rho : G \to {\rm Aut}(Z), Z:=\Aut_\D(\D)$  be the action of $G$
defined in Section~\ref{action pi}. By Proposition~\ref{action independent},
$\rho$ depends only on $c$ and not on the choice $M_{g,h},\, g,h\in G$.
We would like to parameterize fusion category structures on
$\C =\oplus_{g\in G}\,\C_g$ which give rise to this data.

%
%


First, let us investigate the existence of a 
$G$-graded quasi-tensor category structure
on $\C$. By a {\em quasi-tensor category}  we mean
a category $\C$ with  a bifunctor $\otimes :\C \times \C \to \C$
such that $\otimes\circ (\otimes \times {\rm Id}_\C) \cong \otimes \circ ({\rm Id}_\C \times \otimes) $
(so we do not yet require existence of an associativity constraint for $\otimes$).

As before, let $Z_g = \Aut_\D(\C_{g})$.  Then $Z := Z_e$ is
the group of invertible objects in $\Z(\D)$.
We have isomorphisms $i_{g,1} : Z \cong Z_g$ for all $g\in G$. 

For all $f,g,h \in G$ let
\begin{equation}
T_{f,g,h} =  M_{fg,h} (M_{f,g} \boxtimes_\D {\rm Id}_{\C_h}) 
            ( {\rm Id}_{\C_f} \boxtimes_\D M_{g,h}^{-1}) M_{f, gh}^{-1}
\end{equation}
Then $\bar{T}_{f,g,h} = i_{fgh, 1}^{-1}(T_{f,g,h})$ 
defines a function on $G^3$ with values in the abelian group $Z$.
One can directly check that $\bar{T}_{f,g,h}$ is an element of
$Z^3(G, Z)$, i.e., a $3$-cocycle on $G$ with values in $Z$ (the latter
is a $G$-module via $\rho$).
Let us find how this function depends on the choice of equivalences $M_{g,h}$. 

Suppose each $M_{g,h}$ is replaced by $M'_{g,h} = L_{g,h}\circ M_{g,h}$ where 
$L_{g,h} \in Z_{gh}$ as above. Then the corresponding function on $G^3$ with values in
$Z_{fgh}$ is 
\begin{eqnarray*}
T'_{f,g,h} 
&=&  M'_{fg,h} (M'_{f,g} \boxtimes_\D {\rm Id}_{\C_h})
( {\rm Id}_{\C_f} \boxtimes_\D M'^{-1}_{g,h})  M'^{-1}_{f, gh} \\
&=& L_{fg,h} M_{fg,h} (L_{f,g} M_{f,g} \boxtimes_\D {\rm Id}_{\C_h})
 ( {\rm Id}_{\C_f} \boxtimes_\D M_{g,h}^{-1} L_{g,h}^{-1}) M_{f, gh}^{-1} L_{f, gh}^{-1}\\
&=& L_{fg,h} \,i_{h, fg}(L_{f,g})\, j_{f, gh}(L_{g,h}^{-1})\, L_{f, gh}^{-1}\, T_{f,g,h}.
\end{eqnarray*}
Let $\bar{L}_{g,h} := i_{gh,1}^{-1}(L_{g,h}) \in Z$. We compute,
using Equations~\eqref{truth about actions}, \eqref{i,j commute} 
and definition \eqref{pig} of the action $\rho$:
\begin{eqnarray*}
\bar{T}'_{f,g,h}
&=&  i_{fgh, 1}^{-1}(T'_{f,g,h})  \\
&=& \bar{L}_{fg,h} \,i_{fgh,1}^{-1} i_{h, fg} i_{fg, 1}(\bar{L}_{f,g})\, 
i_{fgh,1}^{-1} j_{f, gh} i_{gh,1}(\bar{L}_{g,h}^{-1})\, \bar{L}_{f, gh}^{-1}\, 
\bar{T}_{f,g,h}\\
&=& \bar{L}_{fg,h} \,\bar{L}_{f,g}\, 
\rho_f(\bar{L}_{g,h}^{-1})\, \bar{L}_{f, gh}^{-1}\, 
\bar{T}_{f,g,h}.
\end{eqnarray*}
Thus, the function $\bar{T}'$ differs from $\bar{T}$ by a coboundary. 
This yields a cohomology class in $H^3(G, Z)$ independent on the choice
of the equivalences $M_{g,h}$.

\begin{definition}
We will call the cohomology class of $\bar{T}$
in $H^3(G, Z)$ the {\em tensor product obstruction class} and denote it $O_3(c)$.
\end{definition}

\begin{theorem}
\label{tensor product obstruction}
Let $G$ be a finite group and let $\D$ be a fusion category. Let  
\[
c: G \to \Pic(\D) : g \mapsto \C_g
\] 
be a group homomorphism.
Then there exist $\D$-bimodule category equivalences $\C_e \cong \D$ and 
$\C_g \boxtimes_\D \C_h \cong \C_{gh},\, g,h\in G$ 
defining a $\D$-bimodule tensor product $\otimes$
on $\C =\oplus_{g\in G}\,\C_g$ such that $\otimes\circ (\otimes \boxtimes {\rm Id}_\C) \cong
\otimes\circ ({\rm Id}_\C \boxtimes\otimes)$
if and only if the obstruction class $O_3(c)$ is the trivial element of $H^3(G, Z)$.
\end{theorem}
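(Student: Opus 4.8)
The plan is to fix a choice of representing data, read off the obstruction to a $G$-graded quasi-tensor structure on $\C=\bigoplus_{g}\C_g$ in terms of the functors $T_{f,g,h}$, and then recognize that obstruction as the class $O_3(c)$.

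First I would choose, for each $g\in G$, an invertible $\D$-bimodule category $\C_g$ in the class $c(g)$, with $\C_e=\D$. As $c$ is a homomorphism, $c(g)\boxtimes_\D c(h)=c(gh)$ in $\Pic(\D)$, so there exist $\D$-bimodule equivalences $M_{g,h}:\C_g\boxtimes_\D\C_h\xrightarrow{\sim}\C_{gh}$; composing with the canonical balanced functors $B_{g,h}$ these assemble into a $G$-graded bifunctor $\otimes$ on $\C$, $\D$-balanced in the middle variable and a $\D$-bimodule functor in the outer ones. The key claim is: for this fixed choice of the $M_{g,h}$, a natural isomorphism $\otimes\circ(\otimes\boxtimes{\rm Id}_\C)\cong\otimes\circ({\rm Id}_\C\boxtimes\otimes)$ exists iff $T_{f,g,h}\cong{\rm Id}_{\C_{fgh}}$ as a $\D$-bimodule functor for all $f,g,h$, equivalently $\bar T\equiv 1$ in $Z^3(G,Z)$. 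I would prove this by chasing diagram \eqref{two associativities}: restricting the two triply iterated tensor bifunctors to $\C_f\times\C_g\times\C_h$ and using the universal property of $\boxtimes_\D$ together with Remark~\ref{properties of btD}(ii), both factor through $\C_f\boxtimes_\D\C_g\boxtimes_\D\C_h$ and into $\C_{fgh}$, and by the very formula \eqref{Tfgh} the first factored functor is $T_{f,g,h}$ postcomposed with the second; since the second, $M_{f,gh}({\rm Id}\boxtimes_\D M_{g,h})$, is an equivalence, an isomorphism between them is the same as $T_{f,g,h}\cong{\rm Id}$, i.e.\ $\bar T_{f,g,h}=1$ (using that $T_{f,g,h}\in Z_{fgh}\cong Z$). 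Finally, the graded pieces are disjoint summands of $\C\times\C\times\C$ and both iterated bifunctors are ``block-diagonal'', so componentwise isomorphisms assemble into a global one. I expect this claim --- that for a fixed choice of the $M_{g,h}$ the sole obstruction is triviality of all the $T_{f,g,h}$ --- to be the step needing the most care.

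Next I would use the modification computation recorded just before the statement of the theorem: replacing each $M_{g,h}$ by $L_{g,h}\circ M_{g,h}$ with $L_{g,h}\in Z_{gh}$ alters the cocycle $\bar T$ exactly by the coboundary of the $2$-cochain $g,h\mapsto i_{gh,1}^{-1}(L_{g,h})$. Hence the set of $3$-cocycles $\bar T$ realizable as the choice of the $M_{g,h}$ varies is precisely the cohomology class $O_3(c)\in H^3(G,Z)$.

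Combining these inputs closes the argument. If $O_3(c)=0$, pick a $2$-cochain $\bar L$ whose coboundary cancels $\bar T$, set $L_{g,h}:=i_{gh,1}(\bar L_{g,h})$, and replace each $M_{g,h}$ by $L_{g,h}\circ M_{g,h}$; for the new equivalences $\bar T'\equiv 1$, so by the claim the bifunctor $\otimes$ built from them makes $\C$ a $G$-graded $\D$-bimodule quasi-tensor category. Conversely, if some choice of the $M_{g,h}$ yields a $\D$-bimodule tensor product with $\otimes\circ(\otimes\boxtimes{\rm Id}_\C)\cong\otimes\circ({\rm Id}_\C\boxtimes\otimes)$, then restricting that isomorphism to each $\C_f\times\C_g\times\C_h$ gives the natural isomorphisms $\alpha_{f,g,h}$ of \eqref{etafgh}, whence $T_{f,g,h}\cong{\rm Id}$ and $\bar T\equiv1$ for that choice; as $O_3(c)$ is the class of $\bar T$ and is choice-independent, $O_3(c)=0$.
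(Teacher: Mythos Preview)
Your proposal is correct and follows essentially the same route as the paper's proof: both reduce the existence of a $\D$-bimodule isomorphism $\otimes\circ(\otimes\boxtimes{\rm Id}_\C)\cong\otimes\circ({\rm Id}_\C\boxtimes\otimes)$ to the condition $T_{f,g,h}\cong{\rm Id}_{\C_{fgh}}$ via diagram~\eqref{two associativities}, and then invoke the coboundary computation preceding the theorem to show this is equivalent to triviality of the class $O_3(c)$. You have simply spelled out more explicitly the logic of fixing a system of products, identifying the obstruction for that choice, and then varying the choice, whereas the paper compresses this into two sentences.
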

\begin{proof}
Consider the diagram of functors~\eqref{two associativities}. 
It was explained above that its natural $\D$-bimodule commutativity is equivalent 
to the natural $\D$-bimodule isomorphism $T_{f,g,h}\cong {\rm Id}_{\C_{fgh}},\ f,g,h\in G$.
The latter is equivalent to $\bar{T}$ being cohomologous to $1$ in $Z^3(G, Z)$. 
\end{proof}

\subsection{Construction of a quasi-tensor product}

\begin{theorem}
\label{products correspond to H2}
Suppose that the obstruction class $O_3(c)$ vanishes. Then isomorphism classes
of $\D$-bimodule tensor products on $\C =\oplus_{g\in G}\,\C_g$ form a torsor over  
the second cohomology group $H^2(G, Z)$.
\end{theorem}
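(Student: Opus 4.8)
The plan is to run the whole argument at the level of the $2$-cochain $\bar L\in C^2(G,Z)$ introduced in Section~\ref{obsten}, reusing the change-of-$M$ computation performed there. Recall from the list in Section~\ref{algebraic data} that a $\D$-bimodule tensor product on $\C$ inducing the fixed homomorphism $c$ amounts to a collection $M=(M_{g,h})_{g,h\in G}$ of $\D$-bimodule equivalences $M_{g,h}\colon\C_g\boxtimes_\D\C_h\xrightarrow{\sim}\C_{gh}$ for which each functor $T_{f,g,h}$ of \eqref{Tfgh} is isomorphic, as a $\D$-bimodule functor, to the identity; and two such collections give isomorphic tensor products precisely when they are related by conjugation by a $G$-graded $\D$-bimodule autoequivalence $\Phi=(\Phi_g)_{g\in G}$ of $\C$, i.e. a collection with $\Phi_g\in Z_g=\Aut_\D(\C_g)$. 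So the task is to show that these data, up to that notion of equivalence, form an $H^2(G,Z)$-torsor.

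First I would exhibit a base point and parametrize. Since $O_3(c)=0$, the proof of Theorem~\ref{tensor product obstruction} produces a collection $M^0$ with $\bar T^{M^0}=1$ in $Z^3(G,Z)$, which by that theorem is a $\D$-bimodule tensor product on $\C$; hence the set in question is nonempty. For an arbitrary collection $M$ of $\D$-bimodule equivalences we may, using $\C_g\boxtimes_\D\C_h\cong\C_{gh}$ (Theorem~\ref{component invertibility}), write uniquely $M_{g,h}=L_{g,h}\circ M^0_{g,h}$ with $L_{g,h}\in Z_{gh}$, and then $\bar L_{g,h}:=i_{gh,1}^{-1}(L_{g,h})\in Z$ defines a $2$-cochain $\bar L\in C^2(G,Z)$; this sets up a bijection $M\leftrightarrow\bar L$ between all such collections (taken up to componentwise $\D$-bimodule equivalence) and $C^2(G,Z)$, with $M^0\leftrightarrow 0$. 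Here $Z$ carries the $G$-action $\rho$ of \eqref{pig}, which by Proposition~\ref{action independent} depends only on $c$.

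Now the cochain calculus finishes the argument. By the formula for $\bar T'$ in terms of $\bar T$ obtained just before Theorem~\ref{tensor product obstruction}, $\bar T^M$ differs from $\bar T^{M^0}=1$ exactly by the coboundary of $\bar L$; hence $M$ defines a tensor product (equivalently, all $T_{f,g,h}\cong{\rm Id}$, i.e. $\bar T^M=1$) precisely when $\partial\bar L=0$, i.e. $\bar L\in Z^2(G,Z)$. Thus, up to componentwise equivalence, the tensor products on $\C$ form a torsor over $Z^2(G,Z)$ with base point $M^0$. Conjugating $M$ by a collection $\Phi=(\Phi_g)$ replaces $M_{g,h}$ by $\Phi_{gh}\circ M_{g,h}\circ(\Phi_g\boxtimes_\D\Phi_h)^{-1}$; transporting this along the $i$- and $j$-isomorphisms and using abelianness of $Z$ together with \eqref{truth about actions}, \eqref{i,j commute} and \eqref{pig}, exactly as in the $\bar T'$ computation, one finds that it shifts $\bar L$ by the coboundary $\partial\bar\Phi$ of the $1$-cochain $\bar\Phi\colon g\mapsto i_{g,1}^{-1}(\Phi_g)$, and that every $1$-cochain arises this way. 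Hence the $\Phi$'s act on the $Z^2(G,Z)$-torsor of tensor products through the subgroup $B^2(G,Z)$, so the set of isomorphism classes of $\D$-bimodule tensor products on $\C$ is a torsor over $Z^2(G,Z)/B^2(G,Z)=H^2(G,Z)$, as asserted.

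The delicate point is the last one: one must pin down exactly the notion of ``isomorphic $\D$-bimodule tensor products'' for a $G$-graded category and check that it corresponds, on the nose, to modification of $\bar L$ by the coboundary of an arbitrary $1$-cochain --- in particular that the collections $(\Phi_g)$, including the behaviour of the trivial-component piece $\Phi_e$ and the relevant normalization conventions for cochains, sweep out precisely $B^2(G,Z)$ and no more. Everything preceding that is a straightforward repackaging of the abelian cochain bookkeeping already carried out for the obstruction $O_3(c)$, combined with the nonemptiness inputs supplied by Theorems~\ref{component invertibility} and~\ref{tensor product obstruction}.
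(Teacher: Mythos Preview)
Your approach is essentially the same as the paper's: fix a base system of products $M^0$ (which exists since $O_3(c)=0$), parametrize all others by $\bar L\in C^2(G,Z)$, and invoke the $\bar T'=\bar T\cdot(\text{coboundary of }\bar L)$ computation from Section~\ref{obsten} to see that the tensor-product condition is $\bar L\in Z^2(G,Z)$. The paper then disposes of the coboundary step in a single sentence (``Clearly, two $2$-cocycles define isomorphic tensor products if and only if they are cohomologous''), whereas you spell out that isomorphism means conjugation by a family $(\Phi_g)\in\prod_g Z_g$ and verify via the $i,j$-calculus that this shifts $\bar L$ exactly by $\partial\bar\Phi$; your computation is correct (indeed $i_{gh,1}^{-1}j_{g,h}i_{h,1}=\rho_g$ follows from \eqref{truth about actions}, \eqref{i,j commute}). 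So your argument is the paper's argument with the ``clearly'' unpacked, and your flagged delicate point is precisely the place the paper leaves to the reader.
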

\begin{proof}
Choose $\D$-bimodule equivalences 
$M_{g,h}: \C_g \boxtimes_\D \C_h \cong \C_{gh}$ for all $g,h \in G$ 
and natural isomorphisms of $\D$-bimodule functors 
\begin{equation}
\label{eta}
\alpha_{f,g,h}: 
M_{fg,h} (M_{f,g} \boxtimes_\D {\rm Id}_{\C_h}) \xrightarrow{\sim}
M_{f, gh} ( {\rm Id}_{\C_f} \boxtimes_\D M_{g,h}) 
\end{equation}
which give rise to a natural isomorphism   
\begin{equation}
\label{big eta}
\alpha: \otimes\circ (\otimes \times {\rm Id}_{\C}) \xrightarrow{\sim}  \otimes\circ ({\rm Id}_{\C} \times \otimes).
\end{equation}
The computations done in the previous  subsection show that 
replacing each $M_{g,h}$ by $L_{g,h}\circ M_{g,h}$ where 
$L_{g,h} \in Z_{gh}$ makes the two sides of \eqref{eta}  differ by 
\begin{equation}
\bar{L}_{fg,h} \,\bar{L}_{f,g}\, 
\rho_f(\bar{L}_{g,h}^{-1})\, \bar{L}_{f, gh}^{-1} \in Z.
\end{equation} 

Thus, substituting  $L_{g,h}\circ M_{g,h}$ for $M_{g,h}$
does not affect the existence
of an isomorphism $\alpha$ as in \eqref{eta} if and only if 
$\bar{L}\in Z^2(G, Z)$ is a $2$-cocycle.
Clearly, two $2$-cocycles define isomorphic tensor products
if and only if they are cohomologous.
\end{proof}

Thus, in the case when $O_3(c)$ is cohomologically trivial, one defines a tensor product
on $\C =\oplus_{g\in G}\, \C_g$ as follows. Choose $\D$-bimodule equivalences
$$
M_{g,h}: \C_g \boxtimes_\D \C_h \cong \C_{gh}
$$ 
and natural isomorphisms $\alpha_{f,g,h}$ as in \eqref{eta}. Then each $M_{g,h}$ gives rise
to a product $\otimes_{g,h} : \C_g \boxtimes \C_h \to \C_{gh}$ and $\alpha_{f,g,h}$
gives rise to an isomorphism 
$$
\otimes_{f,gh} \circ ({\rm Id}_{\C_f} \boxtimes \otimes_{g,h})
\cong \otimes_{fg,h}\circ (\otimes_{f,g} \boxtimes {\rm Id}_{\C_h}).
$$ 

\subsection{Obstruction to the existence of an associativity constraint}

We continue to assume that the tensor product obstruction $O_3(c)$ vanishes,
i.e., that $c$ gives rise to a $\D$-bimodule quasi-tensor product
on $\C =\oplus_{g\in G}\, \C_g$. 

Let us determine when this quasi-tensor product is in fact a tensor product, 
i.e., when it admits an  associativity constraint satisfying the pentagon equation. 
Choose a collection of $\D$-bimodule category equivalences
\[
M =\{ M_{g,h}: \C_g \boxtimes_\D \C_h \cong \C_{gh} \}
\]
and  natural isomorphisms  $\alpha_{f,g,h}$ as in  \eqref{eta}.

\begin{definition}
\label{system of products M}
We will call $M =\{ M_{g,h}: \C_g \boxtimes_\D \C_h \cong \C_{gh} \}_{g,h\in G}$ 
a {\em system of products}.
\end{definition}

By Theorem~\ref{products correspond to H2}, $M$ is an element of an $H^2(G, Z)$-torsor.

Note that each $\alpha_{f,g,h}$ is determined up to
an automorphism of a simple object in $\Z(\C_e)$, i.e. up to a nonzero scalar.

For all $f,g,h,k\in G$ let us consider the following cube
whose vertices are $\D$-bimodule categories, 
edges are $\D$-bimodule equivalences $M_{a,b}$, and faces are natural 
isomorphisms $\alpha_{a,b,c},\, a,b,c\in G$, see \eqref{eta} 
(to keep the diagram readable, only the faces are labeled):
\begin{equation}
\label{polytope}
\xymatrix{
\C_f \bt_\D \C_g \bt_\D \C_h \bt_\D \C_k \ar[r] \ar[dd]^{}="a" \ar[drr]^{}="b"
& \C_{fg} \bt_\D \C_{h} \bt_\D \C_k \ar[drr]  \ar@{-->}[dd]_{}="c"  & & \\
& & \C_f \bt_\D \C_g \bt_\D \C_{hk} \ar[dd]^{}="h" \ar[r]^{}="g" & 
\C_{fg} \bt_\D \C_{hk} \ar[dd]^{}="f" \\
\C_f \bt_\D \C_{gh} \bt_\D \C_k \ar@{-->}[r]^{}="d"  \ar[drr]_{}="k" & 
\C_{fgh} \bt_\D \C_k \ar@{-->}[drr]_{}="l"^{}="e" & & \\
& & \C_f \bt_\D \C_{ghk} \ar[r] &  \C_{fghk}.
\ar@2{->}^{\alpha_{g,h,k}}"a";"b"
\ar@2{-->}_{\alpha_{f,g,h}}"c";"d"
\ar@2{-->}_{\alpha_{fg,h,k}}"e";"f"
\ar@2{-->}^{\alpha_{f,gh, k}}"k";"l"
\ar@2{->}^{\alpha_{f,g,hk}}"g";"h"
}
\end{equation}
%
The composition of the natural transformations corresponding to faces
of this cube is a $\D$-bimodule automorphism of the functor 
$$
M_{fgh,k}\circ (M_{fg,h} \boxtimes_\D {\rm Id}_{\C_k})\circ 
(M_{f,g} \boxtimes_\D {\rm Id}_{\C_h} \boxtimes_\D {\rm Id}_{\C_k}) 
$$
i.e., the scalar
\begin{multline}
\label{nu=1}
\nu_{f,g,h,k}:=
(M_{f,g} \boxtimes_\D {\rm Id}_{\C_h} \boxtimes_\D {\rm Id}_{\C_k}) \alpha_{fg,h,k}^{-1}
\circ ({\rm Id}_{\C_f} \boxtimes_\D {\rm Id}_{\C_g}  \boxtimes_\D M_{h,k}) \alpha_{f,g,hk}^{-1} \\
\circ M_{f,ghk} (\id_f \boxtimes_\D \alpha_{g,h,k}) 
\circ \alpha_{f,gh,k} ({\rm Id}_{\C_f} \boxtimes_\D M_{g,h} 
\boxtimes_\D {\rm Id}_{\C_k}) \circ M_{fgh,k}(\alpha_{f,g,h} \boxtimes_\D \id_k)
\end{multline}
The commutativity of the cube \eqref{polytope} 
is equivalent to the existence of an associativity constraint for $\C$
satisfying the pentagon axiom.  The cube
commutes if and only if $\{\alpha_{f,g,h}\}_{ f,g,h\in G}$ can be chosen 
in such a way that $\nu_{f,g,h,k}= 1$.
See \cite{KV} regarding the notions of composition of faces and commutativity of a polytope.

It is easy to check that the above $\nu$ is a $4$-cocycle on $G$ with values in ${\bold k}^\times$.
Let us determine how $\nu$ changes when we change the choice of $\alpha$. Let 
\begin{equation}
\label{eta prime}
\alpha'_{f,g,h} =  \alpha_{f,g,h} \lambda_{f,g,h}, \qquad \mbox{where}\qquad \lambda_{f,g,h} \in {\bold k}^\times.
\end{equation}
Then the corresponding scalar is
\begin{equation}
\label{nu prime}
\nu'_{f,g,h,k}= \nu_{f,g,h,k} \lambda_{f,g,hk}^{-1} \lambda_{fg,h,k}^{-1} \lambda_{g,h,k}
\lambda_{f,gh,k} \lambda_{f,g,h}
\end{equation}
Therefore, there is a canonical element in $H^4(G, {\bold k}^\times)$ (the class of $\nu$) which depends only on $c$
and the choice of $M$. 

\begin{definition}
We will call this element the {\em associativity constraint obstruction class} and denote
it $O_4(c, M)$.
\end{definition}

\begin{theorem}
\label{associativity obstruction}
Suppose that a homomorphism $c: G \to \Pic(\D)$ is such that $\C =\oplus_{g\in G}\, \C_g$ 
(with $\C_e = \D$) admits a $\D$-bimodule quasi-tensor product via a choice of 
a system of products $M$. Then this product
admits an associativity constraint satisfying the pentagon equation if and only if
$O_4(c, M)$ is the trivial element of $H^4(G, {\bold k}^\times)$.
\end{theorem}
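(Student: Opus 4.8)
Looking at this statement, I need to prove that once we have a quasi-tensor product on $\C = \oplus_g \C_g$ via a system of products $M$, it admits an associativity constraint satisfying the pentagon if and only if $O_4(c,M)$ vanishes.

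Let me think about what's already set up. We have the cube diagram \eqref{polytope}, the cocycle $\nu_{f,g,h,k}$ defined in \eqref{nu=1}, and the transformation rule \eqref{nu prime} showing that modifying $\alpha$ by $\lambda_{f,g,h}$ changes $\nu$ by the coboundary $d\lambda$. So $[\nu] = O_4(c,M) \in H^4(G,\mathbf{k}^\times)$ is well-defined. The task is essentially to verify: (a) $\nu$ is genuinely a 4-cocycle, and (b) $\nu$ is trivializable (i.e., equals a coboundary) iff there's a choice of $\alpha$ making all $\nu_{f,g,h,k}=1$, and that such a choice is exactly what gives a pentagon-satisfying associativity constraint.

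The proof plan:

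First I would recall that the $\D$-bimodule quasi-tensor product structure on $\C$ is given by the system of products $M = \{M_{g,h}\}$ together with the natural isomorphisms $\alpha_{f,g,h}$ of \eqref{eta} (which exist since $O_3(c)=0$ and a system of products has been chosen). An associativity constraint for $\otimes$ on $\C$ satisfying the pentagon is, by the discussion around \eqref{pentagon for eta}, precisely a collection of the $\alpha_{f,g,h}$ for which the pentagon identity \eqref{pentagon for eta} holds. Since each $\alpha_{f,g,h}$ is a $\D$-bimodule automorphism of an indecomposable object (the left-hand functor of \eqref{eta} between invertible bimodule categories), it is determined up to a scalar in $\mathbf{k}^\times$; the scalar $\nu_{f,g,h,k}$ of \eqref{nu=1} measures exactly the failure of the pentagon. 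Hence the pentagon holds for some choice of the $\alpha$'s iff the scalars can be arranged so that $\nu_{f,g,h,k}\equiv 1$.

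Next I would establish that $\nu$ is a 4-cocycle. The cleanest route is the standard ``coherence of pentagons'' argument: assemble the $3$-dimensional associahedron (pentagon) faces $\nu_{f,g,h,k}$ over a $5$-fold tensor product $\C_f \bt_\D \C_g \bt_\D \C_h \bt_\D \C_i \bt_\D \C_j$, and observe that the composite of all such faces around the associahedron of dimension 3 (the $3$-dimensional Stasheff polytope $K_5$) is the identity because each vertex is an iterated $M$ applied in a fixed way and each edge ($\alpha$) appears exactly twice with opposite orientations. This composite, read off, is exactly the coboundary relation $(d\nu)_{f,g,h,i,j} = 1$. Concretely, this amounts to the classical fact that the obstruction to coherence of a monoidal-like structure with given underlying tensor product lives in $H^4$; I can cite or mimic the argument of \cite{KV} for commutativity of polytopes, or reduce to the Eilenberg--MacLane type computation. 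The transformation law \eqref{nu prime} then shows the class $O_4(c,M)=[\nu]$ depends only on $c$ and $M$.

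Finally, I would prove the equivalence. If $O_4(c,M)=0$, then $\nu = d\lambda$ for some $\lambda_{f,g,h}\in\mathbf{k}^\times$; replacing each $\alpha_{f,g,h}$ by $\alpha'_{f,g,h} = \alpha_{f,g,h}\lambda_{f,g,h}^{-1}$, formula \eqref{nu prime} gives $\nu' \equiv 1$, so the cube \eqref{polytope} commutes for every $f,g,h,k$, which is precisely the pentagon axiom \eqref{pentagon for eta} for the resulting associativity constraint; thus $\C$ is a genuine $G$-graded fusion category. Conversely, if the quasi-tensor product admits a pentagon-satisfying associativity constraint, that constraint, restricted to graded components, provides isomorphisms $\alpha_{f,g,h}$ as in \eqref{eta} (they are $\D$-bimodule isomorphisms because the tensor product and its associator on $\C$ are compatible with the $\C_e=\D$-bimodule structure on each $\C_g$, cf.\ Theorem~\ref{component invertibility}) satisfying $\nu_{f,g,h,k}\equiv 1$; hence $\nu$, and therefore $O_4(c,M)$, is trivial. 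This completes the proof.

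The main obstacle I anticipate is the verification that $\nu$ is a $4$-cocycle: one must carefully track the orientations and multiplicities of the $\alpha$-faces around the $3$-dimensional associahedron $K_5$, and confirm that the ``composite of faces'' in the sense of Kapranov--Voevodsky \cite{KV} yields exactly $d\nu = 1$ rather than some twisted version. This is a routine but genuinely bookkeeping-intensive diagram chase; everything else (the scalar rigidity of $\alpha$ from indecomposability, the torsor structure from \eqref{nu prime}, and the translation between ``cube commutes'' and ``pentagon holds'') follows directly from what has already been set up.
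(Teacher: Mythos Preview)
Your proposal is correct and follows the same approach as the paper, which simply says ``Clear from the discussion above'' --- that discussion having already established that the pentagon identity \eqref{pentagon for eta} is equivalent to the commutativity of the cube \eqref{polytope}, i.e.\ to $\nu\equiv 1$, and that modifying $\alpha$ by a cochain $\lambda$ changes $\nu$ by the coboundary $d\lambda$ via \eqref{nu prime}. You spell out in more detail the cocycle verification (via the $K_5$ associahedron) that the paper asserts as ``easy to check,'' but the logical structure is identical.
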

\begin{proof}
Clear from the discussion above.
\end{proof}

Let $\C =\oplus_{g\in G}\, \C_g$ be a $G$-extension and let
$\alpha_{X,Y,Z}$ be the associativity constraint for the tensor product of $\C$, 
where $X,Y,Z$ are objects in $\C$.
Given a $3$-cocycle $\omega\in Z^3(G,\, {\bold k}^\times)$ one can define
a new associator 
\[
\alpha^\omega_{X ,Y,Z} := \omega(f,g,h) \alpha_{X,Y,Z}
\]
for all $X\in \C_f,\, Y\in\C_g,\, Z\in \C_h$. 

Let $\alpha'_{X,Y,Z}$ be another associativity constraint for the tensor product of $\C$.
We will say that $\alpha'$ is {\em equivalent} to $\alpha$ if $\alpha' =\alpha^\omega$
for some coboundary $\omega$. Clearly, equivalent associators determine
equivalent tensor categories.

\begin{theorem}
\label{associativities correspond to H3}
Suppose that the obstruction classes $O_3(c)$ and  $O_4(c, M)$ vanish. 
Then the equivalence classes of associativity constraints
for the tensor product of $\C =\oplus_{g\in G}\, \C_g$ 
coming from the system of products $M$ form  
a torsor $T_c^2$ over $H^3(G,{\bold k}^\times)$.
\end{theorem}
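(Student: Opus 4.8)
The plan is to re-run the analysis of the cube~\eqref{polytope} that produced the class $O_4(c,M)$, but this time keeping track of \emph{all} admissible choices of the $2$-isomorphisms $\{\alpha_{f,g,h}\}$ rather than merely asking whether one exists. First I would check that the torsor is nonempty: fix the system of products $M$; since $O_3(c)=0$ it makes $\C=\oplus_{g\in G}\,\C_g$ into a $\D$-bimodule quasi-tensor category, and since $O_4(c,M)=0$, Theorem~\ref{associativity obstruction} yields a family $\{\alpha_{f,g,h}\}$ of $\D$-bimodule natural isomorphisms as in~\eqref{eta} with $\nu_{f,g,h,k}=1$ for all $f,g,h,k\in G$, i.e.\ an associativity constraint for the given tensor product satisfying the pentagon equation. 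Fix one such, call it $\alpha$, with homogeneous components $\alpha_{f,g,h}$.

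Next I would parametrize the remaining ones. Any associativity constraint $\alpha'$ for the same tensor product restricts, on homogeneous components, to $\D$-bimodule natural isomorphisms $\alpha'_{f,g,h}$ of the very same pair of $\D$-bimodule functors $\C_f\boxtimes_\D\C_g\boxtimes_\D\C_h\to\C_{fgh}$ occurring in~\eqref{eta}, and conversely every such family assembles into an associativity constraint for the given tensor product. By the remark preceding the theorem, $\alpha_{f,g,h}$ is determined up to a nonzero scalar, so there is a unique $3$-cochain $\lambda=(\lambda_{f,g,h})\in C^3(G,{\bold k}^\times)$ with $\alpha'_{f,g,h}=\alpha_{f,g,h}\lambda_{f,g,h}$; that is, $\alpha'=\alpha^\lambda$ in the notation introduced before the theorem, and this identifies the set of all associativity constraints with a torsor over $C^3(G,{\bold k}^\times)$. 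By~\eqref{nu prime}, using $\nu_{f,g,h,k}=1$, the obstruction scalar of $\alpha^\lambda$ is $\nu'_{f,g,h,k}=(d\lambda)(f,g,h,k)$, so $\alpha^\lambda$ satisfies the pentagon equation precisely when $\lambda\in Z^3(G,{\bold k}^\times)$. Hence the pentagon-satisfying associators attached to $M$ form a torsor over $Z^3(G,{\bold k}^\times)$.

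Finally I would pass to equivalence classes. By definition $\alpha^\omega$ is equivalent to $\alpha$ exactly when $\omega$ is a coboundary, and $(\alpha^\omega)^{\omega'}=\alpha^{\omega\omega'}$, so the $Z^3(G,{\bold k}^\times)$-action on the set of pentagon-satisfying associators descends to a free and transitive action of $H^3(G,{\bold k}^\times)=Z^3/B^3$ on their equivalence classes; this is the asserted torsor $T_c^2$.

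The one genuinely delicate point is the assertion, used above, that a general associativity constraint differs from the fixed one by a single scalar in each degree $(f,g,h)$, with no further object-dependent freedom. This is precisely the remark cited before the theorem, which in turn rests on the indecomposability of the invertible $\D$-bimodule categories $\C_g$ (Corollary~\ref{inv --> indec}) together with the identification of the $\D$-bimodule natural automorphisms of a bimodule equivalence between invertible bimodule categories with ${\bold k}^\times$. Everything else is bookkeeping with the cochain identity~\eqref{nu prime}.
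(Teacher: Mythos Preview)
Your proposal is correct and follows essentially the same route as the paper's own proof: fix one associator (using the vanishing of $O_4(c,M)$), observe that any other differs from it by a $3$-cochain $\lambda$ via~\eqref{eta prime}, use~\eqref{nu prime} to see that the pentagon holds iff $\lambda$ is a $3$-cocycle, and then invoke the definition of equivalence to descend to $H^3(G,{\bold k}^\times)$. Your write-up is slightly more explicit about nonemptiness and about the justification that $\alpha_{f,g,h}$ is unique up to a scalar, but the argument is the same.
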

\begin{proof}
The proof is similar to the proof of Theorem~\ref{products correspond to H2}.
We need to establish a bijection between the choices of 
$\alpha =\{ \alpha_{f,g,h} \}_{f,g,h\in G}$ leading to associativity constraints on $\C$
and elements of $H^3(G,{\bold k}^\times)$. If one such $\alpha$ is chosen, any other choice
has a form \eqref{eta prime}. From equation \eqref{nu prime} we see that the corresponding 
coboundary $\nu'_{f,g,h,k}$ is equal to 1 precisely when
\begin{equation}
\label{n is a 3 cocycle}
\lambda_{f,g,h} \lambda_{f,gh,k} \lambda_{g,h,k} = \lambda_{fg,h,k} \lambda_{f,g,hk},\qquad
f,g,h,k\in G,
\end{equation}
i.e., when $\lambda$ is a $3$-cocycle. Moreover, two $3$-cocycles are cohomologous if and 
only if the corresponding associators on $\C$ are  equivalent.
\end{proof}

\subsection{The associativity constraint and the Pontryagin-White\-head quadratic function}

Let $\E$ be a pointed braided fusion category, and let 
$A$ be the group of isomorphism classes of invertible objects 
of $\E$. Let $G$ be a finite group acting on $\E$ by braided
autoequivalences. In this situation we can define a quadratic
map $PW: H^2(G,A)\to H^4(G,{\bold k}^\times)$, which we call
{\it the Pontryagin-Whitehead quadratic function}, as follows. 

Let 
\[
L : G \times G \to A : (f,g)\mapsto L_{f,g}
\]
be a 2-cocycle of $G$ with coefficients in $A$, i.e. 
a collection of simple objects of $\E$, such that there exist isomorphisms 
\begin{equation}
\label{Pasha zeta}
\zeta_{f,g,h}: L_{fg,h}\otimes L_{f,g}\cong L_{f,gh}\otimes f(L_{g,h}), \qquad f,g,h\in G.
\end{equation}
Then we can consider the automorphism of $L_{fgh,k}\otimes L_{fg,h}\otimes L_{f,g}$
(identified with a  scalar) $\nu_{f,g,h,k}$, given by the composition
$$
L_{fgh,k}\otimes L_{fg,h}\otimes L_{f,g}\to
L_{fgh,k}\otimes L_{f,gh}\otimes f(L_{g,h})\to
L_{f,ghk}\otimes f(L_{gh,k})\otimes f(L_{g,h})
$$
$$
\to L_{f,ghk}\otimes f(L_{g,hk})\otimes fg(L_{h,k})\to
L_{fg,hk}\otimes L_{f,g}\otimes fg(L_{h,k})
$$
$$
\to L_{fg,hk}\otimes fg(L_{h,k})\otimes L_{f,g}\to
L_{fgh,k}\otimes L_{fg,h}\otimes L_{f,g},
$$
where we suppress the associativity isomorphisms, 
and all the maps except the fifth map are given by the isomorphisms $\zeta_{x,y,z}$ 
from \eqref{Pasha zeta}
for appropriate $x,y,z$, while the fifth map is given by the braiding 
acting on $L_{f,g}\otimes fg(L_{h,k})$. 

\begin{proposition}\label{threeprope}
\begin{enumerate}
\item[(i)] $\nu$ is a 4-cocycle of $G$ with coefficients in ${\bold k^\times}$,
\item[(ii)] if $\zeta$ is changed by a cochain $\xi_{x,y,z}$, then $\nu$ is multiplied by $d\xi$
(so the cohomology class of $\nu$ does not change),
\item[(iii)] if $L_{f,g}$ is changed by a coboundary, i.e. replaced by 
\[
L_{f,g}'=X_{fg}\otimes L_{f,g}\otimes f(X_g^{-1})\otimes X_f^{-1},
\] 
where $(X_f)$ is a collection of simple objects, then $\nu$ is changed by a coboundary. 
\end{enumerate}
\end{proposition}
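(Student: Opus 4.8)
The statement is a package of three standard "cocycle-bookkeeping" claims about the scalar $\nu_{f,g,h,k}$ defined via a hexagonal composition of the $\zeta$'s and one braiding move. My strategy is to prove (i), (ii), (iii) in that order, treating each as a purely formal manipulation of isomorphisms in the pointed braided category $\E$, where "pointed" means every $L$-object is invertible so that all the relevant $\Hom$-spaces are $1$-dimensional and every map is a scalar times a canonical identification. The only subtlety that distinguishes this from the ordinary Eilenberg--MacLane computation is the presence of the braiding in the fifth arrow; so the heart of the argument is a careful audit of where braiding morphisms occur and how they recombine.

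For part (i), I would check the $4$-cocycle identity $d\nu = 1$ directly. The clean way to do this is to recognize $\nu$ as a "defect" measuring the failure of the two pentagon-type composites built from $\zeta$ to agree, modified by the braiding. Concretely: the $\zeta_{f,g,h}$ are (the components of) a would-be associator for a $G$-graded extension of $\E$, so $\nu$ is literally the $O_4$-type obstruction cocycle $\nu_{f,g,h,k}$ of \eqref{nu=1} in the special pointed/braided situation, where the ambient category is $\E \rtimes G$ (the $G$-crossed extension determined by $L$). Since that $\nu$ was already shown to be a $4$-cocycle in the discussion following \eqref{nu=1} (via the cube \eqref{polytope} and the fact that any $5$-fold composite of coherence data is well-defined in a monoidal $2$-category), part (i) follows by specialization — I would just spell out that the braiding contribution in the fifth arrow does not disturb the cocycle identity because the braiding is itself a coherent natural transformation. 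Alternatively, one does the $5$-term alternating-composite check on $G^5$ by hand, drawing the associahedron and inserting the braiding move in each face; this is routine but tedious, so I would only sketch it.

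For parts (ii) and (iii) the mechanism is the familiar one: changing $\zeta$ by a cochain $\xi: G^3 \to \kk^\times$ replaces each occurrence of $\zeta_{x,y,z}$ in the hexagon by $\xi_{x,y,z}\,\zeta_{x,y,z}$, and reading off the four $\zeta$-arrows in the definition of $\nu$ (for the quadruples $(fg,h,k)$, $(f,gh,k)$, $(g,h,k)$, $(f,g,hk)$ — note the fifth arrow is the braiding and carries no $\xi$) shows that $\nu$ gets multiplied by exactly $\xi_{fg,h,k}^{-1}\,\xi_{f,gh,k}^{-1}\,\xi_{g,h,k}\,\xi_{f,g,hk}\cdot(\text{the remaining term})$, which I will identify with $(d\xi)_{f,g,h,k}$ up to the sign conventions in the paper. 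This is the same computation as \eqref{nu prime}, so I can cite that. For (iii), replacing $L_{f,g}$ by $L'_{f,g} = X_{fg}\otimes L_{f,g}\otimes f(X_g)^{-1}\otimes X_f^{-1}$ forces a corresponding change of $\zeta$ to $\zeta'$ built from $\zeta$ together with the canonical associativity/unit isomorphisms and the braidings needed to reorder the $X$'s past the $L$'s; substituting into the hexagon, all the $\zeta$-parts recombine to the old $\nu$, and the $X$-bookkeeping parts — including braiding moves of the form $c_{X_?,\,?}$ — assemble into a coboundary. The combinatorics here are the Eilenberg--MacLane "normalization" manipulation; the braiding terms that appear are squared-braiding expressions $c_{Y,Z}c_{Z,Y}$ which are themselves of coboundary type (indeed they feed into the quadratic nature of $PW$), so they contribute a coboundary as well.

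\textbf{Main obstacle.} The genuine difficulty is purely notational/combinatorial: keeping track of exactly which braiding morphisms appear after one reorders tensor factors in (iii), and verifying that the braiding-dependent part of $\nu'/\nu$ is a coboundary rather than merely a cocycle. I would handle this by introducing a normal-form convention for writing objects of $\E$ (all $X$-factors pushed to the left, in a fixed order) and computing the transition scalars once and for all; then the change-of-$L$ computation becomes a bounded sequence of substitutions. I do not expect any conceptual obstruction — every step is forced — but I would flag that the sign/inversion conventions must be matched to those implicit in \eqref{nu=1}–\eqref{nu prime}, and I would state explicitly that I am using the same conventions so the reader can cross-check against the cube \eqref{polytope}.
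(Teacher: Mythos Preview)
Your proposal is correct and matches the paper's approach, which simply reads ``Straightforward verification'' --- you are just spelling out what that verification entails. One small slip: the hexagon defining $\nu$ has \emph{six} arrows, of which \emph{five} are $\zeta$-type (namely $\zeta_{f,g,h}$, $\zeta_{f,gh,k}$, $f(\zeta_{g,h,k})$, $\zeta_{f,g,hk}^{-1}$, $\zeta_{fg,h,k}^{-1}$) and one is the braiding; you listed only four $\zeta$-arrows and omitted $(f,g,h)$, though your ``remaining term'' seems to acknowledge something is missing. Once that is corrected, the five $\xi$-factors assemble exactly into $(d\xi)_{f,g,h,k}$ as claimed, and the rest of your outline goes through.
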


\begin{proof}
Straightforward verification. 
\end{proof} 

\begin{definition} The map $PW: H^2(G,A)\to H^4(G,{\bold k}^\times)$ is defined by 
\begin{equation}
\label{PW}
PW(L)=\nu.
\end{equation}
\end{definition} 

Note that if the action of $G$ on $\E$ is altered by an element 
$\theta\in H^2(G,A^*)$ (by changing the isomorphisms $\eta_{g,h}: F_g \circ F_h \xrightarrow{\sim} F_{gh}$), 
then the map $PW$ is modified  according to the rule
$$
PW'(L)=PW(L)(L,\theta),
$$
where $(\, , \,):  H^2(G,\, A)\times H^2(G,A^*) \to H^4(G,\, {\bold k}^\times)$ is the evaluation map
combined with the cup product in the cohomology of $G$.

Let $q$ be the quadratic form on $A$ defined by the braiding on $\E$
($q(Z)=c_{ZZ}$), and $b_q$ be the corresponding symmetric bilinear form
($b_q(Y,Z)=c_{YZ}c_{ZY}$). The following proposition shows that $PW$ is indeed a quadratic function. 

\begin{proposition}\label{quadra}
$$
PW(L_1L_2)=PW(L_1)PW(L_2)b_q(L_1,L_2).
$$
\end{proposition}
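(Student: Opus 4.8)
The plan is to prove that \eqref{PW} is quadratic by a direct cochain-level computation: build an explicit choice of auxiliary isomorphisms for the product cocycle $L_1L_2$ out of the ones for $L_1$ and $L_2$, substitute into the six-term composition defining $\nu$, and bookkeep the braiding scalars that survive.

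First I would fix $2$-cocycles $L_1,L_2\in Z^2(G,A)$ together with isomorphisms $\zeta^1_{f,g,h}$, $\zeta^2_{f,g,h}$ as in \eqref{Pasha zeta}, and set $L_{f,g}:=L^1_{f,g}\ot L^2_{f,g}$, which is a $2$-cocycle representing $L_1L_2$. There is a natural choice of isomorphisms $\zeta_{f,g,h}$ for $L$: starting from $L_{fg,h}\ot L_{f,g}=L^1_{fg,h}\ot L^2_{fg,h}\ot L^1_{f,g}\ot L^2_{f,g}$, braid $L^2_{fg,h}$ past $L^1_{f,g}$ to reach $L^1_{fg,h}\ot L^1_{f,g}\ot L^2_{fg,h}\ot L^2_{f,g}$, apply $\zeta^1_{f,g,h}\ot\zeta^2_{f,g,h}$ to land in $L^1_{f,gh}\ot f(L^1_{g,h})\ot L^2_{f,gh}\ot f(L^2_{g,h})$, and then braid $f(L^1_{g,h})$ past $L^2_{f,gh}$ to reach $L_{f,gh}\ot f(L_{g,h})$. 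Here I use that $G$ acts by braided autoequivalences, so each $f$ carries braidings to braidings and is compatible with the factorization $f(L_{g,h})=f(L^1_{g,h})\ot f(L^2_{g,h})$.

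Next I would substitute this $\zeta$ into the composition defining $\nu_{f,g,h,k}$ and expand. Since $\E$ is pointed, every braiding between invertible objects is a scalar and $(Y,Z)\mapsto c_{Y,Z}$ is biadditive; in particular the genuine double-strand crossing in $\nu$ (the ``fifth map'', the braiding on $L_{f,g}\ot fg(L_{h,k})$) factors into four scalars by the hexagon axioms, two of which reassemble the fifth maps of $\nu^{L_1}$ and $\nu^{L_2}$ while the other two are the cross terms $c_{L^1_{f,g},\,fg(L^2_{h,k})}\,c_{L^2_{f,g},\,fg(L^1_{h,k})}$. Collecting these together with the reordering braidings contributed by the five $\zeta$-maps of the loop (each $\zeta$ contributes two, with inverses where $\zeta$ enters inverted), I obtain
\[
\nu^{L_1L_2}_{f,g,h,k}=\nu^{L_1}_{f,g,h,k}\;\nu^{L_2}_{f,g,h,k}\;\eps_{f,g,h,k},
\]
where $\eps$ is a $4$-cochain built entirely from braiding scalars $c_{L^i_{\bullet},\,L^j_{\bullet}}$ with $i\ne j$ (and their $G$-translates). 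Passing to cohomology classes and invoking Proposition~\ref{threeprope}(ii) to remove the dependence on the chosen $\zeta$'s, the proposition reduces to the claim that $[\eps]=b_q(L_1,L_2)$ in $H^4(G,{\bold k}^\times)$.

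The last point is where the real work lies, and it is the main obstacle. I would show that $\eps$ is cohomologous to the cup-product representative $(f,g,h,k)\mapsto b_q\big(L^1_{f,g},\,(fg)(L^2_{h,k})\big)$, which by the conventions fixed just before the proposition represents $b_q(L_1,L_2)$. The subtlety is that $\eps$ a priori records the full, possibly non-symmetric, braiding bicharacter rather than its symmetrization $b_q$; one must exhibit the antisymmetric contribution as an explicit coboundary $d\xi$ for a $3$-cochain $\xi$ assembled from scalars $c_{L^1_{\bullet},\,L^2_{\bullet}}$. This is forced by the hexagon axioms together with the cocycle relations $dL_1=dL_2=0$: the hexagons allow one to trade $c_{X\ot Y,\,Z}$ for $c_{X,Z}\,c_{Y,Z}$ (and symmetrically), so the reordering braidings telescope around the loop and the only non-coboundary residue is the symmetric combination $c_{X,Z}\,c_{Z,X}=b_q(X,Z)$ sitting at the spot where $\nu$ genuinely braids two strands past each other. (Conceptually this is the same mechanism as in Proposition~\ref{whit}: the braiding enters $\nu$ quadratically, so $PW$ refines $b_q$ exactly as a Whitehead half-square refines a Whitehead bracket.) Combining this identification with the displayed factorization of $\nu^{L_1L_2}$ gives $PW(L_1L_2)=PW(L_1)\,PW(L_2)\,b_q(L_1,L_2)$.
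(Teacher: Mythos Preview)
Your approach is correct and is essentially the same as the paper's: the paper's proof is the single sentence ``This is verified by a direct computation from the definition, by using the hexagon relations for the braiding,'' and you have spelled out exactly that computation---choosing $\zeta$ for $L_1L_2$ by interleaving $\zeta^1$ and $\zeta^2$ with braidings, then tracking the surviving cross-braiding scalars through the six-map loop and identifying their class with the cup product $b_q(L_1,L_2)$ via the hexagon axioms and the cocycle conditions on $L_1,L_2$. Your outline is in fact more detailed than what the paper provides.
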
 

\begin{proof} 
This is verified by a direct computation from the definition, by using the hexagon relations for the braiding. 
\end{proof} 

Now let us assume that $|A|$ is odd. Then $\E=\Vec_A$, and the braiding on $\E$ is canonically defined by 
the quadratic form $q$ on $A$, which is, in turn, determined by the corresponding symmetric bilinear form $b_q$.
Thus, every homomorphism $\phi: G\to O(A,q)$ canonically defines an action of $G$ on $\E$. 
In this case, we can pick the associativity morphisms and the maps 
$\zeta_{x,y,z}$ to be the identities, and one gets 
\begin{equation}\label{nuform}
\nu_{f,g,h,k}=c_{L_{f,g},fg(L_{h,k})}.
\end{equation}

\begin{proposition} For the canonical action of $G$ on $\E$, one has 
$$
PW(L)=b_q(L^{1/2},L)
$$
(i.e., $PW(L)$ is $b_q$ applied to the cup product of $L^{1/2}$ 
with $L$). Thus, for the canonical action shifted by $\theta\in H^2(G,A^*)$, one has 
$$
PW(L)=b_q(L^{1/2},L)(L,\theta).
$$
\end{proposition}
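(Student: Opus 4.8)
The plan is to compute both sides of the claimed identity explicitly as $4$-cochains on $G$, starting from the formula \eqref{nuform} for $\nu$, and then match them literally. The first step is to pin down the braiding on $\E=\Vec_A$ in the odd case. Since $|A|$ is odd, the assignment $q\mapsto b_q$ is a bijection between quadratic forms and symmetric bilinear forms on $A$, so there is a unique symmetric bicharacter $b$ with $b(x,x)=q(x)$, and squaring gives $b(x,y)^2=b_q(x,y)$; because $2$ is invertible modulo the exponent of $A$, $b$ is the canonical square root, and I will write $b(x,y)=b_q(x,y)^{1/2}$. In the gauge where the associativity constraint of $\E$ and the maps $\zeta_{x,y,z}$ are taken to be identities — exactly the gauge in which \eqref{nuform} was derived — the braiding is $c_{x,y}=b(x,y)$.

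Next I would unwind the right-hand side $b_q(L^{1/2},L)$, which, following the same convention as for the pairing $(L,\theta)$ used just before Proposition \ref{quadra}, means: form the cup product of the $2$-cocycles $L^{1/2}$ and $L$ to obtain a $4$-cochain valued in $A\otimes A$, then apply $b_q\colon A\otimes A\to{\bold k}^\times$. Here $L^{1/2}$ is the $2$-cocycle $(f,g)\mapsto\tfrac12 L_{f,g}$, which makes sense because multiplication by $2$ is an automorphism of $A$ commuting with the differential. The cup product convention gives $(L^{1/2}\cup L)_{f,g,h,k}=L^{1/2}_{f,g}\otimes fg(L_{h,k})$, hence $b_q(L^{1/2},L)_{f,g,h,k}=b_q\bigl(\tfrac12 L_{f,g},\,fg(L_{h,k})\bigr)$.

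The key step is then a one-line manipulation: by bilinearity of $b_q$ and the definition of $b$,
\[
b_q\bigl(\tfrac12 L_{f,g},\,fg(L_{h,k})\bigr)
= b_q\bigl(L_{f,g},\,fg(L_{h,k})\bigr)^{1/2}
= b\bigl(L_{f,g},\,fg(L_{h,k})\bigr)
= c_{L_{f,g},\,fg(L_{h,k})} = \nu_{f,g,h,k},
\]
the last two equalities being the description of the braiding above and \eqref{nuform}. Thus the two $4$-cochains coincide on the nose, so $PW(L)=b_q(L^{1/2},L)$ in $H^4(G,{\bold k}^\times)$. The displayed formula for the $\theta$-shifted action then follows at once by composing with the transformation rule $PW'(L)=PW(L)(L,\theta)$ recorded earlier.

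The only point requiring genuine care — and therefore the main obstacle — is the bookkeeping around the halving operation: one must check that $\tfrac12 L$ is actually a $2$-cocycle, that halving commutes with the $G$-action (it does, being multiplication by an integer, while $G$ acts through $O(A,q)\subset\Aut(A)$, which preserves $q$ and $b_q$), and that the chosen gauge making the associator and the $\zeta$'s trivial is consistent with the normalization under which \eqref{nuform} was obtained; once these are verified, the identification is exactly the manipulation of bicharacters displayed above.
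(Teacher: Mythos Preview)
Your proof is correct and is precisely the unpacking of the paper's one-line argument ``This follows from formula \eqref{nuform}'': you identify the canonical braiding as the symmetric square root $b$ of $b_q$, read $b_q(L^{1/2},L)$ as $b_q$ applied to the cup product, and match both sides with $c_{L_{f,g},fg(L_{h,k})}=\nu_{f,g,h,k}$. The approach is the same as the paper's, just made explicit.
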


\begin{proof} This follows from 
formula (\ref{nuform}).
\end{proof}

\begin{remark} The map $PW$ can be alternatively characterized as follows. 
Since $G$ acts on $\E$ by braided autoequivalences, it acts canonically 
on the Drinfeld center $\Z(\E)$. Note that $\Z(\E)$ is a pointed category, 
and its group of simple objects is $A\oplus A^*$. Thus, an element $L\in H^2(G,A)$ is 
nothing but a way to alter the canonical action of $G$ on $\Z(\E)$ (keeping its action on isomorphism 
classes of objects fixed), so that its action on $\E\subset \Z(\E)$ remains the same. 
By Theorem \ref{breq} and Theorem \ref{extcl}, having fixed $L$, we fix a collection 
of $\E$-bimodule categories $\E_g$, $g\in G$, with a tensor product functor on them. 
Then $PW(L)$ is nothing but the obstruction $O_4(c,L)$ to the existence of the associativity constraint for 
this tensor product functor.
\end{remark}

Now let $\D$ be any fusion category.
Let $c: G\to \Pic(\D)$ be a group homomorphism, 
$c(g)=\C_g$, and let $M=(M_{g,h})$ be a choice of
isomorphisms $\C_g\boxtimes_\D \C_h\to \C_{gh}$ defining 
a tensor product functor on $\C=\oplus_g \C_g$. 
Then $G$ acts on the braided category $\Z(\D)$, in particular, on the subcategory 
of its invertible objects. Thus, we have the Pontryagin-Whitehead quadratic function 
$$
PW_M: H^2(G,\pi_2)\to H^4(G,\pi_3),
$$  
where $\pi_2={\rm Inv}(\Z(\D))$, $\pi_3={\bold k}^\times$. 

\begin{proposition}\label{whi}
For any $L\in H^2(G,\pi_2)$, 
one has 
$$
O_4(c,LM)/O_4(c,M)=PW_M(L).
$$
Thus, for 
$L_1,L_2\in H^2(G,\pi_2)$, 
one has 
$$
\frac{O_4(c,L_1L_2M)O_4(c,M)}
{O_4(c,L_1M)O_4(c,L_2M)}=
[L_1,L_2],
$$
where $[,]$ is the Whitehead bracket combined with the cup product in the cohomology of $G$. 
\end{proposition}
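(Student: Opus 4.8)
The plan is to reduce this to a naturality/additivity statement about the obstruction class $O_4$ under modification of the system of products $M$. Recall from Theorem \ref{products correspond to H2} that systems of products on $\C=\oplus_g\C_g$ form a torsor over $H^2(G,\pi_2)$; I write $LM$ for the result of twisting $M$ by a cocycle representing $L\in H^2(G,\pi_2)$, i.e. replacing each $M_{g,h}$ by $L_{g,h}\circ M_{g,h}$ with $L_{g,h}\in Z_{gh}$ a chosen lift. The first step is to fix, for the system $M$, a collection of associativity isomorphisms $\alpha=\{\alpha_{f,g,h}\}$ as in \eqref{eta}; these exist only up to a $4$-cocycle, and the resulting scalar $\nu_{f,g,h,k}$ of \eqref{nu=1} represents $O_4(c,M)$. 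For the twisted system $LM$, the isomorphisms $\alpha^L_{f,g,h}$ making \eqref{eta} commute differ from $\alpha_{f,g,h}$ precisely by the action of $L_{f,g}\in Z_{fg}$ sitting inside the relevant bimodule functor, exactly as in the computation preceding Theorem \ref{products correspond to H2}. The key step is then to plug $\alpha^L$ into \eqref{nu=1} and track how the five factors of $\nu$ change: each of the four $\alpha$-factors contributes a $\zeta$-type rearrangement of the $L_{\bullet,\bullet}$'s, and the one remaining factor $M_{fgh,k}(\id_f\boxtimes_\D\alpha^L_{f,g,h})$ versus $M_{fgh,k}(\id_f\boxtimes_\D\alpha_{f,g,h})$ forces us to commute an object of $Z_{fg}$ past an object coming from $\C_{hk}$ inside $\Z(\D)$, which introduces the braiding. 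This is the content that makes the quotient $O_4(c,LM)/O_4(c,M)$ equal to $\nu_{f,g,h,k}$ of the definition of $PW_M$ in \eqref{nuform}, i.e. equal to $PW_M(L)$ — compare the explicit formula \eqref{nuform} and note the role of the squared braiding in Proposition \ref{whit}.

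More precisely, I would argue as follows. Identify $Z_{gh}\cong Z=\operatorname{Inv}(\Z(\D))$ via the isomorphisms $i_{\bullet,1}$ (as in Section \ref{action pi}), so that the $L_{g,h}$ become genuine invertible objects of $\Z(\D)$ and $G$ acts on them via $\rho$. Then the five maps in the composite \eqref{nu=1}, when evaluated on the twisted $\alpha^L$, decompose as (original map for $\alpha$) composed with (a map built from the $L_{g,h}$), and the latter maps are exactly the five maps in the definition of $\nu$ in the Pontryagin--Whitehead construction: four $\zeta$-maps — here the $\zeta_{f,g,h}$ of \eqref{Pasha zeta} come from the fact that the twisted $\alpha^L$ exists, which is the $2$-cocycle condition on $L$ — and one braiding map coming from the necessity of reordering $L_{f,g}$ and $fg(L_{h,k})$ inside $\Z(\D)$. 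Multiplying, $O_4(c,LM)/O_4(c,M)$ is the scalar $\nu$ attached to $L$ by \eqref{PW}, that is $PW_M(L)$. The independence of this ratio from the choice of $\alpha$ and of the cocycle representative of $L$ follows from Proposition \ref{threeprope}(ii),(iii) together with the torsor statement of Theorem \ref{associativities correspond to H3}.

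The second assertion is then purely formal: by Proposition \ref{quadra}, $PW_M$ is a quadratic function with associated bilinear form $b_q(Y,Z)=c_{YZ}c_{ZY}$ the squared braiding on invertible objects of $\Z(\D)$, and by Proposition \ref{whit} this squared braiding is precisely the Whitehead bracket $[\,,\,]$ on $\pi_2=\operatorname{Inv}(\Z(\D))$ (combined with the cup product in $H^*(G,-)$). Hence
\[
\frac{O_4(c,L_1L_2M)\,O_4(c,M)}{O_4(c,L_1M)\,O_4(c,L_2M)}
=\frac{PW_M(L_1L_2)}{PW_M(L_1)PW_M(L_2)}=b_q(L_1,L_2)=[L_1,L_2],
\]
as claimed.

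I expect the main obstacle to be the bookkeeping in the key step: verifying that the discrepancy between $\alpha^L$ and $\alpha$ inserted into the pentagon scalar \eqref{nu=1} reassembles \emph{exactly} into the six-term composite defining $\nu$ in the $PW$ construction, with the braiding appearing in the correct (fifth) slot and no spurious associativity factors. This requires being careful about where the identifications $i_{\bullet,1}$, $j_{\bullet,1}$ are used and about the $\D$-bimodule functoriality of $B_{g,h}$ (as in the rear faces of \eqref{two associativities}); all of these are routine but must be done consistently, and the sign/placement of the braiding is the one genuinely non-formal point, which is exactly why Proposition \ref{whit} is invoked.
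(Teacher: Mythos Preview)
Your proposal is correct and follows exactly the approach the paper takes: the paper's proof is the one-line ``replace $M$ by $LM$ in diagram \eqref{polytope} and apply the definition of $PW_M$; the second statement follows from the first and Proposition \ref{quadra},'' and you have spelled out what that substitution entails. One small citation slip: you point to \eqref{nuform} as ``the definition of $PW_M$,'' but \eqref{nuform} is only the special formula valid when $|A|$ is odd and the associators can be trivialized; the general definition of $\nu$ is the six-term composite preceding Proposition \ref{threeprope} (with $PW(L)=\nu$ in \eqref{PW}), which is indeed what you correctly invoke later in your argument.
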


\begin{proof}
The first statement follows by replacing $M$ by $LM$ in diagram (\ref{polytope})
and applying the definition of $PW_M$. The second statement follows from the first one and Proposition \ref{quadra}.  
\end{proof}

{\bf Remark.} 
A version of the map $PW$ 
is discussed in \cite{B}, Chapter V, 
and under additional assumptions, 
the above results can be derived from 
the statements in \cite{B}. 

\subsection{A divisibility theorem}

The following theorem is somewhat analogous 
to the Anderson-Moore-Vafa theorem for tensor categories (see \cite{Et}).

\begin{theorem}\label{divi}
Let $D$ be the Frobenius-Perron dimension of $\D$. 
Then the order of $O_4(c,M)$ in $H^4(G,{\bold k}^\times)$ divides $D^4$. 
\end{theorem}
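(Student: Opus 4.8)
The plan is to exhibit, for each homomorphism $c\colon G\to \Pic(\D)$ and system of products $M$, an explicit $3$-cochain $\mu\in C^3(G,{\bold k}^\times)$ such that $D^4\cdot O_4(c,M)$ (written multiplicatively, $O_4(c,M)^{D^4}$) equals $d\mu$; equivalently, the class $O_4(c,M)^{D^4}$ vanishes in $H^4(G,{\bold k}^\times)$. The cochain $\mu$ will be manufactured from the interplay between the quantum-dimension (i.e.\ Frobenius-Perron dimension) function on the bimodule categories $\C_g$ and the associativity data $\alpha_{f,g,h}$. First I would recall from Theorem \ref{component invertibility} that each $\C_g$ is an invertible $\D$-bimodule category, hence indecomposable as a one-sided module category by Corollary \ref{inv --> indec}, and $\sum_{M\in\O(\C_g)}\FPdim(M)^2=\FPdim(\D)=D$. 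The key point is that each equivalence $M_{g,h}\colon\C_g\boxtimes_\D\C_h\xrightarrow{\sim}\C_{gh}$ and each natural isomorphism $\alpha_{f,g,h}$ can be normalized, but not canonically; the freedom in $\alpha_{f,g,h}$ is exactly a scalar in ${\bold k}^\times$, and the obstruction $\nu_{f,g,h,k}$ of \eqref{nu=1} measures the failure of the pentagon. The idea is that a "trace/dimension" normalization of the $\alpha$'s produces a coboundary equation only after raising to the power $D$ or $D^2$ at each relevant stage.

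Concretely, I would proceed as follows. Regard $\C$, if the quasi-tensor product exists, as a $G$-graded \emph{quasi-tensor} category; pick the maps $M_{g,h}$ and then pick candidate associators $\alpha_{f,g,h}$, giving the $4$-cocycle $\nu=\nu_{f,g,h,k}$ representing $O_4(c,M)$. Now choose in each $\C_g$ a nonzero object (say a direct sum of all simples, or the image of the regular object), transport it through the equivalences, and use the internal-Hom / Frobenius-Perron machinery of Section \ref{FPmod} to attach to the composite endofunctor underlying $\nu$ a scalar which on the one hand is $\nu_{f,g,h,k}$ raised to a power controlled by Frobenius-Perron dimensions, and on the other hand is visibly a coboundary $d\mu$ for an explicit $\mu\in C^3(G,{\bold k}^\times)$ built from the dimensions of the $\C_g$ and of the images of $M_{f,g}$. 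The bookkeeping: each of the five faces of the cube \eqref{polytope} contributes, and tracking how the object's Frobenius-Perron dimension multiplies under $M$'s, one sees the total exponent is a product of four dimension factors each dividing $D$, giving $D^4$. This mirrors the proof of the Anderson-Moore-Vafa theorem (cf.\ \cite{Et}), where global dimension to the fourth power kills the analogous obstruction.

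The main obstacle, I expect, is making the "dimension-weighted trace of $\nu$" argument precise: one must argue that applying $\nu$ (an automorphism of a specific $\D$-bimodule endofunctor of $\C_{fghk}$, hence a scalar) to a chosen generating object, and then evaluating Frobenius-Perron dimensions, genuinely yields $\nu^{N}$ with $N=D^4$ on the nose and not merely $N\mid D^4$ with an uncontrolled cofactor --- and simultaneously that the "correction" terms assemble into a bona fide $3$-cochain whose coboundary is exactly that power. Here I would lean on Proposition \ref{whi} and the Pontryagin-Whitehead formalism: since $O_4(c,LM)/O_4(c,M)=PW_M(L)$ is quadratic in $L\in H^2(G,\pi_2)$ and $\pi_2={\rm Inv}(\Z(\D))$ is a finite abelian group whose order divides $D^2$, the "variable part" of $O_4$ already has order dividing (roughly) $D^2$ times the order of the Whitehead bracket contribution. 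Thus the argument naturally splits: a divisibility bound for the $PW$-part coming from $|\pi_2|\mid D^2$ and the fact that a quadratic function valued in $H^4(G,{\bold k}^\times)$ with values of order dividing the order of its argument group, and a separate bound for the "base point" $O_4(c,M_0)$ for one convenient choice $M_0$, handled by the dimension-trace computation above. Combining the two bounds, with care about the factor of $2$ lurking in quadratic-versus-bilinear (as in Section \ref{SubSectSO}), yields that the order of $O_4(c,M)$ divides $D^4$.
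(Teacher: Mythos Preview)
Your first instinct is correct and is essentially the paper's argument: apply the identity \eqref{nu=1} to the fourfold regular object $R_f\otimes R_g\otimes R_h\otimes R_k$ (where $R_a=\bigoplus_{X\in\O(\C_a)}\FPdim(X)\,X$ is the virtual regular object of $\C_a$), use the relation $R_a\otimes R_b=D\cdot R_{ab}$, and extract scalars. The precise tool you are groping for is not a ``dimension-weighted trace'' or an ``evaluation of Frobenius--Perron dimensions'' --- those are positive-real-valued and cannot detect a root of unity $\nu$ --- but rather the \emph{determinant} of a natural automorphism in the sense of \cite{Et}, Section~2. Taking determinants of both sides of \eqref{nu=1} on $R_f\otimes R_g\otimes R_h\otimes R_k$ gives $\nu_{f,g,h,k}^{D^4}$ on the left (since $\nu$ is scalar and the total dimension is $D^4$), while on the right each face $\alpha_{a,b,c}$ contributes $\det(\alpha_{a,b,c})^{\pm D}$, the extra factor of $D$ arising from the spectator regular object via $R_a\otimes R_b=D\,R_{ab}$. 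Hence $\nu^{D^4}=d\bigl(\det(\alpha)^D\bigr)$ exactly, and the proof is complete in one line.

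Your alternative approach --- splitting $O_4(c,M)$ into a ``base-point'' value $O_4(c,M_0)$ and a Pontryagin--Whitehead correction $PW_{M_0}(L)$ via Proposition~\ref{whi} --- is a detour that does not pay off. There is no canonical base point $M_0$; and even granting one, a bound on the order of $PW_{M_0}(L)$ in $H^4(G,{\bold k}^\times)$ does not follow formally from the fact that $L$ has order dividing $|\pi_2|$: the cup product and the quadratic refinement interact with the cohomology of $G$ in a way that yields no clean exponent. You would still need the determinant computation for the base point regardless. Drop this half of the proposal; the first half, once you replace ``trace/FP dimension'' by ``determinant in the sense of \cite{Et}'', is already the complete proof.
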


\begin{proof} 
For $a\in G$, let $R_a=\oplus_{X\in {\rm Irr}\C_a}{\rm FPdim}(X)X$ be the regular (virtual) object of $\C_a$ 
(where the Frobenius-Perron dimensions in $\C_a$ are normalized in such a way that ${\rm FPdim}(R_a)=D$).  
Let us apply equation (\ref{nu=1}) to the product $R_f\otimes R_g\otimes R_h\otimes R_k$,
and compute the determinants of both sides (where the determinant is understood 
in the sense of \cite{Et}, Section 2). Since $\nu$ is a scalar, on the left hand side we get 
$\nu_{f,g,h,k}^{D^4}$. To compute the right hand side, we use that $R_a\otimes R_b=DR_{ab}$.
Then the right hand side takes the form 
$$
\det(\alpha_{fg,h,k})^{-D}\det(\alpha_{f,g,hk})^{-D}\det(\alpha_{g,h,k})^D\det(\alpha_{f,gh,k})^D\det(\alpha_{f,g,h})^D. 
$$
Thus we see that 
$$
\nu^{D^4}=d(\det(\alpha)^D),
$$
where $d$ is the differential in the standard complex of $G$ with coefficients in ${\bold k}^\times$. 
This implies the statement. 
\end{proof}

\section{Examples of extensions}
\label{ex section}

Throughout this section we freely use the notation and terminology from the previous sections.

\subsection{Extensions of finite groups}
Let $G$ be a finite group.
The problem of finding all $G$-extensions of a fusion category $\D$ 
(i.e., $G$-graded fusion categories $\C =\oplus_{g\in G}\, \C_g$
with a prescribed identity component $\C_e=\D$) includes, as a special case,
the classical theory of group extensions \cite{EM}.

Indeed, let $H$ be a finite group and let $\D = \Vec_H$ be the fusion category of
$H$-graded vector spaces with the trivial associator. For any automorphism $\alpha \in \Aut(H)$
let $\M_\alpha$ be the $\D$-bimodule category, which is $\D$ as an abelian category,
with the actions given by
\begin{equation*}
{\bold k}_{h} \otimes {\bold k}_x = {\bold k}_{\alpha(h)x},\qquad \mbox{and} \qquad {\bold k}_{x} \otimes {\bold k}_h = {\bold k}_{xh},
\qquad h,x \in H,
\end{equation*}
where ${\bold k}_h, h\in H$ are the simple objects of $\Vec_H$, and with 
the usual vector space associator.
Note that $\M_\alpha$ is a typical example of an indecomposable  
$\D$-bimodule category which is equivalent
to $\D$ as a right $\D$-module category.

It is easy to check that  $\M_\alpha$ is isomorphic to the regular $\D$-bimodule
category if and only if $\alpha$ is an inner automorphism and that
$\M_\alpha \boxtimes_\D \M_\beta \cong \M_{\alpha\beta}$ for all $\alpha,\beta\in \Aut(H)$.
In particular, each $\M_\alpha$ is an invertible $\D$-bimodule category.

Thus, in this case a homomorphism $c: G \to \Pic(\D)$ with the property that each $\C_g$ 
is equivalent to $\D$ as a right $\D$-module category is the same thing as
a homomorphism $c: G \to \Out(H)$ to the quotient of $\Aut(H)$ by the subgroup
of inner automorphisms. For such a homomorphism choose a representative
$\gamma_g \in \Aut(H)$ from each coset $c(g)$ and let $\C_g= \M_{\gamma_g}$.

If there is a fusion category structure on $\C =\oplus_{g\in G}\, \C_g$ then
this category is pointed and hence is equivalent to a category of $K$-graded
vector spaces for some group $K$, possibly with a 3-cocycle $\omega$. 
It is clear that this $K$ is an extension of
$G$ by $H$, i.e., there is a short exact sequence of finite groups:
\begin{equation}
\label{exact sequence of groups}
1 \longrightarrow H \longrightarrow K \longrightarrow G \longrightarrow 1.
\end{equation}
In this case the group $Z$ is isomorphic to $Z(H)\oplus \Hom(H,\,{\bold k}^\times)$, where
$Z(H)$ is the center of $H$ and $\Hom(H,\,{\bold k}^\times)$ is the group of homomorphisms
from $H$ to ${\bold k}^\times$. Indeed, as we observed earlier, $Z$ is isomorphic to the group 
of invertible objects of $\Z(\D)$ ($=\Z(\Vec_H)$).

One can easily check that the obstruction class $O_3(c)$ belongs to 
$H^3(G, Z(H))\subset H^3(G,Z(H))\oplus \Hom(H,{\bold k}^\times)$,
and coincides with the Eilenberg-MacLane obstruction to the existence of extension
\eqref{exact sequence of groups}, with a given action $G\to \Out(H)$
see \cite{EM}. When this obstruction vanishes, we have a choice of $M=(M_1,M_2)$ 
where $M_1$ belongs to a torsor $T_1$ over $H^2(G,Z(H))$, and $M_2$ belongs 
to a torsor $T_2$ over $H^2(G, \Hom(H,{\bold k}^\times))$. One can check that the torsor $T_1$ is 
exactly the one classifying group extensions, see \cite{EM}. Furthermore, the torsor 
$T_2$ is canonically trivial, since every group extension canonically determines
a categorical extension. 
Finally, it is easy to check that the obstruction $O_4(c,M_1,M_2)$ is linear in $M_2$, and 
it follows from Proposition \ref{whi} that 
for any $L\in H^2(G,Z(H))$, 
$$
\frac{O_4(c,LM_1,M_2)}{O_4(c,M_1,M_2)}=(L,M_2)\in H^4(G,{\bold k}^\times).
$$
Thus, our theory of categorical extensions reproduces the classical theory of group extensions.

\begin{remark}
If $H$ is an abelian group, then it is clear that $O_3(c)$ vanishes, and 
the torsor $T_1$ is canonically trivial ($T_1=H^2(G,H)$). In this case, we have 
$$
O_4(c,M_1,M_2)=(M_1,M_2)\in H^4(G,{\bold k}^\times).
$$
\end{remark}



\subsection{Invertible fiber functors and Tambara-Yamagami categories}
\label{invertible ff}

Recall that a fiber functor on a tensor category $\C$ 
is the same thing as a $\C$-module category  structure on $\Vec$.

Let $G$ be a finite group and let $\C=\Vec_G$ be the tensor category 
of $G$-graded vector spaces.
We will  describe all invertible $\C$-bimodule category structures on $\Vec$.
Let $\phi$ be a 2-cocycle on $G\times G^\op$ and let $\M_\phi$
denote the $\Vec_G$-bimodule category based on $\Vec$ with the action
$(a,\,b)\ot {\bold k} ={\bold k}$, where ${\bold k}$ is a one-dimensional vector space,
and an associativity constraint 
\[
\phi((a_1, a_2),\, (b_1, b_2))\id_{\bold k} : 
((a_1, a_2)\ot(b_1, b_2))\ot {\bold k} \xrightarrow{\sim}
(a_1, a_2)\ot ((b_1, b_2)\ot {\bold k}).
\]
Every $\C$-bimodule category structure on $\Vec$ is equivalent to $\M_\phi$.

Recall the Schur isomorphism, see \cite[2.2.10]{Kar}:
\begin{equation}
\label{Schur}
s: H^2(G\times G,\, {\bold k}^\times) \xrightarrow{\sim} H^2(G,\, {\bold k}^\times) \times H^2(G,\, {\bold k}^\times)
\times (G_{ab}\otimes_\mathbb{Z} G_{ab})^*,
\end{equation}
where $G_{ab} =G/G'$ is the abelianization of $G$. Note that $(G_{ab}\otimes_\mathbb{Z} G_{ab})^*$
is isomorphic to the group of bicharacters on $G$ (or, equivalently, on $G_{ab}$).

Below we will abuse notation and identify cocycles with their cohomology classes.
Let us write 
\begin{equation}
\label{s(phi)}
s(\phi) = (\phi_1,\, \phi_2,\, \phi_{12}).
\end{equation}
Here $\phi_1,\, \phi_2\in H^2(G,\, {\bold k}^\times)$
define the left and right $\C$-module structures on $\M_\phi$ and  $\phi_{12}\in
(G_{ab}\otimes_\mathbb{Z} G_{ab})^*$ defines its $\C$-bimodule structure.


\begin{remark}
\label{phi op}
The category $(\M_\phi)^\op$ is also a $\C$-bimodule category based on $\Vec$.
It is easy to check that
$(\M_\phi)^\op \cong \M_{\tilde{\phi}}$ where 
\[ 
\tilde{\phi}((a,a'),(b,b')) := \phi((a',a), (b',b))^{-1}.
\]
Thus, $\tilde{\phi}_{1}= \phi_{2}^{-1}$, $\tilde{\phi}_{2}= \phi_{1}^{-1}$, and
$\tilde{\phi}_{12}(a,b)= \phi_{12}(b,a),\, a,b\in G_{ab}$.
\end{remark}

Given two $2$-cocycles $\phi, \, \phi'$ on $G\times G$,
the category $\Fun_\C(\M_\phi,\, \M_{\phi'})$ is equivalent, as an abelian category,  to the category
$\Rep_\mu(G)$
of projective representations of $G$ with the Schur multiplier $\mu = \phi_{1}'/\phi_{1}$. This cate\-gory
is acted upon by $\Vec_{G\times G^\op}$ 
via 
\[
((a, b)\ot \pi)(x) = \phi_{12}(x,a) \pi(x) \phi'_{12}(x, b),
\]
where $a,b,x\in G,\, \pi \in  \Rep_\mu(G)$. The associativity constraint isomorphism
between $((a, a')\ot (b, b'))\ot \pi$ and $(a, a')\ot ((b, b')\ot \pi)$ is given by $\phi_{2}(a,b)\phi'_{2}(b', a')$.

\begin{proposition}
\label{description of inv ff}
\begin{enumerate}
\item[(i)] Let $G$ be a finite group and let 
$\omega\in H^3(G,\, {\bold k}^\times)$.
Let $\Vec_{G,\omega}$ be the corresponding pointed fusion category.
If $\Vec$ has a structure of an invertible $\Vec_{G,\omega}$-bimodule category
then $G$ is abelian and $\omega$ is cohomologically trivial.
\item[(ii)] Let $G$ be abelian. Then $\M_\phi$ is an invertible $\Vec_G$-bimodule category
if and only if $\phi_{12}$ is a non-degenerate bicharacter on $G$.
\item[(iii)] The category $\M_\phi$ has order $2$ in $\Pic(\Vec_G)$ if and only if 
$\phi_1 =\phi_2^{-1}$ and $\phi_{12}$ is a symmetric non-degenerate bicharacter.
\end{enumerate}
\end{proposition}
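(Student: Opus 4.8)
The plan is to reduce everything to the tensor-product computation for $\Vec_G$-bimodule categories on $\Vec$, which is encoded by the Schur decomposition \eqref{Schur}--\eqref{s(phi)}. The key point is that composition of bimodule categories corresponds to a group law on the triples $(\phi_1,\phi_2,\phi_{12})$: one has $\M_\phi \boxtimes_{\Vec_G} \M_{\phi'} \cong \M_{\phi''}$ where $\phi''_{12}$ is the composite bicharacter $\widehat{\phi''_{12}} = \widehat{\phi'_{12}}\circ(\text{something})\circ\widehat{\phi_{12}}$ obtained by viewing the middle $\Vec_G$-action as a pairing to be cancelled (this is the abelian-group shadow of Proposition~\ref{tenpro}, applied with the bicharacter data living on $G$ and its dual). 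Concretely, $\Fun_\C(\M_\phi,\M_{\phi'})$ was already described above as $\Rep_\mu(G)$ with an explicit $\Vec_{G\times G^{\op}}$-action through $\phi_{12}$ and $\phi'_{12}$; I would extract the resulting bimodule invariants of that composite. Once the group structure on triples is in hand, invertibility of $\M_\phi$ means precisely that $\M_\phi \boxtimes_\C (\M_\phi)^{\op} \cong \C$ as a bimodule category, i.e.\ that the composite triple is the identity.

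First I would prove (ii): combining Remark~\ref{phi op} (giving $(\M_\phi)^{\op} = \M_{\tilde\phi}$ with $\tilde\phi_{12}(a,b) = \phi_{12}(b,a)$) with the composition rule, the product $\M_\phi \boxtimes_\C \M_\phi^{\op}$ has $\phi_{12}$-invariant governed by $\widehat{\phi_{12}}$ composed with its transpose; this is the trivial bimodule $\C$ exactly when $\widehat{\phi_{12}}\colon G\to G^*$ is an isomorphism, i.e.\ $\phi_{12}$ is non-degenerate. (The $\phi_1,\phi_2$ parts contribute nothing new to invertibility because over $\Vec$ a mere $\C$-module category structure coming from a $2$-cocycle on $G$ alone — with $\phi_{12}$ trivial — is never invertible unless it is already $\Vec$, and twisting by such a factor does not affect invertibility; this should follow from the fact that $\Fun_\C(\M_\phi,\M_\phi)$ has the right Frobenius--Perron dimension iff $\phi_{12}$ is non-degenerate.) For (i): if $\Vec$ carries an invertible $\Vec_{G,\omega}$-bimodule structure, then by Proposition~\ref{inv criterion} $\Vec_{G,\omega}$ is Morita equivalent to $\Vec$, hence $\Z(\Vec_{G,\omega})\cong\Z(\Vec)=\Vec$; but $\Z(\Vec_{G,\omega}) = \Vec$ forces $G=1$ unless... — more precisely, since a fiber functor on $\Vec_{G,\omega}$ exists, $\Vec_{G,\omega} \cong \Rep(\Gamma)$ for some finite-dimensional Hopf algebra, which over $\Vec_{G,\omega}$ being pointed forces $\omega$ trivial and $G$ abelian (a fiber functor on $\Vec_{G,\omega}$ exists iff $\omega$ is trivial, and then $\Vec_G$ admits an invertible module structure on $\Vec$ iff $G$ is abelian, by (ii) applied in reverse — the bimodule-over-$\Vec_G$ version forces the bicharacter to be non-degenerate, which needs $G=G_{ab}$).

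Finally, (iii) follows by specializing the composition rule to $\M_\phi \boxtimes_\C \M_\phi$: the condition $\M_\phi^{\boxtimes 2}\cong\C$ translates, via Remark~\ref{phi op}, into $\M_\phi \cong \M_\phi^{\op} = \M_{\tilde\phi}$, i.e.\ $\phi_1 = \tilde\phi_1 = \phi_2^{-1}$, $\phi_2 = \phi_1^{-1}$ (the same condition), and $\phi_{12}(a,b) = \tilde\phi_{12}(a,b) = \phi_{12}(b,a)$, i.e.\ $\phi_{12}$ symmetric; together with non-degeneracy from (ii). I expect the main obstacle to be pinning down the precise composition law for the triples $(\phi_1,\phi_2,\phi_{12})$ under $\boxtimes_{\Vec_G}$ — in particular getting the bookkeeping right for how the middle $\Vec_G$-action pairs against itself and contracts the $\phi_{12}$-factors — and carefully justifying that the $\phi_1,\phi_2$ components are irrelevant to invertibility (so that invertibility is detected entirely by the bicharacter $\phi_{12}$); the rest is routine once that lemma is cleanly stated.
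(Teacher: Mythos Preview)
Your argument for (i) contains a genuine error: an invertible $\Vec_{G,\omega}$-bimodule category structure on $\Vec$ is an element of $\Pic(\Vec_{G,\omega})$, i.e.\ a Morita auto-equivalence of $\Vec_{G,\omega}$, \emph{not} a Morita equivalence between $\Vec_{G,\omega}$ and $\Vec$. So the claim $\Z(\Vec_{G,\omega})\cong\Vec$ is unfounded (and indeed false unless $G=1$). Your recovery --- that the left $\C$-module structure on $\Vec$ is a fiber functor, forcing $\omega$ trivial --- is correct and is exactly what the paper does. But your argument for $G$ abelian is circular: you invoke (ii), which is stated only for abelian $G$. The paper's argument is cleaner and independent of (ii): by Proposition~\ref{inv criterion}, invertibility forces the dual $(\Vec_G)^*_\Vec$ (with respect to the one-sided module category $\Vec$) to be tensor equivalent to $\Vec_G$, hence pointed; but this dual is $\Rep(G)$, which is pointed iff $G$ is abelian.

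For (ii) your plan is correct in spirit but overbuilt. You do not need the full composition law on triples $(\phi_1,\phi_2,\phi_{12})$: the description of $\Fun_\C(\M_\phi,\M_{\phi'})$ as $\Rep_\mu(G)$ with its explicit $\Vec_{G\times G^{\op}}$-action is already recorded just before the proposition, and Proposition~\ref{inv criterion} says $\M_\phi$ is invertible iff $\Fun_\C(\M_\phi,\M_\phi)\cong\C$ as a $\C$-bimodule category. Setting $\phi'=\phi$ gives $\mu=1$, so this category is $\Rep(G)\cong\Vec_{G^*}$ with the bimodule action determined by $\phi_{12}$; it is bimodule-equivalent to $\Vec_G$ precisely when $\phi_{12}$ is non-degenerate. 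No bookkeeping of $\phi_1,\phi_2$ is needed --- they drop out automatically in $\Fun_\C(\M_\phi,\M_\phi)$ --- so the ``main obstacle'' you anticipate dissolves.

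Your argument for (iii) is exactly the paper's: order~$2$ means $\M_\phi\cong\M_\phi^{\op}=\M_{\tilde\phi}$, and Remark~\ref{phi op} gives the stated conditions.
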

\begin{proof} (i) Since $\Vec_{G,\omega}$ has a fiber functor, $\omega$ must be trivial.
By Proposition~\ref{inv criterion}  the dual of $\Vec_G$ with respect to its module 
category $\Vec$ must be pointed, which forces $G$ to be abelian. 
(ii) The computations done before this Proposition show that $\Fun_\C(\M_\phi,\, \M_{\phi}) \cong \C$
as a $\C$-bimodule category if and only if $\phi_{12}$ is non-degenerate (there are no conditions
on $\phi_1,\, \phi_2$).
(iii) This is equivalent to existence of a $\C$-bimodule equivalence $\M_\phi^\op \cong \M_\phi$,
so we can apply Remark~\ref{phi op}.
\end{proof}

\begin{example}
In \cite{TY} D.~Tambara and  S.~Yamagami classified all 
$\mathbb{Z}/2\mathbb{Z}$-graded fusion categories $\C=\C_+\oplus
\C_-$ in which $\C_+$ is a pointed category, and $\C_-$ has a unique simple object. 
They showed that any such category
is determined, up to a tensor equivalence, by a
finite abelian group $A$, an isomorphism class of a  
non-degenerate symmetric bilinear form
$\chi: A\times A \to {\bold k}^\times$, and a  square root of $|A|$ in ${\bold k}$.
The classification of \cite{TY} uses direct calculations
of associativity constraints as solutions of a system of pentagon equations.

Let us derive this classification from our description of graded categories
in Section~\ref{classification proper} and Proposition~\ref{description of inv ff}.
Let $\C= \C_0 \oplus \C_1$
be a fusion category with $\mathbb{Z}/2\mathbb{Z}$-grading satisfying the above properties.
Its trivial component $\C_0$ is a pointed fusion category. 
By Proposition~\ref{description of inv ff}(i), 
$\C_0\cong \Vec_A$, for some finite abelian group $A$.  
The invertible $\Vec_A$-bimodule category $\C_1$ has order $2$ in $\Pic(\Vec_A)$.
By Proposition~\ref{description of inv ff}, $\C_1\cong \M_\phi$ where $\phi$ is such that
$\phi_1=\phi_2^{-1}$ and $\phi_{12}$ is a non-degenerate symmetric bicharacter
of $A$, cf.\ \eqref{s(phi)}. 

We have $Z: = \mbox{Inv}(\Z(\Vec_A)) = A \oplus A^*$. Let us identify $A$ with $A^*$ using the bicharacter 
$\phi_{12}$; then we have $Z=A\oplus A$, and as a $\mathbb{Z}/2\mathbb{Z}$-module, $Z={\rm Fun}(\mathbb{Z}/2\mathbb{Z},A)$.  
Therefore, by the Shapiro lemma, 
$H^i(\mathbb{Z}/2\mathbb{Z} ,\, Z) = 0$ for $i>0$. Thus, $O_3(c)=0$, and there is no freedom in choosing $M$. 

Furthermore, the associativity constraint obstruction $O_4$ vanishes since  
$H^4(\mathbb{Z}/2\mathbb{Z},{\bold k}^\times)=0$
and hence there are precisely two non-equivalent tensor category structures
on $\C$ corresponding to two elements of the group
$H^3(\mathbb{Z}/2\mathbb{Z},{\bold k}^\times) \cong \mathbb{Z}/2\mathbb{Z}$.

Let $\tau$ be a tensor autoequivalence of $\Vec_A$.
Let $\C_1^\tau$ denote the $\Vec_A$-bimodule category
obtained from $\C_1$ by twisting the action  of $\Vec_A$ by means of $\tau$, i.e.,
by letting the result of action 
of $X\bt Y \in \Vec_A\bt \Vec_A^\rev$ on $M\in \M$ to be  $(\tau(X)\bt \tau(Y))\ot M$.
Clearly, we can replace $\C_1$ by $\C_1^\tau$ without changing the corresponding
extension.

The group of tensor  autoequivalnces of  $\Vec_A$ is isomorphic
to the semi-direct product $H^2(A,\,{\bold k}^\times)\rtimes \Aut(A)$. 
Choosing $\tau$ to be the element corresponding to $(\phi_1^{-1},\, \alpha)$, where $\alpha$ is
any automorphism of $A$, we see that $\phi$ can be chosen in such a way
that $\phi_1=1$ and the choice of $\phi_{12}$ matters only up to 
an automorphism of $A$.

Thus, we obtain the same parameterization as in \cite{TY}.
\end{example}

\subsection{Categories $\C$ graded by a group $G$ of order coprime to ${\rm FPdim}(\C_e)$}

If $|G|$ and $D:={\rm FPdim}(\D)$ are coprime, the classification of extensions of $\D$ by $G$ 
simplifies, as the cohomological obstructions $O_3$ and $O_4$ automatically vanish. Namely, we have the following result. 

\begin{theorem}\label{copr}  Let $\D$ be a fusion category of Frobenius-Perron 
dimension $D$ relatively prime to $|G|$. Then any homomorphism 
$c: G\to \Pic(\D)$ can be upgraded to a $G$-graded fusion category
with trivial component $\C_e=\D$, and such categories 
are parametrized by a torsor $T_{c,M}^3$ over $H^3(G,{\bold k}^\times)$
(up to a grading-preserving equivalence). 
\end{theorem}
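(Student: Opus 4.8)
The plan is to obtain Theorem~\ref{copr} as an immediate consequence of the obstruction theory developed in Section~\ref{classification proper}, by checking that the coprimality hypothesis forces all the relevant obstructions to vanish and the intermediate torsor to degenerate to a point. Recall from Proposition~\ref{homot} that $\pi_2 := \pi_2(B\uuPic(\D)) = \mathrm{Inv}(\Z(\D))$ and $\pi_3 = {\bold k}^\times$. The two inputs I need are: (a) $\pi_2$ is a finite abelian group of order prime to $|G|$; and (b) the order of $O_4(c,M)$ in $H^4(G,{\bold k}^\times)$ divides $|G|$.

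For (a): the Drinfeld center $\Z(\D)$ is a fusion category with $\FPdim \Z(\D) = D^2$, and the subcategory of $\Z(\D)$ generated by its invertible objects is a pointed fusion subcategory of Frobenius--Perron dimension $|\mathrm{Inv}(\Z(\D))|$; by the divisibility of Frobenius--Perron dimensions of fusion subcategories \cite{ENO1}, this integer divides $D^2$ and is therefore prime to $|G|$. Since $\pi_2$ is a finite $G$-module whose order is prime to $|G|$, we get $H^i(G,\pi_2) = 0$ for all $i \ge 1$. In particular $H^3(G,\pi_2)=0$, so by Theorem~\ref{tensor product obstruction} the obstruction $O_3(c)$ vanishes for \emph{every} homomorphism $c : G \to \Pic(\D)$; and by Theorem~\ref{products correspond to H2} the set of $\D$-bimodule quasi-tensor products on $\C = \oplus_{g\in G}\C_g$, being a torsor over $H^2(G,\pi_2) = 0$, has a single element, so a system of products $M$ exists and is unique up to equivalence.

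For (b): fix such an $M$. By Theorem~\ref{divi} the order of $O_4(c,M)$ in $H^4(G,{\bold k}^\times)$ divides $D^4$, while the standard fact that $|G|$ annihilates $H^n(G,-)$ for $n\ge 1$ shows this order also divides $|G|$; as $\gcd(D^4,|G|)=1$, we conclude $O_4(c,M)=0$. Hence, by Theorem~\ref{associativity obstruction}, the quasi-tensor product admits an associator satisfying the pentagon axiom, so $c$ is realized by a $G$-graded fusion category with trivial component $\D$. Finally, Theorem~\ref{associativities correspond to H3} identifies the equivalence classes of such structures (for the given $M$) with a torsor over $H^3(G,{\bold k}^\times)$, which is the promised $T^3_{c,M}$; since $M$ is unique up to equivalence, this is the complete classification up to grading-preserving equivalence. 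There is no serious obstacle: the only points demanding care are quoting the uniqueness of $M$ and the two divisibility facts in the right order so the torsor statement comes out clean, and --- if $\D$ is not integral --- reading ``$D$ prime to $|G|$'' as the assertion that the finitely many integers that actually enter the argument (namely $|\mathrm{Inv}(\Z(\D))|$ and the order of $O_4(c,M)$) are prime to $|G|$.
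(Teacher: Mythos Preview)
Your proof is correct and follows essentially the same route as the paper's: both argue that $|\pi_2|=|\mathrm{Inv}(\Z(\D))|$ divides $D^2$ so that $H^i(G,\pi_2)=0$ kills $O_3(c)$ and the freedom in $M$, and then invoke Theorem~\ref{divi} together with $|G|$-torsion of group cohomology to kill $O_4(c,M)$, leaving only the $H^3(G,{\bold k}^\times)$-torsor of associators. The only cosmetic difference is that the paper cites \cite[Proposition 8.15]{ENO1} directly for the divisibility of $|\pi_2|$, while you derive it from the subcategory divisibility theorem.
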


\begin{proof}
This follows from Theorem \ref{extcl} and Theorem \ref{divi}. Indeed, 
the order of the group $\pi_2={\rm Inv}(\Z(\D))$ divides $D^2$
(\cite[Proposition 8.15]{ENO1}), so it is relatively prime to $|G|$. 
Thus, $H^i(G,\pi_2)=0$, $i\ge 1$. So $O_3(c)$ vanishes, 
and there is no freedom in choosing $M$. 
Also, by Theorem \ref{divi}, the obstruction $O_4(c,M)$ vanishes.
So the graded category $\C$ exists, and 
the freedom in its construction is just the freedom 
of choosing $\alpha$, which lies in a torsor over 
$H^3(G,{\bold k}^\times)$, as desired. 
\end{proof}

For applications of this Theorem, see \cite{JL}.

\section{Lagrangian subgroups in metric groups and bimodule categories over $\Vec_A$}
\label{Sect10}

Let ${\rm Bimod}_{\rm ab}$ be the category 
whose objects are categories $\Vec_A$ where $A$ is an finite abelian group, 
and morphisms from $\Vec_A$ to $\Vec_B$ are equivalence classes of 
(not necessarily invertible) $(\Vec_B,\Vec_A)$-bimodule categories, with composition of morphisms being 
the tensor product of bimodule categories. The goal of this section is to describe 
this category explicitly. 

First we need to set up some linear algebra, which is well
known, but we work out the details for the reader's convenience.

\subsection{The category of Lagrangian correspondences}

Let us define the category Lag of Lagrangian correspondences. We define the objects of this category
to be metric groups $(E,q)$. Morphisms from $(E_1,q_1)$ to $(E_2,q_2)$ are, by definition, 
formal $\Bbb Z_+$-linear combinations of Lagrangian subgroups in 
$(E_1\oplus E_2, q_1^{-1}\oplus q_2)$. 

The composition of morphisms is defined as follows. Let
\linebreak $L\in {\rm Mor}((E_1,q_1),(E_2,q_2))$,
$M\in {\rm Mor}((E_2,q_2),(E_3,q_3))$ be Lagrangian subgroups. 
Then we define $M\circ L$ to be the set of all pairs 
$(a_1,a_3)\in E_1\oplus E_3$ such that there exists $a_2\in E_2$
for which $(a_1,a_2)\in L$ and 
$(a_2,a_3)\in M$. Also, let $m(M,L)$ be the number of such
$a_2$. Then the composition of morphisms is defined by the
condition that it is biadditive, and 
$$
M\bullet L=m(M,L)M\circ L.
$$  

To validate this definition, we must prove the following Lemma. 

\begin{lemma}
(i) $M\circ L$ is a Lagrangian subgroup of the metric group 
$(E_1\oplus E_3,\,q_1^{-1}\oplus q_3)$.  

(ii) The function $m$ satisfies the 2-cocycle condition, 
$$
m(N,M\circ L)m(M,L)=m(N\circ M,L)m(N,M), 
$$
so that the operation $\bullet$ is associative. 
\end{lemma}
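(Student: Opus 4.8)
The plan is to verify the two assertions by reducing everything to linear algebra over the symmetric bicharacters $b_{q_i}$ associated to the quadratic forms $q_i$, and then invoking Proposition~\ref{3Lagr} (the cocycle property of $d$) for part (ii). For part (i), write $L \subset E_1 \oplus E_2$ and $M \subset E_2 \oplus E_3$ for the given Lagrangian subgroups with respect to $q_1^{-1}\oplus q_2$ and $q_2^{-1}\oplus q_3$ respectively. First I would check that $M\circ L$ is isotropic in $(E_1\oplus E_3,\, q_1^{-1}\oplus q_3)$: given $(a_1,a_3)\in M\circ L$ with witness $a_2\in E_2$, we have $q_1(a_1)^{-1}q_2(a_2)=1$ (from $(a_1,a_2)\in L$) and $q_2(a_2)^{-1}q_3(a_3)=1$ (from $(a_2,a_3)\in M$), so multiplying gives $q_1(a_1)^{-1}q_3(a_3)=1$; a parallel computation with $b_q$ on two elements of $M\circ L$ (choosing witnesses $a_2,a_2'$) shows the associated bilinear form vanishes. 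Then I would count: there is a surjection from the fiber product $\{(a_1,a_2,a_3): (a_1,a_2)\in L,\ (a_2,a_3)\in M\}$ onto $M\circ L$ with fibers of constant size $m(M,L)$ (this is exactly the definition of $m$, and constancy follows because the set of admissible $a_2$ for a fixed $(a_1,a_3)$ is a coset of $(L_{E_2})\cap(M_{E_2})$, where $L_{E_2}=\{a_2:(0,a_2)\in L\}$ etc.), whence $|M\circ L| = |L|\cdot|M| / (|E_2|\cdot |(L_{E_2})\cap(M_{E_2})|) \cdot |(L_{E_2})\cap(M_{E_2})|$ — more carefully, $|M\circ L|\cdot m(M,L) = $ the size of the fiber product, and the fiber product has size $|L\cap (E_1\oplus \Delta \oplus E_3$-type condition$)|$ which one computes to be $|L|\cdot|M|/|E_2|$. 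Combining with $|L|^2=|E_1||E_2|$ and $|M|^2=|E_2||E_3|$ gives $|M\circ L|^2 \cdot m(M,L)^2 = |E_1||E_3|$; since $M\circ L$ is isotropic, $|M\circ L|^2 \le |E_1||E_3|$, and I expect the delicate point is to show $m(M,L)$ divides $|M\circ L|$ appropriately so that in fact $|M\circ L|^2=|E_1||E_3|$, i.e. $M\circ L$ is Lagrangian — but actually the cleaner route is: $M\circ L$ isotropic and the identity $|M\circ L|\cdot m(M,L)=|L|\cdot|M|/|E_2|$ forces, once we know $m(M,L)\mid |M\circ L|$ via the coset structure, that $|M\circ L|^2=|E_1||E_3|$. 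I would present the counting identity $|M\circ L|\cdot m(M,L)=|L|\cdot|M|/|E_2|$ as the key lemma and derive Lagrangian-ness from it together with isotropy.

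For part (ii), the strategy is to express both sides of the claimed cocycle identity purely in terms of the quantities $m(\cdot,\cdot)$ and the orders $|L|,|M|,|N|,|E_i|$ using the key counting identity repeatedly, and then check that the resulting numerical relation is an identity. Applying $|X\bullet Y| \cdot (\text{correction}) = |X||Y|/|E|$ in the two ways of bracketing $N\circ M\circ L$, one finds that $m(N,M\circ L)m(M,L)$ and $m(N\circ M,L)m(N,M)$ both equal the number of pairs $(a_2,a_2')\in E_2\times E_2$-type data compatible with a fixed triple in the triple composition — concretely, both count the fiber $\{(a_1,a_2,a_3,a_4): (a_1,a_2)\in L,(a_2,a_3)\in M,(a_3,a_4)\in N\}$ over a fixed element of $N\circ M\circ L$, which is manifestly symmetric in the bracketing. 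So associativity of $\bullet$ is really the statement that the triple fiber product has a well-defined fiber size over the triple composition, independent of how we factor the projection; this is a routine diagram chase once the binary case is set up correctly.

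The main obstacle I anticipate is the bookkeeping in part (i): pinning down exactly which intersection subgroup governs the fiber size $m(M,L)$ and verifying that the fibers really are all cosets of one fixed subgroup (so $m$ is constant, not just generically defined), and then assembling the order count with the correct powers of $|E_2|$. Once that identity $|M\circ L|\cdot m(M,L)=|L|\cdot|M|/|E_2|$ is nailed down, both the Lagrangian property and the cocycle condition fall out; in fact one can observe that $d(L_1,L_2)$ from Section~\ref{SubSectSO} is essentially a special case of $m$ for graph-type correspondences, so part (ii) is morally Proposition~\ref{3Lagr} in disguise, and I would remark on this to shorten the argument. I would keep the computations terse, citing $|L|^2=|E|$ for Lagrangians and the orthogonal-complement order formula $|N|\cdot|N^\perp|=|E|$ from the preliminaries as the only external inputs.
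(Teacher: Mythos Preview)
Your argument for part (ii) is correct and is essentially what the paper means by ``a straightforward computation'': both products count the fiber of the triple fiber product
\[
Q=\{(a_1,a_2,a_3,a_4):(a_1,a_2)\in L,\ (a_2,a_3)\in M,\ (a_3,a_4)\in N\}
\]
over a point of $N\circ M\circ L$, via the two factorizations through $M\circ L$ and $N\circ M$ respectively. (The remark linking this to Proposition~\ref{3Lagr} is a bit off, since $d$ takes values in $\q$ rather than $\BN$, so it is a different cocycle; but that is only a side comment.)

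Part (i), however, has a real gap. You assert that the fiber product
\[
P=\{(a_1,a_2,a_3):(a_1,a_2)\in L,\ (a_2,a_3)\in M\}=(L\oplus M)\cap(E_1\oplus E_2^{\rm diag}\oplus E_3)
\]
has order $|L|\cdot|M|/|E_2|$. This is not automatic: the map $L\oplus M\to E_2\oplus E_2/E_2^{\rm diag}\cong E_2$ has image ${\rm pr}_2(L)+{\rm pr}_1(M)$, so in fact $|P|=|L|\cdot|M|/|{\rm pr}_2(L)+{\rm pr}_1(M)|$. Your identity $|M\circ L|\cdot m(M,L)=|L|\cdot|M|/|E_2|$ is therefore wrong as stated; the right-hand side should be $|L|\cdot|M|/|{\rm pr}_2(L)+{\rm pr}_1(M)|$. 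From that alone you cannot conclude $|M\circ L|^2=|E_1|\cdot|E_3|$, and your attempted fix via divisibility of $m(M,L)$ does not rescue it.

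The missing step, and the only place in the counting where the Lagrangian hypothesis on $L$ and $M$ is actually used, is the identity
\[
{\rm pr}_2(L)=(L\cap E_2)^\perp\quad\text{in }E_2,
\]
which follows from $L=L^\perp$ in $E_1\oplus E_2$: one has $(L\cap E_2)^\perp=L^\perp+E_2^\perp=L+E_1$, whose projection to $E_2$ is ${\rm pr}_2(L)$. Similarly ${\rm pr}_1(M)=(M\cap E_2)^\perp$. Hence ${\rm pr}_2(L)+{\rm pr}_1(M)=N^\perp$ where $N=L_{E_2}\cap M_{E_2}$ is exactly your fiber group, and then $|N|\cdot|N^\perp|=|E_2|$ gives $|M\circ L|=|P|/|N|=|L|\cdot|M|/|E_2|=(|E_1|\cdot|E_3|)^{1/2}$. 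This is precisely the route the paper takes; once you insert this one observation your outline becomes a proof.
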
   

\begin{proof}
(i) First of all, it is easy to check that $M\circ L$ is an isotropic subgroup. 
Next, $M\circ L$ is the quotient of the intersection of the subgroup $L\oplus M$ with 
the diagonal copy of $E_1\oplus E_2\oplus E_3$ in $E_1\oplus E_2\oplus E_2\oplus E_3$
by the group $N=M\cap L\cap E_2$. It is easy to see that the
image of $L\oplus M$ in $E_2\oplus E_2/E_2^{diag}=E_2$ 
is the orthogonal complement $N^\perp$ of $N$. Thus, the order of the intersection of $L\oplus M$ with 
$E_1\oplus E_2\oplus E_3$ is $|M|\cdot |L|/|N^\perp|$, and hence the order of $M\circ L$ is 
$|M|\cdot |L|/|E_2|=(|E_1|\cdot |E_3|)^{1/2}$, i.e. $M\circ L$ is Lagrangian.  

(ii) This is a straightforward computation.  
\end{proof}

Thus, we have defined the category Lag. Note that the identity
morphism of $(E,q)$ 
in this category is the diagonal subgroup of $E\oplus E$. 

\begin{proposition}\label{groupoi} The groupoid of isomorphisms in Lag is naturally isomorphic
to the groupoid of isometries of metric groups.   
In particular, the group of automorphisms of $(E,q)$ in ${\rm Lag}$ is naturally isomorphic to 
$O(E,q)$. 
\end{proposition}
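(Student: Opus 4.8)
The plan is to set up an explicit, mutually inverse correspondence between isometries of metric groups and isomorphisms in ${\rm Lag}$, realized by taking graphs, and then to check that it respects composition and identities, so that it is an isomorphism of groupoids.

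First I would treat the easy direction. Given an isometry $f\colon (E_1,q_1)\to(E_2,q_2)$, consider its graph $\Gamma_f=\{(x,f(x))\mid x\in E_1\}\subset E_1\oplus E_2$. Since $f$ is an isometry, $(q_1^{-1}\oplus q_2)(x,f(x))=q_1(x)^{-1}q_2(f(x))=1$, so $\Gamma_f$ is isotropic; and $|\Gamma_f|=|E_1|=(|E_1|\,|E_2|)^{1/2}$ (as $f$ is bijective), so $\Gamma_f$ is Lagrangian and hence a morphism $(E_1,q_1)\to(E_2,q_2)$ in ${\rm Lag}$. A short check with the definitions of $\circ$ and $m$ shows $\Gamma_g\circ\Gamma_f=\Gamma_{g\circ f}$ with $m(\Gamma_g,\Gamma_f)=1$, hence $\Gamma_g\bullet\Gamma_f=\Gamma_{g\circ f}$; moreover $\Gamma_{{\rm id}_E}$ is precisely the diagonal of $E\oplus E$, i.e.\ the identity morphism of $(E,q)$. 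So $f\mapsto\Gamma_f$ is a functor from the groupoid of isometries into ${\rm Lag}$; in particular each $\Gamma_f$ is an isomorphism, with inverse $\Gamma_{f^{-1}}$. The assignment is injective on morphisms since a map is determined by its graph.

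The substance is the converse: every isomorphism of ${\rm Lag}$ has the form $\Gamma_f$. Let $L\in{\rm Mor}((E_1,q_1),(E_2,q_2))$ be an isomorphism with inverse $M$, and write $L=\sum_i n_i L_i$, $M=\sum_j m_j M_j$ as $\mathbb{Z}_+$-combinations of distinct Lagrangian subgroups. In the expansion $M\bullet L=\sum_{i,j}n_i m_j\, m(M_j,L_i)\,(M_j\circ L_i)$ every coefficient is a positive integer, so the equality $M\bullet L=\Delta_{E_1}$ (the diagonal, taken with coefficient $1$) forces exactly one term with all factors equal to $1$: thus $L$ is a single Lagrangian subgroup of $E_1\oplus E_2$ with coefficient $1$, similarly $M$, and $M\circ L=\Delta_{E_1}$, $L\circ M=\Delta_{E_2}$. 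From $M\circ L=\Delta_{E_1}$ the projection $p_1\colon L\to E_1$ is surjective, and from $L\circ M=\Delta_{E_2}$ the projection $p_2\colon L\to E_2$ is surjective; comparing with $|L|=(|E_1|\,|E_2|)^{1/2}$ gives $|E_1|=|E_2|$ and then that $p_1,p_2$ are bijective. Hence $f:=p_2\circ p_1^{-1}\colon E_1\to E_2$ is a group isomorphism with $L=\Gamma_f$, and isotropy of $L$ with respect to $q_1^{-1}\oplus q_2$ yields $q_2(f(x))=q_1(x)$, i.e.\ $f$ is an isometry.

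Combining the two directions, $f\mapsto\Gamma_f$ is a bijection between isometries of metric groups and isomorphisms in ${\rm Lag}$, compatible with composition and identities, hence an isomorphism of groupoids; restricting to $E_1=E_2=E$ identifies the automorphism group of $(E,q)$ in ${\rm Lag}$ with $O(E,q)$. The only step needing care is the bookkeeping in the previous paragraph — that an invertible $\mathbb{Z}_+$-combination of Lagrangian subgroups collapses to a single subgroup with coefficient one — but this is forced purely by positivity of the multiplicities $m(-,-)$ entering the composition law; everything else is elementary linear algebra with finite abelian groups.
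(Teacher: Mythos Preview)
Your proof is correct and follows essentially the same approach as the paper: identify isomorphisms in ${\rm Lag}$ with graphs of isometries. The only minor differences are that you deduce $L$ is a graph via \emph{surjectivity} of both projections (then count), whereas the paper argues the intersections $L\cap E$ and $L\cap E'$ are trivial (i.e.\ \emph{injectivity} of both projections, then count); and you explicitly handle the possibility that an isomorphism is a nontrivial $\mathbb{Z}_+$-combination of Lagrangians, which the paper tacitly skips.
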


\begin{proof}
Let $(E,q),(E',q')\in {\rm Lag}$. 
Let $L\subset E\oplus E'$ be Lagrangian under the form $q^{-1}\oplus
q'$. If $L$ defines an isomorphism 
then $L\circ M=\id$ for some Lagrangian $M\subset E'\oplus E$, 
which implies that the intersection $L$ with $E'$ is zero. 
Similarly, the intersection of $L$ with $E$ is zero (because $M\circ
L=\id$). This means that $L$ is the graph of some isomorphism 
of groups $g: E\to E'$, and since $L$ is Lagrangian, 
this isomorphism is an isometry. Conversely, if $g:E\to E'$ is an isometry 
then the graph of $g$ is Lagrangian in $E\oplus E'$. 
It is easy to see that the composition of Lagrangian subgroups 
goes under this identification to the composition of isometries. 
The proposition is proved. 
\end{proof}

\subsection{Subgroups with a skew-symmetric bicharacter in an abe\-lian group}

Let $A$ be a finite abelian group. Denote by $C(A)$ the set of
pairs $(H,\psi)$, where $H\subset A$ is a subgroup, and $\psi$ is a 
skew-symmetric bicharacter of $H$.
Also, for a metric group $(E,q)$, 
let $\mathcal{L}(E,q)$ be the set of Lagrangian subgroups of $E$. 

The following Proposition is a special case of a more general
result proved in \cite{NN}. 

\begin{proposition}\label{bij} There is a natural bijection 
\begin{equation}
\label{Tau}
\tau: C(A)\to \mathcal{L}(A\oplus A^*,q),
\end{equation}
where $q$ is the standard hyperbolic
quadratic form of $A\oplus A^*$ given by $q(a,f)=f(a)$. 
This bijection is given by the formula 
$\tau(H,\psi)=\lbrace{(h,z)|z\in \psi(h)\rbrace}$, 
where $\psi(h)\in H^*=A^*/H_\perp$ is regarded as a coset
of $H_\perp$ in $A^*$.  
\end{proposition}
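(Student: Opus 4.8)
The plan is to exhibit the map $\tau$ explicitly, verify that it is well-defined (i.e. that $\tau(H,\psi)$ really is a Lagrangian subgroup of $(A\oplus A^*, q)$), and then construct an inverse. First I would unwind the definition: given $(H,\psi)\in C(A)$, the bicharacter $\psi$ defines a homomorphism $\widehat\psi\colon H\to H^*$, and since $H^* = \Hom(H,{\bold k}^\times)$ is canonically $A^*/H_\perp$ (restriction of characters from $A$ to $H$ is surjective with kernel $H_\perp$), each $\psi(h)$ is a coset of $H_\perp$ in $A^*$. So $\tau(H,\psi) = \{(h,z)\in A\oplus A^* : h\in H,\ z|_H = \psi(h)\}$ is a subgroup of $A\oplus A^*$ sitting in an extension $0\to H_\perp \to \tau(H,\psi)\to H\to 0$, whence $|\tau(H,\psi)| = |H|\cdot|H_\perp| = |H|\cdot |A|/|H| = |A| = |A\oplus A^*|^{1/2}$. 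That takes care of the order condition for being Lagrangian.

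Next I would check isotropy. For $(h,z),(h',z')\in\tau(H,\psi)$ one computes $q((h,z)+(h',z')) = (z+z')(h+h')$; expanding and using $q(h,z)=z(h)=\psi(h)(h)$, one has to show $\psi(h)(h)=1$, i.e. that the ``diagonal'' value of a skew-symmetric bicharacter is trivial — but that is exactly the defining property $\psi(h,h)=1$ of skew-symmetric bicharacters recalled in the preliminaries. The cross terms $z'(h) z(h')$ must combine to $1$ using that $z,z'$ restrict to $\psi(h),\psi(h')$ and that $\psi$ is a bicharacter ($\psi(h,h') = \psi(h',h)^{-1}$); this is a short symmetry manipulation. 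Hence $\tau(H,\psi)$ is isotropic of the right order, so Lagrangian.

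For injectivity and surjectivity I would go backwards: given a Lagrangian $L\subset A\oplus A^*$, set $H := \{h\in A : (h,z)\in L \text{ for some } z\}$, the image of the first projection, and let $L\cap A^* =: N$ be the kernel of that projection restricted to $L$. Isotropy of $L$ forces $N\subset H_\perp$ (for $f\in N$ and $h\in H$ pick $z$ with $(h,z)\in L$; then $q((h,z)+(0,f))=q(h,z)f(h)$ combined with $q(h,z)=1$ and $q(0,f)=1$ gives $f(h)=1$), and comparing orders $|L|=|A|$, $|L| = |H|\cdot|N|$ gives $|N| = |A|/|H| = |H_\perp|$, so $N = H_\perp$. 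Then the recipe $h\mapsto (z|_H : (h,z)\in L)$ is a well-defined map $H\to A^*/H_\perp = H^*$ (independent of the lift $z$ precisely because $N=H_\perp$), hence a homomorphism $\widehat\psi\colon H\to H^*$, and isotropy of $L$ translates into $\psi$ being a skew-symmetric bicharacter. This $(H,\psi)$ is visibly sent to $L$ by $\tau$, and conversely $\tau(H,\psi)$ recovers $(H,\psi)$, so $\tau$ is a bijection; naturality in $A$ (with respect to isometries/automorphisms, or at least isomorphisms of the pairs $(A,\text{hyperbolic }q)$) is then a routine check that the construction commutes with pushforward along group isomorphisms. The main obstacle, such as it is, is purely bookkeeping: keeping straight the identification $H^* \cong A^*/H_\perp$ and tracking which quadratic/bicharacter identity (the diagonal-triviality of skew-symmetric $\psi$ versus the hyperbolic form $q(a,f)=f(a)$) is being used at each step; there is no conceptual difficulty, and indeed the statement is flagged as a special case of \cite{NN}, so I would keep the argument terse.
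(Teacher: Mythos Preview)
Your proposal is correct and follows essentially the same route as the paper: show $\tau(H,\psi)$ is isotropic of order $|A|$, then build the inverse by projecting a Lagrangian $L$ to $A$ and reading off $\psi$ from the fibers. One small simplification: once you have noted that $\tau(H,\psi)$ is a subgroup (via the extension $0\to H_\perp\to\tau(H,\psi)\to H\to 0$), isotropy only requires checking $q(h,z)=z(h)=\psi(h,h)=1$ on single elements; the cross-term computation for sums is then automatic and can be omitted.
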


\begin{proof} It is clear that the subgroup 
$L=\lbrace{(h,z)|z\in \psi(h)\rbrace}\subset A\oplus A^*$
is isotropic. Also, $|L|=|H|\cdot |H_\perp|=|A|$, so
$L$ is Lagrangian. Thus the map $\tau$ is well defined. 
Now we'll prove that $\tau$ is invertible by constructing 
the inverse map. Namely, given a Lagrangian subgroup $L\subset
A\oplus A^*$, 
set $\sigma(L)=(H,\psi)$, where $H$ is the image of $L$ in $A$,
and 
$$
\psi(h_1,h_2):=(h_1',h_2),
$$
where $h_1'$ is any lifting 
of $h_1$ into $L$. 

To prove that $\sigma$ is well defined, 
we need to show that $\psi(h_1,h_2)$ is independent on the choice
of the lifting $h_1'$. In other words, we must show that 
if $v$ is an element of $L\cap H^*$ then for any $h\in H$ we have
$(v,h)=1$. But this holds because $(v,h)=(v,h')$ 
for any lifting $h'$ of $h$ to $L$, and 
$(v,h')=1$ since $v,h'\in L$, and $b_q$ is the standard inner
product on $A\oplus A^*$. 
  
Now we should prove that $(\psi(h),h)=1$, i.e. that $(h',h)=1$,
if $h\in H$ and $h'$ is a lift of $h$ in $L$.
 We have 
$$
(h',h)=q(h)q(h')/q(h'-h),
$$
Now we see that all three factors on the RHS are equal to $1$:
the first one because $h\in A$, the third one because $h'-h\in A^*$, and 
the second one because $h'\in L$ and $L$ is Lagrangian.  

Finally, we should check that $\sigma$ is indeed inverse to
$\tau$. We have $(\sigma\circ \tau)(H,\psi)=(H,\psi')$, where 
$\psi'(h_1,h_2)=(z_1,h_2)$, where $z_1\in \psi(h_1)$. 
Thus $\psi'=\psi$ and we are done (since $\tau$ is a map of
finite sets). 
\end{proof}

\subsection{The structure of the category ${\rm Bimod}_{\rm ab}$}

Now we will define a functor $T$ 
from the category ${\rm Bimod}_{\rm ab}$ to the full subcategory 
${\rm Lag}_{\rm hyp}$ of ${\rm Lag}$, 
whose objects are groups of the form $A\oplus A^*$ 
with the hyperbolic quadratic form $q$. 
Namely, recall that if $G$ is an abelian group, then equivalence classes 
of indecomposable left module categories over $\Vec_G$ are parametrized by the set
$C(G)$ defined in the previous subsection. 
Now, for any indecomposable
$(\Vec_A,\Vec_B)$-bimodule category $\mathcal{M}$, 
regard $\mathcal{M}$ as a $\Vec_{A\oplus B}$-module category 
via $(a,b)\otimes M=a\otimes M\otimes b^{-1}$, and consider its
equivalence class $[\mathcal{M}]\in C(A\oplus B)$.
Set 
$$
T(\mathcal{M}):=\gamma\tau([\mathcal{M}])\in \mathcal{L}(A\oplus A^*\oplus
B\oplus B^*, q_A\oplus q_B^{-1}),
$$
where $\gamma\in {\rm
Aut}(A\oplus A^*\oplus B\oplus B^*)$ is defined by the formula
$\gamma(a,a^*,b,b^*)=(a,a^*,-b,b^*)$ and $\tau$ is defined in \eqref{Tau}. 
Extend $T$ to decomposable module categories by additivity. 

\begin{theorem}\label{bimo1}
The assignment $T$ is a functor, 
i.e. for any $(\Vec_{A_1},\Vec_{A_2})$-bimodule
category $\N$ and $(\Vec_{A_2},\Vec_{A_3})$-bimodule
category $\N'$ one has
$$
T(\mathcal{N}\boxtimes_{\Vec_{A_2}} \mathcal{N'})=T(\mathcal{N})\circ
T(\mathcal{N'}).
$$ 
\end{theorem}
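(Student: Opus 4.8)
The goal is to show that the bijection $T$ intertwines composition of bimodule categories (i.e., tensor product over $\Vec_{A_2}$) with composition of Lagrangian correspondences. The strategy is to reduce everything to the explicit formulas already proved in the paper: Proposition \ref{tenspro1} computes $\M(H,\psi)\boxtimes_{\Vec_{A_2}}\M(H',\psi')$ as a multiple $m\cdot\M(H'',\psi'')$, and Proposition \ref{bij} (the map $\tau$) translates each pair $(H,\psi)$ into an explicit Lagrangian subgroup. So the proof is essentially a matching of two recipes: the ``algebraic'' recipe for $H''$, $\psi''$ coming from Proposition \ref{tenspro1}, and the ``linear-algebraic'' recipe $M\circ L$ for composing Lagrangian correspondences in the category $\mathrm{Lag}$. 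Since both $T$ and composition are additive, it suffices to check the identity on indecomposable bimodule categories $\N=\M(H,\psi)$ and $\N'=\M(H',\psi')$ (with the appropriate sign twist $\gamma$ built in), so I will fix such representatives throughout.

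\textbf{Step 1: unwind the definitions on both sides.} On the $\mathrm{Lag}$ side, write $L=T(\N)\subset A_1\oplus A_1^*\oplus A_2\oplus A_2^*$ and $M=T(\N')\subset A_2\oplus A_2^*\oplus A_3\oplus A_3^*$; by definition of $M\circ L$ and the accompanying multiplicity $m(M,L)$, the composite $M\bullet L$ is $m(M,L)\cdot(M\circ L)$, where $M\circ L$ consists of those $(x_1,x_3)$ (with $x_i\in A_i\oplus A_i^*$) such that some $x_2\in A_2\oplus A_2^*$ has $(x_1,x_2)\in L$, $(x_2,x_3)\in M$, and $m(M,L)$ counts such $x_2$. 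On the bimodule side, $T(\N\boxtimes_{\Vec_{A_2}}\N')=m\cdot\gamma\tau([\,\M(H'',\psi'')\,])$ with $H''$, $\psi''$ and $m$ as in Proposition \ref{tenspro1}. So I must prove two things: (a) the Lagrangian subgroup $\tau(H'',\psi'')$ (after the sign twists $\gamma$) equals $M\circ L$, and (b) the multiplicities agree, $m = m(M,L)$.

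\textbf{Step 2: identify the subgroups.} For (a), I will use the description of $M\circ L$ from the proof of the Lemma preceding Proposition \ref{groupoi}: $M\circ L$ is the quotient of $(L\oplus M)\cap(\text{diagonal in the }A_2\oplus A_2^*\text{ slot})$ by the ``kernel'' subgroup sitting inside $A_2\oplus A_2^*$. This is manifestly parallel to the construction preceding Proposition \ref{tenspro1}, where $H''$ is the image in $A_1\oplus A_3$ of $(H\cap H')^\perp\subset H\circ H'$, the orthogonal complement being taken with respect to $(\psi\times\psi')|_{H\circ H'}$. The key point is that, under $\tau$, the orthogonality condition defining $(H\cap H')^\perp$ translates exactly into the isotropy/Lagrangian condition cutting out elements of $L\oplus M$ whose $A_2$-components are ``compatible'' — concretely, $\psi(h)\in H_\perp\backslash A_2^*$ is exactly the set of characters $z$ with $(h,z)\in L$. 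I would verify this by a direct element chase: take $(x_1,x_3)\in M\circ L$, lift to $(x_1,x_2)\in L$ and $(x_2,x_3)\in M$, read off via $\sigma=\tau^{-1}$ that this says precisely $(x_1\text{'s }A_1\text{-part},\,x_3\text{'s }A_3\text{-part})$ lies in $H''$ with the character components matching $\psi''$, using that the bicharacter $\psi''$ is the pushforward of $\psi\times\psi'$ — and the sign twists $\gamma$ are exactly what convert the ``$q_2^{-1}$'' on the middle factor into the antidiagonal embedding $h\mapsto(-h,h)$ used in defining $H\circ H'$ and $H\cap H'$.

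\textbf{Step 3: match the multiplicities.} For (b), I compare the two count formulas. From Proposition \ref{tenspro1}, $m=|H\cap H'\cap\mathrm{Rad}(\psi\times\psi')|$ (equivalently the displayed ratio $|H\cap H'|\cdot|(H\cap H')^\perp|\cdot|A_2|/(|H|\cdot|H'|)$). From the Lemma preceding Proposition \ref{groupoi}, $m(M,L)$ is the number of $x_2\in A_2\oplus A_2^*$ completing a given $(x_1,x_3)$, which by that Lemma's proof equals $|M\cap L\cap(A_2\oplus A_2^*)| = |N|$ where $N$ is the kernel group; and there $|L|\cdot|M|/|N^\perp| = |L\cap M\cap(\text{diag})|$, giving $|N|=|L|\cdot|M|/\big(|E_2|\cdot|M\circ L|\big)$ with $E_2=A_2\oplus A_2^*$. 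Plugging $|L|=|A_1\oplus A_2|$ (times nothing, since $L$ is Lagrangian, $|L|^2=|A_1\oplus A_1^*\oplus A_2\oplus A_2^*|$ so $|L|=|A_1|\cdot|A_2|$), similarly $|M|=|A_2|\cdot|A_3|$, $|E_2|=|A_2|^2$, and $|M\circ L|=|A_1|\cdot|A_3|$, yields $|N|=1$ — which is wrong in general, so in fact the multiplicity $m(M,L)$ must be read off more carefully as the order of a genuinely nontrivial group, and the correct bookkeeping is the one already done in the proof of Proposition \ref{tenspro1}: the radical intersection $|H\cap H'\cap\mathrm{Rad}(\psi\times\psi')|$. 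I expect the cleanest route is to avoid recomputing and instead observe that $\sigma=\tau^{-1}$ is a bijection $\mathcal L(A\oplus A^*,q)\to C(A)$, so the two sides of the theorem are each a well-defined bimodule category described as ``(multiplicity)$\cdot$(indecomposable)'', and since the indecomposable factors have already been matched in Step 2, the multiplicities must agree by counting total numbers of simple objects (or, equivalently, by Frobenius–Perron dimension, using §\ref{FPmod}).

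\textbf{Main obstacle.} The genuinely delicate point is Step 2 — the precise bookkeeping of the sign twist $\gamma$ and the antidiagonal embeddings, making sure that ``orthogonal complement under $\psi\times\psi'$ inside $H\circ H'$'' on the algebraic side is literally the same subgroup as ``$(L\oplus M)\cap\text{diagonal}$, modulo its $E_2$-part'' on the Lagrangian side. All the group theory is elementary, but the identification requires care with which factor carries $q$ versus $q^{-1}$ and with the direction conventions for bimodule categories ($(\Vec_B,\Vec_A)$ vs.\ $(\Vec_A,\Vec_B)$). Once that dictionary is set up correctly, everything else — isotropy, the Lagrangian dimension count, and the multiplicity — follows from the computations already carried out in Propositions \ref{tenspro1}, \ref{bij}, and the Lemma before Proposition \ref{groupoi}.
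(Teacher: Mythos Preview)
Your overall strategy is the same as the paper's: reduce to indecomposables, write out $L=\gamma\tau(H,\psi)$ and $L'=\gamma\tau(H',\psi')$ explicitly, and compare the element-by-element description of $L\circ L'$ with the output $(H'',\psi'')$ of Proposition~\ref{tenspro1}. Step~2 is sketched correctly in spirit and is exactly how the paper proceeds (it writes $L$ as the set of $(a_1,f_1,a_2,f_2)$ with $(a_1,-a_2)\in H$ and $(f_1,f_2)-\widehat\psi(a_1,-a_2)\in H_\perp$, and similarly for $L'$, then reads off $H'',\psi''$ directly).

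Step~3, however, has a real gap. Your tautological computation $|L|\cdot|M|/(|E_2|\cdot|M\circ L|)=1$ is correct but says nothing about $|N|$; it simply recovers the fact that $M\circ L$ is Lagrangian. More importantly, your proposed workaround---matching multiplicities by ``counting total numbers of simple objects'' or Frobenius--Perron dimension---cannot work. The multiplicity $m(M,L)$ on the $\mathrm{Lag}$ side is a purely combinatorial count (the number of intermediate $x_2\in A_2\oplus A_2^*$); it carries no a priori categorical meaning, so there is nothing on that side for FPdim or a simple-object count to latch onto. The identity $m=m(M,L)$ is not a formality: it is the substantive half of the theorem and must be checked directly.

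The paper does this by computing $m(M,L)$ as the number of pairs $(a_2,f_2)$, which factors as
\[
m(M,L)=\bigl|\Ker\bigl((H\cap H')^\perp\to A_1\oplus A_3\bigr)\bigr|\cdot |H_\perp\cap H'_\perp|,
\]
the first factor counting choices of $a_2$ and the second counting choices of $f_2$. It then rewrites this using $|H''|=|H\circ H'|/|H\cap H'|$ to obtain exactly the second formula for $m$ in Proposition~\ref{tenspro1}. You should carry out this computation rather than try to sidestep it.
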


\begin{proof}
Let $A_1,A_2,A_3$ be abelian groups, $(H,\psi)\in C(A_1\oplus A_2)$, 
$(H',\psi')\in C(A_2\oplus A_3)$. We would like to find 
$(H'',\psi'')$ such that 
$$
\gamma\tau(H,\psi)\bullet \gamma\tau(H',\psi')=m\cdot \gamma\tau(H'',\psi''),
$$
and compute the value of $m$. 

By the definition of $\tau$, the subgroup 
$L:=\gamma\tau(H,\psi)\subset A_1\oplus A_1^*\oplus A_2\oplus A_2^*$ 
is the set of all $(a_1,f_1,a_2,f_2)$ such that 
$(a_1,-a_2)\in H$ and $(f_1,f_2)-\widehat{\psi}(a_1,-a_2)\in H_\perp$. 
Similarly, the subgroup 
$L':=\gamma\tau(H',\psi')\subset A_2\oplus A_2^*\oplus A_3\oplus A_3^*$ 
is the set of all $(a_2,f_2,a_3,f_3)$ such that 
$(a_2,-a_3)\in H'$ and $(f_2,f_3)-\widehat{\psi'}(a_2,-a_3)\in H_\perp'$. 

Now, $L\circ L'=m\cdot L''$, where 
$L''$ is the set of all $(a_1,f_1,a_3,f_3)$ 
such that there exist $a_2,f_2$ with 
$(a_1,-a_2)\in H$, $(f_1,f_2)-\widehat{\psi}(a_1,-a_2)\in H_\perp$, 
$(a_2,-a_3)\in H'$, and $(f_2,f_3)-\widehat{\psi'}(a_2,-a_3)\in H_\perp'$. 
Moreover, $m$ is the number of pairs $(a_2,f_2)$ satisfying these conditions. 

Let $L''=\gamma\tau(H'',\psi'')$. 
It can be checked directly from the above conditions that $H'',\psi''$ 
are the same as in Proposition \ref{tenspro1}. Moreover, 
the number $m$ is the number of pairs $(a_2,f_2)$, so we have 
$$
m=|{\rm Ker}((H\cap H')^\perp\to A_1\oplus A_3)|\cdot |H_\perp\cap H_\perp'|
$$
(the first factor represents the number of choices of $a_2$, and the second one 
stands for the number of choices of $f_2$). Thus, 
$$
m=\frac{|(H\cap H')^\perp|}{|H''|}\cdot |H_\perp\cap H_\perp'|.
$$
But $H''=H\circ H'/(H\cap H')$, so we get 
$$
m=
\frac{|(H\cap H')^\perp|\cdot |H\cap H'|}{|H\circ H'|}\cdot |H_\perp\cap H_\perp'|,
$$
which coincides with the second formula for $m$ in Proposition \ref{tenspro1}. 
The theorem is proved. 
\end{proof} 

\begin{corollary}\label{equi}
$T$ is an equivalence of categories
${\rm Bimod}_{\rm ab}\to {\rm Lag}_{\rm hyp}$. 
\end{corollary}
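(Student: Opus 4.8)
The plan is to derive the Corollary formally from Theorem~\ref{bimo1}, the bijection $\tau$ of Proposition~\ref{bij}, and the uniqueness (up to reordering) of the decomposition of a semisimple module category into indecomposable ones; all the genuine content is already in the functoriality statement of Theorem~\ref{bimo1}.

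First I would note that $T$ is a functor by Theorem~\ref{bimo1}, and that it is essentially surjective: by definition every object of ${\rm Lag}_{\rm hyp}$ is a metric group of the form $(A\oplus A^{*},q)$ with $q$ the hyperbolic form, and this is $T(\Vec_{A})$. It then remains to prove that for every pair of finite abelian groups $A,B$ the induced map
$$
T:\ \Hom_{{\rm Bimod}_{\rm ab}}(\Vec_{A},\Vec_{B})\ \longrightarrow\ \Hom_{{\rm Lag}_{\rm hyp}}\big(T(\Vec_{A}),\,T(\Vec_{B})\big)
$$
is a bijection. Both sides are free commutative monoids: the right-hand side by the definition of ${\rm Lag}$ as formal $\mathbb{Z}_{+}$-combinations of Lagrangian subgroups, and the left-hand side because every semisimple $(\Vec_{B},\Vec_{A})$-bimodule category is a finite direct sum of indecomposable ones, uniquely up to reordering. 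Since $T$ is additive by construction, and a homomorphism between free commutative monoids is bijective as soon as it is bijective on generators, it suffices to check that $T$ is bijective on the respective sets of generators.

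The generators on the left are equivalence classes of indecomposable $(\Vec_{B},\Vec_{A})$-bimodule categories; as $A,B$ are abelian we have $\Vec_{B}^{\rev}\cong\Vec_{B}$, so these coincide with indecomposable $\Vec_{A\oplus B}$-module categories, which by Section~\ref{VecGmod} are parametrized exactly by the set $C(A\oplus B)$. The generators on the right are the Lagrangian subgroups of $\big(A\oplus A^{*}\oplus B\oplus B^{*},\,q_{A}\oplus q_{B}^{-1}\big)$; using $\mathcal{L}(E,q)=\mathcal{L}(E,q^{-1})$ and the fact that the automorphism $\gamma$ carries the hyperbolic form of $(A\oplus B)\oplus(A\oplus B)^{*}$ onto $q_{A}\oplus q_{B}^{-1}$ (which is precisely why $T(\mathcal{M})$ is Lagrangian for $q_{A}\oplus q_{B}^{-1}$), one identifies these with $\mathcal{L}\big((A\oplus B)\oplus(A\oplus B)^{*},q_{\rm hyp}\big)$. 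By the definition of $T$, its restriction to generators is the composite of the classification bijection $\{\text{indec.\ bimodule categories}\}\xrightarrow{\sim}C(A\oplus B)$, the bijection $\tau$ of Proposition~\ref{bij}, and $\gamma$; hence it is a bijection. Therefore $T$ induces an isomorphism on each $\Hom$-monoid, i.e.\ it is fully faithful, and together with essential surjectivity this shows that $T$ is an equivalence of categories. The only slightly delicate step is the form bookkeeping just invoked (tracking the sign in $\gamma$ and the passage between $q$ and $q^{-1}$), but it is routine and presents no real obstacle.
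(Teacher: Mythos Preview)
Your proof is correct and follows essentially the same approach as the paper, which simply says the result ``follows from Theorem~\ref{bimo1} and Proposition~\ref{bij}.'' You have made explicit what the paper leaves implicit: that $T$ is a functor by Theorem~\ref{bimo1}, essentially surjective by construction, and fully faithful because on the generators of the free commutative monoids $\Hom(\Vec_A,\Vec_B)$ it is given by the bijection $\gamma\circ\tau$ of Proposition~\ref{bij} composed with the classification of indecomposable $\Vec_{A\oplus B}$-module categories.
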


\begin{proof}
This follows from Theorem \ref{bimo1} and Proposition \ref{bij}. 
\end{proof}

\begin{remark}
Note that we have obtained another (direct) proof of 
Corollary \ref{ortho}, which does not use Theorem \ref{breq}.
(Namely, Corollary \ref{ortho} follows from Theorem \ref{bimo1} and 
Proposition \ref{groupoi}.) One can check that the two proofs 
provide the same isomorphism 
$$
\Pic(\Vec_A)\cong O(A\oplus A^*).
$$ 
\end{remark} 

\begin{remark}
The isomorphism of Corollary \ref{ortho} can be understood 
in topological terms as follows. Recall that
$$
\pi_2(B\uuPic(\Vec_A))=A\oplus A^*, 
\pi_3(B\uuPic(\Vec_A))={\bold k}^\times,
$$
and by Proposition \ref{whit}, the Whitehead half-square
$\pi_2\to \pi_3$ is the hyperbolic quadratic form $q$ on
$A\oplus A^*$. Thus, the action of $\pi_1$ on $\pi_2$ must
preserve this form, i.e. we have a homomorphism 
\[
\eta: \Pic(\Vec_A)\to O(A\oplus A^*).
\]
One can show that this $\eta$ coincides with the isomorphism of Corollary \ref{ortho}, i.e. 
with the restriction of $T$ to invertible $\Vec_A$-bimodule
categories. 
\end{remark}
 

\subsection{The number of simple objects in an invertible bimodule category 
over $\Vec_A$}

\begin{proposition}\label{numbersim}
Let $g\in O(A\oplus A^*)$, and $\C_g$ be the corresponding invertible bimodule category. 
Let $P$ be the projection $A\oplus A^*\to A$, and $K$ be the kernel of $P\circ g|_{A^*}$. Then the number 
of isomorphism classes of simple objects of $\C_g$ equals $|K|$. 
\end{proposition}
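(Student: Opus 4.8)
The plan is to reduce the statement to a concrete description of the Lagrangian subgroup $L := T(\C_g)$ associated to $\C_g$ under the equivalence of Corollary~\ref{equi}, and then to count simple objects directly from the combinatorial data $(H,\psi)$ with $L = \gamma\tau(H,\psi)$. Recall from Section~\ref{VecGmod} that the simple objects of $\M(H,\psi)$ as a left $\Vec_{A\oplus A^*}$-module category are in bijection with the cosets $(A\oplus A^*)/H$, so the number of simple objects of $\C_g$ equals $|A\oplus A^*|/|H| = |A|^2/|H|$. Thus the whole problem becomes: identify $H$ (the image in $A\oplus A^*$ of the Lagrangian $\tau^{-1}(\gamma^{-1}L)$) in terms of $g$, and show $|A|^2/|H| = |K|$, where $K = \Ker(P\circ g|_{A^*})$.

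First I would set up notation carefully. Since $\C_g$ is invertible, by Corollary~\ref{inve}(i) the bicharacter $\psi$ on $H$ is nondegenerate, and by Proposition~\ref{bij} the subgroup $H$ is exactly the image of the Lagrangian $\gamma^{-1}L \subset A\oplus A^*$ in the first factor... wait — here one must be careful about which metric group $\C_g$ lives in. As a $\Vec_A$-bimodule category, $\C_g$ is regarded as a $\Vec_{A\oplus A}$-module category (via $(a,b)\otimes M = a\otimes M\otimes b^{-1}$), so $T(\C_g)$ is a Lagrangian in $A\oplus A^*\oplus A\oplus A^*$ with the form $q_A \oplus q_A^{-1}$, and the twist $\gamma$ of Section~\ref{Sect10} converts this into the graph-like datum. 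The cleanest route is: by Corollary~\ref{ortho} and the last Remark of Section~\ref{Sect10}, the element $g\in O(A\oplus A^*)$ assigned to $\C_g$ is precisely the one whose graph (composed with the sign twist) is the Lagrangian $T(\C_g)$. Concretely, $T(\C_g) = \{(v, g(v)) : v\in A\oplus A^*\}$ up to the standard sign normalization, and under $\tau^{-1}$ this graph corresponds to the pair $(H,\psi)$ where $H$ is the image of $\{(v,g(v))\}$ under the projection to the ``$A$-coordinates'' of $A\oplus A^*\oplus A\oplus A^*$.

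The key step is then the linear-algebra computation of this projection. Writing a general element of $A\oplus A^*\oplus A\oplus A^*$ as $(a_1, f_1, a_2, f_2)$ and recalling that $\tau$ and $\gamma$ extract $H$ as the image in the $A$-coordinates $(a_1, a_2)$ (after the sign flip $\gamma$), the subgroup $H$ is the image of the map $A\oplus A^* \to A\oplus A$ sending $v\mapsto (P(v), P(g(v)))$, where $P: A\oplus A^*\to A$ is the first projection. Hence $|H| = |A\oplus A^*| / |\Ker(v\mapsto (P(v),Pg(v)))|$. An element $v$ lies in that kernel iff $P(v)=0$, i.e. $v\in A^*$, and $P(g(v))=0$, i.e. $v\in K = \Ker(P\circ g|_{A^*})$. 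So the kernel is exactly $K$, giving $|H| = |A|^2/|K|$, and therefore the number of simple objects of $\C_g$ equals $|A|^2/|H| = |K|$, as claimed.

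The main obstacle will be pinning down, without sign errors or factor-of-$|A|$ slips, the precise identification ``$g \leftrightarrow T(\C_g)$ is the graph of $g$'' and the precise sense in which the extraction of $H$ from a Lagrangian of $A\oplus A^*\oplus A\oplus A^*$ amounts to projecting onto the two $A$-summands. This requires unwinding the definition of $T$ (the $\gamma$-twist, the convention $(a,b)\otimes M = a\otimes M\otimes b^{-1}$, and the formula for $\tau$ in Proposition~\ref{bij}), together with the Remark at the end of Section~\ref{Sect10} identifying $\eta = T|_{\text{invertible}}$ with the isomorphism of Corollary~\ref{ortho}. Once that dictionary is fixed, the count is the one-line kernel computation above; alternatively, if one wants to avoid invoking that Remark, one can instead compute $|H|$ directly from Proposition~\ref{tenspro1} applied to $\C_g \bt_{\Vec_A} \C_g^{\op} \cong \Vec_A$, which forces the coefficient $m=1$ and pins down $|H|$ in terms of $g$ — but the graph description is more transparent and is the route I would take.
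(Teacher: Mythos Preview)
Your proposal is correct and follows essentially the same route as the paper: identify the Lagrangian attached to $g$ with the graph $\{(v,g(v))\}\subset A\oplus A^*\oplus A\oplus A^*$, take $H\subset A\oplus A$ to be its projection onto the two $A$-coordinates, and count simple objects as $|A|^2/|H|$. Your kernel computation $|H|=|A\oplus A^*|/|K|$ via $\Ker\bigl(v\mapsto (Pv,Pg(v))\bigr)=K$ is a slightly cleaner packaging of the same count the paper does by factoring through the surjection $H\to A$ with kernel $\Image(P\circ g|_{A^*})$; the $\gamma$-sign you worry about only replaces the second $A$-coordinate by its negative and so does not affect $|H|$.
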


\begin{proof} The Lagrangian subspace $L$ in $A\oplus A^*\oplus A\oplus A^*$ correpsonding to 
$g$ is the set of $(a,f,g(a,f))$, where $a\in A$, $f\in A^*$. 
The corresponding subgroup $H$ in $A\oplus A$ (such that $\C_g=\M(H,\psi)$ for some $\psi$) is the projection 
of $L$ to $A\oplus A$. Thus, $H$ projects onto $A$ (via the first coordinate), and the kernel is 
the set of possible first coordinates of $g(a,f)$, $f\in A^*$, i.e. the image of 
$P\circ g|_{A^*}$. Thus, $|H|=|A|/|K|$, and we re done. 
\end{proof}

\subsection{Integral $\Vec_A$-bimodule categories}

%

Recall that for an integral fusion category $\C$ we defined in Section~\ref{int bim} the  categorical 2-subgroup 
$\uuPic_+(\C)\subset \uuPic(\C)$ consisting of integral invertible $\C$-bimodule categories.

\begin{proposition}\label{ortho1}
If $A$ is an abelian group  then
$\Pic_+(\Vec_A)=SO(A\oplus A^*)$.   
\end{proposition}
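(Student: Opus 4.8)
The plan is to show that for $A$ a finite abelian group, the determinant homomorphism $\det : O(A\oplus A^*)\to \q$ defined in Section~\ref{SubSectSO} corresponds, under the isomorphism $\Pic(\Vec_A)\cong O(A\oplus A^*)$ of Corollary~\ref{ortho}, exactly to the $\BZ/2$-valued invariant that cuts out $\Pic_+(\Vec_A)$ inside $\Pic(\Vec_A)$. Recall from Section~\ref{int bim} that an invertible $\Vec_A$-bimodule category $\M$ is integral iff $\FPdim(M)\in\BZ$ for every object $M$, equivalently iff $\FPdim(\underline\Hom(M,M))$ is a perfect square for every simple $M$; and that the obstruction to integrality takes values in an elementary abelian $2$-group. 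So the statement amounts to: $\M(H,\psi)$ is integral iff the corresponding orthogonal transformation $g$ has $\det(g)=1$ in $\q=\mathbb{Q}^\times_{>0}/(\mathbb{Q}^\times_{>0})^2$.

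First I would compute the Frobenius-Perron dimensions in $\M(H,\psi)$ explicitly. For an invertible $\Vec_A$-bimodule category, by Corollary~\ref{inve} we have $\M\cong \M(H,\psi)$ with $\psi$ nondegenerate on $H$; the simple objects are indexed by $A/H$ (viewing $\M(H,\psi)$ as a left $\Vec_A$-module category), each of FP-dimension $|H|^{1/2}$, and $\sum \FPdim(M)^2 = (|A|/|H|)\cdot|H| = |A| = \FPdim(\Vec_A)$, consistent with the Proposition in Section~\ref{FPmod}. Hence $\M(H,\psi)$ is integral precisely when $|H|$ is a perfect square. So the task reduces to the purely group-theoretic claim: under the identification $\Pic(\Vec_A)\cong O(A\oplus A^*)$, the subgroup of $g$ with $|H_g|$ a square (where $\M(H_g,\psi_g)=\C_g$) is exactly $\ker(\det)=SO(A\oplus A^*)$.

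Next I would use Proposition~\ref{numbersim}: the number of simple objects of $\C_g$ is $|K|$, where $K=\ker(P\circ g|_{A^*})$ and $P:A\oplus A^*\to A$ is the projection; and since $\C_g\cong\M(H,\psi)$ has $|A/H|$ simple objects, $|H|=|A|/|K|$. Thus $|H|$ is a square iff $|A|/|K|$ is a square iff $|K|\cdot|A|$ is a square (as $|K|$ divides $|A|$, or more directly since $|A|/|K|$ being a square is equivalent to $|K|\equiv|A|$ in $\q$). Now I would relate $|K|$ to $\det(g)=|(g-1)(A\oplus A^*)|$ in $\q$. Using Proposition~\ref{dete1}, $\det(g)=d(L,g(L))$ for any Lagrangian $L$; taking $L=A^*\subset A\oplus A^*$ (which is Lagrangian for the hyperbolic form), $d(A^*,g(A^*))$ is the image of $|A^*|/|A^*\cap g(A^*)|=|A|^{1/2}\cdot|A|^{1/2}/|A^*\cap g(A^*)|$... more carefully, $d(A^*,g(A^*))=|A|^{1/2}/|A^*\cap g(A^*)|\in\BN$ mapped to $\q$, and $A^*\cap g(A^*)=g(\ker(P\circ g|_{A^*}))=g(K)\cong K$, so $d(A^*,g(A^*))$ is the image of $|A|^{1/2}/|K|$, i.e. $\det(g)=|A|/|K|^2 = |A|\cdot|K|^{-2}\equiv |A|\cdot|K| \pmod{(\mathbb{Q}^\times_{>0})^2}$ since $|K|^{-2}$ is a square. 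Hence $\det(g)=1$ in $\q$ iff $|A|\cdot|K|$ is a square iff $|H|=|A|/|K|$ is a square iff $\C_g$ is integral. Combined with the fact that $\Pic_+$ is by definition the subgroup of integral classes, this gives $\Pic_+(\Vec_A)=\ker(\det)=SO(A\oplus A^*)$.

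The main obstacle I expect is pinning down the normalization and the exact identification $A^*\cap g(A^*) = g(K)$ versus $K$ itself, and being careful that the two Lagrangians $A$ and $A^*$ inside $A\oplus A^*$ with the hyperbolic form are genuinely Lagrangian and that the formula $d(A^*, g(A^*)) = |A|^{1/2}/|A^*\cap g(A^*)|$ is applied with the correct conventions (in particular that $|A\oplus A^*|^{1/2}=|A|$). A secondary check is confirming that the isomorphism $\Pic(\Vec_A)\cong O(A\oplus A^*)$ used here is compatible with the description via $\M(H,\psi)\mapsto T(\M)$ from Corollary~\ref{equi} and Proposition~\ref{bij}, so that Proposition~\ref{numbersim} (phrased in terms of $g$) and the FP-dimension computation (phrased in terms of $(H,\psi)$) refer to the same bijection; this is exactly the content of the last Remark in Section~\ref{Sect10}, so it may simply be cited. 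Everything else is routine bookkeeping in $\q$.
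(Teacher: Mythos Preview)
Your approach is essentially the paper's: both use Proposition~\ref{numbersim} to see that every simple object of $\C_g$ has Frobenius--Perron dimension $(|A|/|K|)^{1/2}$, and then Proposition~\ref{dete1} with $L=A^*$ to identify $|A|/|K|$ with $\det(g)$ in $\q$. The paper first reduces to $A$ a $p$-group via Proposition~\ref{relp}; your argument shows this step is unnecessary.

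Two slips to clean up. First, Corollary~\ref{inve} is not the right citation: it concerns invertibility in $\underline{\rm Pic}(\Vec_A)$, and when an arbitrary invertible \emph{bimodule} category $\C_g$ is viewed as a left $\Vec_A$-module $\M(H,\psi)$ with $H\subset A$, the bicharacter $\psi$ need not be nondegenerate. What you actually need is only that $\C_g$ is indecomposable as a left module (Corollary~\ref{inv --> indec}), hence of the form $\M(H,\psi)$ by Section~\ref{VecGmod}; you never use $\psi$. Second, your arithmetic wobbles exactly where you feared: since $|A\oplus A^*|^{1/2}=|A|$ (as you note yourself), one has $d(A^*,g(A^*))=|A|/|A^*\cap g(A^*)|=|A|/|K|$ directly, not $|A|^{1/2}/|K|$, and then $|A|/|K|\equiv |A|\cdot|K|$ in $\q$ because their ratio is $|K|^2$. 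With these fixes the argument is complete.
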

\begin{proof}
This follows easily from Corollary \ref{ortho}. 
Namely, by Proposition \ref{relp}, 
we may assume without loss of generality that 
$A$ is a $p$-group for some prime $p$. 
In this case, the dimensions of simple objects in a bimodule category 
are either integer or half-integer powers of $p$.

Let $\C=\Vec_A$. For $g\in \Pic(\C)=O(A\oplus A^*)$, 
let $P: A\oplus A^*\to A$ be the projection, 
$K$ be the kernel of $P\circ g|_{A^*}$, and 
$I$ be the image of $P\circ g|_{A^*}$.
Then by Proposition \ref{numbersim},  
the dimensions of simple objects 
of $\C_g$ are $(|A|/|K|)^{1/2}=|I|^{1/2}$
(as $I=A^*/K$). This is an integer  
if and only if $|I|=p^n$, where 
$n$ is even, i.e. if and only if $d(A^*,g(A^*))=1$, 
which implies the statement by Proposition \ref{dete1}. 
\end{proof}

\section{Appendix: Group extensions as $G$-graded fusion categories}

\vskip .05in
\centerline{\bf by Ehud Meir}
\vskip .05in

\subsection{Introduction}
In this appendix we will discuss a special class of extensions of a fusion category by a finite group. Let $\Ga$ be a finite group which fits
into a short exact sequence of groups $1\rightarrow N \rightarrow \Ga\rightarrow G\rightarrow 1$. Suppose that we have a 3-cocycle
$\omega\in H^3(\Ga,{\bold k}^\times)$, and the corresponding
fusion category $\C = Vec_{\Ga,\omega}$. This category has a
subcategory $\D = Vec_{N,\omega}$ 
(where by $\omega$ we also mean the restriction of $\omega$ to $N$), and $\C$ is a $G$-extension of $\D$. It is possible to
classify directly extensions of $\D$ by $G$ which are also pointed; one needs to give an extension $\Ga$ of $G$ by $N$, and to give an
extension of the cocycle $\omega$ on $N$ to a cocycle on $\Ga$. We will explain here why this solution and the solution given by the theory
of $G$-extensions developed in the paper are equivalent. We will do so in the following way: we will take a parameterization $(c,M,\alpha)$
of a pointed $G$-graded extension of $\D$, as in Theorem \ref{extcl}, and we will explain why this parameterization is equivalent to giving an
extension $\Ga$ of $G$ by $N$ and an extension of $\omega$ to a cocycle on $\Ga$.
In order to do so we first study the groups $\Aaut$ and $\Oout$ of tensor autoequivalences and outer tensor autoequivalences of $\D$,
respectively, since these two groups will play a decisive role in
understanding the triple $(c,M,\alpha)$. We then describe the
group $T={\rm Inv}
(\mathcal{Z}(\D))$ of invertible objects of the center, in order to understand the obstruction $O_3(c)$ which lies in $H^3(G,T)$. Using
this, we will explain how to ``translate'' a triple $(c,M,\alpha)$ to an extension $\Ga$ of $G$ by $N$ together with a 3-cocycle on $\Ga$
which is an extension of $\omega$.
If $H$ is any finite group and $\omega\in H^3(H,{\bold k}^\times)$, we will denote the simple objects of $Vec_{H,\omega}$ by $\{V_h\}_{h\in H}$.

\subsection{The groups $\Aaut$ and $\Oout$}
Let $\Phi\in \Aaut$. By considering the way in which $\Phi$ acts on simple objects of $\D$ (which correspond to elements of $N$) we get an automorphism $\phi$ of $N$.
The additional data which we need in order to turn $\Phi$ into a
tensor autoequivalence of $\D$ is an isomorphism, for every
$a,b\in N$, 
$$
\Phi(V_{\phi^{-1}(a)})\tensor\Phi(V_{\phi^{-1}(b)})\rightarrow
\Phi(V_{\phi^{-1}(a)}\tensor V_{\phi^{-1}(b)}).
$$
This isomorphism is
given by a scalar which we will denote $\gamma_{\Phi}(a,b)$. It
is easy to see that the equation that $\gamma_{\Phi}$ should
satisfy is 
$$
\del
\gamma_{\Phi}(a,b,c) =
\omega(\phi^{-1}(a),\phi^{-1}(b),\phi^{-1}(c))\omega^{-1}(a,b,c)
= \phi\cdot\omega/\omega.
$$
 In other words, in order
for $\phi$ to furnish a tensor autoequivalence, 
it is necessary and sufficient that 
$\phi\cdot\omega = \omega$ in $H^3(N,{\bold k}^\times)$. We will
denote the subgroup of all such automorphisms by ${\rm Aut}(N,\omega)$. We thus have an onto map $\pi:\Aaut\mapsonto {\rm Aut}(N,\omega)$. A direct
calculation shows that the kernel of this map is $H^2(N,{\bold
k}^\times)$. We thus have a short exact sequence \begin{equation}1\rightarrow H^2(N,{\bold k}^\times)\rightarrow
\Aaut\rightarrow {\rm Aut}(N,\omega)\rightarrow 1\end{equation} Notice
that in the case $\omega\neq 1$ this sequence does not necessarily split.

For every $n\in N$ we have an autoequivalence $C_n$ of
conjugation by $V_n$. This is the autoequivalence which sends the object $V_a$ to
$(V_n\tensor V_a)\tensor V_{n^{-1}}$, and the tensor structure is defined in the obvious way.
Notice that in particular this gives us a canonical 2-cochain $t_n$ such that $\del t_n = \omega(n^{-1}?n)/
\omega(?)$. As expected, this defines a homomorphism of groups $Con: N\rightarrow \Aaut$.  The image of $Con$ is a normal subgroup, and we
will denote the quotient of $\Aaut$ by $im(Con)$ by $\Oout$.

\subsection{The group ${\rm Inv}(\mathcal{Z}(\D))$}
We will now describe the group $T = {\rm Inv}(\mathcal{Z}(\D))$. This is a special case of Theorem 5.2 of \cite{GN}, where the group of invertible objects of a general group-theoretical category was described. An invertible object of $\mathcal{Z}(\D)$ would be an invertible object of $\D$
(that is $V_z$, for some $z\in N$), such that for every 
$a\in N$, we have an isomorphism $V_z\otimes V_a\rightarrow
V_a\tensor V_z$ (and thus, $z\in Z(N)$, the center of $N$). The
element $z$ should satisfy however another condition. The map
$V_z\tensor V_a\rightarrow V_a\tensor V_z$ (if it exists) is just
multiplication by a scalar. Denote this scalar by $r(a)$. Then a
direct calculation shows that the set of scalars $r(a)$ will define on $V_z$ a structure of a
central object if and only if the equation 
$$
r(a)r(b)r(ab)^{-1} = \omega(z,a,b)\omega(a,b,z)\omega^{-1}(a,z,b)
$$ 
holds. We have the following
fact, which can be easily proved directly:
\begin{fact} For every $z\in Z(N)$, the function \begin{equation}c_z(a,b)=\omega(z,a,b)\omega(a,b,z)\omega^{-1}(a,z,b)\end{equation} is a 2-cocycle of $N$ with values
in ${\bold k}^\times$. The conjugation map $Con:N\rightarrow \Aaut$ maps $Z(N)$ to $H^2(N,{\bold k}^\times)$ via $z\mapsto c_z$.\end{fact}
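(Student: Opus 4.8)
The plan is to verify the cocycle identity for $c_z$ by a direct application of the $3$-cocycle identity for $\omega$, and then to identify the homomorphism $Z(N)\to H^2(N,{\bold k}^\times)$ induced by $Con$ with $z\mapsto [c_z]$, using the exact sequence $1\to H^2(N,{\bold k}^\times)\to\Aaut\to{\rm Aut}(N,\omega)\to 1$ together with an explicit computation of the tensor structure of the conjugation autoequivalence $C_z$.

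First I would prove that $\del c_z\equiv 1$. Write $c_z=c_z^{(1)}\,c_z^{(2)}\,(c_z^{(3)})^{-1}$, where $c_z^{(1)}(a,b)=\omega(z,a,b)$, $c_z^{(2)}(a,b)=\omega(a,b,z)$, and $c_z^{(3)}(a,b)=\omega(a,z,b)$. Applying the $3$-cocycle identity for $\omega$ to the quadruples $(z,a,b,c)$, $(a,b,c,z)$, $(a,z,b,c)$ and $(a,b,z,c)$, and using that $z$ is central in $N$ to identify terms such as $\omega(a,bz,c)$ with $\omega(a,zb,c)$, one rewrites each of $\del c_z^{(1)}$, $\del c_z^{(2)}$, $\del c_z^{(3)}$ as an explicit product of values of $\omega$; in the combination $\del c_z^{(1)}\cdot\del c_z^{(2)}\cdot(\del c_z^{(3)})^{-1}$ every factor then cancels. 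This is pure bookkeeping. Alternatively, granting the second assertion of the Fact, $\del c_z\equiv 1$ is automatic: $c_z$ represents the class of $C_z$, and the canonical cochain $t_z$ attached to $C_z$ satisfies $\del t_z=\omega(z^{-1}?z)/\omega(?)$, which is the trivial $3$-cochain precisely because $z$ is central.

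For the second assertion I would argue in two steps. Composing $Con$ with the projection $\pi:\Aaut\mapsonto{\rm Aut}(N,\omega)$ gives the conjugation action of $N$ on itself (since $C_n$ sends $V_a$ to $V_{nan^{-1}}$), whose kernel is exactly $Z(N)$; hence $Con(Z(N))\subseteq\ker\pi=H^2(N,{\bold k}^\times)$, and $Con$ restricts to a group homomorphism $Z(N)\to H^2(N,{\bold k}^\times)$. To identify it, note that for $z\in Z(N)$ the underlying automorphism of $C_z$ is the identity, so its tensor structure is encoded by a single $2$-cochain $J_z\in C^2(N,{\bold k}^\times)$ which, under the identification $\ker\pi\cong H^2(N,{\bold k}^\times)$, represents $[C_z]$. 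Writing out the associativity and duality isomorphisms that enter the natural transformation $C_z(V_a)\tensor C_z(V_b)\to C_z(V_a\tensor V_b)$ — in particular the cancellation of the middle $V_{z^{-1}}\tensor V_z$ — one computes $J_z$ explicitly in terms of $\omega$ and checks that $J_z$ and $c_z$ differ by the coboundary of a $1$-cochain assembled from the $\omega$-factors in the chosen evaluation maps for $V_z$. Thus $[C_z]=[c_z]$ in $H^2(N,{\bold k}^\times)$.

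The computation establishing $\del c_z\equiv 1$ is entirely mechanical, as is the verification that $Con$ restricts to $Z(N)$. I expect the main technical obstacle to be the last step: pinning down $J_z$ up to coboundary requires fixing normalizations for the dual objects $V_{z^{-1}}$ and tracking carefully which instances of $\omega$ appear in $C_z$'s tensor structure. This is of the same elementary nature as the rest, and once it is carried out both halves of the Fact follow at once, with $\del c_z\equiv 1$ recoverable as a consistency check.
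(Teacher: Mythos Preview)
The paper does not actually prove this Fact; it simply asserts that it ``can be easily proved directly.'' Your proposal supplies precisely the kind of direct verification the paper has in mind---applying the $3$-cocycle identity for $\omega$ to establish $\del c_z=1$, and unwinding the tensor structure of the conjugation functor $C_z$ (using the canonical cochain $t_z$ alluded to in the paragraph preceding the Fact) to identify its class in $H^2(N,{\bold k}^\times)$---so your approach is correct and is essentially what the paper intends.
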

So conjugation by $V_z$ where $z\in Z(N)$ is not necessarily the trivial autoequivalence of $\D$. It is the autoequiovalence given by the 2-cocycle
$c_z$. An object $V_z$, for $z\in Z(N)$ would have a structure of a central object if and only if conjugation by $V_z$ is trivial, that is,
if and only if $c_z$ is the trivial cocycle. We will denote the kernel of $z\mapsto c_z$ by $Z(N,\omega)$ (so this is also the kernel of
$N\rightarrow {\rm Aut}_{\tensor}(\D)$). Thus, we have an onto map $T\mapsonto Z(N,\omega)$. What would be its kernel? To give $V_1$ a structure of
an object of $\mathcal{Z}(\D)$ is the same thing as to give a function $r:N\rightarrow {\bold k}^\times$ which satisfies $r(a)r(b)=r(ab)$, i.e. a 1-
cocycle. Since 1-coboundaries are trivial, we can describe $T$ as an extension of the form \begin{equation}1\rightarrow H^1(N,{\bold k}^\times)\rightarrow T\rightarrow
Z(N,\omega)\rightarrow 1.\end{equation} In case $\omega\neq 1$,
this sequence does not necessarily split.

The group $\Aaut$ acts naturally on $T$. As objects of $T$ are central in $\D$, it is easy to see that inner automorphisms would act trivially
on $T$. We therefore have an induced action of $\Oout$ on $T$.

\subsection{The homomorphism $c$}
If $\C$ is a pointed extension of $\D$, it is easy to see that
for every $g\in G$, the bimodule category $\D_g$ is a
quasi-trivial bimodule (as defined in Section \ref{quasitr}). 
It follows that there are autoequivalences $\Phi(g)\in \Aaut$ for $g\in G$, such  that $\D_g\cong \D^{\Phi(g)}$, that is, $\D_g$ is the same category as $\D$, but the action of $\D\rtensor \D^{op}$ is given by \begin{equation}(V_a\rtensor V_c) \tensor V_b = (V_a\tensor V_b)\tensor\Phi(g)(V_c).\end{equation} It can easily be seen that the bimodule category $\D_g$
defines the autoequivalence $\Phi(g)$ only up to conjugation by an invertible object of $\D$. So $\Oout$ is a subgroup of ${\rm BrPic}(\D)$, and the image of $c:G\rightarrow {\rm BrPic}(\D)$ lies inside $\Oout$. For each $g\in G$, choose an autoequivalence $\Phi(g)$ of $\D$, whose image in $\Oout$ is $c(g)$. We thus have an isomorphism of functors \begin{equation}\label{multaut}p_{g,h}:\Phi(g)\Phi(h)\stackrel{\cong}{\rightarrow}C_{n(g,h)}\Phi(gh),\end{equation} where $n(g,h)\in N$. Notice that we need to make a choice here, as $n(g,h)$ is defined only up to a
coset of $Z(N,\omega)$ in $N$. We can think of the morphism $p_{g,h}$ as a 1-cochain which satisfies a certain boundary condition. We also make a choice in choosing the $p_{g,h}$'s. As explained above, we will think of $\Phi(g)$ as an automorphism $\phi(g)$ of $N$, together with a 2-cochain $\gamma_g$ on $N$ which satisfies \begin{equation}\label{defgamma}\del\gamma_g = \phi(g)\cdot\omega/\omega.\end{equation} Equation (\ref{multaut}) simply means
that we have equality of automorphisms of $N$, $\phi(g)\phi(h)=c_{n(g,h)}\phi(gh)$, where $c_n$ means the automorphism of conjugation by $n$,
and also that the 2-cocycle \begin{equation}U_{g,h}= \gamma_g (\phi(g)\cdot\gamma_h) t_{n(g,h)}^{-1}(c_n\cdot\gamma_{gh}^{-1})\end{equation} where $t_n$ was described above, is trivial, and equals to $\del p_{g,h}$ (this is the boundary
condition that $p_{g,h}$ should satisfy in order to be an isomorphism between the functors described above).

\subsection{The first obstruction}
We will explain now what the first obstruction $O_3(c)$ looks like in our context. Recall that $O_3(c)$ is an element of $H^3(G,T)$. Assume
that $g,h,k$ are elements of $G$. Let us describe $O_3(c)(g,h,k)\in T$. In order to do so we need to choose equivalences of
$\D$-bimodule categories $\D_g\rtensor_{\D}\D_h\cong \D_{gh}$ for every $g,h\in G$. By the universal property of tensor product of bimodule
categories, this is the same as to give a balanced $\D$-bimodule functor $F_{g,h}:\D_g\rtensor\D_h\rightarrow \D_{gh}$ for every $g,h\in G$,
such that the universal property from Definition 3.3 holds. We choose \begin{equation}F_{g,h}(V_a\rtensor V_b) = (V_a\otimes\Phi(g)(V_b))\otimes V_{n
(g,h)}.\end{equation} The isomorphism $p_{g,h}$ of autoequivalences of $\D$ given by Equation (\ref{multaut}) equips the functor $F_{g,h}$ with a structure of a
balanced $\D$-bimodule functor. The idea now is that we have two functors from $\D_g\rtensor\D_h\rtensor\D_k$ into $\D_{ghk}$, namely $F_
{g,hk}F_{h,k}$ and $F_{gh,k}F_{g,h}$. As both functors can be used in order to identify $\D_{ghk}$ with $\D_g\rtensor_{\D}\D_h\rtensor_{\D}
\D_k$, there is an equivalence of $\D$-bimodule categories $y_{g,h,k}:\D_{ghk}\rightarrow \D_{ghk}$ such that $F_{g,hk}F_{h,k}\cong y_{g,h,k}
F_{gh,k}F_{g,h}$. As $\D$-bimodule equivalences of $\D_{ghk}$ correspond to elements of $T$ as explained in Section \ref{obsten}, this $y_{g,h,k}$
corresponds to $O_3(c)(g,h,k)$.
Using these considerations, a more explicit description of $O_3(c)$ can be given in the following way: for $g,h,k\in G$, the isomorphisms of
functors given in Equation (\ref{multaut}) give us the following isomorphism of functors:
\begin{equation}C_{n(g,h)n(gh,k)n(g,hk)^{-1}\phi(g)(n(h,k)^{-1})}\cong \\ \end{equation}
\begin{equation*}C_{n(g,h)}C_{n(gh,k)}C^{-1}_{n(g,hk)}\Phi(g)C^{-1}_{n(h,k)}\Phi(g)^{-1}\\ \end{equation*}
\begin{equation*}\cong \Phi(g)\Phi(h)\Phi(gh)^{-1}\Phi(gh)\Phi(k)\Phi(ghk)^{-1}\Phi(ghk)\Phi(hk)^{-1}\Phi(g)^{-1}\\\end{equation*}
\begin{equation*}\Phi(g)\Phi(hk)\Phi(k)^{-1}\Phi(h)^{-1}\Phi(g)^{-1}\cong Id.\end{equation*}
But to give an isomorphism of functors $C_n\cong Id$, is the same
thing as to give a structure of a central object on $V_n$. This
(invertible) central object $V_{n(g,h)n(gh,k)n(g,hk)^{-1}\phi(g)(n(h,k)^{-1})}$ would be $O_3(c)(g,h,k)$. Notice that choosing different isomorphisms $p_{g,h}$ or different coset representatives $n(g,h)$ would change $O_3(c)$ only by a coboundary, and thus will give an equivalent cocycle.
We will be interested in the case in which $O_3(c)$ vanishes in $H^3(G,T)$. In case it vanishes, we will call an element $\rho\in C^2(G,T)$
which satisfies $\del \rho = O_3(c)$ a \italic{solution} for $O_3(c)$ (and use similar terminology for other obstructions). The choice of a
solution in this case therefore corresponds to the choice of a
system of products. In our case this is equivalent to choosing
the elements $n(g,h)$ and the morphisms $p_{g,h}$ in such a way that the obstruction we get is the trivial 3-cocycle. This means that the equation $n(g,h)n(gh,k) = \phi(g)(n(h,k))n(g,hk)$ holds in $N$ (and not only
up to a coset of $Z(N,\omega)$), and also that the functions
$p_{g,h}$ satisfy a certain boundary condition which we will consider
later.

\subsection{Vanishing of the first obstruction, the Eilenberg - Mac Lane obstruction, and the choice of a solution}
The description of $T$ as an extension of $Z(N,\omega)$ by $H^1(N,{\bold k}^\times)$ will help us understand the vanishing of $O_3(c)$ in two steps.
Assume first that we know that the image of $O_3(c)$ in $H^3(G,Z(N,\omega))$ (which will be denoted by $\overline{O_3(c)}$) vanishes. This means
that we can change the elements $n(g,h)$ by elements of $Z(N,\omega)$ (that is, to take different coset representatives) in such a way that
the equation \begin{equation}\label{associativity}n(g,h)n(gh,k)=\phi(g)(n(h,k))n(g,hk)\end{equation} holds in $N$. A solution to $\overline{O_3(c)}$ in $H^3(G,Z
(N,\omega))$  would therefore be a choice of coset
representatives $n(g,h)$ which satisfy 
Equation (\ref{associativity}). This would give us a
group extension \begin{equation}1\rightarrow N\rightarrow \Ga\rightarrow G\rightarrow 1\end{equation} We will think of elements of $\Ga$ as products of the form
$n\bar{g}$ where $n\in N$ and $g\in G$. The product of two such
elements would be $n\bar{g}m\bar{h} =
n\phi(g)(m)n(g,h)\overline{gh}$. Equation (\ref{associativity})
is thus equivalent to the associativity of $\Ga$. It can be
checked that the image of $O_3(c)$ in $H^3(G,Z(N))$ coincides
with the Eilenberg-Mac Lane obstruction for the existence of a
group extension of $G$ by $N$ with the given ``outer'' action
$\bar{c}:G\rightarrow \Oout\rightarrow {\rm Out}(N)$. See \cite{Mac} for a description of this obstruction. Suppose that we have
chosen a solution $\rho$ for $\overline{O_3(c)}$ (and therefore we get a group extension $\Ga$ of $G$ by $N$). Lift $\rho$ to a 2-cochain
$\tilde{\rho}$ of $G$ with values in $T$. The cocycle $O_3(c) \del\tilde{\rho}^{-1}$ has all its values in the subgroup $H^1(N,{\bold k}^\times)$. It
is easy to see that the class of this cocycle in $H^3(G,H^1(N,{\bold k}^\times))$ is well defined and does not depend on the choice of the particular lifting, but only
on the choice of the solution $\rho$. We will denote this cocycle by $\widehat{O_3(c)}_{\rho}$. It is easy to see that the vanishing of $O_3(c)$
is equivalent to the fact that $\overline{O_3(c)}$ vanishes, and that we can find for it a solution $\rho$ such that $\widehat{O_3(c)}_{\rho}$
vanishes as well. A solution for $O_3(c)$ will then be of the form $\tilde{\rho} \mu$, where $\mu$ is a solution for $\widehat{O_3(c)}_{\rho}$.
So the situation we will consider from now on is the following: we have a group extension $\Ga$ of $G$ by $N$, and we also have $\widehat
{O_3(c)}_{\rho}$, the ``remainder'' of the first obstruction $O_3(c)$.
We will now describe the second obstruction $O_4(c,M)$, and see how all this data corresponds to 
data from the
spectral sequence of the group extension.

\subsection{The second obstruction}
Let us now describe the second obstruction $O_4(c,M)$ (we assume that we have a solution $\mu$ for $\widehat{O_3(c)}_{\rho}$). We ``almost have''
the extension $\bar{\omega}$ of $\omega$ to $\Ga$ in the following sense: if we knew $\bar{\omega}(\bar{g},\bar{h},\bar
{k})$ for every $g,h,k\in G$, we would have known $\bar{\omega}(a,b,c)$ for any $a,b,c\in \Ga$. This is because the system of products
enables us to express any value of $\bar{\omega}$ solely in terms
of the values $\bar{\omega}(\bar{g},\bar{h},\bar{k})$ for $g,h,k\in G$. So choose such
values arbitrarly, for example, $\bar{\omega}(\bar{g},\bar{h},\bar{k})=1$ for every $g,h,k\in G$. Now consider the hexagon diagram for $\bar
{g},\bar{h},\bar{k},\bar{l}$, where $g,h,k,l\in G$. It will be commutative up to a scalar, which will be $O_4(c,M)(g,h,k,l)$. A choice
of different arbitrary values would give us a cohomologous cocycle. A solution for $O_4(c,M)$ means a collection of values $\bar{\omega}
(\bar{g},\bar{h},\bar{k})$ which will make $\bar{\omega}$ a 3-cocycle on $\Ga$. By choosing a different solution, we will get another
extension of $\omega$ to $\Ga$, which differs by a pullback to
$\Ga$ of a class $\alpha\in H^3(G,{\bold k}^\times)$.
In the context of the data $(c,M,\alpha)$, we assume that we have one fixed solution $\eta$ for $O_4(c,M)$, and that we take the solution
$\eta\alpha$, where $\alpha\in H^3(G,{\bold k}^\times)$.

\subsection{The LHS spectral sequence}
We have already seen that the data $(c,M,\alpha)$ yields an extension $\Ga$ of $G$ by $N$. We will now explain how it determines the
extension $\bar{\omega}$ of $\omega$ from $N$ to $\Ga$. In order to do so we will use the Lyndon-Hochshild-Serre (abbreviated LHS) spectral
sequence \begin{equation}E_2^{p,q}=H^p(G,H^q(N,{\bold
k}^\times))\converges E_{\infty}^{p,q}=H^{p+q}(\Ga,{\bold
k}^\times)\end{equation} A general discussion of this spectral sequence can be found in \cite{Rotman} and in \cite{Mac}. We will use this spectral sequence to understand how the vanishing of the obstructions and the mere existence of $c:G\rightarrow \Oout$ imply that $\omega$ can be
extended to a 3-cocycle on $\Ga$, and how the choices of $c$, $M$ and $\alpha$ give us a specific extension of $\omega$ to $\Ga$.
The idea is the following: we consider $\omega$ as an element of $E_2^{0,3}=H^0(G,H^3(N,{\bold k}^\times))$. Using the theory of spectral sequences, we
know that $\omega$ is extendable to $\Ga$ if and only if $d_2(\omega)=0$ in $E_2^{2,2}$, $d_3(\omega)=0$ in $E_3^{3,1}$
and $d_4(\omega)=0$ in $E_4^{4,0}$. In case this is true, the theory of spectral sequences also gives us all possible extensions of
$\omega$ to $\Ga$. They are parameterized in the following way: if $d_2(\omega)=0$, this means that a certain equation has a ``solution'' (we
will soon see this explicitly). We need to choose a solution $\gamma$, and then $\omega$ and $\gamma$ together will define an element $i^
{\gamma,\omega}\in E_2^{3,1}=H^3(G,H^1(N,{\bold k}^\times))$. This element $i^{\gamma,\omega}$ will be in the kernel of $d_2$, and the image of
$i^{\gamma,\omega}$ in $E_3^{3,1}$ would be $d_3(\omega)$ (recall that $E_3$ is the cohomology of $(E_2,d_2)$). The fact that $d_3(\omega)=0$
means that for some of the solutions $\gamma$, the cocycle $i^{\gamma,\omega}$ would be trivial. We need to choose only such $\gamma$'s.
Again, the fact that $i^{\gamma,\omega}=0$ means that some equation has a solution, and we need once again to choose such a solution,
which we will denote by $p$. Exactly like at the previous step, $\omega$, $\gamma$ and $p$ will define an element $j^{\omega,\gamma,p}\in
E_2^{4,0}=H^4(G,H^0(N,{\bold k}^\times))=H^4(G,{\bold k}^\times)$. The cohomology class $j^{\omega,\gamma,p}$ will obviously be in the kernel of $d_2$ and $d_3$, as
they are trivial on $E_2^{4,0}$ and $E_3^{4,0}$ respectively. The image of $j^{\omega,\gamma,p}$ in $E_4^{4,0}$ would be exactly
$d_4(\omega)$. The fact that $d_4(\omega)=0$ is equivalent to the fact that we can choose $\gamma$ and $p$ such that $j^{\omega,\gamma,p}=0$ in
$H^4(G,{\bold k}^\times)$, and we will choose only such $\gamma$'s and $p$'s. The construction of $j^{\omega,\gamma,p}$ gives it as a cocycle rather than
just as a cohomology class. Therefore we also need a 3-cochain $\beta\in C^3(G,{\bold k}^\times)$ which satisfies $\del\beta = j^{\omega,\gamma,p}$,
that is, we need a solution to this equation. The tuple $(\gamma,p,\beta)$ will give us, by the theory of spectral sequences, the
desired extension of $\omega$ to $\Ga$. We will now explain the connection between the tuple $(\gamma,p,\beta)$ and the data $(c,M,\alpha)$.
We do so by considering the different pages of the spectral sequence.

\subsubsection{The first differential in page $E_2$}
Let us describe $d_2(\omega)$. The cocycle $\omega$ is $G$-invariant, and therefore for every $g\in G$ we can find a 2-cochain $\gamma_g$
such that $\del\gamma_g = \phi(g)\cdot\omega/\omega$. Let $g,h\in G$. Since $\phi(g)\phi(h) = c_{n(g,h)}\phi(gh)$, we have a two
cocycle on $N$ with values in ${\bold k}^\times$, \begin{equation}\label{firstdiff}U_{g,h}^{\gamma}=\frac{\gamma_g g\cdot \gamma_h}{t_{n(g,h)}c_n\cdot\gamma_{gh}}\end{equation} The
function which takes $(g,h)$, for $g,h\in G$, to the cocycle $U^{\gamma}_{g,h}$ is a 2-cocycle of $G$ with values in $H^2(N,{\bold k}^\times)$.
Different choices of $\gamma_g$'s would give us cohomologous cocycles. The cocycle $U_{g,h}^{\gamma}$ is trivial if and only if there is a choice of
$\gamma_g$'s for which $U_{g,h}^{\gamma}$ would be a coboundary
for every $g,h\in G$. A direct calculation shows that $d_2(\omega)= U_{g,h}^{\gamma}$. We claim that the existence of $\Phi$ implies that $d_2(\omega)$ is trivial. This is because we can choose the $\gamma_g$'s we
have in the definition of $\Phi$, in equation (\ref{defgamma}), and for this choice we know that $U^{\gamma}_{g,h} = \del p_{g,h}$. So the ``equation'' we have here is $U_{g,h}^{\gamma}=1$ in $H^2(N,{\bold k}^\times)$, and the solution $\gamma$ is given by $\Phi$, which comes from the homomorphism $c$.

This $\gamma$ is the first part of the data needed in order to
define the extension of $\omega$.

\subsubsection{The second differential in page $E_3$}

We consider now the cocycle $i^{\gamma,\omega}\in H^3(G,H^1(N,{\bold k}^\times))$. In order to construct $i^{\gamma,\omega}$ we need to choose 1-cochains $p_{g,h}$ for every $g,h\in G$ in such a way that the equation $\del p_{g,h} = U^{\gamma}_{g,h}$ holds. We have such 1-cochains given in Equation (\ref{multaut}). If we take the 1-cochains from Equation (\ref{multaut}) and compute $i^{\gamma,\omega}$, we get $i^{\gamma,\omega}= \widehat{O_3(c)}_{\rho}$. So the vanishing of the first obstruction implies also that $d_3(\omega)=0$, and the second part we need in order to define the extension of $\omega$ is the collection of isomorphisms $p_{g,h}$ (which
comes from the system of products $M$). Again, $p=\{p_{g,h}\}$ is a solution to an equation which says that $\widehat{O_3(c)}_{\rho}$ is trivial (recall that $O_3(c)$ was defined using the $p_{g,h}$'s).

\subsubsection{The third differential in page $E_4$}
Finally, consider the cocycle $j^{\omega,\gamma,p}$. A direct calculation shows that this is exactly $O_4(c,M)$. So the vanishing of the
second obstruction implies that $d_4(\omega)=0$. The last choice
we need to make is to choose a 3-cochain $\beta\in C^3(G,{\bold k}^\times)$ such that
$\del\beta = j^{\omega,\gamma,p}$. But the data $(c,M,\alpha)$ determines such a solution. The solution will be $\beta=\eta\alpha$, where
$\eta$ is the fixed solution for $O_4(c,M)$ we assumed we have.

\bibliographystyle{ams-alpha}

\end{document}